\newtheorem{thm}{Theorem}[section]
\newtheorem{prop}[thm]{Proposition}
\newtheorem{lemma}[thm]{Lemma}
\newtheorem{cor}[thm]{Corollary}
\newtheorem{conj}[thm]{Conjecture}
\newtheorem{claim}[thm]{Claim}
\numberwithin{equation}{subsection}
\numberwithin{thm}{subsection}
\theoremstyle{definition}
\newtheorem{defn}[thm]{Definition}
\newtheorem{notation}[thm]{Notation}
\theoremstyle{remark}
\newtheorem{q}[thm]{Question}
\newtheorem{rmk}[thm]{Remark}
\newtheorem{example}[thm]{Example}
\DeclareMathAlphabet{\mathpzc}{OT1}{pzc}{m}{it}
\newcommand{\C}{\mathbb{C}}
\newcommand{\R}{\mathbb{R}}
\newcommand{\Z}{\mathbb{Z}}
\newcommand{\bdry}{\partial}
\newcommand{\s}{\vskip.1in}
\newcommand{\n}{\noindent}
\newcommand{\F}{\mathbb{F}}
\newcommand{\E}{\mathcal{E}}
\newcommand{\be}{\begin{enumerate}}
	\newcommand{\ee}{\end{enumerate}}
\newcommand{\op}{\operatorname}
\newcommand{\bs}{\boldsymbol}
\newcommand{\wh}{\widehat}
\newcommand{\rsnk}{R(S,n,{\bf a};k)}
\newcommand{\rsno}{R(S,n,{\bf a};1)}
\newcommand{\mb}{\mathbf{1}}
\newcommand{\Hom}{\op{Hom}}
\newcommand{\End}{\op{End}}
\newcommand{\rk}{R_k}
\newcommand{\rkkh}{R_k^{\op{nil}}}
\newcommand{\nck}{\op{NC}_k}
\newcommand{\nhk}{\op{NH}_k}
\newcommand{\smk}{\op{Sym}_k}
\newcommand{\polk}{\op{Pol}_k}
\newcommand{\rnk}{R(n;k)}
\newcommand{\rnkkh}{R(n;k)^{\op{nil}}}
\newcommand{\rnkp}{R(n;k+1)^{\op{nil}}}
\newcommand{\rnkm}{R(n;k-1)^{\op{nil}}}
\newcommand{\rnke}{R(n+1;k+1)^{\op{nil}}}
\newcommand{\rnkf}{R(n+1;k)^{\op{nil}}}
\newcommand{\cnk}{\mathcal{C}_{n,k}}
\newcommand{\cnkp}{\mathcal{C}_{n,k+1}}
\newcommand{\cnkm}{\mathcal{C}_{n,k-1}}
\newcommand{\zq}{\mathbb{Z}[q, q^{-1}]}
\newcommand{\ech}{\check{\mathcal{E}}}
\newcommand{\fch}{\check{\mathcal{F}}}
\newcommand{\lk}{\Lambda_k}
\newcommand{\lkkh}{\Lambda_k^{\op{nil}}}
\newcommand{\slk}{H_k \tilde{\otimes} \Lambda_k}
\newcommand{\lm}{\Lambda}
\newcommand{\ai}{A_{\infty}}
\newcommand{\mfn}{\mathbf{n}}
\newcommand{\pnk}{\mathcal{P}_k(\mathbf{n})}
\newcommand{\pnke}{\mathcal{P}_{k+1}(\mathbf{n+1})}
\newcommand{\pnkf}{\mathcal{P}_{k}(\mathbf{n+1})}
\newcommand{\pnkp}{\mathcal{P}_{k+1}(\mathbf{n})}
\newcommand{\pnkm}{\mathcal{P}_{k-1}(\mathbf{n})}
\newcommand{\po}{\mathcal{PO}}
\newcommand{\inv}{\op{Inv}}
\newcommand{\ti}{\tilde}
\newcommand{\ra}{\rightarrow}
\newcommand{\xra}{\xrightarrow}
\newcommand{\es}{\varnothing}
\newcommand{\Ker}{\op{Ker}}
\newcommand{\im}{\op{Im}}
\newcommand{\ot}{\otimes}
\newcommand{\cal}{\mathcal}
\newcommand{\upl}{\mathcal{U}^{+}}
\newcommand{\dgupl}{\mathcal{DG}(\mathcal{U}^{+})}
\newcommand{\lle}{L_{\mathcal{E}}}
\newcommand{\lek}{L_{\mathcal{E}}^{\otimes k}}
\newcommand{\id}{\op{id}}
\newcommand{\circled}[1]{\raisebox{.5pt}{\textcircled{\raisebox{-.9pt} {#1}}}}
\begin{document}
	
	\title[Khovanov homology for transverse links in fibered $3$-manifolds]
	{Towards a symplectic Khovanov homology for links in fibered $3$-manifolds}
	
	\author{Vincent Colin}
	\address{Nantes Universit\'e, 44322 Nantes, France}
	\email{vincent.colin@univ-nantes.fr}
	
	\author{Ko Honda}
	\address{University of California, Los Angeles, Los Angeles, CA 90095}
	\email{honda@math.ucla.edu} \urladdr{http://www.math.ucla.edu/\char126 honda}
	
	\author{Yin Tian}
	\address{School of Mathematical Sciences, Beijing Normal University; 
Laboratory of Mathematics and Complex Systems, Ministry of Education, Beijing 100875, China}
	\email{yintian@bnu.edu.cn}

	\date{\today}

	\keywords{Contact structures, Khovanov homology, Higher-dimensional Heegaard Floer homology}
	
	\subjclass[2000]{Primary 57M50; Secondary 53D10,53D40.}
	
	\thanks{VC supported by ERC Geodycon and ANR Quantact and COSY.  KH supported by NSF Grants DMS-1406564, DMS-1549147, DMS-2003483, and DMS-2505876. YT supported by NSFC 11601256, 11971256 and 12471064.}

	\begin{abstract}
		The goal of this paper is twofold: (i) define a symplectic Khovanov type homology for a transverse link in a fibered closed $3$-manifold $M$ (with an auxiliary choice of a homotopy class of loops that intersect each fiber once) and (ii) give conjectural combinatorial dga descriptions of surface categories that appear in (i).  These dgas are higher-dimensional analogs of the strands algebras in bordered Heegaard Floer homology, due to Lipshitz-Ozsv\'ath-Thurston \cite{LOT}.
	\end{abstract}

\maketitle

\setcounter{tocdepth}{1}
\tableofcontents

\section{Introduction} \label{section: intro}

Khovanov homology \cite{Kh1} associates a bigraded $\Z$-module $Kh(\ell)$ to a link $\ell$ in $S^3$ through somewhat mysterious combinatorial formulas expressed on a planar diagram of $\ell$. In 2006, Seidel and Smith \cite{SS} proposed a more geometric reformulation of Khovanov homology, called {\em symplectic Khovanov homology} $Kh_{\op{symp}}(\ell)$, as a Lagrangian Floer homology.

Let us recall the reinterpretation of $Kh_{\op{symp}}(\ell)$ due to Manolescu~\cite{Ma}: If $\ell$ is a closure of an $n$-strand braid $\mathbb{b}$, then let $W_{2n-1}$ be the $4$-dimensional Milnor fiber of the $A_{2n-1}$ singularity. It admits a symplectic Lefschetz fibration $\pi : W_{2n-1} \to D^2$ over the closed $2$-disk $D^2$ with $2n$ singular points (and also $2n$ critical values) and whose regular fiber is an annulus $A=S^1\times[-1,1]\subset T^*S^1$. The braid $\mathbb{b}$ induces a symplectomorphism $\Phi : W_{2n-1}\stackrel\sim\to W_{2n-1}$ by permuting the fibers and hence a symplectomorphism $\overline{\Phi}$ of an open set $(Y_n , \Omega)$ of the Hilbert scheme $\op{Hilb}^n(W_{2n-1})$ of $n$ points in $W_{2n-1}$. There are disjoint Lagrangian $2$-spheres $\alpha_1,\dots,\alpha_n\subset W_{2n-1}$ lying over $n$ disjoint arcs $\gamma_1,\dots,\gamma_n$ in $D^2$ which connect the critical values of $\pi$ in pairs.  Writing $\mathcal{L}_{\bs\gamma}:= \alpha_1 \times \dots \times \alpha_n \subset (Y_n , \Omega )$, where ${\bs \gamma} =\{\gamma_1 ,\dots ,\gamma_n \}$, the symplectic Khovanov homology $Kh_{\op{symp}}(\ell)$ of $\ell$ is defined as the Lagrangian Floer homology $HF(\mathcal{L}_{\bs \gamma} ,\overline{\Phi} (\mathcal{L}_{\bs\gamma} ))$. It was shown to be independent of choices by Seidel-Smith~\cite{SS}, isomorphic to $Kh(\ell)$ by Abouzaid-Smith~\cite{AS}, and given a cylindrical reformulation by Mak-Smith~\cite{MS}. 

The goal of this paper is twofold: (1) define a symplectic Khovanov type homology for a transverse link in a fibered $3$-manifold $M$ (together with an auxiliary choice of a homotopy class of loops that intersect each fiber once) and (2) give conjectural combinatorial descriptions of surface categories that appear in (1).   

\begin{rmk}
    The reason we say symplectic ``Khovanov type'' homology is that our invariant generalizes the variant $Kh^\sharp$ of Khovanov homology given in \cite[Section 9]{CHT}, not $Kh_{\op{symp}}$; see \cite[Section 9.6]{CHT} for the spectral sequence for $Kh^\sharp$ whose $E^1$-term is $Kh_{\op{symp}}$.
\end{rmk}

This paper should be viewed as a sequel to \cite{CHT} and we will freely refer to the construction of the higher-dimensional Heegaard Floer (HDHF) homology groups from \cite{CHT}.  The HDHF groups will be defined over a field $\F$.

Let $(W^{2\nu},\beta,\phi)$ be a $2\nu$-dimensional Weinstein domain, where $\beta$ is a Liouville form and $\phi:W\to\R$ is a compatible Morse function, and let $h$ be an element of $\op{Symp}(W,\bdry W,d\beta)$, the group of symplectomorphisms of $(W,d\beta)$ that restrict to the identity on $\bdry W$. 

The starting point of this paper is Section~\ref{section: wrapped version}, where we define the wrapped HDHF $\ai$-bi\-module $B^\star(W,\beta,\phi;h)$ over a wrapped HDHF $\ai$-algebra $R^\star(W,\beta,\phi)$ and prove its Morita invariance under handleslides (Theorem~\ref{thm: invariance under handleslides}). Here $\star$ refers to a fixed wrapping scheme, e.g., $\star=f$ for full wrapping and $\star=p$ for partial wrapping. 

The rest of the paper is organized into Parts 1 and 2 corresponding to (1) and (2) above and summarized in Sections~\ref{subsection: Khovanov} and \ref{subsection: combinatorial} below.

\subsection{Symplectic Khovanov type homology of a transverse link in a fibered $3$-manifold}\label{subsection: Khovanov}

Let $S$ be a {\em bordered surface}, i.e., a compact connected oriented surface with nonempty boundary, let $\op{Diff}^+(S,\bdry S)$ be the set of orien\-tation-preserving diffeomorphisms of $S$ that restrict to the identity on $\bdry S$, and let ${\bf x}=\{x_1,\dots,x_n\}\subset \op{int}S$ be an $n$-element subset called the set of {\em marked points}.  We also choose a finite subset $\tau\subset \bdry S$, called a {\em stop}. Then $\star$ will be partial wrapping with respect to $\tau$ if $\tau\not=\varnothing$ and full wrapping if $\tau=\varnothing$. Let ${\bf a}= \{a_1,\dots, a_{n+s}\}$ be a {\em parametrization of $(S,{\bf x},\tau)$,} described a few paragraphs below (also see Definition~\ref{defn: parametrization}).

The goal of Part 1 is to give definitions of 
\be
\item[(A)] surface categories $\overline {\mathcal{R}}^\star (S,n,{\bf a})$ motivated by symplectic Khovanov homology and 
\item[(B)] a symplectic Khovanov type invariant of a transverse link in a fibered $3$-manifold $M$ together with an auxiliary choice of a homotopy class of loops that intersect each fiber once.
\ee  
By (B) we mean an invariant of a transverse link $\ell\subset M$ (i.e., a link that is transverse to all the fibers of $M$), up to an isotopy through transverse links. 

Consider the mapping torus
$$N_{(S,h)}=(S\times[0,1])/ (x,1)\sim (h(x),0) \quad \forall x\in S$$
of $h\in \op{Diff}^+(S,\bdry S)$. Let $\ell\subset \op{int}(N_{(S,h)})$ be a transverse link and suppose it intersects the fiber $S\times\{1\}$ at $n$ points along ${\bf x}=\{x_1,\dots,x_n\}$.  Then we cut $\ell$ along $S\times \{1\}$ to obtain a braid $\mathbb{b}$ in $S\times  [0,1]$.  We may assume that $h$ takes ${\bf x}$ to itself, i.e., is an element of $\op{Diff}^+(S,\bdry S,{\bf x})$, and that the endpoints of $\mathbb{b}$ are ${\bf x}\times \{0,1\}$.

We then consider a symplectic Lefschetz fibration $\pi :W_{S,n} \to S$ with base $S$ instead of $D^2$, regular fiber $A=S^1\times[-1,1]$, and $n$ critical values $x_1,\dots, x_n$. The family $\{a_1,\dots, a_n\}$ of pairwise disjoint half-arcs in $S$ joining the critical values of $\pi$ to $\partial S\setminus \tau$ is augmented by a basis $\{a_{n+1} ,\dots ,a_{n+s}\}$ of $s$ properly embedded pairwise disjoint arcs in $S$ with endpoints on $\bdry S\setminus \tau$ such that:
\be
\item the elements of the {\em parametrization} ${\bf a}:= \{a_1,\dots, a_{n+s}\}$ of $(S,{\bf x}, \tau)$ are pairwise disjoint; and
\item if $\tau=\varnothing$ then $S\setminus\cup_{i=n+1}^{n+s}a_i$ is a single polygon; otherwise each component of $S\setminus\cup_{i=n+1}^{n+s}a_i$ is a polygon containing a single point of $\tau$.
\ee 
(For example, when $\bdry S$ is connected and $\tau=\varnothing$ or $\tau$ is a $1$-element set, then $s=2g(S)$.)  We consider the collection $\{L_1,\dots, L_{n+s}\}$ of Lagrangian disks and annuli of $W_{S,n}$, where $L_i$, $i\leq n$, are thimbles over $a_i$  and $L_i$, $i\geq n+1$, are annuli over $a_i$ which are obtained by parallel transporting the $0$-section of the fiber using the symplectic connection.\footnote{The Lagrangians we actually use are the completions $\widehat{L}_i$ of $L_i$ that lie over the cylindrical completions $\widehat{a}_i$ of $a_i$ in $S\cup ([0,\infty)\times \bdry S)$, but for the moment we do distinguish between $\widehat{L}_i$ and $L_i$.}

For each nonnegative integer $k$, consider a partition ${\bf m}= (m_1,\dots, m_{n+s})$ of $k$ subject to the condition that $m_i=0$ or $1$ when $i\leq n$.  In other words, we allow parallel copies of arcs but at most one copy of half-arcs. Compare this to the symplectic Khovanov setting of \cite{SS,Ma}, where $S$ is a disk and there is only one copy of each half-arc.  The main reason for only allowing one copy of each half-arc is that two copies would intersect at their interior endpoint, which we want to avoid in our collections of Lagrangian submanifolds.

Let $a_{\bf m}$ be the union of $k$ disjoint arcs/half-arcs consisting of $m_i$ disjoint copies of $a_i$ and let $L_{\bf m}$ be the family of Lagrangian disks and cylinders sitting above $a_{\bf m}$.

We then define the $\ai$-category $\mathcal{R}^\star(S,n,{\bf a})$, whose objects are $L_{\bf m}$ and whose morphisms are $CF^\star(L_{\bf m}, L_{\bf m'})$, the $\star$-wrapped HDHF chain complex with coefficients in $\F[\mathcal{A}]\llbracket \hbar, \hbar^{-1}]$ (this means power series in $\hbar$ and polynomial in $\hbar^{-1}$; $\mathcal{A}$ takes care of relative homology classes of degree $2$, provided a complete set of capping surfaces is given, see \cite[Sections 3.5.3, 3.8 and 9.2]{CHT})\color{black}, where the stop $\tau$ determines how far the arcs $a_i$ (resp.\ Lagrangians $L_i$) can be wrapped around $\bdry S$ (resp.\ $\bdry W_{S,n}$).  Viewed as an $\ai$-algebra,
\begin{equation} \label{eq surface alg}
R^\star(S,n,{\bf a}):= \oplus_{k\geq 0, |{\bf m}|=|{\bf m'}|=k} CF^\star (L_{\bf m}, L_{\bf m'}).
\end{equation}

Following Seidel \cite{Se3}, a triangulated envelope of an $A_\infty$-category $\mathcal{A}$  is a pair $(\mathcal{B},\mathcal{F})$ where $\mathcal{B}$ is a triangulated $A_\infty$-category and  $\mathcal{F} : \mathcal{A}\to \mathcal{B}$ is a cohomologically full and faithful functor such that the objects in the image of $\mathcal{F}$ are generators of $\mathcal{B}$.
In our case, we realize a triangulated envelope of $\overline {\mathcal{R}}^\star (S,n,{\bf a})$ of $\mathcal{R}^\star (S,n,{\bf a})$ as the $A_\infty$-category of its twisted complexes.

\begin{thm}\label{thm: invariance}
The triangulated envelope $\overline {\mathcal{R}}^\star (S,n,{\bf a})$ of $\mathcal{R}^\star (S,n,{\bf a})$ does not depend on the choice of ${\bf a}$, when $\star$ is a partial wrapping with respect to $\tau$ or a full wrapping.
\end{thm}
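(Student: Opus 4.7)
The plan is to reduce the invariance question to a short list of elementary moves on parametrizations, then verify that each move induces an $\ai$-quasi-equivalence of triangulated envelopes, combining Hamiltonian isotopy invariance of Lagrangian Floer theory with the Morita invariance of the wrapped HDHF bimodule under handleslides (Theorem~\ref{thm: invariance under handleslides}). The first step is combinatorial: any two parametrizations $\mathbf{a}$ and $\mathbf{a}'$ of $(S, \mathbf{x}, \tau)$ can be connected by a finite sequence of (i) ambient isotopies of $S$ fixing $\mathbf{x}$ and $\tau$ pointwise, (ii) interior arc slides of an arc $a_i$ with $i \geq n+1$ over a neighboring arc $a_j$ with $j \geq n+1$, and (iii) half-arc slides of a half-arc $a_i$ with $i \leq n$ over a neighboring interior arc $a_j$ with $j \geq n+1$. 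This is a standard change-of-cut-system argument, adapted to keep the stop $\tau$ in the appropriate polygonal complementary region and to respect the distinction between half-arcs and interior arcs.

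For the isotopy moves in (i), the symplectic connection of the Lefschetz fibration $\pi : W_{S,n} \to S$ lifts an ambient isotopy of the base to a Hamiltonian isotopy of the Lagrangians $L_{\mathbf{m}} \subset W_{S,n}$. Hamiltonian isotopies induce $\ai$-quasi-isomorphisms on $\star$-wrapped Floer complexes, so $\mathcal{R}^\star(S, n, \mathbf{a})$ and $\mathcal{R}^\star(S, n, \mathbf{a}')$ are $\ai$-quasi-equivalent, and passing to twisted complexes preserves this.

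For the slides in (ii) and (iii), the three arcs $a_i$, $a_j$, and the slid arc $a_i'$ cobound an embedded triangular region $D \subset S$ that is disjoint from $\mathbf{x}\cup \tau$ in case (ii) and contains only the endpoint of the half-arc $a_i$ in case (iii). Parallel-transporting the relevant Lagrangian fiber over $D$ yields a Lagrangian filling of $L_{i'} \cup L_i \cup L_j$ in $W_{S,n}$, and the associated holomorphic triangle count, combined with Theorem~\ref{thm: invariance under handleslides}, produces an exact triangle
\[
L_j \longrightarrow L_{i'} \longrightarrow L_i \xrightarrow{\ [1]\ } L_j
\]
inside $\overline{\mathcal{R}}^\star(S, n, \mathbf{a})$. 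Thus every generator of the new category $\mathcal{R}^\star(S, n, \mathbf{a}')$ is a twisted complex of generators of the old one, and by symmetry the reverse holds too. Since a triangulated envelope is determined up to $\ai$-quasi-equivalence by its generators and their morphism complexes, one concludes $\overline{\mathcal{R}}^\star(S, n, \mathbf{a}) \simeq \overline{\mathcal{R}}^\star(S, n, \mathbf{a}')$.

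The main obstacle is the half-arc slide (iii), which interacts with the constraint that half-arcs appear with multiplicity at most one. This rules out slides of one half-arc over another: one must always slide the half-arc over an interior arc. Even so, care is needed to organize the exact triangle so that on both sides the half-arc $L_i$ (or $L_{i'}$) appears with multiplicity exactly one while $L_j$ may appear with arbitrary multiplicity in the surrounding twisted complexes; this is compatible with Theorem~\ref{thm: invariance under handleslides} because the Morita invariance there is phrased directly in terms of the generators' bimodule structure. A secondary subtlety is to track the relative homology weights in $\mathcal{A}$ and the $\hbar$-powers contributed by holomorphic curves bounded by the lift of $D$; these are determined by areas and winding numbers in the Lefschetz fibration and fix the weights in the exact triangle uniquely.
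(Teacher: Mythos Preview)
Your combinatorial reduction is incorrect, and this is a genuine gap. You claim that any two parametrizations are connected by (i) ambient isotopies, (ii) arc-over-arc slides, and (iii) half-arc-over-arc slides, explicitly ruling out half-arc-over-half-arc slides. But take $S = D^2$ with $|\tau| = 1$: then $s = 0$ (the disk minus no arcs is already a single polygon containing the stop), so every parametrization consists solely of half-arcs $a_1,\dots,a_n$. Two such parametrizations that differ in the cyclic order of the endpoints $\partial a_i$ along $\partial D^2$ are not related by an ambient isotopy fixing $\mathbf{x}$ and $\tau$, and your moves (ii), (iii) are vacuous here. You are forced to slide half-arcs over half-arcs. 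More generally, the paper treats all four cases: half-arcs over half-arcs, arcs over half-arcs, half-arcs over arcs, and arcs over arcs.

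The two cases you omit are precisely the delicate ones. When sliding a half-arc $a_1$ over a half-arc $a_2$ (or an arc over a half-arc) and $a_2 \mid a_{\mathbf m}$, the cone description fails for the reason you anticipate: it would require an object with $a_2^2 \mid a_{\mathbf{m}'}$. The paper does \emph{not} resolve this by avoiding the move; instead it shows directly that $\widehat L_{\mathbf m}$ and $\widehat L_{\mathbf m}'$ are quasi-isomorphic via explicit cycles $\Theta_c$ and $\Theta$ built from short Reeb chords and interior intersection points, using a curve-count-with-point-constraints argument imported from \cite[Theorem~9.3.1]{CHT}. When $a_2 \nmid a_{\mathbf m}$, the paper replaces each thimble $L_i$ by a Lagrangian disk $\widetilde L_i$ over an arc $\widetilde a_i$ encircling $x_i$ (shown to be Lagrangian isotopic to $L_i$ via an Eliashberg--Polterovich uniqueness argument), so that the half-arc-over-half-arc slide becomes \emph{two} consecutive disk-over-disk handleslides to which Theorem~\ref{thm: invariance under handleslides} applies. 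Neither mechanism appears in your outline, and your exact-triangle argument does not cover these cases.
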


If $\ell$ is the closure of a braid $\mathbb{b}$  by $h$, then it induces an $R^\star(S,n,{\bf a})$-bimodule
$$B^\star(S,n,{\bf a};\mathbb{b},h):=\oplus_{k\geq 0,|{\bf m}|=|{\bf m'}|=k} CF^\star( h_{\mathbb{b}}\circ h(L_{\bf m}),L_{\bf m'}),$$
where $h_{\mathbb{b}}$ is isotopic to the identity in $\op{Diff}^+(S,\bdry S)$ and traces out $\mathbb{b}$. We then consider the Hochschild homology $HH^\star (S,n,{\bf a};\mathbb{b},h)$ of $B^\star(S,n,{\bf a};\mathbb{b},h)$.

In view of the following theorem, we can view $HH^\star (S,n,{\bf a};\mathbb{b},h)$ as a symplectic Khovanov type transverse link invariant of $\ell$ with respect to the fibration $N_{(S,h)}\to S^1$.

\begin{thm}\label{thm: wrapped}
The Hochschild homology groups
$$HH^p(S,n,{\bf a};\mathbb{b},h) \quad \mbox{and} \quad HH^f(S,n,{\bf a};\mathbb{b},h)$$
depend only on $(S,h)$ and on the transverse isotopy class of $\ell$ in $N_{(S,h)}$.
\end{thm}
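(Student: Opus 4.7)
The plan is to reduce the theorem to invariance under a short list of elementary moves: (i) change of the parametrization ${\bf a}$ for fixed $(n,\mathbb{b},h)$; (ii) isotopy of the braid $\mathbb{b}$ in $S \times [0,1]$ rel endpoints; and (iii) ambient isotopy of the marked points ${\bf x}$ in $S$, accompanied by a compatible isotopy of $h$ so that the new ${\bf x}$ is again preserved. Any transverse isotopy of $\ell$ inside $N_{(S,h)}$ (with $(S,h)$ fixed) can be decomposed into a finite composition of these moves by a standard braid-presentation argument, since $n$ is determined by the geometric intersection of $\ell$ with any fiber and no Markov-type stabilization is required.

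For (i), I will invoke Theorem~\ref{thm: invariance}: any two parametrizations ${\bf a}, {\bf a}'$ yield quasi-equivalent triangulated envelopes $\overline{\mathcal{R}}^\star(S,n,{\bf a})$ and $\overline{\mathcal{R}}^\star(S,n,{\bf a}')$ via a zigzag of continuation/handleslide functors. Since the bimodule $B^\star(S,n,{\bf a};\mathbb{b},h)$ arises from the same geometric input, namely the image of the Lagrangian collection under $h_{\mathbb{b}} \circ h$, these functors upgrade to quasi-isomorphisms of $A_\infty$-bimodules. Because Hochschild homology of an algebra--bimodule pair is invariant under quasi-equivalence, (i) follows.

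For (ii), an isotopy $\mathbb{b}_t$ of braids rel endpoints yields a smooth family $h_{\mathbb{b}_t} \in \op{Diff}^+(S,\bdry S)$, and hence a family of symplectomorphisms of $W_{S,n}$ via the symplectic connection on the Lefschetz fibration $\pi : W_{S,n} \to S$. The resulting Lagrangian isotopy of $h_{\mathbb{b}_t} \circ h(L_{\bf m})$ induces continuation quasi-isomorphisms of the bimodule, and hence an isomorphism on Hochschild homology. For (iii), an ambient isotopy moving ${\bf x}$ to ${\bf x}'$ is realized by a $\psi \in \op{Diff}^+(S,\bdry S)$ isotopic to the identity, which lifts to a symplectomorphism of $W_{S,n}$ by parallel transport. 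This lift carries the Lagrangian collection over ${\bf a}$ to that over $\psi({\bf a})$ and intertwines the corresponding bimodules for the conjugated braid data; combining with (i) to absorb the parametrization change from ${\bf a}$ to $\psi({\bf a})$ then yields invariance under (iii).

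The main obstacle will be verifying that the continuation/handleslide functors of Theorem~\ref{thm: invariance} upgrade coherently to quasi-isomorphisms of $A_\infty$-bimodules: one must check that the insertion of $h_{\mathbb{b}} \circ h$ commutes, up to coherent higher homotopy, with the continuation data relating ${\bf a}$ and ${\bf a}'$, so that the Morita equivalence at the algebra level genuinely transports $B^\star(S,n,{\bf a};\mathbb{b},h)$ to $B^\star(S,n,{\bf a}';\mathbb{b},h)$. A secondary technical point is the compatibility of Hochschild homology with the wrapping scheme and with the coefficient ring $\F[\mathcal{A}]\llbracket \hbar, \hbar^{-1}]$ used in HDHF; both issues should be addressable by standard continuation-map arguments adapted to the wrapped HDHF framework of Section~\ref{section: wrapped version}.
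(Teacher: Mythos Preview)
Your approach is correct and matches the paper's, though the paper is considerably terser: it gives no separate proof of Theorem~\ref{thm: wrapped}, instead treating it as an immediate consequence of Theorem~\ref{thm: invariance} together with the bimodule half of Theorem~\ref{thm: invariance under handleslides} and Corollary~\ref{cor: Hochschild} (the first sentence of the proof of Theorem~\ref{thm: braid in fibration} in Section~\ref{section: Khovanov homology of braid in fibration} simply asserts that $HH^f(S,n,{\bf a};\mathbb{b},h)$ is an invariant of $\ell$ in $N_{(S,h)}$ ``by Theorem~\ref{thm: invariance}'').  Your decomposition into moves (i)--(iii) is exactly how one would unpack that assertion; in particular the ``main obstacle'' you flag --- that the handleslide functors at the algebra level must upgrade to quasi-isomorphisms of $A_\infty$-bimodules --- is precisely the content of the second clause of Theorem~\ref{thm: invariance under handleslides} in the general Weinstein setting, and the surface-specific handleslide analysis of Section~\ref{subsection: proof of theorem} carries over to bimodules by the same arguments.
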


The suspension of the stops $\tau\subset \bdry S$ gives a collection of curves $\Gamma\subset \partial N_{(S,h)}$; if  $\tau$ divides $\bdry S$ into alternating positive and negative regions, then $\Gamma$ is a set of {\em sutures}. 
The associated partially wrapped Hochschild homology $HH^p(S,n,{\bf a};\mathbb{b},h)$ can therefore be interpreted as some version of sutured Khovanov homology of $\ell$ in the {\em sutured manifold} $(N_{(S,h)},\Gamma)$.

\begin{rmk}
In the case of $S=D^2$, $|\tau|=1$, our Hochschild homology is expected to be related to annular Khovanov homology (see \cite{APS}). On the algebraic side, the connection between Hochschild homology and annular Khovanov homology was first described in \cite{AuGrWe} for a special case, and fully established in \cite{BPW} via the trace functor. 
\end{rmk}

Next we consider the case when the ambient manifold is a closed fibered $3$-manifold.  
For simplicity assume $\partial S$ is connected and denote by $\overline{S} =S\cup D$ the capping off of $S$ by a disk $D$ and by $\overline{h}$ the extension of $h$ to $\overline{S}$ by the identity on $D$. The mapping torus of $(\overline{S},\overline{h})$ is the closed fibered $3$-manifold $N_{(\overline{S},\overline{h})}$.  Also let the knot $K_0$ be the mapping torus of the center $0$ of the disk $D\subset \overline{S}$.

In view of the following theorem, we refer to $HH^f(S,n,{\bf a};\mathbb{b},h)$ as a symplectic Khovanov-type invariant of the transverse link $\ell$ with respect to the fibration $\pi: N_{(\overline{S},\overline{h})}\to S^1$ and the transverse isotopy class $[K_0]$ of $K_0$ (i.e., the connected component of loops that intersect each fiber transversely and exactly once, and contains $K_0$):

\begin{thm}\label{thm: braid in fibration}
    The Hochschild homology $HH^f(S,n, {\bf a}; \mathbb{b},h)$ is an invariant of the transverse link $\ell$ with respect to the pair consisting of a fibration $\pi: N_{(\overline{S},\overline{h})}\to S^1$ and the transverse isotopy class $[K_0]$. When $\ell$ is empty, this is an invariant of the pair $(\pi: N_{(\overline{S},\overline{h})}\to S^1,[K_0])$.
\end{thm}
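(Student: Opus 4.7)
The plan is to reduce the statement to Theorem~\ref{thm: wrapped}. By that result, $HH^f(S,n,{\bf a};\mathbb{b},h)$ already depends only on $(S,h)$ and on the transverse isotopy class of $\ell$ in $N_{(S,h)}$. It therefore suffices to show that any two sets of data $(S_i,h_i,\mathbb{b}_i)$, $i=1,2$, realizing the same pair $(\pi: N_{(\overline{S},\overline{h})}\to S^1,[K_0])$ together with the same transverse link $\ell\subset N_{(\overline{S},\overline{h})}$ give rise to bordered models $(S_i,h_i)$ related by a fiber-preserving diffeomorphism that carries $\ell$ to $\ell$ up to transverse isotopy.

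The pair $(S,h)$ is recovered from $N:=N_{(\overline{S},\overline{h})}$ by (i) selecting a representative $K_0\in[K_0]$ disjoint from $\ell$; (ii) choosing a \emph{fibered} tubular neighborhood $\nu(K_0)\cong D^2\times S^1$ of $K_0$, disjoint from $\ell$, whose slices $D^2\times\{\ast\}$ each lie in a single fiber; and (iii) setting $N_{(S,h)}:=N\setminus\op{int}\nu(K_0)$ and cutting along a fiber to extract the braid $\mathbb{b}$. First I would verify that, for a fixed $K_0$, any two such fibered tubular neighborhoods disjoint from $\ell$ can be nested inside one another and connected through a one-parameter family of fibered tubular neighborhoods disjoint from $\ell$; this produces a fiber-preserving ambient isotopy of $N$ identifying the two corresponding bordered models rel $K_0$ and rel $\ell$.

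Next I would compare two representatives $K_0,K_0'\in[K_0]$, each disjoint from $\ell$. An ambient isotopy between them through transverse sections sweeps out a map $S^1\times[0,1]\to N$ that, after a $C^\infty$-small perturbation supported away from the endpoints, is transverse to $\ell$ and meets it in finitely many points. Because $\ell$ has codimension two in $N$ and the sections vary in a one-parameter family, each such intersection can be eliminated by a local modification of the isotopy that keeps every section transverse to the fibration; this yields an isotopy of transverse sections all disjoint from $\ell$. Choosing a smoothly varying family of fibered tubular neighborhoods $\nu(K_t)$ disjoint from $\ell$, I then obtain a fiber-preserving ambient isotopy of $N$ relating the two bordered models and preserving $\ell$ setwise, so Theorem~\ref{thm: wrapped} applies and gives the desired isomorphism of Hochschild homologies.

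The main technical obstacle is the transversality step in the previous paragraph: ensuring that any isotopy between two transverse sections disjoint from $\ell$ can be deformed so as to remain disjoint from $\ell$ throughout while staying within transverse sections. This is a general-position argument that uses crucially the three-dimensional nature of $N$ (loops and links together sweep out a one-dimensional family in a three-manifold, so the intersection cycle is zero-dimensional and locally avoidable). Once it is in hand, the theorem follows by combining it with Theorem~\ref{thm: invariance} (independence of the parametrization ${\bf a}$) and Theorem~\ref{thm: wrapped}. The case $\ell=\varnothing$ is the degeneration in which the transversality constraint is vacuous, and the same argument produces invariance of the pair $(\pi,[K_0])$ alone.
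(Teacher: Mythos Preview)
Your reduction to Theorem~\ref{thm: wrapped} is the right first move, but the transversality step is not just a technical obstacle: it is false as stated, and the failure is exactly what the theorem has to address. The swept annulus $A=\bigcup_t K_t$ is a $2$--cycle rel boundary in the $3$--manifold $N$, and its intersection with $\ell$ is generically a signed $0$--manifold. A transverse intersection point corresponds locally to a nondegenerate zero of the map $(t,\theta)\mapsto \gamma_t(\theta)-p(\theta)\in\overline S_\theta$, where $\gamma_t$ traces $K_t$ and $p$ traces a strand of $\ell$ in the fiber; such a zero has index $\pm 1$ and is stable under any local modification that keeps each $K_t$ a section. Concretely, take $\overline S=S^2$, $\overline h=\op{id}$, $\ell=\{x_1,x_2\}\times S^1$, and let $K_0$ be the constant section at the south pole while $K_0'$ is a section whose trace in $S^2$ is a small loop encircling $x_1$ once. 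Then $[K_0]=[K_0']$ since $\pi_1(S^2)=0$, but no isotopy through transverse sections disjoint from $\ell$ exists because the trace of $K_0'$ represents a nontrivial class in $\pi_1(S^2\setminus\{x_1,x_2\})$. The two bordered models you obtain by drilling out $K_0$ versus $K_0'$ genuinely differ.

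The paper confronts this directly: it observes that the unavoidable crossing of $K_0$ through a strand of $\ell$ (an ``Alexander move'') has the effect on the bordered model of replacing $h$ by $\tau\circ h$, where $\tau$ sends one marked point $x_j$ along an arc to $\partial S$, once around $\partial S$, and back. The invariance $HH^f(S,n,{\bf a};\mathbb{b},h)\simeq HH^f(S,n,{\bf a};\mathbb{b},\tau\circ h)$ then follows from the \emph{full} wrapping, which makes the extra boundary--parallel twist invisible on the level of the wrapped bimodule. This is the missing ingredient in your argument; without it, the reduction to Theorem~\ref{thm: wrapped} does not go through.
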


Let us fix a reference transverse isotopy class $[K_0]$, where $K_0$ is the center of the solid torus $D\times[0,1]/\sim$ as above. Given any other transverse isotopy class $[K]$, we cut $K$ along $\overline{S}\times\{1\}$ and view it as a path $K^\dagger:[0,1]\to\overline{S}\times [0,1]$ from $\overline{S}\times\{0\}$ to $\overline{S}\times\{1\}$. If $\zeta$ is a path in $\overline{S}\times \{0\}$ from $0$ to $K^\dagger(0)$, then $h_*^{-1}(\zeta)$ is a path in $\overline{S}\times \{1\}$ from $0$ to $K^\dagger(1)$.  Writing $K^\ddagger$ for the projection of $K^\dagger$ to $\overline{S}$, we obtain a loop $\zeta K^\ddagger h_*^{-1}(\zeta^{-1})$ based at $0$. Since the loop $\zeta K^\ddagger h_*^{-1}(\zeta^{-1})$ depends on the choice of connecting path $\zeta$, the set of transverse isotopy classes $[K]$ is in bijection with 
$$\pi^{h_*}_1(\overline{S},0):=\pi_1(\overline{S},0)/ \langle a b a^{-1} h_*^{-1}(b^{-1})~|~ a,b\in \pi_1(\overline{S},0)\rangle.$$

Hence Theorem~\ref{thm: braid in fibration} can be rephrased as follows:

\begin{thm} \label{cor: invariant}
    The Hochschild homology $HH^f(S,n, {\bf a}; \mathbb{b},h)$ is an invariant of the transverse link $\ell$ with respect to the pair $(\pi: N_{(\overline{S},\overline{h})}\to S^1,[a])$, where $[a]\in\pi^{h_*}_1(\overline{S},0)$. 
\end{thm}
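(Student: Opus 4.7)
Theorem~\ref{cor: invariant} is the reformulation of Theorem~\ref{thm: braid in fibration} obtained by substituting the parameter $[a]\in\pi_1^{h_*}(\overline{S},0)$ for the transverse isotopy class $[K]$. The proof therefore comes in two parts: (i) verify that the map $\Psi\colon [K]\mapsto [\zeta K^\ddagger h_*^{-1}(\zeta^{-1})]$ described in the paragraph preceding the theorem statement descends to a well-defined bijection from transverse isotopy classes of knots meeting each fiber of $N_{(\overline{S},\overline{h})}$ transversely in one point onto $\pi_1^{h_*}(\overline{S},0)$; and (ii) transport Theorem~\ref{thm: braid in fibration} across this bijection.

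For (i), I would first verify $\zeta$-independence by a direct computation. Given two choices $\zeta,\zeta'$ with common endpoints $0$ and $K^\dagger(0)$, set $\alpha:=\zeta\cdot(\zeta')^{-1}\in\pi_1(\overline{S},0)$, so that $\zeta\simeq \alpha\zeta'$. Then
\[
\zeta K^\ddagger h_*^{-1}(\zeta^{-1}) \simeq \alpha\cdot\bigl(\zeta' K^\ddagger h_*^{-1}((\zeta')^{-1})\bigr)\cdot h_*^{-1}(\alpha^{-1}),
\]
which is exactly the $h_*$-twisted conjugation relation defining $\pi_1^{h_*}(\overline{S},0)$. Invariance under transverse isotopy then follows by lifting a transverse isotopy $K_s$ of $K$ to a continuous family of cut paths $K_s^\ddagger$ whose left endpoints $p_s:=K_s^\dagger(0)$ vary in $\overline{S}$, letting $\zeta_s$ track $p_s$, and observing that the resulting family of loops is a based homotopy in $\pi_1(\overline{S},0)$. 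For the inverse, given any $[a]\in\pi_1^{h_*}(\overline{S},0)$ I would realize a representative loop $\alpha$ at $0$ as $K^\ddagger$ for some section of $\pi$ (using that $\alpha$ is based at $0$ and $\overline{h}(0)=0$); surjectivity is then immediate, and injectivity reduces to showing that a smooth homotopy of $K^\ddagger$'s through twisted conjugates can be lifted to a transverse isotopy in $N_{(\overline{S},\overline{h})}$.

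For (ii), Theorem~\ref{thm: braid in fibration} asserts that $HH^f(S,n,\mathbf{a};\mathbb{b},h)$ depends only on $\ell$ and on the pair $(\pi,[K])$. Translating via the bijection $[K]\leftrightarrow[a]$ then yields dependence on $(\pi,[a])$, which is the assertion of Theorem~\ref{cor: invariant}.

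The main technical obstacle is the injectivity part of (i): promoting a loop-level equivalence (in $\overline{S}$, through twisted conjugates) to an actual transverse isotopy of sections of $\pi$. I expect this to follow from a parametric transversality argument showing that the inclusion of the space of transverse-to-fiber sections into the space of all smooth sections is a weak equivalence, so that their $\pi_0$'s agree and coincide with the set of $h_*$-twisted conjugacy classes in $\pi_1(\overline{S},0)$. Once this is established, the rest of the proof is a formal consequence of Theorem~\ref{thm: braid in fibration}.
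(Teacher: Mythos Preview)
Your proposal is correct and follows exactly the approach the paper takes: the paper simply states the bijection between transverse isotopy classes $[K]$ and $\pi_1^{h_*}(\overline{S},0)$ in the paragraph preceding the theorem and then declares that ``Theorem~\ref{thm: braid in fibration} can be rephrased as follows,'' without further argument. Your write-up actually supplies more detail on the bijection (the $\zeta$-independence computation and the injectivity/surjectivity discussion) than the paper does, which leaves these verifications entirely to the reader.
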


\begin{conj}
    The transverse link invariant is a genuine link invariant of a fibration together with $[K]$.
\end{conj}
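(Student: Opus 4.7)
The plan is to reduce the question of smooth isotopy invariance of transverse links in $N_{(\overline{S},\overline{h})}$ to a finite list of elementary moves, following the philosophy of the transverse Markov theorem (Orevkov--Shevchishin, Wrinkle) in the classical setting $S=D^2$. First I would establish a Markov-type theorem for braids in fibered $3$-manifolds: two transverse braids $(\mathbb{b}_1,h_1)$ and $(\mathbb{b}_2,h_2)$ in $N_{(\overline{S},\overline{h})}$ represent the same smooth link (preserving the fibration and class $[K]$) if and only if they differ by a sequence of (i) braid conjugation compatible with $h$, (ii) $h$-conjugation of the monodromy, and (iii) positive transverse stabilization and destabilization, where stabilization adds an extra strand implementing a positive half-twist near some point of $\overline{S}$. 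Step (i) is built into our setup, while (ii) should be handled via Morita invariance of the wrapped HDHF bimodule (Theorem~\ref{thm: invariance under handleslides}) combined with Theorem~\ref{thm: invariance}.

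The heart of the proof is then invariance under positive stabilization. Concretely, if $\ell'$ is obtained from $\ell$ by adding a strand that goes once around a small loop near a point $x_{n+1}\in\op{int}(\overline{S})\setminus {\bf x}$, one needs to show
$$HH^f(S,n+1,{\bf a}';\mathbb{b}',h)\;\cong\; HH^f(S,n,{\bf a};\mathbb{b},h).$$
The geometric content is that the stabilization amounts to enlarging the Lefschetz fibration $W_{S,n}\to S$ to $W_{S,n+1}\to S$ by adding one extra critical value at $x_{n+1}$, adjoining a thimble $L_{n+1}$ over a new half-arc $a_{n+1}$ from $x_{n+1}$ to $\bdry S\setminus\tau$, and modifying $h_{\mathbb{b}}$ by a Dehn twist along the boundary of a neighborhood of $L_{n+1}$. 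The argument I envision computes the Hochschild complex of the enlarged bimodule via a filtration by the number of $L_{n+1}$ endpoints of generators and shows that, after choosing $a_{n+1}$ sufficiently short, the subcomplex counting generators with at least one $L_{n+1}$ endpoint is acyclic: the new Dehn twist makes the relevant Floer strip cancellation explicit, analogous to the proof of invariance of $Kh^\sharp$ under Reidemeister I in \cite{CHT}. The remaining quotient complex is canonically identified with the unstabilized Hochschild complex.

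Finally, I would handle the dependence on the fibration itself. Two fibrations $\pi_1,\pi_2:M\to S^1$ lying in the same fibered face of the Thurston norm ball and carrying the same class $[K]$ are related by a sequence of (generalized) Stallings twists and isotopies of the fiber surface; each such move can be realized at the level of the Weinstein model $W_{S,n}$ by a deformation covered by Theorems~\ref{thm: invariance under handleslides} and~\ref{thm: invariance}. Together with the conjugation invariance of step (ii), this reduces the fibration-dependence to data intrinsic to the pair $(\pi,[K])$.

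The main obstacle will be proving invariance under positive stabilization: this is exactly where the passage from transverse to smooth links becomes delicate. In the symplectic Khovanov story for $S=D^2$, the analog is known only for positive (not negative) stabilization, and extracting the correct vanishing statement requires fine control over the Floer differentials emanating from the newly adjoined thimble. I expect this step to require a neck-stretching argument, or a deformation of the Lefschetz fibration through a generic $1$-parameter family of critical value configurations, together with an explicit computation showing that the contribution of holomorphic curves passing through the new critical region vanishes at the level of Hochschild homology. Identifying the precise class of transverse stabilizations for which this holds, and showing that these suffice to generate smooth isotopy modulo transverse isotopy, is the crucial geometric input the conjecture demands.
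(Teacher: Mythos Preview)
The statement you are addressing is a \emph{conjecture}; the paper does not prove it. Immediately after stating it, the authors write: ``To show this we need to introduce cup- and cap-bimodules and show bimodule equivalences corresponding to standard moves. This will be left for future work.'' So there is no proof in the paper to compare against, only a suggested strategy, and that strategy (decompose a generic link into elementary tangles via cup/cap cobordisms, then verify the relevant bimodule relations) is different from your Markov-theorem route.

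There is a genuine gap in your proposal. The difference between a \emph{transverse} link invariant and a \emph{genuine} link invariant, in the Markov framework you invoke, is precisely invariance under \emph{negative} stabilization: by the transverse Markov theorem, conjugation together with positive stabilization/destabilization already generates transverse isotopy, and Theorem~\ref{thm: braid in fibration} has established that much. Your list of moves (i)--(iii) includes only positive stabilization, so carrying out your program exactly as written would re-prove the transverse invariance the paper already has, not the conjecture. You tacitly acknowledge this when you note that ``the analog is known only for positive (not negative) stabilization''---but that observation is fatal to the plan rather than a minor obstacle, since negative stabilization is the entire content of what remains. Your filtration/acyclicity sketch is tailored to the positive case (a positive Dehn twist near the new thimble), and there is no reason to expect the analogous cancellation for a negative twist; indeed, for many transverse-type invariants it demonstrably fails.

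A smaller point: your final paragraph on varying the fibration goes beyond the conjecture as stated. The conjecture fixes the fibration $\pi$ and the class $[K]$ as part of the data; it asks only that the dependence on $\ell$ be through its smooth isotopy class rather than its transverse isotopy class.
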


To show this we need to introduce cup- and cap-bimodules and show bimodule equivalences corresponding to standard moves.  This will be left for future work.

\begin{rmk} 
Theorem~\ref{cor: invariant} extends to the case where $\partial S$ is not connected and we cap all the boundary components of $S$ by disks. In this situation, $K$ is a link corresponding to the mapping torus of the centers of the disks.
\end{rmk}

\begin{rmk}
These constructions seem to fit some of the expectations of Witten~\cite[p.11, 2nd paragraph]{Wi3}, namely there should be a Khovanov-type $3$-manifold invariant for a $3$-mani\-fold $M$ together with a representation $\rho: \pi_1(\R \times M)\to SL(2,\C)$. In the case of a fibration $\pi:M\to S^1$ with monodromy $h:S\stackrel\sim\to S$, the group $\pi_1(\R\times M)\simeq \pi_1(M)$ is an HNN-extension of $\pi_1 (S)$ by $h_*$. Two different choices of transverse knots $K_0$ and $K$ that intersect each fiber exactly once give rise to an element $[a]\in \pi^{h_*}_1(\overline{S},0)$ as above. 
We thus have a collection of representations associated with every $[K]$ given by the composition
$$\pi_1 (\R\times M)\stackrel{\pi_*} \to \pi_1 (S^1)\simeq \langle[ K]\rangle \to \langle a \rangle \subset PSL_2(\R)=SL_2(\R)/\{\pm \op{id}\},$$
where $a$ is a representative of $[a]$.
It can be lifted to $\rho :\pi_1 (\R\times M) \to SL_2(\R) \subset SL_2(\C)$ by choosing a lift of $a$ in $SL_2(\R)$ and extending the map to the subgroup it generates.
\end{rmk}

\subsection{Combinatorial description} \label{subsection: combinatorial}

The Hochschild homology invariants of links constructed in Part 1 have the defect of being difficult to compute. In Part 2 of this paper we take the first steps towards computing the above invariants. In particular, we present conjectural combinatorial descriptions of the surface categories of that appear in Part 1.  We present some justification for the combinatorial descriptions, but at this moment the equivalence with the geometric description from Part 1 should be viewed as being {\em entirely conjectural.}  It relies in particular on the fact that one could get genus bounds on holomorphic curves and set $\hbar=1$.

More specifically, we give (conjectural) combinatorial descriptions of the partially wrapped $\ai$-algebras $R^p(S,n,{\bf a})$ in (\ref{eq surface alg}) in terms of dgas $R(S,n,{\bf a})$. Our dgas are higher-dimensional analogs of the {\em strands algebras} in bordered Heegaard Floer homology, due to Lipshitz-Ozsv\'ath-Thurston \cite{LOT}.

\subsubsection{The local model} 

Khovanov used the local features of the strands algebra to construct a simplified version of the strands algebra called the {\em LOT dga} \cite{Kh2}.  We first define a family of dgas $\rk$ for $k \ge 0$, which are higher-dimensional analogs of these LOT dgas; see Definition \ref{def Rk}. The corresponding surface is a disk with no singular points and with a stop consisting of two points on the boundary, i.e., $S = D^2, n = 0, |\tau| = 2$, and $W_{S,n}=D^2\times A=D^2\times S^1\times[-1,1]$. Let $a\subset D^2$ be an arc such that each component of $D^2\setminus a$ nontrivially intersects $\tau$; see Figure \ref{fig algn7}. Let $L\subset W_{S,n}$ be the Lagrangian $a\times S^1\times\{0\}$. Then  $\rk$ is the conjectural algebraic description of $\End(L_k)$, where $L_k$ is the collection of Lagrangian cylinders sitting above $k$ parallel copies of $a$.

\begin{figure}[ht]
\begin{overpic}
[scale=0.5]{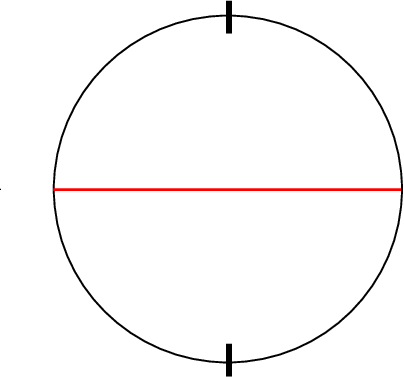}
\end{overpic}
\caption{The red arc is $a$, the projection of the Lagrangian to the base $D^2$, and the two marks on the boundary form the stop.}
\label{fig algn7}
\end{figure}

We will discuss the algebraic properties of the dga $\rk$ in Section \ref{sec local}. In particular, we prove that its cohomology algebra $H(\rk)$ is isomorphic to an exterior algebra and conjecture that $\rk$ is formal. 

There is an additional filtration on $\rk$, and the associated $q$-graded dga is denoted by $\rkkh$. 
It turns out that $\rkkh$ is naturally connected to the nilHecke algebra $\nhk$ and categorified quantum $\mathfrak{sl}_2$.
Recall that Lauda \cite{La} gave a diagrammatic categorification of $U_q\mathfrak{sl}_2$ and in particular there is an additive monoidal category $\upl$ which categorifies the positive half of $U_q\mathfrak{sl}_2$. 
It has objects $\E^{\otimes k}$ for $k \ge 0$ such that $\End(\E^{\otimes k})=\nhk$. Let $\dgupl$ be the dg category of complexes over $\upl$; it has a natural monoidal structure induced from $\upl$. 
Define a two-term complex:
\begin{equation} \label{def le1}
\lle:=(\E \xra{x_1} \E),
\end{equation}   
where the two copies of $\E$ are in cohomological degrees $0$ and $1$, and the differential is given by $x_1 \in \End(\E)$. 
Let $\End(\lek)$ denote the endomorphism algebra of $\lek$ in $\dgupl$ which is a $q$-graded dga. 
There is a natural action of $\rkkh$ on $\lek$ via an inclusion of $q$-graded dgas 
$$\rho_k: \rkkh \hookrightarrow \End(\lek).$$ 
Let $\F$ denote the ground field. Here is one of our main results.

\begin{thm}\label{thm rkkh dual}
Assuming $\op{char}(\F) \neq 2$, the inclusion $\rho_k: \rkkh \hookrightarrow \End(\lek)$ is a quasi-iso\-morphism of $q$-graded dgas. 
Moreover, $\rkkh$ is formal, i.e., it is quasi-isomorphic to $H(\rkkh)$ which is an exterior algebra on $k$ generators.  
\end{thm}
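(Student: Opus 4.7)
The plan is to compute the cohomology of both sides, identify them via $\rho_k$, and then deduce formality. I would first compute $H(\End(\lek))$ using the Koszul-type structure of $\lle = (\E \xra{x_1} \E)$. Since $\End(\E^{\otimes k}) = \nhk$ contains the polynomial ring $\polk = \F[x_1,\dots,x_k]$ as the subalgebra of multiplication operators, and the differential on $\lek = \lle^{\otimes k}$ encodes the regular sequence $x_1,\dots,x_k$ in a Koszul fashion, the endomorphism complex $\End(\lek)$ computes the derived endomorphisms in the appropriate quotient. A direct analysis using this Koszul resolution should show that $H(\End(\lek))$ is an exterior algebra $\Lambda(\xi_1,\dots,\xi_k)$ on $k$ generators of cohomological degree $1$, with a prescribed $q$-degree matching the expected answer.

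In parallel, I would compute $H(\rkkh)$ from the combinatorial definition of $\rk$ (Definition~\ref{def Rk}). The dga has generators naturally associated to the $k$ parallel copies of the arc $a$, and its differential combinatorially resolves pairs of strands. I would identify $k$ explicit cocycles $\eta_1,\dots,\eta_k \in \rkkh$, verify at the cohomology level that $\eta_i^2 = 0$ and $\eta_i\eta_j = -\eta_j\eta_i$, and use a Poincar\'e series count (graded by both cohomological degree and $q$-degree) to confirm that they generate $H(\rkkh)$. Since $\rho_k$ is injective on chains, it then suffices to check that $\rho_k(\eta_i)$ represents $\xi_i$ in $H(\End(\lek))$ up to coboundary, which amounts to an explicit formula check comparing the image of $\eta_i$ under $\rho_k$ with the Koszul generator in the target.

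For formality, once $H(\rkkh)$ is identified as an exterior algebra on $k$ generators in characteristic $\neq 2$, I would either invoke intrinsic formality of exterior algebras via Kadeishvili transfer (verifying that all higher Massey products vanish, which follows from the freeness of the cohomology algebra and the grading) or, more concretely, construct strictly anti-commuting cocycle lifts of $\eta_i$ in $\rkkh$ and build a strict dga map $H(\rkkh) \to \rkkh$ splitting the projection. The hypothesis $\op{char}(\F) \neq 2$ enters precisely here, to allow the anti-symmetrization needed either for Massey product vanishing or for constructing strictly anti-commuting representatives.

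The main obstacle is the cohomology computation of $H(\End(\lek))$: one must disentangle the interplay between the nilHecke divided-difference operators and the Koszul differentials induced by the $x_i$'s, and extract a clean exterior algebra on precisely the correct generators with the correct $q$-degree shifts. A secondary difficulty is matching the $q$-grading on both sides, since the differential on $\lle$ carries a specific $q$-shift that must be consistent across the identification; accurately tracking this through the tensor product and the passage to endomorphisms is the most delicate bookkeeping in the argument.
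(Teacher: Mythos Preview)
Your overall strategy---compute both cohomologies, match them via $\rho_k$, deduce formality---matches the paper's. But the execution of the first step has a real gap, and the paper's resolution of it is the heart of the proof.

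You propose to compute $H(\End(\lek))$ by treating the differential as encoding the regular sequence $x_1,\dots,x_k$ in $\polk$, expecting an exterior algebra $\Lambda(\xi_1,\dots,\xi_k)$. This is not quite right: the ambient algebra is $\nhk$, not $\polk$, and naive Koszul reasoning over $\polk$ would give $k$ generators all of $q$-degree $-2$, which does \emph{not} match $H(\rkkh)$, whose generators $[h_{k,i}]$ have $q$-degrees $-2,-4,\dots,-2k$ (Proposition~\ref{prop H(Rkkh)}). The paper's key move is Lauda's Morita equivalence $\nhk \cong \End_{\smk}(\polk)$, which yields $\End(\lek) \cong \End_{\cal{DG}(\smk)}(M_k)$ where $M_k$ is the Koszul resolution of the trivial module $\F$ over the \emph{symmetric} polynomial ring $\smk=\F[e_1,\dots,e_k]$. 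Koszul duality for $\smk$ then gives $H(\End(\lek)) \cong \lk(e_1^*,\dots,e_k^*)$ with $\deg_q(e_i^*)=-2i$, exactly matching the source. You correctly flag this computation as the main obstacle, but you do not identify the Morita reduction that solves it.

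Two further points where the paper is more efficient. For the matching, rather than checking $\rho_k(\eta_i)$ generator-by-generator, the paper observes that both sides are exterior algebras of the same bigraded dimension and that the lowest $q$-degree piece (degree $-k(k+1)$) is one-dimensional on each side; checking that $\rho_k(\xi_1\cdots\xi_k s_{w_0})\neq 0$ then shows $(\rho_k)_*$ preserves the volume form, hence is an isomorphism. For formality, your proposed routes (intrinsic formality of exterior algebras, or strictly anti-commuting lifts) are delicate---exterior algebras are not intrinsically formal without extra grading input, and constructing strict lifts is essentially Conjecture~\ref{conj Rk formal}. The paper instead gets formality for free: once $\End(\lek)$ is identified with the endomorphism dga of a Koszul resolution over a polynomial ring, it is formal by standard Koszul duality, and formality transfers to $\rkkh$ along the quasi-isomorphism $\rho_k$.
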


The object $\lek$ can be viewed as a ``Koszul dual'' of $\E^{\otimes k}$ in the categorified quantum $\mathfrak{sl}_2$. 
Together with the conjectural isomorphism $\rkkh \cong \End(L_k)$, Theorem \ref{thm rkkh dual} indicates that the Lagrangian $L_k$ is a geometric representative of the Koszul dual of $\E^{\otimes k}$. 


\subsubsection{$A_n$-singularity case}

In Section \ref{sec An} we consider the case $S = D^2$, $n>0$,  $|\tau| = 1$, which is the case most directly related to symplectic Khovanov homology.

We define dgas $\rnk$ and their associated $q$-graded versions $\rnkkh$, for $0 \le k \le n$. As $\rkkh$ is related to the nilHecke algebra $\nhk$ and the categorified quantum $\mathfrak{sl}_2$, $\rnkkh$ is expected to be related to the categorification of the tensor product representation $V^{\ot n}$ of quantum $\mathfrak{sl}_2$, where $V$ is the fundamental representation of $\mathfrak{sl}_2$. 

Let $\cnk$ be the bounded derived category of finitely generated $q$-graded right dg $\rnkkh$ modules. Then we have:

\begin{thm}\label{thm K0 cnk}
	There is an isomorphism of free $\mathbb{Z}[q,q^{-1}]$-modules: $\oplus_{0 \le k \le n} K_0(\cnk) \cong V^{\ot n}$, such that the classes of projective modules are mapped to the tensor basis of $V^{\ot n}$.
\end{thm}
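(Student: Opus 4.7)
The plan is to exhibit $\binom{n}{k}$ natural indecomposable projective dg modules over $\rnkkh$ whose classes give a $\zq$-basis of $K_0(\cnk)$, and then identify this basis with the tensor basis of $V^{\otimes n}$. Observe first that for $S=D^2$ with $|\tau|=1$ we have $s=2g(S)=0$, so a parametrization ${\bf a}$ consists solely of $n$ half-arcs $a_1,\dots,a_n$, and each partition $\mathbf{m}=(m_1,\dots,m_n)$ with $|\mathbf{m}|=k$ satisfies $m_i\in\{0,1\}$. Such partitions are in bijection with $k$-element subsets $I\subseteq\{1,\dots,n\}$. From the definition of $\rnkkh$ I would extract a complete system of orthogonal primitive idempotents $\{e_I\}$ indexed by such $I$ with $\sum_I e_I=\mb$, and set $P_I:=e_I\rnkkh$. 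These give $\binom{n}{k}$ pairwise nonisomorphic indecomposable projective $q$-graded dg right modules.

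To compute $K_0(\cnk)$ I would show that $\rnkkh$ is ``nice enough'' (formal, or at least quasi-isomorphic to a positively graded dga with $H^0$ semisimple at each $e_I$) that the standard machinery for derived categories of positively graded dg algebras applies. Under such hypotheses the $P_I$ together with their $q$-shifts $q^jP_I$ provide a $\Z$-basis of $K_0(\cnk)$, making $K_0(\cnk)$ free over $\zq$ of rank $\binom{n}{k}$ on the classes $\{[P_I]\}$. In the local case $n=0$ this is exactly what Theorem~\ref{thm rkkh dual} delivers via formality and the exterior-algebra presentation of $H(\rkkh)$; for general $n$ the expectation is that the analogous combinatorial strand-type description of $\rnkkh$ yields the same positivity and formality properties.

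With the basis in hand, the identification with $V^{\otimes n}$ is direct. To each $k$-subset $I\subseteq\{1,\dots,n\}$ associate the pure tensor $v_I:=v_{\epsilon_1}\otimes\cdots\otimes v_{\epsilon_n}\in V^{\otimes n}$, where $v_{\epsilon_i}=v_-$ if $i\in I$ and $v_{\epsilon_i}=v_+$ otherwise; these lie in the weight-$(n-2k)$ subspace and form a $\zq$-basis of it. Define $\Phi_k: K_0(\cnk)\to V^{\otimes n}$ by $[P_I]\mapsto v_I$, and take the direct sum over $0\le k\le n$. By construction this is a $\zq$-module isomorphism sending the classes of the indecomposable projectives to the tensor basis, proving the theorem.

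The main obstacle is the middle step: establishing the freeness of $K_0(\cnk)$ on the $[P_I]$ without hidden $\zq$-linear relations. This hinges on structural results about $\rnkkh$ (positivity of the cohomological grading, formality, or an explicit quasi-isomorphism to a tractable model) that go beyond the local input of Theorem~\ref{thm rkkh dual}. I expect the argument to proceed by describing $\rnkkh$ in terms of its local pieces $\rkkh$ glued along the $n$ half-arcs, together with a Koszul-duality identification relating $\cnk$ to a Webster-style categorification of $V^{\otimes n}$; this is also the natural framework for confirming that the $[P_I]$ map precisely to the tensor basis rather than to some mixed basis.
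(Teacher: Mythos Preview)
Your overall architecture is right and matches the paper's: idempotents $\mb_S$ indexed by $S\in\pnk$, projectives $P(S)=\mb_S\cdot\rnkkh$, freeness of $K_0(\cnk)$ on the $[P(S)]$, and the bijection $[P(S)]\mapsto v(S)$. The paper's proof is literally a one-liner citing Proposition~\ref{prop K0 rnk}, which records exactly this freeness statement.

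Where you diverge is the ``middle step.'' You treat it as the hard part and reach for heavy tools: formality, Koszul duality, gluing from local pieces $\rkkh$, comparison with a Webster-style categorification. None of this is needed, and one of these routes is actually blocked: the paper proves (Proposition~\ref{prop rnk non formal}) that $\rnkkh$ is \emph{not} formal for $2\le k\le n-2$, so leaning on formality would fail. The paper's argument is instead completely elementary, drawn directly from the combinatorial definition of $\rnkkh$:
\begin{itemize}
\item $\rnkkh$ is finite-dimensional;
\item the cohomological grading $\deg(S,T,\phi,D)=|D|$ is nonnegative;
\item the quiver has no loops, since $(S,S)=\mb_S\cdot\rnkkh\cdot\mb_S$ is one-dimensional, so each simple $L(S)$ is one-dimensional;
\item hence every simple has a finite projective resolution, the $P(S)$ generate $\cnk$, and the pairing $[P(S)]\mapsto\dim\Hom(P(S),L(T))$ shows the $[P(S)]$ are $\zq$-linearly independent.
\end{itemize}
That is the entire content of Proposition~\ref{prop K0 rnk}. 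No gluing and no appeal to the local model are required; the global dga already has the needed structure by inspection.

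So your proposal is not wrong in outline, but you have misidentified where the difficulty lies. The option you mention in passing (``positively graded dga with $H^0$ semisimple at each $e_I$'') is essentially the correct one, and it holds on the nose rather than after a quasi-isomorphism. A minor point: in the paper's convention $i\in S$ corresponds to $v_1$, not $v_{-1}$, the opposite of your $v_I$; this is harmless for the isomorphism statement but worth aligning.
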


We then construct two bimodules $\ech, \fch$ over $\oplus_{0 \le k \le n} \rnkkh$. They induce functors (still denoted $\ech, \fch$) between the $\cnk$. The functors $\ech, \fch$ descend to linear maps $$K_0(\ech), K_0(\fch): \oplus_{0 \le k \le n} K_0(\cnk) \ra \oplus_{0 \le k \le n} K_0(\cnk).$$

\begin{thm}\label{thm K0 ef}
	Under the isomorphism $\oplus_{0 \le k \le n} K_0(\cnk) \cong V^{\ot n}$, 
	$$K_0(\ech)=(q-q^{-1})E, \quad K_0(\fch)=(q-q^{-1})F,$$ where $E, F$ denote the action of quantum $\mathfrak{sl}_2$ on $V^{\ot n}$.
\end{thm}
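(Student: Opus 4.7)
The plan is to verify both identities on a basis of $\oplus_k K_0(\cnk)$ that matches the tensor basis of $V^{\ot n}$. By Theorem~\ref{thm K0 cnk}, such a basis is given by the classes of indecomposable graded projective $\rnkkh$-modules; label these $P_I$ for $I\subset\{1,\dots,n\}$ with $|I|=k$ so that $[P_I]\leftrightarrow v_I:=v_{\epsilon_1}\ot\cdots\ot v_{\epsilon_n}$ with $\epsilon_i=-$ iff $i\in I$. Since $\ech$ should model $E$ and raise the weight by one, it produces a functor $\cnk\to\cnkm$, and the problem reduces to computing $[\ech\ot^{\mathbb{L}}_{\rnkkh} P_I]$ in $K_0(\cnkm)$ for each $I$; symmetrically for $\fch$.

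First I would unpack the structure of the bimodules $\ech$ and $\fch$ constructed in Section~\ref{sec An}. By design each is a two-term complex of $\rnkkh$-bimodules modelled on $\lle=(\mathcal{E}\xra{x_1}\mathcal{E})$ from (\ref{def le1}), with internal differential of $q$-degree $2$. The graded Euler characteristic of such a two-term complex in $K_0$ is the scalar $(q-q^{-1})$; this accounts for the prefactor in the statement, and the remaining content is to show that the underlying (bare) bimodule categorifies the classical $E$ or $F$ action on $V^{\ot n}$.

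Second I would carry out the key explicit computation. For each $I$, I would identify $\ech\ot^{\mathbb{L}}_{\rnkkh} P_I$ as a finite complex of projective $\rnkm$-modules. Geometrically, tensoring with $\ech$ acts like removing or merging a strand, and so this complex should decompose as a direct sum indexed by the positions $i\in I$ at which $v_{\epsilon_i}=v_-$; the $i$th summand contributes $q^{\alpha_i(I)}[P_{I\setminus\{i\}}]$, where $\alpha_i(I)$ is the $q$-exponent dictated by the comultiplication convention on $V^{\ot n}$ used in Theorem~\ref{thm K0 cnk} (e.g.\ $\alpha_i(I)=\sum_{j<i}\epsilon_j$ for $\Delta(E)=E\ot 1+K\ot E$). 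Combining with the $(q-q^{-1})$ from the two-term structure yields
\[
[\ech\ot^{\mathbb{L}}_{\rnkkh} P_I]=(q-q^{-1})\sum_{i\in I}q^{\alpha_i(I)}[P_{I\setminus\{i\}}],
\]
which, under the isomorphism of Theorem~\ref{thm K0 cnk}, is exactly $(q-q^{-1})E(v_I)$. The case of $\fch$ is symmetric: $\fch\ot^{\mathbb{L}}_{\rnkkh} P_I$ decomposes over the positions $i\notin I$ and reproduces $(q-q^{-1})F(v_I)$.

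The hardest step will be the explicit identification of $\ech\ot^{\mathbb{L}} P_I$ as a complex of projectives with the correct $q$-shifts, since this requires a detailed understanding of the strands-like multiplication in $\rnkkh$. An efficient way to organize this is to exploit Theorem~\ref{thm rkkh dual}, which realizes the local algebra $\rkkh$ as the endomorphisms of $\lek$ in $\dgupl$; pulling back the bimodule $\ech$ along this identification locally reduces its action on $P_I$ to the standard categorified $\mathfrak{sl}_2$-action on tensor products built out of $\lle$, where the required shifts and coefficients are classical, and then one has only to patch the local pieces together compatibly with the global $A_n$-combinatorics of $\rnkkh$.
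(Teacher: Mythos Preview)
Your high-level strategy---compute $\ech\otimes P(S)$ and $\fch\otimes P(S)$ as explicit complexes of projectives and read off the class in $K_0$---is exactly what the paper does. But several of your structural claims are wrong and would derail the execution.

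The bimodules $\ech$ and $\fch$ are \emph{not} two-term complexes modelled on $\lle$. They are defined (see the paragraph preceding Figure~\ref{fig algn1}) as idempotent truncations of the larger dgas $R(n{+}1;k{+}1)^{\op{nil}}$ and $R(n{+}1;k)^{\op{nil}}$: diagrammatically, $\ech$ consists of strand diagrams with one extra strand emanating from a new bottom position, and $\fch$ with an extra strand ending at a new top position. The factor $(q-q^{-1})$ does not come from the bimodule itself; it appears only after tensoring with a projective. Theorems~\ref{thm tensor f} and~\ref{thm tensor e} show that $P(S)\otimes\fch$ and $P(S)\otimes\ech$ are (quasi-isomorphic to) complexes of the form
\[
\bigoplus_i\bigl(P(\ast_i(S))\{m_i{+}1\}\oplus P(\ast_i(S))[-1]\{m_i{-}1\}\bigr),
\]
where the two copies of each summand arise because the extra strand may or may not carry a dot (the generators $r_i,r_i^+$ in the proof of Theorem~\ref{thm tensor f}). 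Euler characteristic then gives $(q-q^{-1})q^{m_i}$ per summand, and Proposition~\ref{prop ef} matches $\sum_i q^{m_i}v(\ast_i(S))$ with $E\cdot v(S)$ or $F\cdot v(S)$. Your conventions are also reversed: in the paper $S\in\pnk$ records the positions of $v_1$, so $\ech$ is a $(k,k{+}1)$-bimodule inducing $\cnk\to\cnkp$, not $\cnk\to\cnkm$; correspondingly $\ech$ \emph{adds} a strand (Figure~\ref{fig algn6}), not removes one.

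Finally, the proposed detour through Theorem~\ref{thm rkkh dual} is neither needed nor available: that result is about the local algebra $\rkkh$, and the analogous statement for $\rnkkh$ is precisely Conjecture~\ref{conj rnk ext}, which is open. The paper instead does the tensor-product computation by direct strands-algebra manipulations; for $\fch$ this is a one-step calculation (Theorem~\ref{thm tensor f}), while for $\ech$ it requires an induction on $n$ via the short exact sequences of Lemmas~\ref{lem e action0} and~\ref{lem e action1}.
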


The complex $\cal{E} \xra{x_1} \cal{E}$ in Equation \eqref{def le1} is the algebraic presentation of the Lagrangian $L$ in the local model. The categorical action of $\ech$ on geometric side is given by ``adding $L$'', i.e., by adding a Lagrangian of the form arc $a\subset D^2$ times the zero section of $T^*S^1$, where $a$ is an arc that is close to and straddles the stop $\tau$ as given in Figure~\ref{fig algn6}.  
This is compatible with the fact that the action of $\ech$ commutes with the action induced by any braid since the braid acts in the interior of the disk.  
Dually, the categorical action of $\fch$ can be understood as ``contracting $L$''.

\begin{figure}[ht]
\begin{overpic}
[scale=0.5]{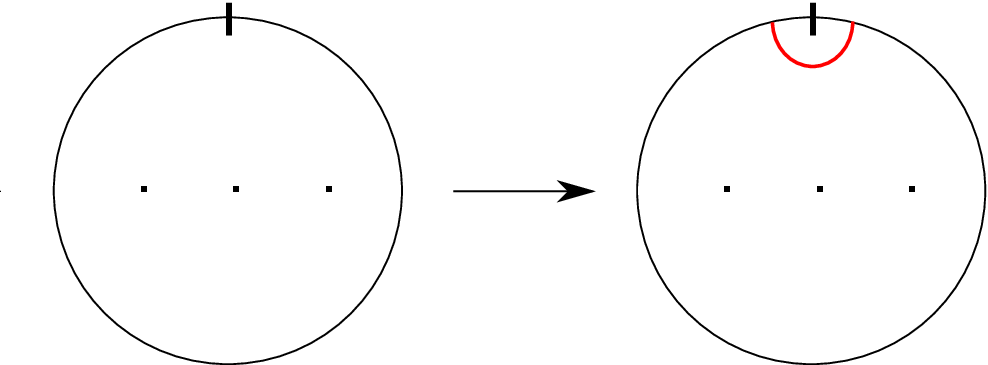}
\put(48,20){$\ech$}
\end{overpic}
\caption{The categorical action of $\ech$ given by adding a Lagrangian near the stop. The red arc is the projection of the Lagrangian to the base $D^2$.}
\label{fig algn6}
\end{figure}

Let $\cal{O}^{k, n-k}$ denote the parabolic subcategory of a regular block of $\cal{O}(\mathfrak{gl}_n)$, corresponding to the parabolic subalgebra of $\mathfrak{gl}_n$ that contains $\mathfrak{gl}_k \oplus \mathfrak{gl}_{n-k}$. Bernstein-Frenkel-Khovanov~\cite{BFK} showed that $\cal{O}^n=\oplus_{0 \le k \le n}\cal{O}^{k, n-k}$ categorifies $V^{\ot n}$ as a representation of $\mathfrak{sl}_2$ and that the Verma modules in $\cal{O}^n$ descend to the tensor basis of $V^{\ot n}$. Stroppel \cite{St} extended this result to the graded version. 


\begin{conj} \label{conj rnk ext} $\mbox{}$
	\be
	\item The category $\cnk$ is equivalent to the derived category $\cal{O}^{k, n-k}$. Under the equivalence, the projective modules are mapped to the Verma modules.   
	Moreover, the dga $\rnkkh$ is quasi-isomorphic to the $\ai$-Ext algebra of the Verma modules.
	
	\item The functors $\ech$ and $\fch$ are equivalent to the action of complexes $\cal{E}\{1\} \xra{x_1} \cal{E}\{-1\}$ and $\cal{F}\{1\} \xra{x_1} \cal{F}\{-1\}$, respectively, where $\cal{E}$ and $\cal{F}$ refer to the categorified quantum $\mathfrak{sl}_2$ action on the categories  $\cal{O}^{k, n-k}$.
	\ee
\end{conj}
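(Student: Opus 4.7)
The plan is to reduce the conjecture to known results on graded parabolic category $\cal{O}^{k,n-k}$ and its categorified quantum $\mathfrak{sl}_2$-action, via Koszul duality. By Stroppel's graded lift and Beilinson-Ginzburg-Soergel Koszul duality for parabolic blocks, the derived category of $\cal{O}^{k,n-k}$ admits an explicit presentation whose quadratic dual is closely related to Khovanov's arc algebra $H^{k,n-k}$. My strategy is to identify $\rnkkh$ directly with the $\ai$-Ext algebra of the parabolic Verma modules, and then to transport $\ech,\fch$ to the bimodules representing Zuckerman/translation functors under this identification.

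For part (1), I would first compute the cohomology $H(\rnkkh)$ by extending the methods of Theorem~\ref{thm rkkh dual} to the case $n>0$. This should yield an algebra whose $q$-graded Poincar\'e series matches $\op{Ext}^*_{\cal{O}^{k,n-k}}(\Delta,\Delta)$, where $\Delta$ is the sum of parabolic Vermas, as computed by Mazorchuk-Stroppel. Theorem~\ref{thm K0 cnk} already pins down the bijection on Grothendieck groups (projective dg $\rnkkh$-modules corresponding to parabolic Vermas); I would upgrade this to a derived equivalence by exhibiting these projectives as a full exceptional collection generating $\cnk$, checking the Ext quiver matches that of the Vermas, and then invoking a formality result of Koszul type (see the next paragraph).

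For part (2), once (1) is in place, Theorem~\ref{thm K0 ef} guarantees that $\ech$ and $\fch$ induce the same $K_0$-maps as the two-term complexes $\cal{E}\{1\}\xra{x_1}\cal{E}\{-1\}$ and $\cal{F}\{1\}\xra{x_1}\cal{F}\{-1\}$ attached to the categorified $\mathfrak{sl}_2$-action. To lift this match to the categorical level, I would construct explicit bimodule morphisms using the local-model description: the ``adding an arc near a stop'' interpretation of $\ech$ should realize the $\rkkh$-action of Theorem~\ref{thm rkkh dual} as a localization of the $\rnkkh$-bimodule structure, giving the base case ($n=0$). An induction on $n$ combined with cone-of-map arguments should then propagate the equivalence globally, using that adjacent generators $\ech,\fch$ together with the $\mathfrak{sl}_2$-relations (verified on $K_0$) determine the functors up to quasi-isomorphism on a generating exceptional collection.

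The main obstacle will be establishing formality of $\rnkkh$ and controlling the full $\ai$-structure rather than just its cohomology. Intrinsic formality of $\op{Ext}^*_{\cal{O}^{k,n-k}}(\Delta,\Delta)$ is expected via purity/weight arguments following from Koszulity of $\cal{O}$, but one must still produce an explicit $\ai$-quasi-isomorphism simultaneously identifying differentials and higher products on both sides, which is substantially more delicate than the local case handled in Theorem~\ref{thm rkkh dual}. A secondary, independent obstacle is that the present conjecture is purely algebraic: even once proved, matching $R^p(S,n,{\bf a})$ from Part~1 with $\rnk$ remains conjectural, so the geometric half of the bridge to symplectic Khovanov homology would still be outstanding.
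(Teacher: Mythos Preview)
The statement you are attempting to prove is labeled as a \emph{conjecture} in the paper, and the paper does not claim a proof. The paper offers only supporting evidence: the $k=1$ case via Auroux--Grigsby--Wehrli, and a consistency check on non-formality via Klamt--Stroppel. So there is no ``paper's own proof'' to compare against.

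More importantly, your proposed strategy contains a genuine error that would cause it to fail. You write that ``the main obstacle will be establishing formality of $\rnkkh$'' and that ``intrinsic formality of $\op{Ext}^*_{\cal{O}^{k,n-k}}(\Delta,\Delta)$ is expected via purity/weight arguments following from Koszulity of $\cal{O}$.'' But the paper proves the opposite: Proposition~\ref{prop rnk non formal} exhibits an explicit nontrivial Massey product in $H(R(4;2))$ and concludes that $\rnkkh$ is \emph{not} formal for $2 \le k \le n-2$. Correspondingly, on the category $\cal{O}$ side, Klamt--Stroppel conjecture (and Klamt gives evidence) that the $\ai$-Ext algebra of parabolic Verma modules is likewise non-formal in this range. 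Koszulity of $\cal{O}$ yields formality for the Ext algebra of \emph{simples}, not of Vermas; your purity/weight heuristic does not apply here. Any approach to the conjecture must therefore match the full $\ai$-structures, including nontrivial higher products, rather than reduce to cohomology.

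A viable route would instead be to construct an explicit $\ai$-morphism directly (e.g., by building a tilting object or a dg bimodule realizing the equivalence), or to identify $\rnkkh$ with a known dg model for the endomorphism algebra of Vermas such as those arising from Soergel-type combinatorics, without passing through formality.
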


\begin{rmk}
Mak-Smith \cite{MS} considered the Fukaya category of an affine open subset of the Hilbert schemes of the Milnor fiber of the $A_{n-1}$ singularity, and showed that it is equivalent to the category $\cal{O}^n$. They mentioned that the {\em Lefschetz thimbles} correspond to the {\em standard} (Verma) modules which lift the tensor basis. The dga $\rnkkh$ is a conjectural algebraic description of the endomorphism algebra of the Lefschetz thimbles.  
\end{rmk}

The conjecture holds for $k=1$. The dga $R(n,1)^{\op{nil}}$ is isomorphic to a dga $B$ considered by Auroux-Grigsby-Wehrli \cite{AuGrWe}.  They showed that the derived category of $B$ is equivalent to the derived category of the zigzag algebra considered by Khovanov-Seidel \cite{KhSe} and it is known that both categories are equivalent to $\cal{O}^{1, n-1}$. 
Moreover, $R(n,1)^{\op{nil}}$ is isomorphic to the Ext algebra of the Verma modules in $\cal{O}^{1, n-1}$. 

There is further evidence supporting the conjecture:  For $2 \le k \le n-2$, we prove that $\rnkkh$ is not formal. On the $\cal{O}^{k, n-k}$ side, Klamt-Stroppel conjectured that the $\ai$-Ext algebra of Verma modules is not formal for $2 \le k \le n-2$; see more details in \cite{Kl, KlSt}.

\begin{rmk}
The dga $\rnkkh$ and the bimodules $\ech, \fch$ are diagrammatically described by a generalization of the strands algebra. Other generalizations are used to categorify the tensor product representations of $\mathfrak{sl}(1|1)$ by the third author \cite{Ti} and in the work of Manion and Rouquier \cite{MR} on the categorification of $\mathfrak{gl}(1|1)$.
\end{rmk}

\subsubsection{The surface case}

The Lagrangian basis in the $(D^2,n,|\tau|=1)$ case descends to the tensor basis of representations of $U_q\mathfrak{sl}_2$. The tensor basis admits a direct generalization to surfaces via the Lagrangian basis in this paper. In Section \ref{sec surface}, we define the dga $R(S,n,{\bf a})$ as a modification of the strands algebra of the surface $S$ by locally replacing the LOT algebra by $\rk$.
Rouquier~\cite{Rou} has a project to algebraically construct the whole TQFT associated to $\mathfrak{sl}_2$ and it would be interesting to compare our approach with his project.

\begin{rmk}
     There are other bases such as the {\em canonical} and {\em dual canonical bases}, with better and simpler algebraic properties. It seems hard to generalize these bases from the case of disks to general surfaces from the TQFT point of view.
\end{rmk}

\begin{rmk}
Aganagic et.\ al.\ \cite{ADLSZ} recently realized a variant of quiver Hecke algebras from Floer homology in Coulomb branches. This variant is called {\em weighted Khovanov-Lauda-Rouquier algebras}, which was introduced by Webster \cite{Web}. A special case of this is a generalization of the nilHecke algebra from the plane to a cylinder. The relevant variety is $(\mathbb{C}^*)^2$. The same manifold appears in our case as the Lefschetz fibration with base $S$ an annulus. Our resulting dga can be viewed as a ``Koszul dual'' to their cylindrical generalization of the nilHecke algebra.
\end{rmk}

\s\n
{\em Acknowledgments.} VC thanks Baptiste Chantraine and Paolo Ghiggini for their interest and support. KH is grateful to Yi Ni and the Caltech Mathematics Department for their hospitality during his sabbatical in 2018. KH also thanks Ciprian Manolescu, John Baldwin, and Otto van Koert for their help. YT thanks Yuan Gao, Rapha\"{e}l Rouquier and Peng Shan for helpful discussions.

\section{Wrapped higher-dimensional Heegaard Floer homology} \label{section: wrapped version}

The goal of this section is to associate to a pair $((W^{2\nu},\beta,\phi),h)$ consisting of a Weinstein domain $(W^{2\nu},\beta,\phi)$ and $h\in \op{Symp}(W,\bdry W,d\beta)$, a wrapped higher-dimensional Heegaard Floer (HDHF) $\ai$-bimodule $B^f(W,\beta,\phi;h)$ over a wrapped HDHF $\ai$-algebra $R^f(W,\beta,\phi)$. 
We will treat the fully wrapped case; the partially wrapped case is analogous and is left to the reader. We also show that $R^f(W,\beta,\phi)$ and $B^f(W,\beta,\phi;h)$ are invariant under Weinstein homotopies and handleslides.

\subsection{Definition of the fully wrapped $\ai$-algebra and $\ai$-bimodules}

We will freely refer to the construction of the HDHF groups from \cite{CHT} and the terminology introduced there.

Let $\mathcal{B}=\{a_1,\dots,a_\kappa\}$ be the basis of Lagrangian disks (i.e., the collection of unstable submanifolds of the Liouville vector field of $\beta$ emanating from the index $\nu$ critical points) for $(W,\beta,\phi)$.  We work in the completion $\widehat{W}=W\cup ([0,\infty)_s\times \bdry W)$, where the Lagrangians $a_i$ are completed to $\widehat{a}_i$ by attaching Lagrangian cylinders $[0,\infty)\times \partial a_i\subset [0,\infty)\times \bdry W$.  We will often use subscripts such as $s$ above to indicate coordinates.

A symplectomorphism $h$ of $(W,\bdry W,d\beta)$ can be extended to two symplectomorphisms of $(\widehat{W},d\widehat{\beta})$, namely $\widetilde{h}=h\cup\op{id}_{[0,\infty)\times \bdry W}$ and $\widehat{h}$ which is defined as follows: Consider the function
$$F_2 : [0,\infty )_s\times \partial W\to \R, \quad (s,x)\mapsto(e^s-1)^2$$
which is quadratic with respect to $e^s$. 
Then $\widehat h$ is the extension of $h$ by a time-one Hamiltonian flow of $-F_2$ on $[0,\infty )\times \partial W$.  We are using the sign convention $i_{X_{F_2}} d(e^s\beta|_{\bdry W})= dF_2$ so that the generating Hamiltonian vector field $X_{-F_2}$ on $[0,\infty)\times \bdry W$ is $2(e^s-1)R|_{\beta\vert_{\partial W}}$, where $R|_{\beta\vert_{\partial W}}$ is the Reeb vector field of $\beta\vert_{\partial W}$.

{\em Without loss of generality we always assume that we have applied a generic perturbation to $\widehat{h}$ on a compact region so that all the relevant Lagrangian intersections below become transverse; this is particularly relevant when $a_{\bf m}$ and $a_{\bf m'}$ given below use copies of the same Lagrangian.}

For each nonnegative integer $k$, take a partition ${\bf m}= (m_1,\dots, m_\kappa)$ of $k$. We write $|{\bf m}|=\sum_{i=1}^\kappa m_i=k$.  Let $a_{\bf m}$ (resp.\ $\widehat{a}_{\bf m}$) be the union of $k$ disjoint asymptotically cylindrical Lagrangian planes consisting of $m_i$ disjoint copies of $a_i$ (resp.\ $\widehat{a}_i$) for $i=1,\dots, \kappa$.

Following \cite[Section 5.6 and Notation 5.6.1]{CHT}, let $\Hom(\widehat{a}_{\bf m}, \widehat{a}_{\bf m'})$ be the HDHF cochain complex $\widehat{CF}(\widehat{\op{id}}(\widehat{a}_{\bf m}), \widehat{a}_{\bf m'})$ whose generators are $k$-tuples ${\bf y}=\{y_1,\dots,y_k\}$ of intersection points between $\widehat{\op{id}}(\widehat{a}_{\bf m})$ and $\widehat{a}_{\bf m'}$, where the $y_i$ lie on distinct components of $\widehat{\op{id}}(\widehat{a}_{\bf m})$ and $\widehat{a}_{\bf m'}$. Observe that we are fully wrapping the first term in the $\Hom$. The differential $\mu_1$ will be defined in Section~\ref{subsection: A-infinity}.

We then define the $A_\infty$-algebras
\begin{gather*}
R^f(W,\beta,\phi;k):= \oplus_{|{\bf m}|=|{\bf m'}|=k} \Hom(\widehat{a}_{\bf m},\widehat{a}_{\bf m'}),\\
R^f(W,\beta,\phi) := \oplus_{k\geq 0} R^f(W,\beta,\phi;k),
\end{gather*}
where the $A_\infty$-operations will be defined in Section~\ref{subsection: A-infinity}.

Similarly we define 
\begin{gather} \label{eq bimodule for h}
B^f(W,\beta,\phi;h;k):=  \oplus_{|{\bf m}|=|{\bf m'}|=k} \Hom(\widetilde h(\wh a_{\bf m}),\wh a_{\bf m'})=\oplus_{|{\bf m}|=|{\bf m'}|=k} \widehat{CF}(\widehat{h}(\widehat{a}_{\bf m}),\widehat{a}_{\bf m'}),\\
B^f(W,\beta,\phi;h):= \oplus_{k\geq 0} B^f(W,\beta,\phi;h; k).
\end{gather}
The $R^f(W,\beta,\phi;k)$ and $R^f(W,\beta,\phi)$ $\ai$-bimodule operations will also be given in Section~\ref{subsection: A-infinity}.



Since the number of intersection points between $\widehat{h}(\wh a_{\bf m})$ and $\wh a_{\bf m'}$ is potentially infinite, the definition relies on the following lemma:

\begin{lemma}\label{lemma: finiteness}
If ${\bf y}=\{y_1,\dots,y_k\}$ is a generator of $\Hom(\widetilde{h} (\wh a_{\bf m}),\wh a_{\bf m'})$ with $|{\bf m}|=|{\bf m'}|=k$, then the summand of $d {\bf y}$ corresponding to a fixed genus is finite.
\end{lemma}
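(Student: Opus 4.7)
The plan is an energy/maximum-principle argument, modeled on the proof of finiteness of the differential in the wrapped Fukaya category, adapted to the higher-genus curves of HDHF.

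First, I would assign to each generator ${\bf y}=\{y_1,\dots,y_k\}$ an action $A({\bf y})=\sum_j A(y_j)$ via the Hamiltonian action functional for the flow of $-F_2$. On the cylindrical end $[0,\infty)_s\times\bdry W$, an intersection point $y_j\in \widehat{h}(\widehat{a}_{\bf m})\cap\widehat{a}_{\bf m'}$ at level $s(y_j)=s_0$ corresponds to a Reeb chord of length $2(e^{s_0}-1)$, and a direct computation using $F_2=(e^s-1)^2$ shows that $|A(y_j)|$ grows quadratically in $e^{s_0}$ as $s_0\to\infty$. Consequently, for any bounded interval $I\subset\R$, only finitely many generators ${\bf y'}$ have total action $A({\bf y'})\in I$.

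Second, for a holomorphic map $u:\Sigma_g\to\widehat{W}$ contributing to the genus-$g$ part of $d{\bf y}$, with input asymptotic ${\bf y}$ and output asymptotic ${\bf y'}$, Stokes' theorem gives an energy identity of the form $E(u)=A({\bf y})-A({\bf y'})$ (up to fixed boundary-primitive shifts), and nonnegativity of $E$ confines $A({\bf y'})$ to a half-infinite interval determined by ${\bf y}$. To prevent $u$ from escaping to the cylindrical end, I would apply the integrated maximum principle familiar from the wrapped Fukaya-category setup: with $J$ adapted to the contact-type cylindrical end and $F_2$ convex in $e^s$, the function $s\circ u$ is subharmonic and controlled on the boundary by the Lagrangians, yielding $s\circ u\le\max_j s(y_j)$. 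Hence the image of $u$, and in particular ${\bf y'}$, lies in a compact region $K=K({\bf y})\subset\widehat{W}$. Only finitely many $k$-tuples of intersection points of $\widehat{h}(\widehat{a}_{\bf m})$ with $\widehat{a}_{\bf m'}$ lie in $K$, so only finitely many output generators ${\bf y'}$ can occur; for each, Gromov compactness with fixed genus and bounded energy yields a compact moduli space of holomorphic curves, so the genus-$g$ summand of $d{\bf y}$ is finite.

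The main technical obstacle is confirming that the integrated maximum principle extends to the higher-genus bordered Riemann surfaces of HDHF. Since subharmonicity of $s\circ u$ is a local statement in the domain, the standard argument should transfer directly, but one does need to verify compatibility with HDHF-specific features such as the multiple disjoint Lagrangian cylinders at infinity, the twisted form of $\widehat{h}(\widehat{a}_{\bf m})$ near the boundary, and the symmetric-product-type boundary conditions inherent in counting $k$-tuples.
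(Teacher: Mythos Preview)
Your core strategy---an energy/Stokes inequality to bound the possible outputs ${\bf y'}$, then Gromov compactness for fixed topological type---is exactly the paper's. The paper's execution, however, is more direct and bypasses the maximum principle altogether. Rather than the Hamiltonian action functional, it applies Stokes to the Liouville form $\widehat\beta$ on the projection $u'=\pi_{\widehat W}\circ u$: the boundary integral $\int_{u'(\partial F)}\widehat\beta$ vanishes along the unwrapped side $\widehat a_{\bf m'}$ (a half-cylinder over a Legendrian, on which $\widehat\beta$ restricts to zero) and along the wrapped side $\widehat h(\widehat a_{\bf m})$ reduces to a signed sum of the Reeb-chord actions of ${\bf y}$ and ${\bf y'}$. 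Positivity of $\int (u')^*d\widehat\beta$ then bounds the output Reeb actions by the input ones; since Reeb actions are nonnegative and grow with the $s$-level of the corresponding intersection point, this single inequality already confines the $s$-coordinates of ${\bf y'}$ to a bounded range. No second mechanism is needed to get finitely many outputs.

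Your maximum-principle step is both unnecessary and not quite right as phrased, and the obstacle is not the one you flag. The domain genus is irrelevant---subharmonicity is a local statement, insensitive to the topology of $\dot F$. The real issue is the boundary condition: one portion of $\partial\dot F$ maps to the wrapped Lagrangian $\widehat h(\widehat a_{\bf m})$, which is \emph{not} cylindrical at infinity, so the Neumann-type boundary behavior underlying the pointwise bound $s\circ u\le\max_j s(y_j)$ fails along that arc. The integrated maximum principle you have in mind is formulated for the Hamiltonian-perturbed Floer equation with conical Lagrangians; porting it to genuinely holomorphic curves with a wrapped Lagrangian boundary requires a gauge change whose content is precisely the $\widehat\beta$-computation the paper carries out directly.
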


\begin{proof}
Let $u: \dot F\to D_1\times \widehat{W}=\R \times [0,1]\times \widehat{W}$ be a $J^\Diamond$-holomorphic map, where $J^\Diamond$ is compatible almost complex structure on $\R \times [0,1]\times \widehat{W}$ in the sense of \cite[Section 3.2]{CHT}, from ${\bf y}$ to a $k$-tuple of chords ${\bf y'}$ and let $u'=\pi_{\widehat{W}}\circ u$ be the composition, where $\pi_{\widehat{W}}$ is the projection to $\widehat{W}$. By Stokes' theorem, $\textstyle\int_{u' (\partial F)} \widehat{\beta} = \textstyle\int_{ u' (F)} d\widehat{\beta}$, where the right-hand side is positive by the holomorphicity of $u$ and the compatibility condition for $J^\Diamond$. Moreover,
\begin{equation} \label{eqn: coffee}
\textstyle\int_{u' (\partial F)} \widehat{\beta} =\textstyle\int_{u' (\partial F)\cap \widehat{h}(\wh a_{\bf m'})}\widehat{\beta} - \textstyle\int_{u' (\partial F)\cap \wh a_{\bf m}} \widehat{\beta}.
\end{equation}

The portion of $u' (\partial F)\cap \wh a_{\bf m'}$ in $[0,\infty)\times \bdry W$ is contained in the half-cylinder $[0,\infty)\times \bdry \wh a_{\bf m'}$ on which $\widehat\beta$ vanishes.  On the other hand, the portion of $u' (\partial F)\cap \widehat{h}(\wh a_{\bf m})$ in $[0,\infty)\times \bdry W$ has the same $\wh \beta$-length as a concatenation of arcs in the half-cylinder $[0,\infty)\times \bdry \wh a_{\bf m}$ and some Reeb chords of $R$.  The contributions of the arcs in the half-cylinders to $\textstyle \int \widehat\beta$ are zero and the contributions of the Reeb chords to $\textstyle \int \widehat\beta$ are $\pm$ the actions of the Reeb chords, depending on whether the Reeb chord corresponds to a point in ${\bf y}$ or ${\bf y'}$.

Hence if ${\bf y}$ is fixed, then the maximum $s$-value of the coordinates of ${\bf y'}$ is bounded and there are only finitely many possible ${\bf y'}$. Finally, if ${\bf y}$, ${\bf y'}$, and the topological type of curve are fixed, the curve count is finite.
\end{proof}

\subsection{The $\ai$-operations}\label{subsection: A-infinity}

We now equip $R^f(W,\beta,\phi;k)$ with an $\ai$-structure following \cite[Section 4]{CHT} and \cite[Section 2]{HTY}. There are some subtleties explained in \cite{Au} and worked out in detail in \cite{Ab}.  In this subsection we briefly review how to adapt the constructions to the cylindrical setting, leaving more details to \cite[Section 2.2]{HTY}, where we replace $T^*M$ there by $\widehat{W}$.

As shown in Figure~\ref{fig: A-infty}, let $D$ be the unit disk in $\C$ and let $D_m =D\setminus\{p_0,\dots,p_{m}\}$, where $p_0,\dots,p_m$ are arranged in counterclockwise order around $\partial D$.  The points $p_1,\dots,p_m$ are incoming and $p_0$ is outgoing. Let $\bdry_i D_m$, $i=0,\dots,m$, be the counterclockwise arc of $\bdry D_m$ from $p_i$ to $p_{i+1}$, where $p_{m+1}=p_0$.
\begin{figure}[ht]
	\begin{overpic}[scale=1]{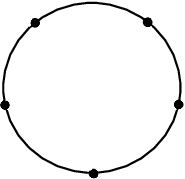}
		\put(47,-7){\tiny $p_0$}
		\put(102,40){\tiny $p_1$}
		\put(85,85){\tiny $p_2$}
		\put(-6,85){\tiny $p_{m-1}$}
		\put(-14,40){\tiny $p_m$}
		\put(82,8){\tiny $\bdry_0 D_m$}
		\put(97.5,65){\tiny $\bdry_1 D_m$}
		\put(-36,65){\tiny $\bdry_{m-1} D_m$}
		\put(-6,8){\tiny $\bdry_m D_m$}
	\end{overpic}
	\caption{The $A_\infty$-base $D_m$ of the symplectic fibration $D_m\times \widehat W$.}
	\label{fig: A-infty}
\end{figure}
Let $\mathcal{A}_m$ be the moduli space of $D_m$ modulo automorphisms.  We choose representatives $D_m$ of equivalence classes of $\mathcal{A}_m$ in a smooth manner (e.g., by setting $p_0=-i$ and $p_1=i$) and abuse notation by writing $D_m\in \mathcal{A}_m$.
We call $D_m$ the ``$A_\infty$-base direction''.  Let $e_i\subset D_m$ be the strip-like end $[0,\infty)_{s_i}\times [0,1]_{t_i}$ corresponding to a neighborhood of $p_i$, which smoothly varies with $D_m\in\mathcal{A}_m$.

In orer to define the $A_\infty$-operation $\mu_m$ we need to construct a {\em consistent universal choice of datum} on all of $\mathcal{A}_m$ following \cite{Se3} and \cite[Definition 4.2]{Ab} and described in \cite[Section 2.2]{HTY}. We sketch some ingredients of the construction.

Consider the symplectic fibrations
\begin{equation}\label{eqn: pi D m}
\pi_{D_m}: (D_m\times \widehat{W},\widehat\Omega_m)\to (D_m,\omega),
\end{equation}
where $\omega$ is an area form on $D$ which restricts to $ds_i\wedge dt_i$ on each $e_i$. Also let $\pi_{\widehat W}$ be the projection $D_m\times \widehat W\to \widehat W$. We take 
$$\widehat \Omega_m=\omega +dH+ d\widehat \beta,$$
where $H: D_m\times \widehat{W}\to \R$ is a Hamiltonian which is supported over the strip-like end $e_0$. 

\s\n
{\em Almost complex structures.} Let $\mathcal{J}_{\widehat{W},\widehat \beta}$ be the set of $d\widehat \beta$-compatible almost complex structures $J_{\widehat{W}}$ on $\widehat{W}$ that are asymptotic to an almost complex structure on $[0,\infty)_s\times \bdry W$ that takes $\partial_s$ to the Reeb vector field of $\widehat \beta|_{\bdry W}$, takes $\ker \widehat \beta|_{\bdry W}$ to itself, and is compatible with $d\widehat \beta|_{\bdry W}$. 

There is a smooth assignment 
$D_m\mapsto J_{D_m}$, where $D_m\in \mathcal{A}_m$, such that:
\begin{enumerate}
    \item[(J1)] on each fiber $\pi_{D_m}^{-1}(p)=\{p\}\times \widehat{W}$, $J_{D_m}$ restricts to an element of $\mathcal{J}_{\widehat{W},\widehat \beta}$;
    \item[(J2)] $J_{D_m}$ projects holomorphically onto $D_m$;
    \item[(J3)] over each strip-like end $[0,\infty)_{s_i}\times[0,1]_{t_i}$, $J_{D_m}$ is invariant in the $s_i$-direction and takes $\partial_{s_i}$ to $\partial_{t_i}$; when $m=1$, $J_{D_1}$ is invariant under $\mathbb{R}$-translation of the base and takes $\partial_{s_i}$ to $\partial_{t_i}$.
\end{enumerate}
One can inductively construct such an assignment for all $m\geq1$ in a manner which is (A) consistent with the boundary strata 
and (B) for which all the moduli spaces $\mathcal{M}({\bf y}_m,\dots, {\bf y}_1, {\bf y}_0)$, defined below, are transversely cut out. 
A collection 
$$\{ J_{D_m}~|~D_m\in \mathcal{A}_m,~m\in \mathbb{Z}_{>0}\}$$ 
satisfying (A) will be called a {\em consistent collection} of almost complex structures; if it satisfies (B) in addition, it is a {\em sufficiently generic} consistent collection.

\s\n {\em Lagrangian boundary conditions.} We define Lagrangians $\widetilde L_i$, $i=0,\dots,m$, over $\bdry_i D_m$; its components will be denoted $\widetilde L_{ij}$, $j=1,\dots, k$, where $k=|{\bf m}^i|$ for all $i$.
The Lagrangian $\widetilde L_i$ restricts to $\widehat{\op{id}}^{m-i} (\wh a_{{\bf m}^i})$ over each point of $\bdry_i D_m\setminus e_0$.  Note that $\widehat{\op{id}}^{m-i} (\wh a_{{\bf m}^i})$ roughly denotes the wrapping of $\wh a_{{\bf m}^i}$ at ``speed'' $m-i$.

In order to define $\widetilde L_0$ and $\widetilde L_m$ over $e_0$ we need some preparation: Let $\psi^\rho$ be the time $\log \rho$ flow of the Liouville vector field of $(\widehat{W},\widehat\beta)$ which is equal to $\bdry_s$ on $[0,\infty)\times \bdry W$. We have a natural isomorphism
\begin{align} \label{eqn: iso wrapped}
CF( \psi^\rho &(\widehat{\op{id}}(\wh a_{{\bf m}^0})), \psi^\rho(\wh a_{{\bf m}^m});\psi^\rho_*(J_{D_1}))\simeq \\
& CF(\widehat{\op{id}}(\wh a_{{\bf m}^0}), \wh a_{{\bf m}^m}; J_{D_1})= \Hom(\wh a_{{\bf m}^0},\wh a_{{\bf m}^m}).\nonumber
\end{align}
Observe that $\psi^\rho(\wh a_{{\bf m}^m})$ is exact Lagrangian isotopic to $\wh a_{{\bf m}^m}$ by a compactly supported isotopy and $\psi^\rho (\widehat{\op{id}}(\wh a_{{\bf m}^0}))$ limits to $\widehat{\op{id}}^{m}(\wh a_{{\bf m}^0})$ as $s\to \infty$ when $\rho=-\log m$. [Idea: View $\widehat{\op{id}}(\wh a_{\bf m})$ as a graph $x= 2(e^s-1)$, where $x$ is the coordinate in the Reeb direction, and view $\psi^\rho(\widehat{\op{id}}(\wh a_{\bf m}))$ as the translate $x= 2(e^{s-\rho}-1)$.]  

We then take $\widetilde L_0$ over $\{t_0=1\}\subset e_0$ to be the trace of the Lagrangian isotopy from $\widehat{\op{id}}^{m}(\wh a_{{\bf m}^0})$ to  $\psi^\rho (\widehat{\op{id}}(\wh a_{{\bf m}^0}))$ as $s_0\to +\infty$ and $\widetilde L_m$ over $\{t_0=0\}\subset e_0$ to be the trace of the Lagrangian isotopy from $\wh a_{{\bf m}^m}$ to $\psi^\rho(\wh a_{{\bf m}^m})$ as $s_0\to +\infty$. As explained in \cite[Lemma 6.1.1]{CHT}, there exists a Hamiltonian $H: D_m\times \widehat{W}\to \R$ which makes the traces Lagrangians with respect to $\widehat\Omega_m$. 


\s\n {\em Modui spaces.} Given generators ${\bf y}_i$, $i=1,\dots,m$, of $\Hom (\wh a_{{\bf m}^{i-1}},\wh a_{{\bf m}^i})$ and a generator ${\bf y}_0$ of $\Hom(\wh a_{{\bf m}^0},\wh a_{{\bf m}^m})$,
let $\mathcal{M}(\mathbf{y}_m,\dots,\mathbf{y}_1;\mathbf{y}_0)$ be the moduli space of maps
\begin{equation*}
    u\colon (\dot F,j)\to(D_m\times \widehat{W},J_{D_m}),
\end{equation*}
modulo domain automorphisms, where $(F,j)$ is a compact Riemann surface with boundary, $\mathbf{p}_0,\dots,\mathbf{p}_m$ are disjoint $\kappa$-tuples of boundary marked points of $F$, $\dot F=F\setminus\cup_i \mathbf{p}_i$, and $D_m\in \mathcal{A}_m$, so that $u$ satisfies
\begin{align}
    \label{floer-condition}
    \left\{
        \begin{array}{ll}
            \text{$du\circ j=J_{D_m}\circ du$;}\\
            \text{each component of $\partial \dot F$ is mapped to a unique $\widetilde{L}_{ij}$;}\\
            \text{$\pi_{\widehat{W}}\circ u$ tends to $\widehat{\op{id}}^{m-i} (\mathbf{y}_i)$ as $s_i\to+\infty$ for $i=1,\dots,m$;}\\
             \text{$\pi_{\widehat{W}}\circ u$ tends to $\psi^\rho(\mathbf{y}_0)$ as $s_0\to+\infty$;}\\
            \text{$\pi_{D_m}\circ u$ is a $\kappa$-fold branched cover of $D_m$.}
        \end{array}
    \right.
\end{align}
With the identification of the strip-like end $e_i$ with $[0,\infty)_{s_i}\times[0,1]_{t_i}$, the 3rd and 4th conditions mean that $u$ maps the neighborhoods of the punctures of $\mathbf{p}_i$ asymptotically to the Reeb chords $[0,1]_{t_i}\times \mathbf{y}_i$, modified by the relevant diffeomorphisms, as $s_i\to +\infty$.


\s\n {\em $A_\infty$-operations.} We now define the $A_\infty$-operations
\begin{gather*}
\mu_m :\Hom (\wh a_{{\bf m}^{m-1}},\wh a_{{\bf m}^m})\otimes\dots\otimes \Hom(\wh a_{{\bf m}^0},\wh a_{{\bf m}^1}) \to \Hom(\wh a_{{\bf m}^0},\wh a_{{\bf m}^m})\\
\mathbf{y}_m\otimes\dots\otimes\mathbf{y}_1\mapsto\sum_{\mathbf{y}_0,\chi\leq\kappa}\#\mathcal{M}^{\op{ind}=0,\chi}(\mathbf{y}_m,\dots,\mathbf{y}_1;\mathbf{y}_0)\cdot\hbar^{\kappa-\chi}\cdot\mathbf{y}_0,
\end{gather*}
where the superscript $\chi$ denotes the Euler characteristic of $F$ and the symbol $\#$ denotes the signed count of the corresponding moduli space. The superscript $\op{ind}$ refers to the virtual dimenson of the moduli space; when $m=1$, $\mathcal{M}^{\op{ind}=0,\chi}({\bf y}_1;{\bf y}_0)$ is shorthand for $\mathcal{M}^{\op{ind}=1,\chi}({\bf y}_1;{\bf y}_0)/ \R$ where $\R$ is the target $\R$-translation of the $A_\infty$-base.

\begin{prop}\label{prop: algebra}
$R^f(W,\beta,\phi;k)=B^f(W,\beta,\phi;\op{id};k)$ is a cohomologically unital $\ai$-algebra with respect to the $A_\infty$-operations $(\mu_m)_{m\in \Z^+}$.
\end{prop}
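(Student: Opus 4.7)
The plan is to verify: (i) the identification $R^f(W,\beta,\phi;k)=B^f(W,\beta,\phi;\op{id};k)$ as chain complexes, (ii) the $\ai$-relations for the family $(\mu_m)_{m\in\Z^+}$, and (iii) cohomological unitality. Item (i) is immediate from the definitions, since when $h=\op{id}$ the wrapped extension $\wh h$ reduces to $\wh{\op{id}}$, so the two underlying cochain complexes $\widehat{CF}(\wh{\op{id}}(\wh a_{\bf m}),\wh a_{\bf m'})$ match on the nose (the same $\ai$-data is used to define them). The analytic backbone for (ii) and (iii) follows the templates of \cite[Section 4]{CHT} and \cite[Section 2.2]{HTY}; the one novel feature here is that the Lagrangians $\widetilde L_i$ over $\bdry_i D_m$ are wrapped at distinct speeds $m-i$ and rescaled by the Liouville flow $\psi^\rho$ at the outgoing end $e_0$.

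For (ii), I would first extend the action and finiteness argument of Lemma~\ref{lemma: finiteness} to curves $u:\dot F\to D_m\times\wh W$ for arbitrary $m$. Applying Stokes' theorem to $u'=\pi_{\wh W}\circ u$ and separating the boundary contribution along the $m+1$ arcs of $\bdry D_m$, the terms coming from the incoming strip-like ends $e_i$ are controlled by the actions of the $\mathbf y_i$, while the $e_0$-contribution is controlled via the rescaling isomorphism \eqref{eqn: iso wrapped} together with the fact that $H$ is supported over $e_0$ and turns the traces $\widetilde L_0, \widetilde L_m$ into genuine Lagrangians for $\widehat\Omega_m$ (as in \cite[Lemma 6.1.1]{CHT}). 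This uniform bound suffices to invoke SFT-type compactness for the moduli spaces $\mathcal M^{\op{ind}=1,\chi}(\mathbf y_m,\dots,\mathbf y_1;\mathbf y_0)$. The boundary of the compactified $1$-dimensional moduli space decomposes into the usual stable degenerations: strip-breaking at an incoming puncture $e_i$, strip-breaking at $e_0$, and node-formation along a chord of $D_m$ separating the punctures. Each corresponds precisely to a term $\mu_{m-j+1}\circ_i\mu_j$ of the $\ai$-relation, consistent with the choice of perturbation data on the boundary strata of $\mathcal A_m$; the signed count of the boundary of a compact oriented $1$-manifold vanishes. Sphere and disk bubbling are ruled out by exactness of $d\wh\beta$ and of the $\wh a_i$; branched-cover degenerations of $\pi_{D_m}\circ u$ are tracked by the $\hbar^{\kappa-\chi}$ bookkeeping and produce no new boundary strata beyond the above.

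For (iii), I use the standard wrapped-unit construction (cf.\ \cite[Section 4]{CHT}): the cohomological unit $e_{\bf m}\in\Hom(\wh a_{\bf m},\wh a_{\bf m})$ is represented by the tuple of minimum-action Hamiltonian chord generators produced by $\wh{\op{id}}$ at each component of $\wh a_{\bf m}$, one per disk. A thin-triangle argument, for a carefully chosen representative $D_2\in\mathcal A_2$ and compatible $J_{D_2}$, shows that $\mu_2(e_{{\bf m}'},\mathbf y)$ and $\mu_2(\mathbf y,e_{\bf m})$ both equal $\mathbf y$ modulo $\mu_1$-exact terms, the only contributing rigid curves being the small ``almost constant'' triangles localized near the basepoints of $e_{\bf m}$.

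The main obstacle is the uniform action bound at the outgoing end $e_0$ for general $m\geq 2$: balancing the rescaling by $\psi^\rho$ against the varying wrapping speeds $\wh{\op{id}}^{m-i}$ at the incoming ends forces the Hamiltonian $H$ and the traces $\widetilde L_0,\widetilde L_m$ to be chosen as a consistent universal family of perturbation data in the sense of Seidel \cite[Section 4.2]{Ab}; only then does Stokes' theorem deliver the one-sided inequality that underlies both compactness and the counting identity. Once this bound is in place, the combinatorics of $\mathcal A_m$ and the gluing theory proceed along entirely standard lines.
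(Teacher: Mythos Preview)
Your argument for the $\ai$-relations in (ii) follows the same route as the paper: both appeal to the codimension-$1$ boundary strata of $\mathcal{A}_m$ and exclude bubbling via exactness, the paper simply citing \cite[Sections 3.7, 3.9]{CHT} for this. Your additional care with the action bound at the outgoing end $e_0$ is reasonable and fills in detail the paper leaves implicit.

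The genuine difference is in (iii). You represent the cohomological unit by the explicit tuple of minimum-action chords and verify it via a thin-triangle count. The paper instead follows Seidel \cite[II.8c, II.9j]{Se3}: the unit is \emph{defined} as the output of a curve count over the monogon $D_0\in\mathcal{A}_0$, with Lagrangian boundary given by the trace of an isotopy from $\wh a_{\bf m}$ to $\wh{\op{id}}(\wh a_{\bf m}')$, where $a_{\bf m}'$ is the graph of $dF$ for a Morse function $F:a_{\bf m}\to\R$ with exactly one interior maximum on each component. Your approach is more concrete and avoids introducing $\mathcal{A}_0$, but it requires a separate verification that the thin triangles are the only rigid contributors; with multiple parallel copies of the same $a_i$ under a quadratic wrapping, this is a place where one must be careful that no low-action triangles exchange copies. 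The paper's monogon construction instead packages unitality into the same consistent universal datum used for all $\mu_m$, so the unit axioms follow from gluing over $\bdry\overline{\mathcal{A}}_m$ rather than from an ad hoc triangle count.
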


\begin{proof} 
The $\ai$-relations arise from the description of the codimension $1$ boundary of $\mathcal{A}_m$ and the elimination of undesirable bubbles in \cite[Section 3.7 and 3.9]{CHT}.
The cohomological unitality is explained in \cite[Sections II.8c and II.9j]{Se3}.
The identity element associated with the object $a_{{\bf m}}$ is obtained by counting curves in the fibration $D_0\times \widehat{W}$ over a monogon $D_0\in \mathcal{A}_0$, with Lagrangian boundary condition given by tracing a Lagrangian isotopy between $\wh a_{{\bf m}}$ and a pushoff $\wh{\op{id}} (\wh a_{{\bf m}}')$. To construct $a_{{\bf m}}'$, we choose a Morse function $F: a_{{\bf m}}\to \R$ such that there is exactly one maximum on each component of $a_{{\bf m}}$ and the maximum is in the interior.
We use this Morse function to perturb $a_{{\bf m}}$ to $a_{{\bf m}}'$, obtained as the graph of $dF$ in a Weinstein neighborhood $T^*a_{{\bf m}}$ of $a_{{\bf m}}$ in $W$.
\end{proof}

\s\n
{\em $A_\infty$-bimodule operations.}
Similarly we define the $\ai$-bimodule operations
\begin{align*}
&\mu_{a,b}: (\Hom (\wh a_{{\bf m}^{a-1}},\wh a_{{\bf m}^a})\otimes\dots\otimes \Hom(\wh a_{{\bf m}^0},\wh a_{{\bf m}^1}))\otimes \Hom(\widetilde{h}(\wh a_{{\bf m'}^{b}}),\wh a_{{\bf m}^0}) \\
& \otimes (\Hom (\widetilde{h}(\wh a_{{\bf m'}^{b-1}}),\widetilde{h}(\wh a_{{\bf m'}^b}))\otimes\dots\otimes \Hom(\widetilde{h}(\wh a_{{\bf m'}^0}),\widetilde{h}(\wh a_{{\bf m'}^1}))) \to \Hom(\widetilde{h}(\wh a_{{\bf m'}^{0}}),\wh a_{{\bf m}^a}),
\end{align*}
by counting elements of 
$$\#\mathcal{M}^{\op{ind}=0,\chi}(\mathbf{y}_a,\dots,\mathbf{y}_1, \mathbf{x},\mathbf{y}'_b,\dots,\mathbf{y}'_1;\mathbf{x}').$$
This gives $B^f(W,\beta,\phi;h;k)$ the structure of an $\ai$-bimodule over $R^f(W,\beta,\phi;k)$.

\begin{prop}\label{prop: bimodule}
$B^f(W,\beta,\phi;h;k)$ is an $\ai$-bimodule over $R^f(W,\beta,\phi;k)$ with respect to the operations $(\mu_{a,b})_{a,b\in \Z^{\geq 0}}$.
\end{prop}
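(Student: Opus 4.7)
The plan is to mirror the proof of Proposition~\ref{prop: algebra}, adapting the disk moduli to the bimodule setting. First I would introduce the parameter space $\mathcal{A}_{a,b}$ of disks $D_{a,b}$ with one outgoing puncture $p_0$, $a$ incoming punctures $p_1,\dots,p_a$ arranged counterclockwise on one boundary arc and, after a distinguished bimodule puncture $q$, $b$ further incoming punctures $q_b,\dots,q_1$ on the other boundary arc. The boundary arcs between consecutive punctures of $\{p_0,\dots,p_a\}$ carry Lagrangians of the form $\widehat{\op{id}}^{?}(\wh a_{{\bf m}^i})$, while those between consecutive punctures of $\{q,q_b,\dots,q_1,p_0\}$ carry Lagrangians of the form $\widehat{\op{id}}^{?}(\widetilde h(\wh a_{{\bf m'}^i}))$, with wrapping speeds determined as in Section~\ref{subsection: A-infinity} and matched at $p_0$ via the Liouville flow $\psi^\rho$ together with trace Lagrangians from \cite[Lemma 6.1.1]{CHT}. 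A consistent universal datum $(H,J_{D_{a,b}})$ is then constructed inductively across the strata of $\overline{\mathcal{A}_{a,b}}$.

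Next I would define $\mu_{a,b}$ by the $\hbar$-weighted signed count
\[
\mu_{a,b}(\mathbf y_a,\dots,\mathbf y_1,\mathbf x,\mathbf y'_b,\dots,\mathbf y'_1)=\sum_{\mathbf x',\,\chi\leq\kappa}\#\mathcal{M}^{\op{ind}=0,\chi}(\mathbf y_a,\dots,\mathbf y_1,\mathbf x,\mathbf y'_b,\dots,\mathbf y'_1;\mathbf x')\cdot\hbar^{\kappa-\chi}\cdot\mathbf x'.
\]
Finiteness for each fixed $\chi$ reduces to the Stokes-type argument of Lemma~\ref{lemma: finiteness}: since $\widetilde h$ is the identity on $[0,\infty)\times \bdry W$, both the $\wh a_{\bf m}$-type and the $\widetilde h(\wh a_{{\bf m'}})$-type boundary components contribute to $\int_{u'(\partial F)}\wh\beta$ only zero (along cylindrical portions) or $\pm$ the action of a Reeb chord, so fixing all inputs bounds the $s$-coordinate of $\mathbf x'$ from above and there are only finitely many possible $\mathbf x'$.

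The $A_\infty$-bimodule relations follow from the analysis of the codimension-one boundary of the Gromov--Floer compactification of the $1$-dimensional components of $\mathcal{M}^{\op{ind}=1,\chi}$. Three types of degenerations occur: (i) consecutive algebra inputs on the left collide, producing a $\mu_{a-m+1,b}\circ(1^{\otimes i}\otimes \mu_m\otimes 1^{\otimes ?}\otimes 1\otimes 1^{\otimes b})$-type term; (ii) the symmetric breaking on the right; (iii) a sub-configuration containing $q$ breaks off, yielding a $\mu_{a',b'}\circ(1^{\otimes i}\otimes \mu_{a'',b''}\otimes 1^{\otimes j})$-type term involving two bimodule operations. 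Summing with appropriate signs yields the defining $A_\infty$-bimodule equations, while disk and sphere bubbles are excluded exactly as in \cite[Sections 3.7 and 3.9]{CHT}. The main obstacle I anticipate is the coherent inductive construction of the universal data over $\mathcal{A}_{a,b}$, with the restriction to each boundary face factoring as a product of data on lower strata of the form $\overline{\mathcal{A}_m}\times \overline{\mathcal{A}_{a',b'}}$ or $\overline{\mathcal{A}_{a',b'}}\times \overline{\mathcal{A}_{a'',b''}}$; this is standard but delicate, following Seidel~\cite{Se3} and Abouzaid~\cite{Ab}, with the additional subtlety that the wrapping speeds along every boundary arc must be consistent with the Liouville shift $\psi^\rho$ imposed at the outgoing puncture across all strata.
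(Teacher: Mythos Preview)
Your proposal is correct and aligns with what the paper intends: the paper does not give a separate proof of Proposition~\ref{prop: bimodule} at all, but simply states it immediately after defining $(\mu_{a,b})$, treating it as the evident bimodule analog of Proposition~\ref{prop: algebra}. Your elaboration---setting up the punctured-disk moduli with the distinguished bimodule puncture, invoking the Stokes argument of Lemma~\ref{lemma: finiteness} for finiteness, and reading off the $A_\infty$-bimodule relations from the codimension-one strata of the compactified one-dimensional moduli spaces---is exactly the standard argument the paper leaves implicit, and your identification of the three degeneration types (left algebra breaking, right algebra breaking, and a sub-disk containing the bimodule puncture) is correct.

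One minor imprecision: in your finiteness paragraph you appeal to $\widetilde h$ being the identity on the end, but the actual Lagrangian boundary conditions in $\widehat{CF}(\widehat h(\wh a_{\bf m}),\wh a_{\bf m'})$ use $\widehat h$, which \emph{does} wrap on $[0,\infty)\times\partial W$. This is not a gap---it is precisely the situation Lemma~\ref{lemma: finiteness} handles, where the wrapped boundary contributes Reeb-chord actions to $\int_{u'(\partial F)}\widehat\beta$---but you should phrase the argument in terms of $\widehat h$ rather than $\widetilde h$.
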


\subsection{Handleslides}

The goal of this subsection is to prove:

\begin{thm}\label{thm: invariance under handleslides}
The $\ai$-algebras $R^f(W,\beta_t,\phi_t)$, $t=0,1$, are Morita equivalent under handleslides arising from the Weinstein homotopy $(\beta_t,\phi_t)$, $t\in[0,1]$.
Moreover, the $\ai$-$R^f(W,\beta_t,\phi_t)$-bi\-modules $B^f(W,\beta_t,\phi_t;h)$, $t=0,1$, are quasi-equivalent with respect to the Morita equivalence of $R^f(W,\beta_t,\phi_t)$, $t=0,1$.
\end{thm}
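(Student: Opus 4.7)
The plan is to reduce the theorem to two building-block moves, following the standard strategy for invariance of wrapped Fukaya categories under Weinstein homotopies. First, I would subdivide the Weinstein homotopy $(\beta_t,\phi_t)$, $t\in [0,1]$, into a finite concatenation of two kinds of pieces: (i) smooth homotopies during which $\phi_t$ remains Morse with a fixed set of critical points and no crossing of critical values, and (ii) elementary handleslides at isolated times $t_0\in (0,1)$ where exactly one pair of index-$\nu$ unstable manifolds undergoes a generic birth/death of a flow line. For pieces of type (i) the unstable manifolds $a_i(t)$ vary by a compactly supported exact Lagrangian isotopy in $\widehat{W}$, and since $\wh h$ is fixed, the standard continuation-map machinery on $J^\Diamond$-holomorphic curves (as set up in Section~\ref{subsection: A-infinity}) produces an $\ai$-quasi-isomorphism at the algebra level and an $\ai$-bimodule quasi-isomorphism at the bimodule level. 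The content is concentrated in pieces of type (ii).

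For an elementary handleslide of $a_i$ over $a_j$, let $\mathcal{B}=\{a_1,\dots,a_\kappa\}$ and $\mathcal{B}'=\{a_1,\dots,a_{i-1},a_i',a_{i+1},\dots,a_\kappa\}$ be the bases before and after the slide. The central claim is that in the triangulated envelope of $R^f(W,\beta,\phi)$ the new generator satisfies
\[
\wh a_i' \;\simeq\; \op{Cone}\!\bigl(\wh a_j \xrightarrow{\,c\,} \wh a_i\bigr),
\]
where $c\in \Hom(\wh a_j,\wh a_i)$ is the cohomology class of the short Reeb chord created at the handleslide moment. This is established by a neck-stretching/gluing argument in the fibration $D_m\times \wh W\to D_m$: curves contributing to $\mu_m$ with boundary on $\wh a_i'$ degenerate, as one localizes near the handle, into pairs consisting of a small triangle accounting for the chord $c$ together with an old curve with boundary on $\mathcal{B}$. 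Because $\mathcal{B}$ and $\mathcal{B}'$ generate the same full subcategory of the triangulated envelope, one concludes the required Morita equivalence between $R^f(W,\beta,\phi)$ before and after the slide.

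For the bimodule statement I would run the identical argument, but inserting $\wt h$ on the first Lagrangian factor. Since after a compactly supported Hamiltonian isotopy we can arrange that $\wt h$ is supported away from the small region where the handleslide takes place, the image basis $\wt h(\wh a_{\bf m'})$ undergoes exactly the same cone relation,
\[
\wt h(\wh a_i') \;\simeq\; \op{Cone}\!\bigl(\wt h(\wh a_j) \xrightarrow{\,\wt h_* c\,} \wt h(\wh a_i)\bigr),
\]
and the transition $\ai$-bimodule maps induced by the handleslide intertwine the $R^f$-bimodule structures on the two sides. By the finiteness result of Lemma~\ref{lemma: finiteness}, all relevant signed counts appearing in these continuation/gluing formulas are well defined. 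Concatenating over all type (i) and type (ii) pieces yields the Morita equivalence of $R^f(W,\beta_t,\phi_t)$ and the compatible quasi-equivalence of $B^f(W,\beta_t,\phi_t;h)$ for $t=0,1$.

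The hard part is the cone identification in (ii): it requires a careful transversality and gluing analysis for $J^\Diamond$-holomorphic curves in the parametrized family $D_m\times \wh W$ when a basis Lagrangian is being deformed through the handleslide, together with verifying that the handleslide chord $c$ appears with nonzero coefficient so that the cone is nondegenerate. The cylindrical ends of $\wh W$ make this more delicate than the compact Weinstein case, but the mechanism is the same: one uses an SFT-type neck-stretching that separates a local model near the handle, computes the local contribution, and identifies it with the mapping cone in the triangulated envelope $\overline{R}^f(W,\beta,\phi)$, and then propagates the identification through the bimodule operations using naturality under the compactly supported symplectomorphism $\wt h$.
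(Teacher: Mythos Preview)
Your high-level strategy---reduce to an elementary handleslide and identify the new Lagrangian with a cone---matches the paper's, but there is a genuine gap that the paper has to work to close and that your outline skips over.

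The objects of the category are not individual Lagrangians $\wh a_i$ but $k$-tuples $\wh a_{\bf m}$, and the HDHF morphism spaces $\Hom(\wh a_{\bf m},\wh a_{\bf m'})$ are \emph{not} tensor products of the pairwise $\Hom(\wh a_i,\wh a_j)$: the curve counts involve branched covers of $D_m$ with all $k$ boundary components constrained simultaneously. So the single-disk cone $\wh a_i'\simeq\op{Cone}(\wh a_j\to\wh a_i)$ does not formally imply the tuple-level statement. The paper proves the tuple version directly (Theorem~\ref{thm: handleslide}): for $\mathbf{b}=\{b_1,a_2'',\dots,a_k''\}$ it constructs explicit cycles $\Theta_c,\Theta_d,\Theta_e$ built from the handleslide chord together with the ``bottom'' generators $\theta^{\op{bot}}_i$ on the untouched components, defines concrete $\ai$-module maps $\Psi,\Phi$ via $\mu_{b+2},\mu_{b+3}$, and runs the Seidel exact-triangle argument by analyzing $\bdry\mathcal{M}^{\op{ind}=1}(\Theta_d,\Theta_c,\Theta_e,\mathbf{y};\mathbf{y}')$. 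The key input that $\mu_3(\Theta_d,\Theta_c,\Theta_e)$ is the cohomological unit comes from Corollary~\ref{cor: resolution}, which is obtained not from a generic neck-stretch but from two explicit local models of the handleslide (Sections~\ref{subsub: model1}--\ref{subsub: model2}) combined with the Fukaya--Oh--Ohta--Ono Lagrangian surgery identification.

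For the bimodule part, your sentence ``arrange that $\wt h$ is supported away from the handleslide region'' is not available: $h$ is an arbitrary element of $\op{Symp}(W,\bdry W,d\beta)$ and need not be Hamiltonian isotopic to something with small support. The paper avoids any geometric argument on the $h$-side and instead proves the algebraic statement
\[
B^f(W,\beta_0,\phi_0;h)\otimes_{R_0} M_{10}\;\simeq\; M_{10}\otimes_{R_1} B^f(W,\beta_1,\phi_1;h)
\]
for the transition bimodule $M_{10}=\bigoplus\Hom(\wh a_{\bf m_1},\wh a_{\bf m_0'})$, using a bar-resolution/spectral-sequence argument in the ambient category $\overline{\mathcal{R}}^f(W)$. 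This is both cleaner and does not depend on any special position of $h$.
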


To this end we present two models of a handleslide that connect the handleslide operation to the better-studied operation of the resolution of a Lagrangian intersection. This leads to the description of the relevant space of $J$-holomorphic curves in Corollary \ref{cor: resolution}. The proof of Theorem \ref{thm: invariance under handleslides} then follows that of Seidel's long exact sequence \cite{Se1} which relates the geometric operations to the algebraic cone.

The following is an immediate corollary:

\begin{cor}\label{cor: Hochschild}
The Hochschild homology of the $\ai$-$R^f(W,\beta,\phi)$-bi\-module $B^f(W,\beta,\phi;h)$ is invariant under handleslides, i.e., Weinstein homotopies $(\beta_t,\phi_t)$, $t\in[0,1]$.
\end{cor}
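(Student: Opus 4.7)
The plan is to deduce this directly from Theorem~\ref{thm: invariance under handleslides} by invoking the Morita invariance of Hochschild homology for $\ai$-algebras with bimodule coefficients.

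First, I would recall the relevant algebraic fact: if $A_0$ and $A_1$ are $\ai$-algebras that are Morita equivalent via an invertible $A_0$-$A_1$-bimodule $P$ (with quasi-inverse $A_1$-$A_0$-bimodule $P^\vee$), and if $M_t$ is an $A_t$-$A_t$-bi\-module for $t=0,1$ such that $M_1$ is quasi-isomorphic to $P^\vee \otimes_{A_0} M_0 \otimes_{A_0} P$ as $A_1$-$A_1$-bimodules, then there is a canonical quasi-isomorphism of Hochschild chain complexes $CH_*(A_0, M_0)\simeq CH_*(A_1, M_1)$. This is a standard fact in the $\ai$-literature (Seidel~\cite{Se3} and Ganatra's thesis, among others); one transparent proof interprets $HH_*(A,M)$ as the derived trace in the category of $A$-$A$-bimodules of the bimodule $M$, which makes invariance under Morita equivalence manifest, since trace is preserved under conjugation by invertible bimodules.

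Next, I would apply Theorem~\ref{thm: invariance under handleslides}: given a Weinstein homotopy $(\beta_t,\phi_t)$, $t\in[0,1]$, the theorem supplies exactly the required data, namely a Morita equivalence between $R^f(W,\beta_0,\phi_0)$ and $R^f(W,\beta_1,\phi_1)$ together with a quasi-equivalence of the bimodules $B^f(W,\beta_t,\phi_t;h)$, $t=0,1$, compatible with this Morita equivalence. Feeding this into the algebraic fact above yields
$$HH_*\bigl(R^f(W,\beta_0,\phi_0),B^f(W,\beta_0,\phi_0;h)\bigr)\cong HH_*\bigl(R^f(W,\beta_1,\phi_1),B^f(W,\beta_1,\phi_1;h)\bigr),$$
which is the desired invariance statement.

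The genuine content is already absorbed into Theorem~\ref{thm: invariance under handleslides}; the extraction of this corollary is essentially formal. The only subtlety worth flagging is that the bimodule quasi-equivalence produced by the handleslide analysis (a geometric continuation-type map associated to the family of Weinstein structures) must match the notion of bimodule equivalence compatible with the invertible bimodule implementing the Morita equivalence, in the precise sense required by the algebraic lemma. This compatibility is naturally built into the continuation/cobordism construction underlying Theorem~\ref{thm: invariance under handleslides} but should be recorded explicitly when fleshing out the proof.
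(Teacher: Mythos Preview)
Your proposal is correct and matches the paper's approach: the paper states this as an ``immediate corollary'' of Theorem~\ref{thm: invariance under handleslides} with no further argument, and your write-up simply spells out the formal step (Morita invariance of Hochschild homology with bimodule coefficients) that the paper leaves implicit.
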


It is a difficult problem to compute the Hochschild homology groups.  The goal of Part 2 is to give a conjectural combinatorial description of the $A_\infty$-algebras in certain cases.

\subsubsection{First handleslide model}\label{subsub: model1}

We consider an {\em elementary handleslide} Weinstein homotopy $(\beta_t,Y_t,\phi_t)$, $t\in[0,1]$, on $W$ which corresponds to a $\nu$-dimensional handleslide; here we are explicitly writing the Liouville vector field $Y_t$ for $\beta_t$.  More precisely, 
\be
\item[(i)] $\phi_t$ is Morse for all $t$;
\item[(ii)] for $t\not=\tfrac{1}{2}$, every trajectory of $Y_t$ between critical points is from a lower index critical point to a higher index critical point;
\item[(iii)] for $t=\tfrac{1}{2}$, every trajectory of $Y_{1/2}$ between critical points is from a lower index critical point to a higher index critical point with the exception of a single trajectory between two index $\nu$ critical points, say from $p$ to $q$;
\item[(iv)] $(\beta_t,Y_t,\phi_t)$ is generic in a family.
\ee

We describe a model for a handleslide on a neighborhood of the union of unstable manifolds of $p$ and $q$.  This model will be used in the proof of Proposition~\ref{prop: half-arc over a half-arc}.

\s\n
{\em Case $\op{dim} W=2$.} Consider $\R^2_{x_1,y_1}$.  Let $p=(0,0)$ and $q=(1,0)$. We take 
\begin{equation}
\beta_{1/2}=\left\{  \begin{array}{cl}
2x_1dy_1 +y_1dx_1, & \mbox{ on } x_1\leq \tfrac{1}{4}, \\
(1-x_1)dy_1-2y_1dx_1, & \mbox{ on } x_1\geq \tfrac{3}{4}.
\end{array}\right.
\end{equation}
For $x_1\leq \tfrac{1}{4}$, $Y_{1/2}=2x_1 \partial_{x_1} -y_1\partial_{y_1}$. This is the standard model of a saddle whose unstable trajectories are along the $x_1$-axis and whose stable ones are along the $y_1$-axis. For $x_1\geq \tfrac{3}{4}$, $Y_{1/2}=(1-x_1)\partial_{x_1} +2y_1\partial_{y_1}$. The stable manifold of $q$ is along the $x_1$-axis and the unstable manifold $\widetilde{a}_1$ of $q$ is in the $y_1$-direction.
We easily extend $\beta_{1/2}$ to the region $\tfrac{1}{4}\leq x_1\leq \tfrac{3}{4}$ so that $d\beta_{1/2}=dx_1\wedge dy_1$ and $\{ 0< x_1< 1, y_1=0\}$ is a trajectory of $Y_{1/2}$.

Now for all $0<|\epsilon|\leq \epsilon_0$ small, let $\beta_{1/2+\epsilon}$ be a small perturbation of $\beta_{1/2}$ which is compactly supported on a neighborhood of $\{ \tfrac{1}{3} <x_1<\tfrac{2}{3}, y_1=0\}$ so that the unstable manifold $a_1(\tfrac{1}{2}+\epsilon)$ of $Y_{1/2+\epsilon}$ from $p$ passes through $(\tfrac{3}{4},\epsilon )$. 
Note that if $\epsilon$ is small, it hits the boundary of $W$ close to the unstable manifold $\widetilde{a}_1$ of $q$ (which does not depend on $t$). In particular, there is a short Reeb chord between the Legendrian submanifolds $\partial a_1(\tfrac{1}{2}+\epsilon)$ and $\partial \widetilde{a}_1$ of $(\partial W ,\beta_{1/2+\epsilon})$, which goes from $\partial \widetilde{a}_1$ to $\partial a_1(\tfrac{1}{2}+\epsilon)$ when $\epsilon >0$ and vice versa when $\epsilon<0$; the latter one will be called $c$.

We then extend $\beta_t$ to all $t\in[0,1]$ so that it is locally constant on $[0,1]\setminus [\tfrac{1}{2}-\epsilon_0,\tfrac{1}{2}+\epsilon_0]$.

\s\n
{\em Case $\op{dim} W=2\nu$.} The general case is a product $\R^2_{x_1,y_1} \times \R^{2\nu-2}_{x_2,y_2,\dots,x_\nu,y_\nu}$. Let $\overline p=(0,\dots,0)$ and $\overline q=(1,0,\dots,0)$.
For $|\epsilon|\leq \epsilon_0$ we take
\begin{gather*}
\overline{\beta}_{1/2+\epsilon} =\beta_{1/2+\epsilon} +\Sigma_{i=2}^\nu (2x_idy_i+y_idx_i),\\
\overline{Y}_{1/2+\epsilon} = Y_{1/2+\epsilon} +\Sigma_{i=2}^\nu (2x_i\partial_{x_i}-y_i\partial_{y_i}).
\end{gather*}
The unstable submanifolds of $\overline p$ and $\overline q$ are:
\begin{gather*}
\overline{a}_1(\tfrac{1}{2}+\epsilon) =a_1(\tfrac{1}{2}+\epsilon) \times \{ y_2=\dots =y_\nu=0\},\\
\overline{\widetilde{a}}_1 =\{ x_1=1, y_2 =\dots =y_\nu =0\}.
\end{gather*}

One can verify that there is one short (nondegenerate) Reeb chord $c_{1/2+\epsilon}$ in $(\partial W,\overline \beta_{1/2+\epsilon})$ between the Legendrians $\partial \overline{a}_1(\tfrac{1}{2}+\epsilon)$ and $\partial \overline{\widetilde{a}}_1$, which goes from $\partial \overline{\widetilde{a}}_1$ to $\partial \overline a_1(\tfrac{1}{2}+\epsilon)$ when $\epsilon >0$ and vice versa when $\epsilon<0$.

\s
{\em From now on we omit the bars and write $c$, $d$, $a_1$, $b_1$ for $c_{1/2-\epsilon}$, $c_{1/2+\epsilon}$, $a_1(\tfrac{1}{2}- \epsilon)$, $a_1(\tfrac{1}{2}+\epsilon)$ with $\epsilon>0$.}

\subsubsection{Second handleslide model}\label{subsub: model2}

We now give a second description of a handleslide.  

Let $(\widehat{W},\widehat\beta)$ be the completion of $(W,\beta)$ obtained by gluing $([0,\infty)_s \times \partial W, d(e^s \beta\vert_{\partial W}))$; also let $W_{[0,s_0]}:=W\cup ([0,s_0]\times \bdry W)$. We extend $a_1$, $\widetilde{a}_1$ and $b_1$ to $\widehat{a}_1$, $\widehat{\widetilde{a}}_1$ and $\widehat{b}_1$ on $\widehat W$ by attaching Lagrangian cylinders $[0,\infty) \times \partial a_1$, $[0,\infty) \times \partial \widetilde{a}_1$ and $[0,\infty) \times \partial b_1$. We then apply a partial wrapping to $\widehat a_1$ to obtain $\widehat{a}_1^{pw}$ using the time-$1$ flow of a Hamiltonian $\delta F(s)$, where $F(s)=\int_0^s e^t \arctan (t)dt$ and the constant $\delta>0$ is chosen 
so that $\widehat{a}_1^{pw}$ and $\widehat{\widetilde{a}}_1$ intersect transversely at a point $p_c\subset\{s=1\}$ corresponding to $c$, and at no other point. The chord $c$ is transformed into a chord $c'\subset \{s=2\}$ from $\widehat{\widetilde{a}}_1 \cap \{s=2\}$ to $\widehat{a}_1^{pw} \cap \{s=2\}$.


At this point we recall the Lagrangian connected sum (which we also call ``resolution''), given by the following local model:
Let $(\R^{2\nu}_{x_1,y_1,\dots,x_\nu,y_\nu},\omega=\Sigma_{i=1}^\nu dx_i \wedge dy_i)$ be a symplectic manifold and let $L_1 =\{ y_1=\dots=y_\nu=0\}$ and $L_2=\{ x_1 =\dots =x_\nu=0\}$ be Lagrangian submanifolds that intersect at $(0,\dots, 0)$. Then we define $L_1 \sharp L_2$ to be given by the graph of the differential of $(x_1,\dots,x_\nu)\mapsto \log \Vert x\Vert$.  Note that $L_1\sharp L_2\not= L_2\sharp L_1$ according to the definition.

Let $l_1$ be the Lagrangian cylinder obtained by taking $ \widehat{\widetilde{a}}_1\sharp \widehat{a}_1^{pw}$ at the intersection point $p_c$.

\begin{lemma}
    After possibly applying a small Lagrangian isotopy to $l_1$, the restriction $l_1\cap W_{[0,1]}$ is a Lagrangian disk with Legendrian boundary that is isotopic to $\widehat{b}_1\cap W_{[0,1]}$ through Lagrangian disks with Legendrian boundary in $\partial W_{[0,1]}$.
\end{lemma}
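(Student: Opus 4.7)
The plan is to reduce the claim to a direct verification in the explicit local coordinate system of Section~\ref{subsub: model1} near the union $\overline{a}_1\cup\overline{\widetilde{a}}_1$, extended to $\widehat W$ by the cylindrical end, and then to produce an explicit family of Lagrangian disks with Legendrian boundary interpolating between the two. The geometric content is intuitive: both $\widehat{b}_1\cap W_{[0,1]}$ and the resolution $l_1\cap W_{[0,1]}$ track $\widehat{a}_1$ for most of $W$ and then, near $\{s=1\}$, bend to run briefly along $\widehat{\widetilde{a}}_1$ before closing up in a Legendrian sphere on $\partial W_{[0,1]}$.

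First I would pin down that $l_1\cap W_{[0,1]}$ is a Lagrangian disk. Since $p_c\in\{s=1\}\subset\partial W_{[0,1]}$, the Lagrangian connected sum $\widehat{\widetilde{a}}_1\sharp\widehat{a}_1^{pw}$ at $p_c$ takes place near the boundary of our truncated domain; the "small Lagrangian isotopy" pushes the surgery region slightly inward to $\{s=1-\eta\}$ so that the smoothing is performed strictly inside $W_{[0,1]}$. The effect on $W_{[0,1]}$ is to replace the singular union of the two embedded half-disks $\widehat{\widetilde{a}}_1\cap W_{[0,1]}$ and $\widehat{a}_1^{pw}\cap W_{[0,1]}$ (meeting transversely at the single interior point obtained after the isotopy) by the standard Polterovich-type smoothing, yielding a smoothly embedded disk $D^\nu$ whose boundary lies in $\partial W_{[0,1]}$ and is Legendrian for $\ker(\beta|_{\partial W})$.

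Next I would identify the Legendrian boundaries. In the local model, $\partial\widehat{b}_1\cap\{s=1\}$ is a small perturbation of $\partial\widehat{a}_1\cap\{s=1\}$ along the Reeb direction, pushed precisely to make it pass through the point where the perturbed unstable trajectory hits $\partial W$ close to $\partial\widehat{\widetilde{a}}_1$; and $\partial(l_1\cap W_{[0,1]})$ is obtained instead by concatenating a short piece of $\partial\widehat{\widetilde{a}}_1$ with the image of $\partial\widehat{a}_1^{pw}\cap\{s=1\}$ through the surgery profile. Parameterizing both explicitly in the local coordinates shows they are Legendrian isotopic in $(\partial W_{[0,1]},\ker\beta|_{\partial W})$ via a Reeb-like Legendrian isotopy supported in an arbitrarily small neighborhood of the short chord $c$.

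Finally, choosing the handleslide parameter $\epsilon$ and the wrapping parameter $\delta$ of comparable, sufficiently small size guarantees that both disks, after the preliminary Legendrian adjustment on the boundary, lie in a common Weinstein tubular neighborhood of, say, $\widehat{b}_1\cap W_{[0,1]}$. There each disk is the graph of an exact $1$-form $dg_t$ satisfying the Legendrian boundary condition (i.e., $g_t$ is locally constant on the boundary sphere up to the contact form identification), and the straight-line homotopy between the two primitive functions yields the desired isotopy through Lagrangian disks with Legendrian boundary. I expect the main technical obstacle to be the bookkeeping of the partial-wrapping function $\delta F(s)$ against the handleslide perturbation in order to certify that the two disks genuinely lie in a common Weinstein neighborhood and that the interpolating family stays embedded all the way through; this is the step where the freedom to apply the preliminary "small Lagrangian isotopy to $l_1$" is essential.
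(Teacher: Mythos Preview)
Your final step has a genuine gap that is structural, not bookkeeping. The Weinstein-neighborhood argument requires $l_1\cap W_{[0,1]}$ and $\widehat b_1\cap W_{[0,1]}$ to be $C^0$-close, but they are not, regardless of how small $\epsilon$ and $\delta$ are. Away from a neighborhood of $p_c$ the surgery has no effect, so inside $W$ one simply has $l_1\cap W=\widetilde a_1\sqcup a_1$. In the model of Section~\ref{subsub: model1} the disk $\widetilde a_1$ is the plane $\{x_1=1,\ y_2=\dots=y_\nu=0\}$, while $b_1$ is the unstable manifold of $p$ for the $\epsilon>0$ perturbation: its branch toward $q$ passes through $(\tfrac34,\epsilon,0,\dots,0)$ and thereafter has $y_1>0$ as $x_1\to 1$, and elsewhere it lies within $O(\epsilon)$ of $\{y_1=0,\ x_1\le\tfrac34\}$. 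A point such as $(1,-\tfrac12,0,\dots,0)\in\widetilde a_1$ is therefore at distance at least $\tfrac14$ from $b_1$, uniformly in $\epsilon$ and $\delta$. Hence $l_1\cap W_{[0,1]}$ cannot lie in any small Weinstein neighborhood of $\widehat b_1\cap W_{[0,1]}$: the required Lagrangian isotopy must sweep the half of $\widetilde a_1$ with $y_1<0$, together with the nearby branch of $a_1$, across a region of definite size, and no straight-line homotopy of graphical primitives will produce this.

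The paper proceeds quite differently. Rather than comparing the two disks globally, it works in the local connected-sum model near $p_c$, uses a specific primitive of $\omega$ for which the hypersurfaces $\{x_1+y_1=\delta\}$ are contact, and shows that after a small isotopy $l_1$ meets $\{x_1+y_1=0\}$ in the Legendrian lift of the Lagrangian resolution of the double point, namely the \emph{cone-sum} of $\partial\widetilde a_1$ and $\partial a_1$ in the sense of \cite{CMP}. The resolution in the collar $[0,2]\times\partial W$ then decomposes as a cone-sum cobordism over $[0,1]$ followed by a pinch over $[1,2]$; the first piece, capped off by $\widetilde a_1\sqcup a_1$ in $W$, is precisely the Lagrangian realization of the handleslide producing $b_1$, which yields the identification with $\widehat b_1\cap W_{[0,1]}$ without any $C^0$-closeness hypothesis.
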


\begin{proof}
    First observe that in the above model $(\R^{2\nu},\omega)$ the restriction $(L_1 \sharp L_2 )\cap \{x_1=y_1=0\}$ coincides with the resolution $(L_1 \cap \{x_1=y_1=0\}) \sharp (L_2 \cap \{x_1=y_1=0\})$ in $\{x_1=y_1=0\}$. 
    
    Consider the primitive $\beta = (1+x_1)dy_1 +\Sigma_{i=2}^\nu x_i dy_i$ of $\omega$. The hypersurfaces
    $$(\mathcal{H}_\delta:=\{x_1+y_1=\delta\},\beta|_{\mathcal{H}_\delta} )$$ 
    are contact since they are transverse to the Liouville vector field $X_\beta=(1+x_1)\bdry_{x_1} + \sum_{i=2}^\nu x_i \bdry_{x_i}$ of $\beta$.  Observe that $X_\beta$ points in the direction of increasing $\delta$ and the local model describes the switching of the ``Reeb heights'' of $L_1\cap \mathcal{H}_\delta$ and $L_2\cap \mathcal{H}_\delta$ as $\delta$ goes from negative to positive.  We claim that $L_1 \sharp L_2$ can be modified by a small isotopy so that the intersection $(L_1 \sharp L_2 )\cap\mathcal{H}_0$ is Legendrian. Indeed, one can view $(L_1 \cup L_2 )\cap \mathcal{H}_0$ as an immersed Legendrian in $(\mathcal{H}_0,\beta )$ with a double point at $p_c$. A neighborhood of it is obtained by plumbing the $1$-jet spaces of $L_1 \cap \mathcal{H}_0$ and $L_2 \cap \mathcal{H}_0$, where the spaces $L_1 \cap \mathcal{H}_0$ and $L_2 \cap\mathcal{H}_0$ correspond to the $0$-sections. We then resolve $L_1\cup L_2$ near $p_c$ so that $(L_1\sharp L_2)\cap \mathcal{H}_0$ is a resolution of $(L_1 \cup L_2 )\cap \mathcal{H}_0$ near $p_c$, viewed as a Lagrangian in the Lagrangian projection of the plumbed jet spaces.
    Recall that the total resolution is obtained as the graph of the differential of $F\colon (x_1,\dots,x_\nu)\mapsto \log \Vert x\Vert$, and the resolution of $(L_1 \cup L_2 )\cap \mathcal{H}_0$ is its restriction to $x_1=y_1=0$. Finally we deform $L_1 \sharp L_2$ so that its intersection with $\mathcal{H}_0$ is the Legendrian lift of the resolution of $(L_1 \cup L_2 )\cap \mathcal{H}_0$ near $p_c$ in the $1$-jet space neighborhood. To adapt the intersection with $\mathcal{H}_0$, we  just need to modify $F$ to give $\frac{\partial F}{\partial x_1} \vert_{\{x_1=0\}}$ the desired value.
    By \cite[Theorem 6.3, Figure 8(B)]{CMP}, this Legendrian lift has a front projection that is a cone-sum pictured in Figure \ref{picture: surgery} at level $s=1$.
    Note that the Legendrian lift is close to its Lagrangian projection.


    

    We picture, in the front projection, the composition of two Legendrian cobordisms from bottom to top: the Lagrangian $L_{[0,1]}$ corresponding to a connected cone-sum operation followed by a pinch cobordism $L_{[1,2]}$ along the disk $D$, whose union gives the resolution of $p_c$; see Figure \ref{picture: surgery}.  
\end{proof}

\begin{figure}[ht]
\begin{overpic}
[scale=0.8]{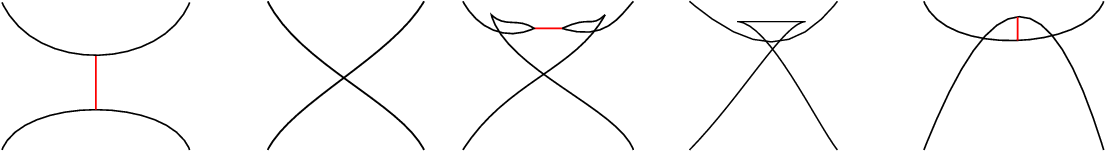}
\put(18,7){$\longleftarrow$}\put (59,7){$\longleftarrow$}\put(79,7){$\simeq$} \put(39,7){$\simeq$}
\put(91,8){$c'$}\put(10,6){$c$}\put(29,9){$\partial b_1$}\put(100,10){$\partial \widetilde{a}_1$}\put(100,2){$\partial a_1^{pw}$}
\put(-5,10){$\partial \widetilde{a}_1$}\put(-5,2){$\partial a_1$}\put(48,13){$D$}
\put(6,-2){$s=0$}\put(28,-2){$s=1$}\put(90,-2){$s=2$}
\end{overpic}
\vskip 0.5cm
\caption{Decomposing the Lagrangian resolution of $p_c$ into two cobordisms. The arrows indicate a surgery operation and $\simeq$ indicates the trace of a Legendrian isotopy.}
\label{picture: surgery}
\end{figure}

Let $L_1,\dots,L_k$ be Lagrangian submanifolds of $(W,d\beta)$, possibly with Legendrian boundary. As usual we denote by $\widehat L_i\subset \widehat W$ the extension of $L_i$ obtained by attaching the cylindrical end $[0,\infty)\times \bdry L_i$. Let $p_1 \in  \widetilde{a}_1 \cap L_1$, $p_2 \in L_1\cap L_2,\dots, p_k \in L_{k-1}\cap L_k$ and $p_{k+1}\in L_k \cap a_1$ be intersection points. 

Let $\mathcal{M}_{\widehat W} (p_{k+1},\dots p_1,c)$ be the moduli space of holomorphic maps $u:\dot F\to (\widehat W,\widehat J_W)$ (modulo domain automorphisms) for a $\widehat\beta$-adapted end-cylindrical almost complex structure $\widehat J_W$ such that:
\be
\item[(i)] $F$ is a compact Riemann surface of genus $g$ and one boundary component, $\dot F$ is $F$ with boundary punctures removed, and we are letting $\dot F$ vary;
\item[(ii)] $u$ is asymptotic to the intersection points $p_{k+1},\dots,p_1$ and the chord $c$ and $u$ maps $\bdry \dot F$ to the Lagrangians $\widehat{\widetilde{a}}_1$, $\widehat{L}_1,\dots,\widehat{L}_k$, $\widehat{a}_1$ in counterclockwise order.
\ee
Similarly, let $\mathcal{M}_{W_{[0,2]}} (p_{k+1},\dots p_{1},p_c)$ be the moduli space of holomorphic maps $v: \dot F\to (W_{[0,2]}, \widehat J_W|_{W_{[0,2]}})$ such that (i) above holds and:
\be
\item[(ii')] $v$ is asymptotic to the intersection points $p_{k+1},\dots,p_{1},p_c$ and $v$ maps $\bdry \dot F$ to the restrictions of the Lagrangians $\widehat{\widetilde{a}}_1$, $\widehat{L}_1,\dots,\widehat{L}_k$, $\widehat{a}^{pw}_1$ to $W_{[0,2]}$ in counterclockwise order.
\ee

\begin{lemma}\label{lemma: p_c to c}
If $c$ is sufficiently short, then for a suitable $\widehat{J}_W$ we have a diffeomorphism 
$$\mathcal{M}_{\widehat W} (p_{k+1},\dots p_1,c)\simeq \mathcal{M}_{W_{[0,2]}} (p_{k+1},\dots p_{1}, p_c).$$ 
\end{lemma}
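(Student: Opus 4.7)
The plan is to construct an explicit bijection between the two moduli spaces using the partial-wrapping Hamiltonian isotopy combined with standard asymptotic analysis at a short Reeb chord.

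First I would set up the geometry precisely. Let $\phi_\delta\colon \widehat W \to \widehat W$ denote the time-$1$ flow of the Hamiltonian $\delta F(s)$ generating the partial wrapping, so that $\phi_\delta(\widehat a_1) = \widehat a_1^{pw}$. After a suitable choice of $\delta$ and the profile of $F$, one arranges that $\phi_\delta$ is the identity on $W_{[0,1-\eta]}$ for some small $\eta>0$ and that the time-$1$ Reeb shift at $s=1$ equals the action of $c$, so that $p_c$ is exactly the image of the short Reeb chord $c$ under the wrapping. Choose $\widehat J_W$ to be a $\widehat\beta$-adapted end-cylindrical almost complex structure on $\widehat W$ that is $\phi_\delta$-invariant on $\{s \geq 1-\eta\} \times \partial W$; such a choice is possible because $\phi_\delta$ preserves the cylindrical structure there.

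Next, I would construct the forward map $\Psi\colon \mathcal{M}_{\widehat W}(p_{k+1},\dots,p_1,c) \to \mathcal{M}_{W_{[0,2]}}(p_{k+1},\dots,p_1,p_c)$. Given $u \in \mathcal{M}_{\widehat W}$, exponential decay at the puncture asymptotic to $c$ produces a compact subdomain $\dot F_0 \subset \dot F$ outside of which $u$ agrees, up to exponentially small error, with the trivial half-strip $[s_0,\infty) \times c \subset \widehat W$. Post-composing the boundary component lying on $\widehat a_1$ in this tail with $\phi_\delta$ converts the trivial half-strip asymptotic to $c$ into a local model for a finite puncture at $p_c \in \{s=1\}$; the resulting curve lies in $W_{[0,2]}$ after truncation at $s=2$. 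To justify that $u$ does not escape into $\{s > 2\}$ beyond the trivial asymptotic tail, one applies the integral identity (\ref{eqn: coffee}) from Lemma~\ref{lemma: finiteness}: any excursion reaching $s=s^*>2$ contributes at least $e^{s^*}-e^2$ worth of Reeb action to the right-hand side of Stokes, while the total action is bounded above by the action of $c$ plus the fixed contributions of $p_1,\dots,p_{k+1}$. For $c$ sufficiently short this rules out all such excursions.

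The inverse map $\Psi^{-1}$ is obtained symmetrically: given $v \in \mathcal{M}_{W_{[0,2]}}$, one applies $\phi_\delta^{-1}$ to the boundary component on $\widehat a_1^{pw}$ to convert the finite puncture at $p_c$ into an open puncture, and then glues on the trivial half-strip $[0,\infty) \times c$ to produce a curve in $\widehat W$ asymptotic to $c$. Mutual invertibility is immediate from the construction, and holomorphicity is preserved by the $\phi_\delta$-invariance of $\widehat J_W$ on the end. The resulting moduli spaces are transversely cut out simultaneously, since they differ by a symplectomorphism in the cylindrical region composed with a rigid asymptotic model.

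The main obstacle I expect is the local rigidity at the puncture/chord: one must verify that the half-strip attachment (in $\Psi^{-1}$) and the tail-to-puncture conversion (in $\Psi$) produce a unique smooth $\widehat J_W$-holomorphic curve, with no additional moduli appearing. This is a quantitative implicit function theorem argument in the style of Ekholm's Reeb-chord/intersection-point correspondence, controlled by the shortness of $c$. Apart from this local analysis, which is well-established, the remaining verifications are formal.
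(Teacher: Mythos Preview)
Your proposal has a genuine gap in the central step. The operation ``post-composing the boundary component lying on $\widehat a_1$ with $\phi_\delta$'' is not a well-defined operation on holomorphic curves: you cannot apply a diffeomorphism to only one boundary arc of a map and obtain a $J$-holomorphic curve. If instead you mean to apply $\phi_\delta$ globally to $u$ (using that $\phi_\delta=\op{id}$ on $W_{[0,1-\eta]}$), then \emph{all} Lagrangian boundary conditions in the end get wrapped simultaneously---$\widehat{\widetilde a}_1$ and each $\widehat L_i$ become $\phi_\delta(\widehat{\widetilde a}_1)$ and $\phi_\delta(\widehat L_i)$---so the resulting curve does not satisfy the boundary conditions defining $\mathcal{M}_{W_{[0,2]}}(p_{k+1},\dots,p_1,p_c)$. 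There is also a secondary issue: an end-cylindrical $\widehat J_W$ (with $J\partial_s=R$) cannot be $\phi_\delta$-invariant, since $(\phi_\delta)_*\partial_s=\partial_s+\tfrac{\delta}{1+s^2}R\neq\partial_s$, so the hypothesis you impose on $\widehat J_W$ is inconsistent.

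The paper takes a different route: it stretches $(\widehat W,\widehat J_W)$ along the contact hypersurface $\{0\}\times\partial W$, citing \cite[Lemma~5.7.2]{CHT}. In the SFT limit, a curve with corner at $p_c$ breaks into a building whose piece in $\widehat W$ has a positive Reeb-chord asymptotic and whose piece in the neck is a strip from $p_c$ down to that chord. Action and index constraints (using that $c$ is short) force the chord to be $c$ and the neck piece to be the unique trivial strip. The diffeomorphism is then obtained for a close-to-breaking almost complex structure, before the limit. The Ekholm-style correspondence you invoke at the end is in fact this neck-stretching argument; it is not a consequence of pushing curves around by a Hamiltonian isotopy.
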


\begin{proof} 
The proof is the same as that of \cite[Lemma 5.7.2]{CHT}, obtained by stretching $(\widehat W,\widehat{J}_W)$ along $\{0\} \times \partial W$. The diffeomorphism arises in the close-to-breaking situation and the corresponding $\widehat{J}_W$. By action and Fredholm index considerations, in the limit the curves from $p_c$ break into strips from $p_c$ to $c$ followed by curves from $c$.
\end{proof}

Let $\mathcal{M}_{W_{[0,2]}}^{\widehat{l}_1}(p_{k+1}',\dots,p_{1}')$, $\mathcal{M}_{W_{[0,2]}}^{\widehat{b}_1}(p_{k+1}',\dots,p_{1}')$ (resp.\ $\mathcal{M}_{\widehat W}^{\widehat{l}_1} (p_{k+1}',\dots,p_{1}')$, $\mathcal{M}_{\widehat W}^{\widehat{b}_1} (p_{k+1}',\dots,p_{1}')$) be the moduli space of holomorphic maps $\dot F\to ({W_{[0,2]}},\widehat{J}_W|_{W_{[0,2]}})$ (resp.\ $\dot F\to (\widehat W, \widehat{J}_W)$) such that (i) above holds and
\be
\item[(ii'')] $u$ maps $\bdry \dot F$ to the Lagrangians $\widehat{L}_1,\dots,\widehat{L}_k$, and $\widehat{l}_1$ or $\widehat{b}_1$ in counterclockwise order and is asymptotic to the intersection points $p_{k+1}',\dots,p_{1}'$, where $p_i'=p_i$ for $i\neq 1,k+1$ and $p_1'$ and $p_{k+1}'$ are obtained from $p_1$ and $p_{k+1}$ by a small isotopy when replacing $\widehat{a}_1\cup\widehat{\widetilde a}_1$ by $\widehat{l}_1$ or $\widehat{b}_1$. 
\ee

\begin{lemma}[Fukaya-Oh-Ohta-Ono~\cite{FO3}]\label{lemma: FO3}
For a suitable $\widehat{J}_W$, we have a diffeomorphism 
$$\mathcal{M}_{W_{[0,2]}} (p_{k+1},\dots p_{1}, p_c)\simeq \mathcal{M}_{W_{[0,2]}}^{\widehat{l}_1}(p_{k+1}',\dots,p_{1}').$$ 
\end{lemma}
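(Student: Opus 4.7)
The plan is to apply the Lagrangian surgery technique of Fukaya-Oh-Ohta-Ono, which establishes a bijection between holomorphic curves with a corner at a Lagrangian intersection point and holomorphic curves with smooth boundary on the Lagrangian connected sum (surgery) resolving that intersection. In our setting, the relevant pair is $\widehat{\widetilde a}_1$ and $\widehat{a}_1^{pw}$ with transverse intersection at $p_c$, and the resolution is $\widehat{l}_1 = \widehat{\widetilde a}_1 \sharp \widehat{a}_1^{pw}$; the asymptotic data $(p_{k+1},\dots,p_1)$ is perturbed to $(p_{k+1}',\dots,p_1')$ because the neighboring Lagrangians move under the surgery. Since the surgery is supported in a small Darboux neighborhood of $p_c$, the identification is a purely local problem there.

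First I would introduce a one-parameter family $\widehat{l}_{1,\epsilon}$, $\epsilon\in [0,1]$, of Lagrangians with $\widehat{l}_{1,0}=\widehat{\widetilde a}_1\cup \widehat{a}_1^{pw}$ and $\widehat{l}_{1,1}=\widehat{l}_1$, where for $\epsilon>0$ each $\widehat{l}_{1,\epsilon}$ is obtained by the same surgery construction (graph of $d(\epsilon \log\|x\|)$) in a fixed Darboux neighborhood $U$ of $p_c$, and coincides with $\widehat{\widetilde a}_1\cup \widehat{a}_1^{pw}$ outside $U$. I would then choose $\widehat{J}_W$ to be integrable and equal to the standard complex structure on $U$, and generic outside $U$ so that the moduli spaces $\mathcal{M}_{W_{[0,2]}}(\dots,p_c)$ and $\mathcal{M}_{W_{[0,2]}}^{\widehat{l}_1}(\dots)$ are transversely cut out. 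With this setup, the neck size controlled by $\epsilon$ plays the role of the gluing parameter.

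Next, for the degeneration direction I would take a sequence $u_\epsilon\in\mathcal{M}_{W_{[0,2]}}^{\widehat{l}_{1,\epsilon}}(\dots)$ as $\epsilon\to 0$ and extract a Gromov limit. An action computation, together with the local model at $p_c$ (where the only holomorphic disks with boundary on $L_1\cup L_2$ emanating from the surgery neck of size $\epsilon$ are the explicit strips computed in \cite{FO3}), shows that no bubbles escape into $U$ and that the limit curve acquires a boundary corner at $p_c$, producing an element of $\mathcal{M}_{W_{[0,2]}}(\dots,p_c)$. Conversely, given a corner curve $v\in\mathcal{M}_{W_{[0,2]}}(\dots,p_c)$, I would apply the standard FO3 gluing construction: pregluing $v$ with the explicit model holomorphic strip over the surgery neck produces an approximate solution $v_\epsilon$ with exponentially small Cauchy-Riemann error, and the implicit function theorem (applied to a uniformly bounded right inverse of the linearized operator, available because the model operator on $U$ is explicitly invertible) yields a unique nearby genuine solution in $\mathcal{M}_{W_{[0,2]}}^{\widehat{l}_1}(\dots)$.

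The main obstacle is verifying that the local surgery model contributes exactly one smooth curve per corner curve and introduces no spurious disk bubbles in the neck region; this is a delicate analysis but is handled by taking the surgery parameter $\epsilon$ sufficiently small, and by exploiting the explicit form of $\widehat{J}_W$ on $U$ to get uniform elliptic estimates as in \cite[Section 10]{FO3}. The gluing and degeneration maps are then shown to be mutually inverse, establishing the claimed diffeomorphism of moduli spaces.
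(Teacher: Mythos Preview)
Your proposal is correct and matches the paper's approach: the paper's proof consists solely of the citation ``This corresponds to \cite[Theorem 55.7, Statement 55.8.3]{FO3},'' and you have sketched the content of that Fukaya--Oh--Ohta--Ono Lagrangian surgery argument (degeneration/gluing between corner curves at $p_c$ and smooth-boundary curves on the resolution $\widehat{l}_1$). There is no additional argument in the paper beyond the reference.
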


\begin{proof} 
This corresponds to \cite[Theorem 55.7, Statement 55.8.3]{FO3}. 
\end{proof}

\begin{figure}[ht]
\begin{overpic}[scale=0.5]{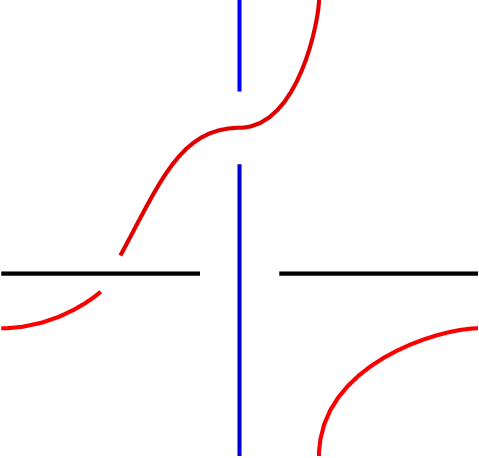}
\put(41,72){$d$} \put(41,43){$c$} \put(18,43){$e$}
\put(60,43){$\partial a_1$} \put(67.5,80){$\partial b_1$} \put(34,14){$\partial \widetilde{a}_1$}
\end{overpic}
\caption{The Legendrians $\bdry a_1$, $\bdry b_1$, and $\bdry \widetilde a_1$ in $\bdry W$ and the chords $c$, $d$, and $e$.}
\label{fig triangle}
\end{figure}

Next, stretching $W_{[0,2]}$ along $s=1$ we obtain:

\begin{lemma}\label{lemma: l to b} 
For a suitable $\widehat{J}_W$, we have a diffeomorphism 
$$\mathcal{M}_{W_{[0,2]}}^{\widehat{l}_1}(p_{k+1}',\dots,p_{1}')\simeq \mathcal{M}^{\widehat{b}_1}_{\widehat{W}} (p_{k+1}',\dots p_{1}').$$ 
\end{lemma}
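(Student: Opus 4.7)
The plan is to prove the diffeomorphism by SFT neck-stretching along the contact hypersurface $\{s=1\}\times\partial W$, carried out simultaneously inside $W_{[0,2]}$ and inside $\widehat W$. By the preceding lemma, there is a Lagrangian isotopy, through Lagrangian disks with Legendrian boundary in $\partial W_{[0,1]}$, from $\widehat{l}_1\cap W_{[0,1]}$ to $\widehat{b}_1\cap W_{[0,1]}$. Using this isotopy together with a compactly supported Hamiltonian in $W_{[0,1]}$, I would first arrange that $\widehat{l}_1\cap W_{[0,1]}=\widehat{b}_1\cap W_{[0,1]}$ and that the corresponding asymptotic points $p_1',\dots,p_{k+1}'$ agree on the two sides. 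With the neck stretched, curves in either moduli space break in the SFT limit into a \emph{bottom piece} in $W_{[0,1]}$ together with a collection of \emph{cap pieces} living above $s=1$: on the $\widehat{b}_1$ side the caps lie in $[1,\infty)\times\partial W$ with boundary on the trivial cylinder $[1,\infty)\times\partial b_1$, while on the $\widehat{l}_1$ side they lie in $(W_{[1,2]},\widehat{l}_1\cap W_{[1,2]})$, namely the pinch cobordism of Figure \ref{picture: surgery}. The bottom pieces then tautologically match, reducing the problem to matching the caps.

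For $\widehat{b}_1$, a cap has prescribed Reeb-chord asymptotes at $s=1$ and no asymptotic data at $s=+\infty$, with boundary on a trivial Reeb cylinder. By the standard action and Fredholm index constraint in the symplectization of $(\partial W,\widehat\beta\vert_{\partial W})$ with boundary on trivial Legendrian cylinders, such caps are forced to be the $\R$-invariant trivial half-strips, one per Reeb-chord asymptote. For $\widehat{l}_1$, we must establish the analogous rigidity inside the pinch cobordism: choosing $\widehat{J}_W$ well adapted to the cobordism decomposition (as in Lemmas \ref{lemma: p_c to c} and \ref{lemma: FO3}), and applying an argument in the style of \cite[Theorem 55.7]{FO3}, one concludes that the caps in the pinch cobordism are likewise in bijection with their Reeb-chord asymptotic data and contribute the identity. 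Combining the matched bottoms with the matched caps and the usual gluing theorem then upgrades the bijection to the asserted diffeomorphism.

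The main obstacle is precisely the last step: controlling the holomorphic caps inside the pinch cobordism and excluding exotic contributions arising from the front-projection cone-sum geometry drawn in Figure \ref{picture: surgery}. This requires a careful choice of the neck-stretching family together with the length scale of the pinch, so that the compactness-and-gluing machinery of \cite{FO3} applies and only the trivial caps survive; it is the higher-dimensional analog of the model computation that underpins Lemma \ref{lemma: FO3}, and is where the bulk of the technical work will lie.
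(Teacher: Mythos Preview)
Your stretching strategy along $\{s=1\}$ is exactly the paper's (its entire proof is the clause ``stretching $W_{[0,2]}$ along $s=1$''), but you have overcomplicated the top-level analysis and introduced a spurious notion of ``$\R$-invariant trivial half-strips.'' In fact there are \emph{no} nontrivial top-level curves on either side, so the bottom levels already exhaust both moduli spaces. For $\widehat b_1$ every Lagrangian is cylindrical in $\{s\ge 1\}$, so the maximum principle confines curves with interior asymptotics to $W_{[0,1]}$ outright and stretching is vacuous; there are no caps to identify, trivial or otherwise. For $\widehat l_1$, the completed top piece $(-\infty,2]\times\partial W$ carries the cylindrical $\widehat L_i$ together with the exact pinch cobordism, whose primitive of $e^s\beta$ is a single constant on its connected cylindrical negative end $(-\infty,1]\times\partial b_1$. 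Any putative top-level curve has only negative Reeb-chord asymptotics, so Stokes gives that its $d(e^s\beta)$-energy equals the boundary integral of $e^s\beta$, which vanishes on the cylindrical $\widehat L_i$ and telescopes to zero on the cobordism; hence the curve is constant, a contradiction. Thus the ``main obstacle'' you flag is resolved by this one-line action argument rather than anything in the style of \cite{FO3}, and once the top levels are empty the diffeomorphism is immediate from the equality of boundary conditions on $W_{[0,1]}$.
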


The combination of Lemmas \ref{lemma: p_c to c}, \ref{lemma: FO3}, and \ref{lemma: l to b} gives:

\begin{cor}\label{cor: resolution} 
If $c$ is sufficiently short, then for suitable $\widehat{J}_W $ and $\widehat{J}_W'$, we have a diffeomorphism 
$$\mathcal{M}_{\widehat W} (p_{k+1},\dots p_{1}, c)\simeq \mathcal{M}_{\widehat{W}}^{\widehat{b}_1}(p_{k+1}',\dots p_{1}').$$
\end{cor}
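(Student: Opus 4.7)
The plan is to chain together the three diffeomorphisms established in Lemmas~\ref{lemma: p_c to c}, \ref{lemma: FO3}, and \ref{lemma: l to b}. Applying Lemma~\ref{lemma: p_c to c} first, I would replace the chord-asymptotic moduli $\mathcal{M}_{\widehat W}(p_{k+1},\dots,p_1,c)$ (involving the Lagrangians $\widehat{a}_1$ and $\widehat{\widetilde{a}}_1$ asymptotic to the short Reeb chord $c$) by the intersection-point moduli $\mathcal{M}_{W_{[0,2]}}(p_{k+1},\dots,p_1,p_c)$ in the truncated completion $W_{[0,2]}$, where $p_c$ is the transverse intersection point of $\widehat{a}_1^{pw}$ with $\widehat{\widetilde{a}}_1$ created by the partial wrapping. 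Next, Lemma~\ref{lemma: FO3} (the Fukaya--Oh--Ohta--Ono resolution-of-intersection principle) lets me exchange the pair $(\widehat{\widetilde{a}}_1,\widehat{a}_1^{pw})$ together with the asymptotic $p_c$ for the single Lagrangian connected sum $\widehat{l}_1=\widehat{\widetilde{a}}_1\sharp\widehat{a}_1^{pw}$, yielding $\mathcal{M}_{W_{[0,2]}}^{\widehat{l}_1}(p_{k+1}',\dots,p_1')$. Finally, Lemma~\ref{lemma: l to b} identifies this with $\mathcal{M}_{\widehat{W}}^{\widehat{b}_1}(p_{k+1}',\dots,p_1')$ via the Lagrangian isotopy between $\widehat{l}_1\cap W_{[0,1]}$ and $\widehat{b}_1\cap W_{[0,1]}$ (and the trivial extension by cylindrical ends on $[1,\infty)\times\bdry W$).

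Composing the three identifications then gives the claimed diffeomorphism. To set this up cleanly I would first fix the almost complex structures: choose $\widehat{J}_W$ on the chord side so that it is suitably stretched along $\{0\}\times\bdry W$ (to invoke Lemma~\ref{lemma: p_c to c}), satisfies the FOOO local model near the resolution at $p_c$ (to invoke Lemma~\ref{lemma: FO3}), and is stretched along $\{s=1\}$ (to invoke Lemma~\ref{lemma: l to b}); since these three conditions are localized in pairwise disjoint regions of $\widehat{W}$, they may be imposed simultaneously. The resulting almost complex structure on the $\widehat{b}_1$-side is the one obtained after the final stretching, which we denote $\widehat{J}_W'$. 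The shortness hypothesis on $c$ enters exactly once, at the neck-stretching step of Lemma~\ref{lemma: p_c to c}, to ensure that the only curves appearing in the limit are those that break off trivial strips from $p_c$ to $c$.

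The main obstacle, such as it is, is the bookkeeping that accompanies the chain: one must verify that the intersection points $p_i'$ produced by the small Lagrangian isotopies (for $i\neq 1, k+1$) together with the Lagrangian labels along $\partial\dot F$ match correctly across each diffeomorphism, and that the genus $g$ and cyclic ordering of boundary components are preserved at each step. This is routine: the isotopies are $C^\infty$-small and compactly supported away from the resolution locus, so Gromov compactness and the standard cobordism argument provide the required identifications, and the Fredholm index and action bookkeeping used in the proof of Lemma~\ref{lemma: p_c to c} ensures that no extra broken configurations are introduced in the limit.
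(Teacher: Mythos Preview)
Your proposal is correct and follows exactly the paper's approach: the corollary is stated as an immediate consequence of chaining Lemmas~\ref{lemma: p_c to c}, \ref{lemma: FO3}, and \ref{lemma: l to b}, which is precisely what you do. The additional bookkeeping you outline (localizing the almost complex structure conditions, tracking the $p_i'$) is reasonable elaboration but not required by the paper, which simply records the composition.
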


Corollary~\ref{cor: resolution} is the key to showing that the cone of $c$ quasi-represents $b_1$ (Theorem \ref{thm: handleslide}). 

\subsubsection{The handleslide theorem}

Let $\mathcal{R}^f(W)$ denote a larger $A_{\infty}$-category whose objects are $\widehat{a}_{\bf m}$ from all possible Lagrangian bases of $W$, i.e., $Ob(\mathcal{R}^f(W))$ is the union of objects of $\mathcal{R}^f(W,\beta,\phi)$ as we range over all $\phi$. Let $\overline{\mathcal{R}}^f(W)$ denote the idempotent completion of the triangulated envelope of $\mathcal{R}^f(W)$.

Let $(W,\beta_t,\phi_t)_{t\in [0,1]}$ be an elementary handle\-slide Weinstein homotopy such that the basis $\mathcal{B}_1$ of Lagrangian disks for $(\beta_1,\phi_1)$ is obtained by one handleslide from the basis $\mathcal{B}_0$ of $(\beta_0,\phi_0)$. In particular, let $a_1,\widetilde{a}_1\in \mathcal{B}_0$ and $b_1,\widetilde{a}_1\in \mathcal{B}_1$, where $b_1$ is obtained by handlesliding $a_1$ over $\widetilde{a}_1$ along the shortest Reeb chord $c$ (taken to be arbitrarily short) from $\bdry a_1$ to $\bdry \widetilde{a}_1$. {\em In what follows we freely identify Reeb chords with their corresponding intersection points in the wrapped setting.}

The main step in proving the handleslide theorem is to show that any collection of disjoint Lagrangian disks for $(W,\beta_1,\phi_1)$ is quasi-isomorphic to a twisted complex of those for $(W,\beta_0,\phi_0)$ in $\overline{\mathcal{R}}^f(W)$. 
To this end, consider a collection of disjoint Lagrangian disks ${\bf b}=\{b_1,a_2'' ,a_3'',\dots ,a_k''\}$, where $a_i''$ are from the basis $\mathcal{B}_0 \cup \mathcal{B}_1$. Let
$${\bf a}=\{a_1,a_2 ,a_3,\dots ,a_k\} \quad\mbox{ and } \quad \widetilde{\bf a}=\{\widetilde{a}_1,a_2' ,a_3',\dots,a_k'\},$$
where $a_i$, $a_i'$, $a_i''$ are Hamiltonian isotopic so that:
\be
\item any two of $a_i$, $a_i'$, $a_i''$ intersect once in the interior; and
\item $\bdry a_i'$ (resp.\ $\bdry a_i''$) is obtained from a time-$\varepsilon$ (resp.\ $2\varepsilon$) Reeb flow of $\bdry a_i$ for small $\varepsilon>0$ and applying a small perturbation.
\ee


For $i\geq 2$ and $\nu\geq 2$, there are two Reeb chords from $\partial a_i$ to $\partial a_i'$ and we denote the {\em longer chord of lower cohomological degree}
by $\theta_{a\widetilde{a},i}^{\op{bot}}$. (In the $\nu=1$ case we take the sum of the two Reeb chords.)  
Similarly, we define chords $\theta_{\widetilde{a}b,i}^{\op{bot}}$ from $\partial a_i'$ to $\partial a_i''$ and $\theta_{ab,i}^{\op{bot}}$ from $\partial a_i$ to $\partial a_i''$ with the same properties. We denote the interior intersection points $a_i\cap a_i'$, $a_i'\cap a_i''$, and $a_i\cap a_i''$ by $\theta_{a\widetilde{a},i}^{\op{top}}$, $\theta_{\widetilde{a}b,i}^{\op{top}}$, and $\theta_{ab,i}^{\op{top}}$.

Recalling the notation $c=c_{1/2-\epsilon}$ and $d=c_{1/2+\epsilon}$ from Section~\ref{subsub: model1}, we define 
\begin{align*}
\Theta_d &= \{d, \theta_{b\widetilde{a},2}^{\op{bot}} ,\theta_{b\widetilde{a},3}^{\op{bot}} ,\dots ,\theta_{b\widetilde{a},k}^{\op{bot}}\}\in \Hom (\widetilde {\bf a}, {\bf b}),\\
\Theta_{c} & =\{c,\theta_{a\widetilde{a},2}^{\op{bot}} ,\theta_{a\widetilde{a},3}^{\op{bot}} ,\dots,\theta_{a\widetilde{a},k}^{\op{bot}}\}\in \Hom ({\bf a},\widetilde {\bf a}),\\
\Theta_e& =\{e, \theta_{ab,2}^{\op{top}} ,\theta_{ab,3}^{\op{top}} ,\dots ,\theta_{ab,k}^{\op{top}}\}\in \Hom({\bf b},{\bf a}),
\end{align*}
where $e$ is the unique chord from $\bdry a_1$ to $\bdry b_1$ as in Figure~\ref{fig triangle}.  The tuples $\Theta_c$, $\Theta_e$, and $\Theta_d$ are cycles since $c,e, d$ can be made arbitrarily short. 

\begin{thm}\label{thm: handleslide}
The object $\bf b$ is quasi-isomorphic to $\op{Cone}(\Theta_c)$ in $\overline{\mathcal{R}}^f(W)$.
\end{thm}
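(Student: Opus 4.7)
The plan is to follow Seidel's long exact sequence strategy \cite{Se1}: I would construct an explicit closed morphism $\Psi\in \Hom(\op{Cone}(\Theta_c),{\bf b})$ in the twisted complex category $\overline{\mathcal{R}}^f(W)$ and then show it is a quasi-isomorphism by a Yoneda-type argument, with Corollary~\ref{cor: resolution} providing the crucial geometric input that identifies moduli on $\widehat b_1$ with moduli carrying a $c$-asymptotic on $\widehat a_1\cup \widehat{\widetilde{a}}_1$.

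First I would unpack the twisted complex: $\op{Cone}(\Theta_c)$ is represented by $\widetilde{\bf a}[1]\oplus {\bf a}$ with off-diagonal differential $\Theta_c$. A closed morphism $\Psi:\op{Cone}(\Theta_c)\to {\bf b}$ amounts to a pair $(\psi_1,\psi_2)$ with $\psi_1\in \Hom(\widetilde{\bf a},{\bf b})$, $\psi_2\in \Hom({\bf a},{\bf b})$ satisfying the cocycle equation $\mu_1(\psi_2)+\mu_2(\psi_1,\Theta_c)=0$. I would take $\psi_1=\Theta_d$, which is natural since $c$ and $d$ come from the same near-critical handleslide model (Section~\ref{subsub: model1}). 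After shrinking the handleslide perturbation, the composition $\mu_2(\Theta_d,\Theta_c)$ is dominated by the small holomorphic triangle in the thin region bounded by the chords $c$, $d$, and $e$ from Figure~\ref{fig triangle}, together with compositions in the remaining factors involving the short chords $\theta_{\bullet\bullet,i}^{\op{bot}}$ and interior intersections $\theta_{\bullet\bullet,i}^{\op{top}}$; these contributions are explicit and can be absorbed by a $\psi_2$ built from the counts of such triangles, yielding a closed $\Psi$.

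Second, to show $\Psi$ is a quasi-isomorphism, I would apply a Yoneda-type argument: it suffices to verify that for every test tuple ${\bf L}=(L_1,\dots,L_k)$ of Lagrangian disks in $\mathcal{B}_0\cup\mathcal{B}_1$, the induced chain map
\[
\Psi_{*}\colon \Hom({\bf L},\op{Cone}(\Theta_c))\longrightarrow \Hom({\bf L},{\bf b})
\]
is a quasi-isomorphism. By construction of the cone, the source is the mapping cone of the chain map $\mu_2(\Theta_c,-)\colon \Hom({\bf L},{\bf a})\to \Hom({\bf L},\widetilde{\bf a})$, whose generators are pairs consisting of an intersection tuple plus one extra asymptotic at $c$. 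The target is computed by moduli $\mathcal{M}^{\widehat b_1}_{\widehat W}$. Corollary~\ref{cor: resolution} bijects these moduli with $\mathcal{M}_{\widehat W}(\dots,c)$, thereby matching the generators and differentials on the two sides. The morphism $\Psi$ is designed to implement this bijection, hence $\Psi_{*}$ is a quasi-isomorphism, and by Yoneda so is $\Psi$ in $\overline{\mathcal{R}}^f(W)$.

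The hardest step will be upgrading Corollary~\ref{cor: resolution} from a statement about single polygons to one compatible with the full $A_\infty$-structure, i.e., to families of curves $\dot F\to D_m\times \widehat W$ with Lagrangian boundary conditions controlled coherently over $\mathcal{A}_m$ and its boundary strata. The neck-stretching arguments of Lemmas~\ref{lemma: p_c to c}, \ref{lemma: FO3}, and~\ref{lemma: l to b} are local in the target $\widehat W$ near the handleslide region, so they should extend with additional $A_\infty$-inputs $\mathbf{y}_1,\dots,\mathbf{y}_m$ placed on $\bdry D_m$; what requires care is choosing the consistent families of almost complex structures $J_{D_m}$ coherently on both sides of the correspondence so that the moduli spaces are identified in families and so that the bubbling analysis from \cite[Sections 3.7, 3.9]{CHT} is preserved. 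Tracking signs and the $\hbar$-weights through the cone construction is a secondary but purely algebraic technical point.
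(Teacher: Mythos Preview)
Your approach and the paper's both follow Seidel, but the implementations diverge in a meaningful way. The paper does \emph{not} attempt to build a closed twisted-complex morphism $(\Theta_d,\psi_2)$ or to match moduli directly. Instead it works at the level of $A_\infty$-module maps and brings in the \emph{third} short chord $e$ (the one from $\partial b_1$ to $\partial a_1$ in Figure~\ref{fig triangle}), which you never use. It defines module maps in both directions,
\[
\Phi_b(({\bf y}',{\bf y}''),{\bf z}_\bullet)=\mu_{b+3}(\Theta_d,\Theta_c,{\bf y}',{\bf z}_\bullet)+\mu_{b+2}(\Theta_d,{\bf y}'',{\bf z}_\bullet),\qquad
\Psi_b({\bf y},{\bf z}_\bullet)=\bigl(\mu_{b+2}(\Theta_e,{\bf y},{\bf z}_\bullet),\,\mu_{b+3}(\Theta_c,\Theta_e,{\bf y},{\bf z}_\bullet)\bigr),
\]
and then analyzes $\partial\mathcal{M}^{\op{ind}=1}(\Theta_d,\Theta_c,\Theta_e,{\bf y};{\bf y}')$. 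Four of the five degeneration types give $\Phi_0\circ\Psi_0$ plus chain-homotopy terms; the fifth is $\mathcal{M}^{\op{ind}=0}({\bf y}'',{\bf y};{\bf y}')\times\mathcal{M}^{\op{ind}=0}(\Theta_d,\Theta_c,\Theta_e;{\bf y}'')$, and Corollary~\ref{cor: resolution} is invoked exactly once, to show that the only ${\bf y}''$ contributing to the second factor is the cohomological unit with algebraic count one. This makes $\Phi\circ\Psi\sim\op{id}$ (and similarly for $\Psi\circ\Phi$). So the sole geometric input is the single identity $\mu_3(\Theta_d,\Theta_c,\Theta_e)=\mathbf{1}$.

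Your route has two soft spots that the paper's detour through $\Theta_e$ avoids. First, producing $\psi_2$ requires $\mu_2(\Theta_d,\Theta_c)\in\Hom({\bf a},{\bf b})$ to be exact; you assert this can be ``absorbed'' by small triangles, but since $a_1$ and $b_1$ are genuinely different disks there is no obvious action or degree reason forcing this, and you give no argument. Second, and more seriously, you claim $\Psi_*$ ``implements'' the bijection from Corollary~\ref{cor: resolution}, but the Yoneda image of $(\Theta_d,\psi_2)$ on the ${\bf a}$-summand is $\mu_2(\psi_2,-)+\mu_3(\Theta_d,\Theta_c,-)$, and checking that this composite map realizes the surgery correspondence on the nose (rather than merely landing in the right homotopy class) is itself nontrivial. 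The paper's two-sided argument replaces both of these verifications with a single clean count of triangles asymptotic to $\Theta_d,\Theta_c,\Theta_e$.
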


\begin{proof}[Sketch of proof.]

The proof is similar to that of Seidel's exact triangle \cite{Se1} and we only give a sketch. 

We define two $A_\infty$-module maps $\Psi=(\Psi_{b})_{b}$ and $\Phi=(\Phi_{b})_{b}$, given by counts of holomorphic curves in some symplectic fibrations.
\begin{align*}
    \Psi_{b} = (\Psi_{b,1},\Psi_{b,2}): \Hom(\widetilde{h}(\wh a_{{\bf m}^b}),{\bf b}) &\otimes \Hom (\widetilde{h}(\wh a_{{\bf m}^{b-1}}),\widetilde{h}(\wh a_{{\bf m}^b}))\otimes\dots\otimes \Hom(\widetilde{h}(\wh a_{{\bf m}^0}),\widetilde{h}(\wh a_{{\bf m}^1})) \\
    & \to \Hom (\widetilde{h}(\wh a_{{\bf m}^0}), {\bf a})\oplus \Hom(\widetilde{h}(\wh a_{{\bf m}^0}), \widetilde{\bf a}),\\
    ({\bf y},{\bf z}_b,\dots,{\bf z}_1) \mapsto (\mu_{b+2} (\Theta_e, {\bf y},&{\bf z}_b,\dots,{\bf z}_1), \mu_{b+3} (\Theta_c, \Theta_e, {\bf y},{\bf z}_b,\dots,{\bf z}_1)),
\end{align*}
\begin{align*}
\Phi_{b} =\Phi_{b,1}\oplus &\Phi_{b,2} :  (\Hom (\widetilde{h}(\wh a_{{\bf m}^b}), {\bf a})\oplus \Hom(\widetilde{h}(\wh a_{{\bf m}^b}), \widetilde{\bf a})) \\
&  \otimes \Hom (\widetilde{h}(\wh a_{{\bf m}^{b-1}}),\widetilde{h}(\wh a_{{\bf m}^b}))\otimes\dots\otimes \Hom(\widetilde{h}(\wh a_{{\bf m}^0}),\widetilde{h}(\wh a_{{\bf m}^1})) \to \Hom(\widetilde{h}(\wh a_{{\bf m}^0}),{\bf b}),\\
(({\bf y}',{\bf y}''),&{\bf z}_b,\dots,{\bf z}_1) \mapsto\mu_{b+3} (\Theta_d, \Theta_c, {\bf y}',{\bf z}_b,\dots,{\bf z}_1) + \mu_{b+2} (\Theta_d, {\bf y}'',{\bf z}_b,\dots,{\bf z}_1).
\end{align*}
In these formulas, the families $\wh a_{{\bf m}^i}$ are taken over all possible bases of Lagrangians in $\mathcal{R}^f(W)$.

The fact that $\Psi$ and $\Phi$ are $A_\infty$-module maps is a classical argument based on the analysis of the boundaries of one-dimensional moduli spaces
\begin{gather*}
\mathcal{M}^{\op{ind}=1}(\Theta_e, {\bf y},{\bf z}_b,\dots,{\bf z}_1;\mathbf{y}'),\quad  \mathcal{M}^{\op{ind}=1}(\Theta_c, \Theta_e, {\bf y},{\bf z}_b,\dots,{\bf z}_1;\mathbf{y}''), \\
\mathcal{M}^{\op{ind}=1}(\Theta_d, \Theta_c, {\bf y}',{\bf z}_b,\dots,{\bf z}_1;\mathbf{y}),\quad  \mathcal{M}^{\op{ind}=1}(\Theta_d, {\bf y}'',{\bf z}_b,\dots,{\bf z}_1; {\bf y}),
\end{gather*}
and observing that $\Theta_d$, $\Theta_c$ and $\Theta_e$ are closed.

The proof that $\Phi$ and $\Psi$ are quasi-isomorphisms is also a classical argument. The composition of the maps is quasi-isomorphic to the count of curves in the concatenation of cobordisms.  We consider $\Phi_0\circ \Psi_0$ for simplicity; the case for higher $b$ is analogous.  For this, we analyze the boundary of the moduli space $\mathcal{M}^{\op{ind}=1,\chi}(\Theta_d, \Theta_c,\Theta_e, {\bf y};{\bf y}')$. Since $\Theta_d$,  $\Theta_c$ and $\Theta_e$ are closed, $\bdry \mathcal{M}^{\op{ind}=1,\chi}(\Theta_d, \Theta_c,\Theta_e, {\bf y};{\bf y}')$ consists of the following:
\begin{gather*}
    \mathcal{M}^{\op{ind}=0,\chi_1}(\Theta_d, {\bf y}'';{\bf y}')\times \mathcal{M}^{\op{ind}=0,\chi_2}(\Theta_c,\Theta_e, {\bf y};{\bf y}''),\\
    \mathcal{M}^{\op{ind}=0,\chi_1}(\Theta_d, \Theta_c, {\bf y}'';{\bf y}')\times \mathcal{M}^{\op{ind}=0,\chi_2}(\Theta_e, {\bf y};{\bf y}''),\\
    \mathcal{M}^{\op{ind}=0,\chi_1}({\bf y}'';{\bf y}')\times \mathcal{M}^{\op{ind}=0,\chi_2}(\Theta_d, \Theta_c,\Theta_e, {\bf y};{\bf y}''),\\
    \mathcal{M}^{\op{ind}=0,\chi_1}(\Theta_d, \Theta_c,\Theta_e, {\bf y}'';{\bf y}')\times \mathcal{M}^{\op{ind}=0,\chi_2}({\bf y};{\bf y}''),\\
    \mathcal{M}^{\op{ind}=0,\chi_1}({\bf y}'', {\bf y};{\bf y}')\times \mathcal{M}^{\op{ind}=0,\chi_2}(\Theta_d, \Theta_c,\Theta_e;{\bf y}''),
\end{gather*}
where $\chi=\chi_1+\chi_2$.
The first two correspond to compositions $\Phi_{0,1}\circ \Psi_{0,2}$ and $\Phi_{0,2} \circ \Psi_{0,1}$, respectively, and the next two give rise to chain homotopy terms. We analyze the last term. By Corollary~\ref{cor: resolution}, the term ${\bf y}''$ in $\mathcal{M}^{\op{ind}=0,\chi_2}(\Theta_d, \Theta_c,\Theta_e;{\bf y}'')$ must be the cohomological unit and the algebraic count must be one.  Then ${\bf y}={\bf y}'$ in $\mathcal{M}^{\op{ind}=0,\chi_1}({\bf y}'', {\bf y};{\bf y}')$.  Hence the last term gives the identity map. Since similar considerations hold for the other compositions, both compositions $\Phi \circ \Psi$ and $\Psi \circ \Phi$, and hence $\Phi$ and $\Psi$, are quasi-isomorphisms. 
\end{proof}



Given the two Lagrangian bases $\mathcal{B}_i$, $i=0,1$, that are related by one handleslide as above, let $R_i$ denote the $A_{\infty}$-algebra $R^f(W,\beta_i,\phi_i)$. Let $R_i-\op{Mod}$ denote the $A_\infty$-category of left $R_i$-modules and let $\op{Perf} R_i$ denote its full subcategory of perfect modules. It is known that  $\op{Perf} R_i$ is the idempotent completion of the triangulated envelope of $R_i$.
By iteratively using Theorem \ref{thm: handleslide}, we obtain the following:

\begin{cor} \label{cor: perfect modules}
For any Lagrangian $\widehat{a}_{{\bf m}_1}$ constructed from the basis $\mathcal{B}_1$ with $|{\bf m}_1|=k$, the $R_0$-module $\oplus_{|{\bf m}'_0|=k} \Hom(\widehat{a}_{{\bf m}_1},\widehat{a}_{{\bf m}'_0})$ is isomorphic to an object of $\op{Perf} R_0$ that we denote by $\mathcal{F}(\widehat{a}_{{\bf m}_1})$. Moreover, $\widehat{a}_{{\bf m}_1}$ and $\mathcal{F}(\widehat{a}_{{\bf m}_1})$ are quasi-isomorphic in $\overline{\mathcal{R}}^f(W)$. 
\end{cor}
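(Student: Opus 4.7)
The plan is to deduce Corollary \ref{cor: perfect modules} from Theorem \ref{thm: handleslide} by an inductive handleslide argument together with the $A_\infty$-Yoneda embedding. First I would show that any Lagrangian $\widehat{a}_{{\bf m}_1}$ built from $\mathcal{B}_1$ is quasi-isomorphic in $\overline{\mathcal{R}}^f(W)$ to a twisted complex of objects built from $\mathcal{B}_0$. Since $\mathcal{B}_1$ differs from $\mathcal{B}_0$ only by replacing $a_1$ with $b_1$, the multiplicity ${\bf m}_1$ records in particular how many copies of $b_1$ occur in $\widehat{a}_{{\bf m}_1}$. If no copies of $b_1$ appear, then after a small Hamiltonian isotopy $\widehat{a}_{{\bf m}_1}$ is already an object built from $\mathcal{B}_0$. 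Otherwise I would apply Theorem \ref{thm: handleslide} to a single $b_1$-component, treating the remaining $k-1$ disks of $\widehat{a}_{{\bf m}_1}$ as passive strands, to rewrite that one component as $\op{Cone}(\Theta_c)$; iterating once per $b_1$-component produces the desired twisted complex.

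Next I would apply the $A_\infty$-Yoneda embedding
\begin{equation*}
\mathcal{Y}\colon \overline{\mathcal{R}}^f(W)\to R_0-\op{Mod},\qquad X\mapsto \oplus_{|{\bf m}'_0|=k}\Hom(X,\widehat{a}_{{\bf m}'_0}).
\end{equation*}
The functor $\mathcal{Y}$ is cohomologically full and faithful, and it sends the idempotent-completed triangulated envelope of the $\mathcal{B}_0$-objects bijectively onto $\op{Perf} R_0$ (Seidel \cite{Se3}). Applying $\mathcal{Y}$ to the twisted complex produced in the first step yields an object of $\op{Perf} R_0$ whose underlying $R_0$-module is $\mathcal{F}(\widehat{a}_{{\bf m}_1})=\oplus_{|{\bf m}'_0|=k}\Hom(\widehat{a}_{{\bf m}_1},\widehat{a}_{{\bf m}'_0})$. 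Cohomological fullness and faithfulness of $\mathcal{Y}$, combined with the quasi-isomorphism from the first step, then yield the quasi-isomorphism $\widehat{a}_{{\bf m}_1}\simeq \mathcal{F}(\widehat{a}_{{\bf m}_1})$ in $\overline{\mathcal{R}}^f(W)$.

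The main obstacle is to justify that Theorem \ref{thm: handleslide} may be applied component-wise, i.e., that it remains valid when only one disk of a multi-disk Lagrangian is handleslid while the others are kept fixed. For this I would revisit the proof of Theorem \ref{thm: handleslide} and observe that the $A_\infty$-module maps $\Psi$ and $\Phi$ are already defined over arbitrary multi-disk test objects $\widetilde{h}(\widehat{a}_{{\bf m}^\bullet})$, and that the key cancellation in $\Phi\circ\Psi$ rests on Corollary \ref{cor: resolution}, which is local to the handle region around the Reeb chords $c$, $d$, $e$. Since the passive components $a_i''$ $(i\geq 2)$ contribute only through already-present inputs ${\bf y}, {\bf z}_b,\dots,{\bf z}_1$ and their boundary conditions are disjoint from the handle region, by action and locality arguments one concludes that the identity-producing term still comes from the cohomological unit on the passive strands together with the handleslide unit on the $b_1$-strand. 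Verifying this geometric separation, and checking that the inductive replacement of one $b_1$-copy at a time respects the necessary transversality and perturbation conventions in $R_0$, is the technical heart of the proof; once established, the passage to perfect modules via $\mathcal{Y}$ is formal.
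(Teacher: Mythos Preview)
Your approach is correct and matches the paper's, which simply says ``By iteratively using Theorem~\ref{thm: handleslide}.''  The Yoneda-embedding step you spell out is implicit in the paper's use of $\op{Perf} R_0$ and is the right way to make the passage precise.

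One point worth noting: the ``main obstacle'' you identify is not actually an obstacle.  Theorem~\ref{thm: handleslide} is already stated for a $k$-tuple ${\bf b}=\{b_1,a_2'',\dots,a_k''\}$ with the $a_i''$ drawn from $\mathcal{B}_0\cup\mathcal{B}_1$, so the passive components are built into the statement and the proof; there is nothing further to verify about ``component-wise'' applicability.  The iteration you need is simply that after one application the resulting ${\bf a}$ and $\widetilde{\bf a}$ may still contain copies of $b_1$ among the $a_i,a_i'$ (as Hamiltonian pushoffs of the $a_i''$), and one applies the theorem again to those.  Since cones in a triangulated envelope are preserved under replacing a vertex by a quasi-isomorphic twisted complex, this inductive reduction terminates in a twisted complex built from $\mathcal{B}_0$-objects.
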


\begin{proof}[Proof of Theorem \ref{thm: invariance under handleslides}]







Given the two Lagrangian bases $\mathcal{B}_i$, $i=0,1$, we define
\begin{gather*}
M_{10}:= \oplus_{k\geq 0} \oplus_{|{\bf m}_1|=|{\bf m}'_0|=k} \Hom(\widehat{a}_{{\bf m}_1},\widehat{a}_{{\bf m}'_0}),
\end{gather*}
where $\widehat{a}_{{\bf m}_1}$ and $\widehat{a}_{{\bf m}'_0}$ range over all $k$-tuples of disjoint Lagrangians constructed from the basis $\mathcal{B}_1$ and $\mathcal{B}_0$, respectively.  
Then $M_{10}$ is an $A_{\infty}$ $(R_0,R_1)$-bimodule. Taking the $A_\infty$-tensor product with $M_{10}$ induces a functor $\Psi_{10}: R_1-\op{Mod}  \to R_0-\op{Mod} $, which, on the level of objects, is given by $N\mapsto M_{10}\otimes_{R_1}N$.  Here $\otimes_{R_1}$ indicates the $A_\infty$-tensor product: as a vector space $M_{10}\otimes_{R_1}N=M_{10}\otimes_{\F} TR_1\otimes_{\F} N$, where $TR_1=\oplus_{i=1}^\infty R_1^{\otimes i}$, i.e., we are inserting the bar resolution of $R_1$, and the chain map is given by $A_\infty$-contractions. See for example \cite[Section 2.5]{Ga} for a detailed exposition.

For any Lagrangian $\widehat{a}_{{\bf m}_1}$ constructed from $\mathcal{B}_1$, let 
$$M(\widehat{a}_{{\bf m}_1})=\oplus_{|{\bf m}'_1|=k} \Hom(\widehat{a}_{{\bf m}_1},\widehat{a}_{{\bf m}'_1})$$ 
denote the corresponding Yoneda $R_1$-module. Then there is a natural morphism 
\begin{equation}\label{eqn: crushing}
\Psi_{10}(M(\widehat{a}_{{\bf m}_1}))= M_{10}\otimes_{\F} TR_1\otimes_{\F} M(\widehat{a}_{{\bf m}_1})  \to \oplus_{|{\bf m}'_0|=k} \Hom(\widehat{a}_{{\bf m}_1},\widehat{a}_{{\bf m}'_0})
\end{equation}
of $R_0$-modules which is given by $A_\infty$-contractions in the ambient category $\overline{\mathcal{R}}^f(W)$.

We claim that \eqref{eqn: crushing} is a quasi-isomorphism.  Following the proof of \cite[Proposition 2.2]{Ga}, consider the spectral sequence associated to the length filtration. The map on the first page reduces to the analogous map for ordinary algebras and is a quasi-isomorphism by the proof similar to the exactness of the bar resolution.  The isomorphism on the $E^1$-page then descends to an isomorphism on the $E^\infty$-level.

Moreover, $\Psi_{10}(M(\widehat{a}_{{\bf m}_1}))$ is quasi-isomorphic to $\mathcal{F}(\widehat{a}_{{\bf m}_1})$ in $\op{Perf} R_0$ by Corollary \ref{cor: perfect modules}. Hence $\Psi_{10}$ preserves the subcategories of perfect modules $\Psi_{10}: \op{Perf} R_1 \to \op{Perf} R_0$. 

Each $\op{Perf} R_i$ is a full subcategory of the ambient category $\overline{\mathcal{R}}^f(W)$. 
We have $\Psi_{10}(M(\widehat{a}_{{\bf m}_1})) \cong M(\widehat{a}_{{\bf m}_1})$ in $\overline{\mathcal{R}}^f(W)$ by Corollary \ref{cor: perfect modules}.  Hence $\Psi_{10}$ is fully faithful: 
$$\Hom(\Psi_{10}(M(\widehat{a}_{{\bf m}_1})),\Psi_{10}(M(\widehat{a}_{{\bf m}'_1}))) \cong \Hom(\Psi_{10}(M(\widehat{a}_{{\bf m}_1})),M(\widehat{a}_{{\bf m}'_1})) \cong \Hom(M(\widehat{a}_{{\bf m}_1}),M(\widehat{a}_{{\bf m}'_1})).$$
One can similarly define an $A_{\infty}$ $(R_1,R_0)$-bimodule $M_{01}$ which induces a functor $\Psi_{01}: \op{Perf} R_0 \to \op{Perf} R_1$. For any object $M(\widehat{a}_{{\bf m}_0})$ in $\overline{\mathcal{R}}_0$, we have 
$$M(\widehat{a}_{{\bf m}_0}) \cong \Psi_{01}(M(\widehat{a}_{{\bf m}_0})) \cong \Psi_{10}(\Psi_{01}(M(\widehat{a}_{{\bf m}_0}))),$$ 
in $\overline{\mathcal{R}}^f(W)$. Hence $\Psi_{10}$ is essentially surjective.  Since $\Psi_{10}$ is fully faithful and essentially surjective, $\Psi_{10}: \op{Perf} R_1 \to \op{Perf} R_0$ is a quasi-equivalence.

Recall the $R_i$-bimodule $B^f(W,\beta_i,\phi_i;h)$, $i=0,1$, from \eqref{eq bimodule for h}  associated to the symplectomorphism $h$. 
The goal is to show that there is a quasi-isomorphism of $(R_0,R_1)$-bimodules:
$$B^f(W,\beta_0,\phi_0;h) \otimes_{R_0} M_{10} \cong M_{10} \otimes_{R_1} B^f(W,\beta_1,\phi_1;h),$$
where the tensor products are still $A_\infty$-tensor products. 
For each summand $k$, the left-hand side is $$\oplus_{|{\bf m_0}|=|{\bf m_0'}|=|{\bf m_1}|=k} \Hom(\widetilde h(\wh a_{\bf m_0'}),\wh a_{\bf m_0}) \otimes_{R_0} \Hom(\widetilde h(\widehat{a}_{{\bf m}_1}), \widetilde h(\widehat{a}_{{\bf m}'_0})).$$
This tensor product is quasi-isomorphic to $$\oplus_{|{\bf m_0}|=|{\bf m_1}|=k} \Hom(\widetilde h(\wh a_{\bf m_1}),\wh a_{\bf m_0}),$$
by a proof similar to that of \cite[Proposition 2.2]{Ga}.  
There is a similar quasi-isomorphism for the right-hand side. This concludes the proof of Theorem~\ref{thm: invariance under handleslides}.
\end{proof}

\part{Symplectic Khovanov homology of a transverse link in a fibered $3$-manifold}

\section{The Lefschetz fibration $\Sigma =W_{S,n}$}\label{section: Lefschetz fibration}

Let $S$ be a bordered surface of genus $g$ and ${\bf x}=\{x_1,\dots, x_n\}\subset \op{int}(S)$ that we call the set of {\em marked points}.

\subsection{Definition of $W_{S,n}$}

We define the Lefschetz fibration $\pi : W_{S,n} \to S$ with $n$ critical values $x_1 ,\dots ,x_n$ and regular fiber $(T^*S^1 , pdq)$.   It is a fibered sum, or more precisely a Liouville connected sum $(W_{S,n},\lambda)$, of two exact symplectic fibrations: the $2$-dimensional $A_{n}$-Milnor fiber and a trivial fibration $(T^* S^1 \times S ,p dq +\beta)$ over $S$, where $\beta$ is a Liouville form on $S$. The $2$-dimensional $A_{n}$-Milnor fiber is given by
$$\{ z_1^2 +z_2^2 +P_{n+1} (z_3)=0\} \subset \C^3,$$
where $P_{n+1}$ is a degree $n+1$ polynomial with simple roots, and admits a Lefschetz fibration over $\C$ via the map $(z_1,z_2,z_3)\mapsto z_3$, whose critical values are the roots of $P_n$. We restrict this fibration to the unit disk $D^2$ which we assume contains all the roots of $P_n$. The regular fiber is the cylinder $T^* S^1$ and the holonomy around one singular fiber is given by a positive symplectic Dehn twist about the $0$-section of $T^* S^1$; see \cite{Se1} for more details.

\begin{notation}
    In what follows we abuse notation and write $\pi:(W_{S,n},\lambda)\to S$ for the {\em compact} Lefschetz fibration with regular fiber $(A:=S^1_q\times[-1,1]_p,pdq )$. At this point, the manifold $W_{S,n}$ has corners. We can round the corners of $W_{S,n}$ to turn it into a Weinstein domain; we further abuse notation and denote either the cornered or smooth version by $W_{S,n}$.
\end{notation}

\subsection{Contact structure on $\bdry W_{S,n}$}

The boundary $\bdry W_{S,n}$ is diffeomorphic to an $S^1$-bundle of Euler class $n$ over the surface $D(S)$ obtained by doubling $S$, and decomposes into the horizontal and a vertical (i.e., tangent to the fibers) parts $\bdry_h W_{S,n}$ and $\bdry_v W_{S,n}$, giving it the structure of a {\em spinal open book decomposition} of Lisi, Van Horn-Morris, and Wendl~\cite{LVW}. The pages are annular fibers $A$ with monodromy a product of $n$ positive Dehn twists along the $0$-section. The horizontal boundary $\bdry_h W_{S,n}$ is the generalized binding neighborhood consisting of two copies of $S^1 \times S$ (generalizing the binding neighborhood $S^1 \times D^2$ in the open book case). We identify 
$$\bdry_v W_{S,n}\simeq T^2_{x,q}\times[-1,1]_p=S^1_x\times S^1_q\times[-1,1]_p.$$

Let $\xi=\ker \lambda$ be the induced contact structure on $\bdry W_{S,n}$ after rounding the corners. We may take $\lambda$ such that:
\be
\item[(i)] the Reeb vector field $R_\lambda$ on $\bdry_h W_{S,n}$ is tangent to the $S^1$-fibers (in particular $\xi|_{\bdry_h W_{S,n}}$ is $S^1$-invariant and transverse to the $S^1$-fibers);
\item[(ii)]  $\lambda|_{\bdry_v W_{S,n}}=f(p)dx -g(p) dq$ for some functions $f,g:[-1,1]\to \R$ which depend on $n$ such that $(f(0),g(0))=(1,0)$, $(-g,f)\cdot (f',g')>0$, and $R_\lambda$ is parallel to $f'\bdry_q+g'\bdry_x$ and rotates from $\bdry_q$ to $-\bdry_q$ in a clockwise manner  as $p$ goes from $-1$ to $1$.
\ee
The torus $\{ p =0\}$ is linearly foliated by Legendrian circles $\{ x=\mbox{const}\}$.

\subsection{A basis of Lagrangian disks and cylinders on $W_{S,n}$}

\begin{defn}
A {\em stop} is a possibly empty finite subset $\tau \subset  \partial S$.  
\end{defn}

\begin{defn} \label{defn: parametrization}
Assuming $\tau$ is either empty or has nontrivial intersection with each component of $\bdry S$, a {\em parametrization} of $(S,{\bf x},\tau)$ is a collection ${\bf a} =\{a_1,\dots,a_{n+s}\}$ of pairwise disjoint arcs and half-arcs of $S$, where:
\be
\item $a_1,\dots,a_n$ are half-arcs connecting $x_i$ to $\bdry S$;
\item $a_{n+1},\dots, a_{n+s}$ are properly embedded arcs;
\item if $\tau\not=\varnothing$, then $S\setminus\cup_{i=n+1}^{n+s} a_i$ is a union of polygons, each of which contains a single element of $\tau$;
\item if $\tau=\varnothing$,  then $S\setminus\cup_{i=n+1}^{n+s} a_i$ is a single polygon.
\ee
\end{defn}

Parallel transporting the vanishing cycle at $x_i$, $1\leq i\leq n$, along $a_i$ using the symplectic connection gives a properly embedded Lagrangian disk $L_i$ with Legendrian boundary $\Lambda_i$ in $W_{S,n}$, called a {\it Lagrangian thimble}. The Legendrian boundary $\Lambda_i$ is the $0$-section of the fiber over $\partial a_i \cap \partial S$, contained in $\partial_v W_{S,n}$. Over the arc $a_i$, $i>n$, we parallel transport the $0$-section of the fiber to form a Lagrangian annulus $L_i$ with Legendrian boundary $\Lambda_i$ (which consists of two circles).

The collection $\{L_1,\dots, L_{n+s}\}$ of Lagrangian disks and cylinders on $W_{S,n}$ will be called a {\em Lagran\-gian basis}.

\subsection{Description of the Reeb chords}

We briefly relate the Reeb chords between $\Lambda_i$ and $\Lambda_j$ to the Reeb chords between $a_i$ and $a_j$.  If $1\leq i,j\leq n+s$, then over each chord $c$ from $\partial a_i$ to $\partial a_j$, there is an $S^1$-family $S^1(c)$ of chords from $\Lambda_i$ to $\Lambda_j$ since $\Lambda_i$ and $\Lambda_j$ are the $0$-sections of the fibers over $\partial a_i \cap \partial S$ and $\partial a_j \cap \partial S$. Applying Morse-Bott theory, we may perturb $S^1(c)$ into two chords $\check c$ and $\hat c$ corresponding to the maximum and the minimum of the Morse function on $S^1$ with lower and higher cohomological degree, respectively.

\section{The $\ai$-algebras $R^p(S,n,{\bf a};k)$ and $R^f(S,n,{\bf a};k)$} \label{section: A infty algebras}

\subsection{Definitions}

Let $S$ be a bordered surface, let ${\bf x}=\{x_1,\dots,x_n\}$ be the set of marked points on $S$, and let $\tau$ be a stop which is either empty or has nontrivial intersection with each component of $\bdry S$. We consider a parametrization ${\bf a}=\{a_1,\dots,a_{n+s}\}$ of $(S,{\bf x},\tau)$.

We will take parallel copies of arcs $a_i$, $i> n$, {\em but not of half-arcs.} Given a nonnegative integer $k$, let ${\bf m}=(m_1,\dots,m_{n+s})$ be a partition of $k$, i.e., $k=m_1+\dots+m_{n+s}$, subject to the condition that $m_i=0$ or $1$ whenever $i\leq n$. We write $|{\bf m}|= \sum_{i=1}^{n+s} m_i=k$. Let $a_{\bf m}$ be the union of $k$ disjoint arcs/half-arcs consisting of $m_i$ (nonintersecting) copies of $a_i$ and let $\widehat{L}_{\bf m}$ be the collection of Lagrangian disks and cylinders sitting above the cylindrical completion $\widehat{a}_{\bf m}$ of $a_{\bf m}$ in $S\cup ([0,\infty)\times \bdry S)$. We sometimes write $a_{\bf m}$ multiplicatively as $\prod_{i=1}^{n+s}a_i^{m_i}$; then expressions such as $a_i\mid a_{\bf m}$ naturally make sense.

Next we define $\widehat{a}^\tau_{\bf m}$, the result of applying $\tau$-wrapping to $\widehat{a}_{\bf m}$, as follows: If $\tau$ nontrivially intersects each component of $\bdry S$, then wrap $\widehat{a}_{\bf m}$ in the positive direction on $[0,\infty)\times \bdry S$ until all their ends come close to the stop $[0,\infty)\times \tau$.  If $\tau=\varnothing$, then apply full wrapping to $\widehat{a}_{\bf m}$.  Then let $\widehat{L}_{\bf m}^\tau$ be the collection of Lagrangians sitting above $\widehat{a}^\tau_{\bf m}$ and defined in a manner analogous to $\widehat{L}_{\bf m}$. 

We then define
\begin{gather*}
R^\tau(S,n,{\bf a};k)=\oplus_{|{\bf m}|= |{\bf m'}|=k}\op{Hom} (\widehat{L}_{\bf m},\widehat{L}_{\bf m'})= \oplus_{|{\bf m}|= |{\bf m'}|=k} \widehat{CF}(\widehat{L}_{\bf m}^\tau,\widehat{L}_{\bf m'}),\\
R^\tau(S,n,{\bf a}) = \oplus_{k\geq 0} R^\tau(S,n,{\bf a};k),
\end{gather*}
where $\widehat{CF}$ is the usual HDHF chain complex as in \cite[Section 5.6 and Notation 5.6.1]{CHT}.

Viewing $R^\tau(S,n,{\bf a})$ as an $\ai$-category $\mathcal{R}^\tau(S,n,{\bf a})$ whose objects are $\widehat{L}_{\bf m}$ (or interchangeably $a_{\bf m}$) and whose morphisms are $\Hom (\widehat{L}_{\bf m},\widehat{L}_{\bf m'})$, we write $\overline {\mathcal{R}}^\tau (S,n,{\bf a})$ for its triangulated envelope (i.e., the smallest triangulated $\ai$-category containing $\mathcal{R}^\tau(S,n,{\bf a})$).

\subsection{Proof of Theorem~\ref{thm: invariance}} \label{subsection: proof of theorem}

The goal of this subsection is to prove Theorem~\ref{thm: invariance} for $\star=\tau$.

There are several cases of handleslides to consider:
\be
\item half-arcs over half-arcs;
\item arcs over half-arcs;
\item half-arcs over arcs;
\item arcs over arcs.
\ee
Any two parametrizations are related by a finite sequence of handleslides of type (1)--(4). 

We give a brief outline of the proof before discussing the individual cases in Sections~\ref{subsubsection: case 1}--\ref{subsubsection: case 2}.

Cases (1) and (2) can both be split into two subcases. This extra complexity is due to the fact that we insist on having at most one copy of each half-arc in our objects $a_{\bf m}$, whereas having a proof similar to Cases (3) and (4) only involving a version of Theorem \ref{thm: invariance under handleslides} would sometimes require us to have multiple copies, as is the case for arcs.

In Case (1), starting from a collection $a_{\bf m}$, without loss of generality we handleslide a half-arc $a_1 \mid a_{\bf m}$ over another half-arc $a_2$. 

If $a_2 \mid a_{\bf m}$, then Theorem \ref{thm: invariance under handleslides} is not applicable because it would involve the cone of some $\Theta\in \op{Hom}(\widehat{L}_{\bf m}, \widehat{L}_{\bf m'})$ where $a_2^2\mid  a_{\bf m'}$, and such an object $\widehat{L}_{\bf m'}$ is not an object of the category.  Instead, we will show that $\widehat{L}_{\bf m}$ and $\widehat{L}_{\bf m'}$ are quasi-isomorphic in the sense of Claim~\ref{claim: quasi-isomorphic}, using the proof strategy of \cite[Theorem 9.3.1]{CHT}.


If $a_2\nmid a_{\bf m}$, then Case (1) consists of two consecutive handleslides of a disk over a disk (Proposition~\ref{prop: half-arc over a half-arc}) and the handleslide invariance will come from two consecutive applications of Theorem \ref{thm: invariance under handleslides}. 

There is a similar splitting into two subcases in Case (2), depending on whether the half-arc over which we handleslide the arc is in $a_{\bf m}$ or not.

For Cases (3) and (4), the proof strategy is the same as that of Theorem \ref{thm: invariance under handleslides}, except Lagrangian disks are replaced by Lagrangian annuli if arcs (instead of half-arcs) are involved. The maps $\Phi$ and $\Psi$ are defined as in the proof of Theorem~\ref{thm: invariance under handleslides}. 
For the half-arcs $a_i$ that are untouched by the handleslide, one takes a pushoff $a_i'$ in the positive Reeb direction which intersects $a_i$ once.  Write $c_{ii}$ for the short Reeb chord in $\bdry S$ from $a_i$ to $a_i'$. Corresponding to each Reeb chord $c$ in $\bdry S$ there exists an $S^1$-Morse-Bott family of chords in $\bdry W$; in the perturbed version we consider the longer and shorter Reeb chords $\check{c}$ and $\hat{c}$ which are the bottom and top generators with respect to the cohomological grading. Let $\theta^{\op{bot}}_i:=\check{c}_{ii}$.  The case of arcs $a_i$ is similar, except there are two chords $c_{ii}$ and $c_{ii}'$ and we let $\theta^{\op{bot}}_i:=\check{c}_{ii}+\check{c}_{ii}'$. 
In view of \cite[Lemma 1.16]{Se3}, the key calculations are given by Lemmas~\ref{lemma: triangle1} and \ref{lemma: triangle2}, i.e., if we handleslide $a_1$ over an arc $a_2$ to obtain $a_1'$, there is a count of one holomorphic triangle passing through a point in $L_2$ and asymptotic to the relevant top/bottom generators of the chords between any two of $\partial L_1$, $\partial L_2$ and $\partial L_1'$.

\subsubsection{Case (1): half-arcs over half-arcs}\label{subsubsection: case 1}$\mbox{}$

\s\n
(i) We start with the subcase where we handleslide a half-arc $a_1\mid a_{\bf m}$ over a half-arc $a_2\mid a_{\bf m}$, yielding $a_1'$.
The proof of Theorem 9.3.1 
for the hat version in \cite{CHT} applies in a straightforward manner to our wrapped case as follows:

The half-arc $a_2$ ends at the critical value $x_2$ of $\pi$ and is contained in the strip bounded by $a_1$ and $a_1'$ as given in Figure~\ref{fig: An-arcslide}. For $i=2,\dots, k$, let $a_i'$ be the pushoff of $a_i$ which intersects $a_i$ at the same endpoint $x_i$. 
\begin{figure}[ht]
	\begin{overpic}[scale=1]{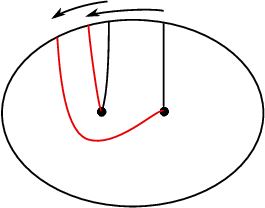}
		\put(41,35){\tiny $x_2$}
		\put(65,35){\tiny $x_1$}
		\put(28,51){\tiny $a_2'$}
		\put(33,19){\tiny $a_1'$}
		\put(42,51){\tiny $a_2$}
		\put(63,51){\tiny $a_1$}
		\put(50,77){\tiny $c_{12}$}
		\put(25,79){\tiny $c_{21}$}
	\end{overpic}
	\caption{}
	\label{fig: An-arcslide}
\end{figure}
We write $a_{\bf m}'$ for $a_{\bf m}$ with $a_1,a_2$ replaced by $a_1',a_2'$ and $\widehat{L}_{\bf m}'$ corresponding to $a_{\bf m}'$. For each pair $(i,j)$ with $i,j=1$ or $2$, let $c_{ij}$ be the shortest counterclockwise chord in $\bdry S$ from $a_i$ to $a_j'$. 
We write
$${\Theta_c}= \{\check c_{12},\check c_{21}, \theta_{3}^{\op{bot}} ,\dots,\theta_{k}^{\op{bot}}\}\quad \mbox{and} \quad \Theta = \{\theta_1^{\op{top}}, \theta_2^{\op{top}} ,\theta_{3}^{\op{top}} ,\dots,\theta_{k}^{\op{top}}\},$$ 
where $\theta_i^{\op{top}}$ is the intersection of $L_i$ and $L_i'$ if they intersect over $x_i$ and the top (= higher cohomological degree) generator of the $S^1$-Morse-Bott family of intersections if $a_i$ is an arc. It is easy to verify that $\Theta_c$ and $\Theta$ are cycles.

\begin{claim} \label{claim: quasi-isomorphic}
    $\widehat{L}_{\bf m}'$ and $\widehat{L}_{\bf m}$ are quasi-isomorphic in the category of twisted complexes of $\mathcal{R}^\tau(S,n,{\bf a})$-modules.
\end{claim}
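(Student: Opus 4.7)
The plan is to produce a quasi-isomorphism $\widehat{L}_{\bf m} \leftrightarrow \widehat{L}_{\bf m}'$ in $\overline{\mathcal{R}}^\tau(S,n,{\bf a})$ by exhibiting mutually inverse cycles, following the strategy of \cite[Theorem 9.3.1]{CHT}. I would use $\Theta_c \in \Hom(\widehat{L}_{\bf m},\widehat{L}_{\bf m}')$ as the candidate quasi-isomorphism in one direction, and define a companion cycle $\Theta_c' \in \Hom(\widehat{L}_{\bf m}',\widehat{L}_{\bf m})$ built from the analogous shortest counterclockwise chords from $\bdry a_i'$ back to $\bdry a_{\sigma(i)}$, where $\sigma$ is the transposition $(1\,2)$ on $\{1,2\}$ and the identity on $\{3,\dots,k\}$, together with the bottom chords $\theta_i^{\op{bot}}$ of lower cohomological degree for $i\geq 3$. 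Both $\Theta_c$ and $\Theta_c'$ are closed: since the $\check c_{ij}$ and the $\theta_i^{\op{bot}}$ can be made arbitrarily short, any nontrivial $J^\Diamond$-holomorphic strip contributing to $\mu_1$ would have outgoing asymptotics of even smaller $\widehat\beta$-action, contradicting the action positivity underlying Lemma~\ref{lemma: finiteness}.

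It then suffices to show that the compositions $\mu_2(\Theta_c',\Theta_c)$ and $\mu_2(\Theta_c,\Theta_c')$ represent the cohomological identities on $\widehat{L}_{\bf m}$ and $\widehat{L}_{\bf m}'$, respectively. Each composition is a signed, $\hbar$-weighted count of $J^\Diamond$-holomorphic triangles in $D_2\times \widehat{W}_{S,n}$ with the prescribed Lagrangian boundary conditions. I would analyze these triangles by stretching along a neighborhood of $\bdry W_{S,n}$ and then projecting to the base surface $S$, reducing the count to that of ``component triangles'' concentrated near the $n+s$ relevant loci in $S$. The contributions from the pushoffs of the arcs $a_i$, $i\geq 3$, produce the standard Morse-Bott identity terms. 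The interesting case is $i=1,2$, where the contribution must output $\theta_1^{\op{top}}$ and $\theta_2^{\op{top}}$ on cohomology (so that $\mu_2(\Theta_c',\Theta_c)$ equals the cohomological identity represented by $\Theta$) after the swap prescribed by $\sigma$.

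The main obstacle is the combined local count in a neighborhood of $a_1\cup a_2\cup a_1'$ drawn in Figure~\ref{fig: An-arcslide}. The corners at $\check c_{12},\check c_{21}$ (and their primed counterparts in $\Theta_c'$) force any contributing triangle to sweep out the thin region bounded by $a_1,a_2,a_1'$, and the key geometric input, analogous in spirit to Lemmas~\ref{lemma: triangle1} and~\ref{lemma: triangle2} that appear later for the arc case, is that there is a single small $J$-holomorphic triangle with output at $\theta_i^{\op{top}}$ in the handleslide region for each of $i=1,2$. A Riemann--Hurwitz analysis of the $\kappa$-fold branched cover $\pi_S\circ u$, combined with an energy estimate based on the shortness of the chords and the maximum principle applied to $\pi_S\circ u$ on $S$, will be used to exclude competing triangles of larger support. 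Once this local count is verified, concatenating with the standard Morse-Bott contributions for $i\geq 3$ yields the cohomological identity in $\Hom(\widehat{L}_{\bf m},\widehat{L}_{\bf m})$; the symmetric computation handles $\Hom(\widehat{L}_{\bf m}',\widehat{L}_{\bf m}')$, completing the proof of the claim.
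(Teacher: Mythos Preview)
Your approach differs from the paper's in the choice of inverse morphism. The paper does not build a second bottom-chord cycle $\Theta_c'$; instead it uses
\[
\Theta=\{\theta_1^{\op{top}},\theta_2^{\op{top}},\theta_3^{\op{top}},\dots,\theta_k^{\op{top}}\}\in\Hom(\widehat L_{\bf m}',\widehat L_{\bf m}),
\]
where $\theta_1^{\op{top}},\theta_2^{\op{top}}$ are the interior intersection points $L_i\cap L_i'$ over the critical values $x_i$. The quasi-isomorphism is then exhibited at the level of Yoneda modules via $\Phi({\bf y})=\mu_2(\Theta_c,{\bf y})$ and $\Psi({\bf y})=\mu_2(\Theta,{\bf y})$.

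The more substantive gap is your expectation that the composition is the cohomological identity on the nose. The paper's conclusion, imported from \cite[Theorems~9.3.1 and 9.3.6]{CHT}, is that $\Psi\circ\Phi$ is chain homotopic to $\hbar^2\cdot p(\hbar)\cdot\op{id}$, with $p(\hbar)$ a power series whose constant term is $\pm1$. Any curve contributing to the double swap at indices $1,2$ has a connected domain whose branched-cover structure over the $A_\infty$-base forces $\chi<k$ and hence strictly positive powers of $\hbar$; the leading term is the curve counted with point constraints in \cite[Theorem~9.3.6]{CHT}. One then uses that $\hbar^2 p(\hbar)$ is invertible in $\F[\mathcal{A}]\llbracket\hbar,\hbar^{-1}]$ to conclude. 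Your ``single small triangle'' heuristic, modeled on Lemmas~\ref{lemma: triangle1} and~\ref{lemma: triangle2}, does not transfer here: those lemmas concern slides over an \emph{arc}, where the relevant Lagrangian is an annulus and the count reduces to a split model in $H\times A$; in the present half-arc-over-half-arc case both $L_1,L_2$ are thimbles meeting over $x_2$, and the triangle count is genuinely the higher-genus one analyzed in \cite[Section~9.3]{CHT}, not a product computation.
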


\begin{proof}[Sketch of proof of Claim~\ref{claim: quasi-isomorphic}]
This follows the proof of \cite[Theorem 9.3.1]{CHT}, which extends from the hat to the wrapped version. The leading term of the quasi-isomorphism is given by:
\begin{gather} \label{eqn: chain map Phi}
\Phi: \Hom(\widehat{L}_{\bf m'},\widehat{L}_{\bf m})\to \Hom(\widehat{L}_{\bf m'},\widehat{L}_{\bf m}'),\\
\nonumber {\bf y}\mapsto \mu_2(\Theta_c \otimes {\bf y}),
\end{gather}
and the leading term of its inverse is:
\begin{gather} \label{eqn: chain map Psi}
\Psi: \Hom(\widehat{L}_{\bf m'},\widehat{L}_{\bf m}')\to \Hom(\widehat{L}_{\bf m'},\widehat{L}_{\bf m}),\\
\nonumber {\bf y}\mapsto \mu_2(\Theta \otimes {\bf y}).
\end{gather}

By \cite[Theorem 9.3.1]{CHT} and the count with point constraints from \cite[Theorem 9.3.6]{CHT}, the composition $\Psi\circ \Phi$ is chain homotopic to $\hbar^2\cdot p(\hbar)$ times the identity, where $p(\hbar)$ is a power series in $\hbar$ whose constant term is $\pm 1$.  The situation for $\Phi\circ \Psi$ is analogous.
\end{proof}

\s\n
(ii) Next we treat the subcase where we handleslide a half-arc $a_1\mid a_{\bf m}$ over a half-arc $a_2\nmid a_{\bf m}$. The main technical statement is the following:

\begin{prop} \label{prop: half-arc over a half-arc}
    Handlesliding $a_1$ over $a_2$ corresponds to two consecutive Lagrangian handleslides in $W_{S,n}$.
\end{prop}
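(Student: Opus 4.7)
The plan is to exhibit a Weinstein homotopy on $W_{S,n}$ that realizes the surface handleslide of $a_1$ over $a_2$, and to factor it as a concatenation of two elementary Weinstein handleslides of the type described in Section~\ref{subsub: model1}; Theorem~\ref{thm: invariance under handleslides} can then be applied twice.

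First I would analyze the base geometry. Since $a_2\nmid a_{\bf m}$, we can treat $a_2$ as fixed while $a_1$ moves. Working in a disk neighborhood $U\subset S$ of $a_1\cup a_2$ containing both critical values $x_1,x_2$ and meeting $\partial S$ in an arc through $\partial a_1,\partial a_2$, the half-arc $a_1'$ is isotopic rel $x_1$ to the arc obtained from $a_1$ by the following detour: go from $x_1$ outward along $a_1$, turn and run parallel to one side of $a_2$ down towards $x_2$, encircle $x_2$, and return parallel to the other side of $a_2$ to the new endpoint on $\partial S$ on the opposite side of $\partial a_2$ from $\partial a_1$. The appearance of two strands parallel to $a_2$ is forced topologically because the endpoint of $a_1$ has moved from one side of $\partial a_2$ to the other along $\partial S$ while the interior endpoint stays pinned at $x_1$ and the arc cannot cross $a_2$.

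Next I would construct a Weinstein homotopy $(\beta_t,\phi_t)_{t\in[0,1]}$ on $W_{S,n}$, supported in $\pi^{-1}(U)$, with exactly two special parameter values $0<t_1<t_2<1$. Near each $t_i$ the homotopy will match the elementary handleslide model of Section~\ref{subsub: model1} in the case $\dim W=2\nu$, between the index-$\nu$ critical points corresponding to $L_1$ and $L_2$, and it will be generic in the family so that no other handleslide event occurs. For $t<t_1$ the unstable manifold of the $L_1$-critical point is $L_1$; for $t_1<t<t_2$ it is an intermediate Lagrangian disk $L_1^{\op{int}}$, whose image under $\pi$ is an intermediate configuration $a_1^{\op{int}}\subset S$ realizing the first of the two detour strands along $a_2$; for $t>t_2$ it is $L_1'$ sitting over the fully handleslid $a_1'$. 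Each elementary handleslide corresponds geometrically to one of the two passes of $a_1'$ alongside $a_2$, and appears as a short Reeb chord between $\partial L_1$ (or $\partial L_1^{\op{int}}$) and $\partial L_2$ in the contact boundary $\partial W_{S,n}$ at the corresponding moment.

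Finally I would invoke Theorem~\ref{thm: invariance under handleslides} at each of $t_1$ and $t_2$. The first application yields a Morita equivalence between $R^\tau(S,n,{\bf a})$ and the algebra associated to the intermediate parametrization, together with a quasi-equivalence of the corresponding $\ai$-bimodules $B^\tau$; the second does the same from the intermediate parametrization to $R^\tau(S,n,{\bf a}')$. Composing the two gives the desired invariance under a single half-arc-over-half-arc handleslide in the subcase $a_2\nmid a_{\bf m}$. The main obstacle will be setting up the Weinstein homotopy cleanly enough that it factors through exactly two elementary handleslides and no others, with $L_1^{\op{int}}$ genuinely realized as the unstable manifold over the predicted intermediate arc $a_1^{\op{int}}$. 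This requires a careful local normal form argument in the parameter family, using the symplectic parallel transport along $a_2$ to separate the two elementary handleslide events at distinct times $t_1<t_2$. The hypothesis $a_2\nmid a_{\bf m}$ is essential: it prevents a second copy of $L_2$ from appearing at the intermediate step, so that each intermediate parametrization remains an allowed object in the ambient $\ai$-category of twisted complexes in which the two Morita equivalences are composed.
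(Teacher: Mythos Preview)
Your high-level plan---factor the move into two elementary Weinstein handleslides and invoke Theorem~\ref{thm: invariance under handleslides} twice---is exactly the right shape, but the geometric mechanism you propose for the factoring is incorrect, and the gap is precisely the content of the paper's argument.

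The problem is your intermediate object. You posit a thimble $L_1^{\op{int}}$ over a half-arc $a_1^{\op{int}}$ ``realizing the first of the two detour strands along $a_2$'', but no such half-arc exists: any half-arc from $x_1$ to $\partial S$ that runs down only one side of $a_2$ lands on the same side of $\partial a_2$ as $a_1$ and is isotopic to $a_1$ rel $x_1$. The ``two'' in the proposition does not come from two passes of $a_1'$ alongside $a_2$ in the base; a single base chord from $\partial a_1$ to $\partial a_2$ lifts to an $S^1$-Morse--Bott family of Reeb chords between the Legendrian circles $\partial L_1$ and $\partial L_2$ (Section~\ref{section: Lefschetz fibration}), which perturbs to \emph{two} short chords $\check{c},\hat{c}$. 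The two handleslides are along these, and the intermediate Lagrangian after the first is \emph{not} a thimble over any half-arc.

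The paper makes this precise by a genuinely different route. It replaces each thimble $L_i$ by a disk $\widetilde{L}_i$ obtained by parallel transporting a vertical fiber arc $\{pt\}\times[-1,1]\subset A$ along the full arc $\widetilde{a}_i=cl(\partial N(a_i)\cap\op{int}(S))$ encircling $x_i$. The key step (Claim~\ref{claim: alternative}) is that $\widetilde{L}_i$ is Lagrangian isotopic to $L_i$ through disks with Legendrian boundary; this is nontrivial and relies on Eliashberg--Polterovich uniqueness of Lagrangian disks bounding the standard Legendrian unknot in $B^4$. Once this replacement is made, $\widetilde{a}_1$ has \emph{two} endpoints on $\partial S$, and passing to $\widetilde{a}_1'$ is visibly two consecutive end-handleslides over $\widetilde{a}_2$, each matching the model of Section~\ref{subsub: model1}. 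The intermediate $\widetilde{L}_1^{\op{int}}$ sits over an arc with one end on each side of $\widetilde{a}_2$---not of the form $\widetilde{a}$ for any half-arc, confirming that your intermediate cannot be a thimble. Your ``careful local normal form argument'' would, if carried out, have to rediscover exactly this replacement and the Eliashberg--Polterovich input.
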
 

\begin{proof}[Proof of Proposition~\ref{prop: half-arc over a half-arc}]
For $i\leq n$, we take a small tubular neighborhood $N(a_i)$ of $a_i$ in $S$ and consider the closure $\widetilde{a}_i=cl(\bdry N(a_i)\cap \op{int}(S))$, which is a properly embedded arc surrounding the point $x_i$.
Let $\widetilde{L}_i$ be the Lagrangian disk obtained by parallel transporting a fiber $\{pt\}\times [-1,1]\subset A$ along $\widetilde{a}_i$ using the symplectic connection 
and rounding $\partial W_{S,n}$ to make $\bdry \widetilde{L}_i$ Legendrian.  The rounding of $\bdry \widetilde{L}_i$ can be done as follows: Let $N_0(a_i) \subset N(a_i)$ be a slightly smaller tubular neighborhood of $a_i$ in $S$.  Over $S\setminus \cup_{i=1}^n N_0 (a_i)$ the Liouville form $\lambda$ can be written as $pdq +\beta$, where $\beta$ is a Liouville $1$-form on $S$ which can be chosen so that $\widetilde{a}_i$ is tangent to its kernel. Then $\partial \widetilde{L}_i$ is Legendrian before rounding and can be kept Legendrian when rounding.
	
Now let $N'(a_i) \supset N(a_i)$ be a slightly enlarged tubular neighborhood of $a_i$ in $S$, so that $W_{S,n}$ is the Liouville connected sum of $\pi^{-1} (N'(a_i))$ and $\pi^{-1}(S\setminus \op{int}(N'(a_i))$\footnote{Here we must perturb the $\lambda$ and $\beta$ rel boundary in the previous paragraph.} along $\pi^{-1}(b)$, where $b=cl(\partial N'(a_i)\cap \op{int}(S))$.
The manifold $\pi^{-1} (N'(a_i))$ is the $4$-ball and the fibration $\pi : \pi^{-1} (N'(a_i)) \to N'(a_i)$ induces an open book decomposition supporting the standard contact structure $\ker \lambda$ on its boundary $3$-sphere. The pages of the open book decomposition are annuli (= the fibers of $\pi$ along $\partial N'(a_i)$), the monodromy is a positive Dehn twist along the core of the annulus, and the binding has two components, a neighborhood of which is given by a horizontal boundary component of $\pi^{-1} (N'(a_i))$.
	
\begin{claim}\label{claim: alternative}
	The disk $\widetilde{L}_i$ is Lagrangian isotopic to the thimble $L_i$ over $a_i$ through Lagrangian disks with Legendrian boundary in $W_{S,n}$.
\end{claim}

\begin{proof}[Proof of Claim~\ref{claim: alternative}]
We first work over $\pi^{-1} (N'(a_i))$. The boundary $\Lambda_i=\bdry L_i$ is the core Legendrian $0_{T^* S^1}$ contained in the vertical fiber over the point $a_i \cap \partial S$.
The boundary $\widetilde{\Lambda}_i =\partial \widetilde{L}_i$ decomposes into two vertical Legendrian arcs contained in two different pages and two horizontal arcs contained in slices $\{pt\} \times \{\pm 1\} \times D^2$ of neighborhoods of the two components of the binding. Now, in the rounded boundary $(\partial \pi^{-1} (N'(a_i)),\xi)\simeq (S^3,\xi_{std})$, the Legendrians $\Lambda_i$ and $\widetilde{\Lambda}_i$ are Legendrian unknots that are Legendrian isotopic in the complement of $\pi^{-1}(b)$ and hence are also Legendrian isotopic in the rounded $\partial W_{S,n}$. This is because the complement of a small open neighborhood of $\pi^{-1}(b)$ in the rounded $\partial \pi^{-1} (N'(a_i))$ is a standard solid torus, in which both Legendrians are isotopic to a standard longitude.
		
We then argue that (i) since the trace of a Legendrian isotopy in a contact manifold can be lifted into a Lagrangian cylinder in its symplectization by Ekholm-Honda-K\'alm\'an \cite{EHK}, the disk $\widetilde{L}_i$ is Lagrangian isotopic in $\partial \pi^{-1} (N'(a_i)) \setminus \pi^{-1} (b)$ (through Lagrangian disks with Legendrian boundary) to a Lagrangian disk $L_i'$ with Legendrian boundary $\Lambda_i =\partial L_i$, and that (ii) $L_i'$ and $L_i$ are then Lagrangian isotopic in the symplectic $4$-ball $\pi^{-1} (N'(a_i))$ by the uniqueness of the Lagrangian disk bounded by the standard Legendrian unknot due to Eliashberg-Polterovich \cite{EP}. Hence $\widetilde{L}_i$ and $L_i$ are Lagrangian isotopic in $\pi^{-1} (N'(a_i)) \setminus \pi^{-1} (b)$ and {\it a fortiori} in $W_{S,n}$.		
\end{proof}

Continuing with the proof of Proposition~\ref{prop: half-arc over a half-arc}, by Claim~\ref{claim: alternative} we can replace $L_i$, $i\leq n$, by $\widetilde{L}_i$ in the Lagrangian basis $\{L_i\}_{1\leq i\leq n+2g}$. Then $\widetilde{L}'_1$ corresponding to $a'_1$ is obtained from $\widetilde{L}_1$ by two consecutive handleslides over $\widetilde{L}_2$. Indeed, in the base $S$, we pass from the arc $\widetilde{a}_1$ to the arc $\widetilde{a}'_1$ by consecutively handlesliding {\em both} ends of $\widetilde{a}_1$ over $\widetilde{a}_2$. By the first handleslide model from Section \ref{subsub: model1} when $\op{dim} W=2\nu=4$, $\widetilde{L}'_1$ is similarly obtained by two consecutive handleslides of  $\widetilde{L}_1$ over $\widetilde{L}_2$. Viewed directly on $L_1$ and $L_2$, the handleslides required to pass from $L_1$ to $L'_1$ are along the two short chords from $\partial L_1$ to $\partial L_2$.
This completes the proof of Proposition~\ref{prop: half-arc over a half-arc}.
\end{proof}

This completes the proof of Case (1).

\subsubsection{Case (4): arcs over arcs}\label{subsection: arcs over arcs}

Let $a_1$, $a_2$ be arcs in $S$ and $a_1'$ be the arc obtained by handlesliding $a_1$ over $a_2$. The arcs $a_1$, $a_2$ and $a_1'$, together with $\partial S$, bound a hexagon $H$ in $S$ such that $\pi^{-1}(H)\simeq H\times A\subset W_{S,n}$ (recall $A$ is an annulus in $T^*S^1$) and $H\cap {\bf x}=\varnothing$. 
The Lagrangians under consideration are $L_1$, $L_2$, and $L_1'$, given respectively by $a_1 \times 0_{T^*S^1}$, $a_2\times 0_{T^*S^1}$, and $a_1'\times 0_{T^*S^1}$. 

We do a holomorphic curve computation on $H\times A$, where we can use the split symplectic structure and split almost complex structure $J$, so that the projections of $H\times A$ to $H$ and $A$ are $J$-holomorphic.  We also partially wrap the arcs to transform the short chords into intersection points; see the left-hand side of Figure~\ref{triangle2}. Over each intersection point in $S$ between the arcs $a_1,a_2,a_1'$, there is a clean circle of intersections corresponding to the $0$-section.  We then slightly perturb $L_1,L_2,L_1'$ (while retaining their names) so they are still products of $a_1,a_2,a_1'$ with Hamiltonian deformations of the zero section in $A$ that intersect each other transversely and at exactly two points; see the right-hand side of Figure~\ref{triangle2}.

We consider the bottom generator $\theta$ of $CF(L_1,L_2)$ and the bottom generators $c$ and $d$ of $CF(L_1',L_1)$ and $CF(L_2,L_1')$, respectively.

As discussed at the beginning of Section~\ref{subsection: proof of theorem}, the desired curve count is:

\begin{lemma}\label{lemma: triangle1}
The algebraic count of holomorphic triangles $u: D_2\to W_{S,n}$, $D_2\in \mathcal{A}_2$, which:
\be
\item map $\bdry_0 D_2,\bdry_1 D_2,\bdry_2 D_2$ to $L_1$, $L_2$, $L_1'$, respectively; 
\item limit to intersection points $\theta$, $c$, $d$; and
\item pass through a generic point $z$ in $L_1$,
\ee
is $\pm 1$.
\end{lemma}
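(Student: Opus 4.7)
The plan is to exploit the local product structure $\pi^{-1}(H)\cong H\times A$ on the Lefschetz fibration over the hexagonal region, together with a split almost complex structure $J=j_S\times j_A$, so that any holomorphic map into $\pi^{-1}(H)$ decomposes as $u=(u_H,u_A)$ with both factors holomorphic. I would first argue that the image of any triangle $u$ satisfying the prescribed boundary and asymptotic conditions is contained in $\pi^{-1}(H)$. The base projection $\pi\circ u$ is a holomorphic disk with boundary on $a_1\cup a_2\cup a_1'$ and three corners at the base projections of $\theta, c, d$; by positivity of intersection with the ``ambient'' boundary of the hexagonal region and the absence of Lefschetz critical values in $H$, the image of $\pi\circ u$ is exactly the triangular region in $H$ cut out by the three wrapped arcs, and the split structure then yields the factorization $u=(u_H,u_A)$.

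The second step is a factorwise analysis. In the base, the Schwarz--Christoffel theorem gives a unique (up to automorphism of $D_2$, which is rigid since $\mathcal{A}_2$ is a single point) conformal map $u_H\colon D_2\to H$ onto the triangular region with corners at the prescribed vertices. In the fiber $A=T^*S^1$, I would work first in the Morse--Bott model where all three Lagrangians $L_1, L_2, L_1'$ restrict to the zero section; by exactness of $A$, every $u_A$ is constant at some $q_0$ on the zero section, yielding a one-parameter Morse--Bott family of combined solutions $u=(u_H,\mathrm{const})$.

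The third step uses the point constraint. Requiring $u(q)=z=(z_S,z_A)\in L_1$ for some $q\in\bdry_0 D_2$ fixes $q$ uniquely via $u_H(q)=z_S$ (invertibility of the Riemann map on the $L_1$-boundary arc), and then forces $q_0=z_A$, selecting a single Morse--Bott solution. A standard Morse--Bott persistence and gluing argument, in the style of Fukaya--Oh--Ohta--Ono or via an explicit Morse flow-tree analysis on $S^1$, then shows that after the small Hamiltonian perturbation making the intersections transverse with prescribed bottom generators $\theta, c, d$, this selected solution deforms into a single nondegenerate transverse triangle contributing with sign $\pm 1$ coming from the product of the complex orientations on $H$ and $A$.

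The main obstacle I expect is the Morse--Bott persistence step in the fiber: one has to verify that the chosen perturbation pins down exactly the selected Morse--Bott triangle as a regular transverse curve with corners matched to the bottom generators, and that no additional triangles are produced by bubbling, gluing, or flow-tree configurations with more than one edge. A secondary delicate point is the containment argument, which needs careful control of the holomorphic disk $\pi\circ u$ near the punctures asymptotic to intersection points lying in the wrapped region beyond $\partial S$, so that no stray branches of the image escape the triangular region.
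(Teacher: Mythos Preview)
Your overall strategy matches the paper's: both use the split structure $\pi^{-1}(H)\cong H\times A$ with a split almost complex structure, factor $u=(u_H,u_A)$ with both projections holomorphic, and note that $u_H$ is the essentially unique Riemann map onto the base triangle. The difference lies entirely in the fiber analysis.

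The paper does \emph{not} pass through a Morse--Bott model. Instead it works directly with the perturbed configuration in $A$ (three Hamiltonian deformations of the zero section, pairwise intersecting transversely in exactly two points each) and observes that the holomorphic triangles in $A$ with corners at the three bottom generators form an explicit $1$-parameter family, parametrized by the length $s$ of a slit (the right-hand side of Figure~\ref{triangle2}). The point constraint is then handled by a cross-ratio matching: $u_H$ determines the unique boundary point $q\in\partial_0 D_2$ with $u_H(q)=z_1$, and as the slit parameter $s$ varies the position of $u_A^{-1}(z_2)$ on $\partial_0 D_2$ varies as well, so exactly one value of $s$ achieves $u_A(q)=z_2$. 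This gives the count of one directly.

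Your Morse--Bott route should also succeed, and you have correctly identified its cost: you must carry out the persistence/gluing step showing that the single point-constrained constant map deforms to a single transverse triangle with corners precisely at the three \emph{bottom} generators (rather than some other combination of top/bottom). The paper's approach sidesteps this entirely by naming the slit family explicitly, which makes the count immediate and avoids any appeal to Morse--Bott machinery. Your containment concern is not treated separately in the paper; the computation is simply performed locally in $H\times A$, implicitly justified by exactness and the fact that all relevant intersection points and the base triangle lie in this product region.
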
 

\begin{proof} 
The holomorphic map $u$ is completely determined by its two projections $u_1$ and $u_2$ to $H$ and $A$ and must pass through $z=(z_1,z_2)\in L_1$. The first projection fixes the cross ratio of $z_1$. 
\begin{figure}[ht]
	\begin{overpic}[scale=1]{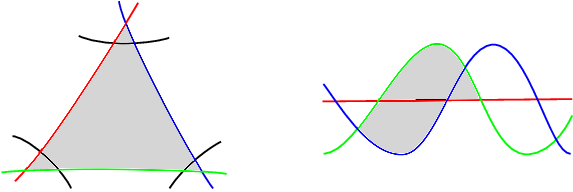}
	\put(18,10){$H$}
    \put(37,5){$c$}\put(24,28){$\theta$}\put(5,-1){$d$}
	\put(30,15){$a_1$}\put(7,15){$a_2$}\put(18,0){$a_1'$}
	\put(61,5){$c$}\put(78,12){$\theta$}\put(87,12){$d$}\put(74,17){$s$}
	\end{overpic}
	\caption{The projections $u_1$ and $u_2$ of the triangle in $H$ and $A$, respectively.}
	\label{triangle2}
\end{figure}
In $A$, there is a $1$-parameter family of holomorphic triangles, indexed by the size of the slit $s$ (see Figure \ref{triangle2}), which allows us to adjust the cross ratio for $z_2$. This gives a count of $1$ holomorphic triangle.
\end{proof}

\subsubsection{Case (3): half-arcs over arcs}

Let $a_1$ be a half-arc that we slide over an arc $a_2$ to obtain a half-arc $a_1'$. Let $L_1$ and $L_1'$ be Lagrangian thimbles over $a_1$ and $a_1'$ that intersect at a point $d$ over the critical value $x_1$ of $\pi$, and let $L_2$ be the Lagrangian annulus over $a_2$. We also partially wrap the arcs/half-arcs to transform the short chords into intersection points. Assume that $a_1, a_2,a_1'$ cobound a triangle in counterclockwise order. Let $\theta$ (resp.\ $c$) be the bottom generator of $CF(L_1,L_2)$ (resp.\ $CF(L_2, L_1')$).

The desired curve count in our case is the following:

\begin{lemma}\label{lemma: triangle2} 
The algebraic count of holomorphic triangles $u: D_2\to W_{S,n}$, $D_2\in \mathcal{A}_2$, which:
\be
\item map $\bdry_0 D_2,\bdry_1 D_2,\bdry_2 D_2$ to $L_1$, $L_2$, $L_1'$, respectively; 
\item limit to intersection points $\theta$, $c$, $d$; and
\item pass through a generic point $z$ in $L_1$,
\ee
is $\pm 1$.
\end{lemma}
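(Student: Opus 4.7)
The plan is to adapt the argument of Lemma~\ref{lemma: triangle1} to the half-arc setting, with the twist that the fiber now degenerates at one vertex of the base triangle. I would first choose an almost complex structure $J$ on $W_{S,n}$ that is split outside a small neighborhood $U$ of the critical fiber $\pi^{-1}(x_1)$ — so that both the base projection $\pi$ and the fiber projection to $A$ obtained by parallel transport are $J$-holomorphic over $T\setminus U$ — and that coincides with the standard integrable Lefschetz model $(z_1,z_2)\mapsto z_1 z_2$ inside $U$. In particular $\pi$ remains $J$-holomorphic everywhere, which is what allows the argument to proceed by separating the base and fiber analyses.

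First I would analyze the base projection $u_1 := \pi\circ u \colon D_2 \to S$. This is a holomorphic map with boundary on sub-arcs of $a_1, a_2, a_1'$ and asymptotic to $\pi(\theta)$, $\pi(c)$, and (after removal of singularities at $x_1$) $x_1$ itself. A degree count then forces $u_1$ to be a degree-one Riemann map onto the triangular region $T\subset S$ bounded by $a_1, a_2, a_1'$ with vertices $\pi(\theta), \pi(c), x_1$. The point constraint $u(z_0) = z$ with $\pi(z)=z_1\in a_1$ pins down the marked boundary point $z_0 \in \partial D_2$ as $u_1^{-1}(z_1)$, fixing the conformal type.

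Next I would analyze the fiber direction. Over $T\setminus U$ the fiber lift $u_2$ is a holomorphic map to $A$ with boundary on (perturbed) zero sections that pairwise intersect at $\theta$ and $c$, exactly as in Lemma~\ref{lemma: triangle1}, except that the third corner of the fiber triangle is replaced by the pinched vanishing cycle at $x_1$. Because the thimbles $L_1, L_1'$ meet only at the single transverse intersection point $d$ over $x_1$, this third corner carries no free parameter and there is no bottom/top ambiguity. The remaining degree of freedom is the slit parameter of Lemma~\ref{lemma: triangle1}, which is cut down by the fiber constraint $u_2(z_0)=z_2$ lying on the one-dimensional zero section of the fiber over $z_1$, yielding a signed count of $\pm 1$.

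The main obstacle is the analysis near $x_1$: one must show that the lift of $u_1$ into $U$ is unique and transversely cut out, with no spurious sphere bubbles or thimble-disk bubbles in the critical fiber. I would handle this either via the explicit local Lefschetz model, in which the thimble over a straight half-arc admits an explicit real-slice description and positivity of intersection with the critical fiber yields local rigidity; or via a deformation argument: perturb $x_1$ slightly off $a_1$ so that $a_1, a_1'$ temporarily become arcs, apply Lemma~\ref{lemma: triangle1} to obtain a count of $\pm 1$ in the perturbed setting, and use Gromov compactness (with the local Lefschetz model preventing bubbling at $x_1$) to conclude that the triangle persists as the perturbation is switched off, with the bottom generator limiting to the unique intersection $d$.
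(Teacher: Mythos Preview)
Your approach is genuinely different from the paper's, and considerably more laborious. The paper sidesteps the critical-fiber analysis entirely: it resolves the transverse intersection $d=L_1\cap L_1'$ via Lagrangian surgery to produce a Lagrangian cylinder $L_1''$ isotopic to $L_2$, then invokes Lemma~\ref{lemma: FO3} (the Fukaya--Oh--Ohta--Ono surgery/gluing result) to identify the triangle count through $z$ with the count of holomorphic \emph{strips} between $L_2$ and $L_1''$ passing through $z$ and asymptotic to $\theta,c$. That strip count is a standard Floer-theoretic continuation computation for two Hamiltonian-isotopic cylinders and equals $\pm 1$. This is a three-line argument once the surgery lemma is available, and it never touches the Lefschetz singularity directly.

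Your first option (explicit local model) is in principle workable, but you have not actually carried out the patching of the split region with the Lefschetz chart, nor shown that the slit-parameter family of Lemma~\ref{lemma: triangle1} survives when one vertex is replaced by the collapsing vanishing cycle; this is precisely the content that the paper avoids by surgery. Your second option (deformation) has a concrete problem: $x_1$ is by definition the interior endpoint of the half-arc $a_1$, so ``perturbing $x_1$ off $a_1$'' destroys the very structure that makes $L_1$ a thimble, and it is unclear what Lagrangians you would be comparing in the perturbed picture or why the bottom generator would limit to $d$. If you want to salvage a deformation argument, the natural move is exactly the paper's: resolve $d$ rather than move $x_1$.
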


\begin{proof} 
We can resolve the intersection $d$ to obtain a Lagrangian cylinder $L_1''$ that is isotopic to $L_2$. By Lemma \ref{lemma: FO3}, the count of triangles through $z$ is the same as the count of strips between $L_2$ and $L_1''$ that pass through $z$ and are asymptotic to $\theta$ and $c$.  The latter count is algebraically $1$ by a standard computation.
\end{proof}

\subsubsection{Case (2): arcs over half-arcs} \label{subsubsection: case 2}

In this case, we slide an arc $a_1$ over a half-arc $a_2$ to obtain the arc $a_1'$. The half-arc $a_2$ ends at a critical value $x_2$ of $\pi$ which is contained in the strip bounded by $a_1$ and $a_1'$.

\s\n
(i) Consider the subcase where the half-arc $a_2\mid a_{\bf m}$. Let $a_{\bf m}'$ be $a_{\bf m}$ with $a_1$ replaced by $a_1'$ and let $\widehat{L}_{\bf m}$ and $\widehat{L}_{\bf m}'$ be the Lagrangians over $a_{\bf m}$ and $a_{\bf m}'$.  

Let $c_{12}$ be the shortest counterclockwise chord in $\bdry S$ from $a_1$ to $a_2$, $c_{21}$ the one from $a_2$ to $a_1'$, and $c_{11}$ the one from $a_1$ to $a_1'$ that passes over $a_2$.  
The situation is similar to that of Figure~\ref{fig: An-arcslide} except that $a_1$ and $a_1'$ are arcs which intersect at a point instead of at $x_1$.

We write 
$${\Theta_c}= \{\check c_{12}, \check c_{21},\theta_{3}^{\op{bot}} ,\dots,\theta_{k}^{\op{bot}}\}\quad \mbox{and}\quad \Theta = \{\theta_1^{\op{top}}, \theta_2^{\op{top}} ,\theta_{3}^{\op{top}} ,\dots,\theta_{k}^{\op{top}}\},$$
where $\theta_i^{\op{top}}$ is the intersection of $L_i$ and $L_i'$ if they intersect over $x_i$ and the top generator of the $S^1$-Morse-Bott family of intersections if $a_i$ is an arc. 

\begin{claim}\label{claim: quasi-isomorphic 2}
    $\widehat{L}_{\bf m}'$ and $\widehat{L}_{\bf m}$ are quasi-isomorphic in the category of twisted complexes of $\mathcal{R}^\tau(S,n,{\bf a})$-modules.
\end{claim}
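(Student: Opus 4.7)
The plan is to mirror the proof of Claim~\ref{claim: quasi-isomorphic}, adapting the holomorphic curve counts for the hybrid arc/half-arc configuration. Define chain maps
\begin{gather*}
\Phi \colon \Hom(\widehat{L}_{\bf m'},\widehat{L}_{\bf m}) \to \Hom(\widehat{L}_{\bf m'},\widehat{L}_{\bf m}'), \quad {\bf y} \mapsto \mu_2(\Theta_c \otimes {\bf y}),\\
\Psi \colon \Hom(\widehat{L}_{\bf m'},\widehat{L}_{\bf m}') \to \Hom(\widehat{L}_{\bf m'},\widehat{L}_{\bf m}), \quad {\bf y} \mapsto \mu_2(\Theta \otimes {\bf y}),
\end{gather*}
plus higher $A_\infty$-corrections, using the cycles $\Theta_c$ and $\Theta$ introduced above. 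That $\Theta_c$ and $\Theta$ are cycles follows from the same short-chord action argument as in Claim~\ref{claim: quasi-isomorphic}: after suitable partial wrapping, the chords $\check c_{12}$, $\check c_{21}$, and the Morse-Bott generators $\theta_i^{\op{top}}$, $\theta_i^{\op{bot}}$ can be made arbitrarily short, so no $\mu_1$-output with positive $d\widehat\beta$-area can be produced.

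The heart of the argument is to show $\Psi\circ\Phi \simeq \hbar^2 p(\hbar)\cdot\op{id}$ with $p(\hbar) = \pm 1 + O(\hbar)$, and symmetrically for $\Phi\circ\Psi$. By the $A_\infty$-relations this reduces to computing $\mu_2(\Theta\otimes\Theta_c)\in\Hom(\widehat{L}_{\bf m},\widehat{L}_{\bf m})$, which factors as a product of essentially independent counts indexed by the components of $a_{\bf m}$. For each index $i\geq 3$ (Lagrangians unaffected by the handleslide), the contribution is the standard Morse-Bott factor $\theta_i^{\op{top}}\cdot\theta_i^{\op{bot}}$ computed in \cite[Theorem 9.3.6]{CHT}. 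The nontrivial new piece is the triangle count involving $L_1$, $L_2$, $L_1'$, with asymptotics $\check c_{12}$, $\check c_{21}$, and the interior intersection $\theta_1^{\op{top}}\in L_1\cap L_1'$, subject to a generic point constraint on $L_2$.

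The main obstacle is precisely this leading triangle count, which hybridises the arguments of Lemmas~\ref{lemma: triangle1} and~\ref{lemma: triangle2}. Since $a_1$, $a_1'$ are arcs, the Lagrangians $L_1$, $L_1'$ are cylinders meeting transversely at an interior point, while $a_2$ is a half-arc ending at the critical value $x_2$ and $L_2$ is a thimble. The base projection to $S$ is a small triangle with two vertices at $c_{12}$, $c_{21}$ near the $\bdry S$-endpoint of $a_2$ and one vertex at the interior intersection $a_1\cap a_1'$, while the fiber projection to the annulus $A$ admits a one-parameter Morse-Bott family of triangles (as in Lemma~\ref{lemma: triangle1}) sufficient to match the cross-ratio forced by the point constraint. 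Equivalently, one can resolve $\theta_1^{\op{top}}$ as in Lemma~\ref{lemma: triangle2} to reduce the triangle count to a strip count through a generic point of $L_2$, which is algebraically $\pm 1$ by a standard computation. Assembling all factors yields the desired $\hbar^2 p(\hbar)\cdot\op{id}$; since $\hbar$ is invertible in the coefficient ring, $\Phi$ and $\Psi$ are mutual quasi-inverses, giving the required quasi-isomorphism in the category of twisted complexes of $\mathcal{R}^\tau(S,n,{\bf a})$-modules.
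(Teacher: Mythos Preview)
Your overall strategy --- define $\Phi$ and $\Psi$ via $\mu_2$ with $\Theta_c$ and $\Theta$ and show the compositions are $\hbar^2 p(\hbar)\cdot\op{id}$ --- is exactly the template from Claim~\ref{claim: quasi-isomorphic}, and the paper follows the same template. The difficulty is isolated in the piece involving $L_1, L_2, L_1'$, and that is where your argument has a gap.

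You write that ``the Lagrangians $L_1$, $L_1'$ are cylinders meeting transversely at an interior point.'' This is the key geometric error. Over the single base intersection $a_1\cap a_1'$ the two cylinders coincide along the zero section $S^1$ of the fiber; the intersection is a clean $S^1$, not a transverse point. The generator $\theta_1^{\op{top}}$ you use is one of the two Morse--Bott perturbations of this circle. This is precisely the feature distinguishing Case~(2)(i) from Case~(1)(i), and neither of the two computations you propose handles it. First, the split computation \`a la Lemma~\ref{lemma: triangle1} requires the Lefschetz fibration to be trivial over the base region; but the strip between $a_1$ and $a_1'$ contains the critical value $x_2$ (this is how $a_1'$ arises from the slide over the half-arc $a_2$), so the relevant curves pass through a neighborhood of the singular fiber and there is no global ``fiber projection to $A$.'' Second, the resolution trick of Lemma~\ref{lemma: triangle2} takes two \emph{thimbles} meeting at a single transverse point over a critical value and produces a cylinder isotopic to $L_2$; here $L_1$ and $L_1'$ are already cylinders meeting along a clean circle, and resolving one perturbed point $\theta_1^{\op{top}}$ does not yield anything with an evident relation to $L_2$ or to a strip count.

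The paper closes this gap differently: it invokes \cite[Lemma~2.3.5]{FO3}, which gives a bijection between holomorphic curves passing through a transverse intersection point and those passing through a clean intersection obtained by resolving and then ``opening up'' that point into a circle (Figure~\ref{fig: lagrangian-surgery2}). Applying this to $\theta_1^\star$ from Case~(1)(i) converts the curve count there directly into the curve count needed in Case~(2)(i), and then the rest of the proof of Claim~\ref{claim: quasi-isomorphic} (in particular the input from \cite[Theorems~9.3.1 and 9.3.6]{CHT}) goes through verbatim. Your write-up would be correct if you replaced the two proposed direct computations by this reduction.
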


\begin{proof}[Proof of Claim~\ref{claim: quasi-isomorphic 2}]
    This is similar to the proof of Claim~\ref{claim: quasi-isomorphic} and uses the curve count with point constraints from \cite[Theorem 9.3.1]{CHT}.  The only difference is that $L_1$ and $L_1'$ intersect along an $S^1$-Morse-Bott family over $a_1\cap a_1'$ as opposed to Case (1) where $L_1^\star$ and $(L_1^\star)'$ over half-arcs $a_1^\star$ and $(a_1^\star)'$ intersect at one point $\theta_1^\star$.  To go between the two cases, we resolve the intersection $\theta_1^\star$ and then transform it into a clean intersection as described in Figure \ref{fig: lagrangian-surgery2}. 
\begin{figure}[ht]
	\begin{overpic}[scale=1]{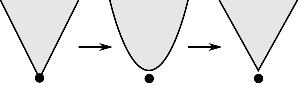}
	\end{overpic}
	\caption{}
	\label{fig: lagrangian-surgery2}
\end{figure}
By \cite[Lemma 2.3.5]{FO3}, there is a bijection between holomorphic curves passing through $\theta_1^\star$ and those passing through the clean intersection.  The rest of the proof of Claim~\ref{claim: quasi-isomorphic} goes through.
\end{proof}

\s\n
(ii) Consider the subcase where $a_2\nmid a_{\bf m}$. If $a_2'$ is the usual pushoff of $a_2$, then the resolution of $L_2$ and $L_2'$ along their intersection point $d$ over $x_2$ is a Lagrangian annulus $\widetilde{L}_2$ that is the lift of an arc $\widetilde{a}_2$ surrounding the point $x_2$. Then $\widetilde L_2$ is quasi-isomorphic in the category of twisted complexes to the cone of $d \in \op{Hom}(L_2, L_2')$ and $L_{\bf m}\cdot \widetilde L_2/L_1$ is quasi-isomorphic to the cone of $\{ d, \theta_2^{\op{bot}},\theta_3^{\op{bot}},\dots,\theta_k^{\op{bot}}\} \in \Hom (L_{{\bf m}}\cdot L_2/L_1 ,L_{\bf m}\cdot L_2'/L_1)$, where the $\theta_i^{\op{bot}}$ are the bottom generators of the terms of $a_{\bf m}/a_1$ and their pushoffs.
On the other hand, by Section~\ref{subsection: arcs over arcs},
$L_{\bf m}':= L_{\bf m} \cdot L_1'/L_1$ is quasi-isomorphic to the cone of $\{ \check c_{12}, \theta_2^{\op{bot}},\dots,\theta_k^{\op{bot}}\}\in \op{Hom}(L_{\bf m}, L_{\bf m}\cdot \widetilde L_2/L_1)$, where $c_{12}$ is the shortest Reeb chord from $a_1$ to $\widetilde a_2$.   Combining the last two statements yields Case (2).

\section{Symplectic Khovanov homology of a braid closure with respect to a fibration over $S^1$} \label{section: Khovanov homology of braid in fibration}

Let $\pi : M\to S^1$ be a fibration over $S^1$ and let $\ell\subset M$ be a link  transverse to the fibers.  The goal of this section is to define an invariant of $\ell$ {\em with respect to the fibration $\pi$}.  We pick an auxiliary knot $K \in M$ which transversely intersects each fiber at one point and assume that $\ell \cap K=\varnothing$. By drilling out a tubular neighborhood $N(K)$ of $K$ from $M$, we can identify
$$M\setminus \op{int} (N(K))\simeq N_{(S,h)}=(S\times[0,1])/ (x,1)\sim (h(x),0),$$
where $S$ is a fiber $\pi^{-1}(pt)$ minus an open disk and $h\in \op{Diff}^+(S,\bdry S)$.

We can cut $\ell$ along $S$ to obtain a braid $\mathbb{b}$ in $S\times  [0,1]$.  We assume that $h$ has been isotoped in $\op{Diff}^+(S,\bdry S)$ so that it takes ${\bf x}= \{x_1,\dots,x_n\}\subset \op{int}(S)$ to itself
 pointwise; we write it as $h\in \op{Diff}^+(S,\bdry S,{\bf x})$. We also assume that the endpoints of $\mathbb{b}$ are ${\bf x}\times \{0,1\}$.

We denote the Hochschild homology of the bimodule $B^f(S,n, {\bf a};\mathbb{b}, h)$ by $HH^f(S,n, {\bf a};\mathbb{b}, h)$. 

\begin{q}
What is the relationship between $HH^f(S,n, {\bf a};\mathbb{b}, h)$ and Rozansky's link invariant for $S^1\times S^2$ \cite{Ro} when we take $S$ to be annulus and $h=\op{id}$? Also compare with Willis' generalization to $\#^r S^1\times S^2$ \cite{Wi}.
\end{q}

\begin{q}
What is the relationship between $HH^f(S,n, {\bf a};\mathbb{b}, h)$ and the invariants of Asaeda-Przytycki-Sikora~\cite{APS} on $\overline{S}\times [0,1]$ when $\ell$ can be isotoped into $\overline{S}\times[0,1]$?
\end{q}

Let $h_{\mathbb{b}}\in \op{Diff}^+(S,\bdry S,{\bf x})$ be a diffeomorphism isotopic to the identity whose trace is $\mathbb{b}$.  By this we mean the following:  There exists $h_t\in \op{Diff}^+(S,\bdry S)$, $t\in[0,1]$, such that $h_0=\op{id}$ and $h_1=h_{\mathbb{b}}$ and such that $\{(h_t({\bf x}), t)~|~t\in[0,1]\}$ is the braid $\mathbb{b}$.

We then define
$$B^\tau(S,n,{\bf a};\mathbb{b},h):=\oplus_{k\geq 0,|{\bf m}|=|{\bf m'}|=k} \widehat{CF}^\tau(L_{\bf m}, h_{\mathbb{b}}\circ h(L_{\bf m'})).$$
It is an $\ai$-bimodule over $R^\tau(S,n,{\bf a})=\oplus_{k\geq 0} R^\tau(S,n,{\bf a};k)$. The coefficients are taken in 
$\F[\mathcal{A}]\llbracket \hbar, \hbar^{-1}]$ as before.

We denote the Hochschild homology of the bimodule $B^\tau(S,n,{\bf a};\mathbb{b},h)$ by $HH^\tau(S,n, {\bf a};\mathbb{b}, h)$.

\begin{proof}[Proof of Theorem~\ref{thm: braid in fibration}]
$HH^f(S,n, {\bf a}; \mathbb{b},h)$ is an invariant of the braid $\ell$ in the mapping torus of $(S,h)$ by Theorem~\ref{thm: invariance}.  Note it is invariant under conjugations of $h$ and thus depends only on the fibration $\pi: N_{(S,h)}\to S^1$.

It remains to see that it is invariant under an ``Alexander move'', i.e., viewing $K$ as fixed and isotoping a strand of $\ell$ across $K$ through transverse links. Such a move corresponds to trading $h$ for $\tau \circ h$, where $\tau$ corresponds to a braid of the following type:
\be
\item trivial strands from $x_i$ to $x_i$ for all $x_i\not= x_j$; and
\item the strand starting at $x_j$ follows an arc $c$ to $\bdry S$, goes once around $\bdry S$, and then back to $x_j$. 
\ee
We have the following isomorphism
$$HH^f(S,n, {\bf a}; \mathbb{b},h)\simeq HH^f(S,n, {\bf a}; \mathbb{b},\tau\circ h).$$
which is a consequence of the full wrapping.
\end{proof}

\begin{conj}
    More generally, one could expect to define a Khovanov-type homology associated to a link in a fibered $3$-manifold by decomposing it into elementary pieces in consecutive products $S\times [t_k,t_{k+1}]$ and tensoring the corresponding elementary braids-, cap- and cup- bimodules as in \cite{AS}. The Hochschild homology of the resulting bimodule should be our invariant.
\end{conj}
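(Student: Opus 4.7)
The plan is to promote the braid-closure construction of $HH^f$ to arbitrary (non-transverse) links in $N_{(\overline S,\overline h)}$ via a movie-move strategy. Any link $\ell$ can be decomposed by slicing along generic fibers $\overline S\times\{t_k\}$ into elementary tangle pieces lying in products $\overline S\times[t_k,t_{k+1}]$: braid generators (already treated), births (cups), and deaths (caps). To each elementary piece one associates an $A_\infty$-bimodule over the surface algebras $R^f(S,n,{\bf a})$: for a braid generator, the bimodule $B^f(S,n,{\bf a};\mathbb{b},h)$ of Section~\ref{section: Khovanov homology of braid in fibration}; for births and deaths, new cup and cap bimodules $\mathcal{C}_i : R^f(S,n,{\bf a}) \to R^f(S,n+2,{\bf a}')$ and $\mathcal{C}_i^*$, constructed as Floer complexes between Lagrangians in $W_{S,n}$ and $W_{S,n+2}$ paired by a Lagrangian correspondence realizing the local introduction of a pair of critical values, exactly as in \cite{Ma,AS} for the disk case. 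The candidate invariant of $\ell$ is then the Hochschild homology of the $A_\infty$-tensor product of these bimodules, taken in the order dictated by the movie.

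The steps in order are as follows. First, construct $\mathcal{C}_i$ and $\mathcal{C}_i^*$ in the cylindrical/wrapped setting of Section~\ref{section: wrapped version}, imitating the disk model. Second, prove that these bimodules are independent of the choice of parametrization ${\bf a}$ and all auxiliary data, which amounts to extensions of Theorem~\ref{thm: invariance} and Theorem~\ref{thm: invariance under handleslides} allowing the Lagrangian bases of adjacent copies of $W_{S,n}$ and $W_{S,n+2}$ to be varied independently. Third, verify the Carter--Saito \emph{movie moves} for links in $N_{(\overline S,\overline h)}$ (suitably generalized to our fibered setting) as quasi-isomorphisms of tensor products of elementary bimodules; in particular, the Reidemeister-type moves, the cup/cap-swap moves, and the zig-zag (snake) relations $\mathcal{C}_i^*\otimes_{R^f}\mathcal{C}_i \simeq \op{id}$. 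Fourth, check that when $\ell$ is a braid closure the resulting invariant recovers $HH^f(S,n,{\bf a};\mathbb{b},h)$, which reduces to cancellation of the top and bottom cup/cap layers inside the Hochschild trace via the snake relations.

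The main obstacle is step three. In the $D^2$ case of \cite{AS} the verification exploits the specific algebraic structure of $W_{2n-1}$ and the classical braid-group action; in the surface setting, the moves must be checked inside $W_{S,n}$, where the interplay with the stops and with full wrapping is nontrivial. My strategy for each move is to localize it to a standard ball-neighborhood in $S$ of the arc(s) involved, so that the holomorphic-curve computation verifying the relevant quasi-isomorphism takes place in a model which (after trivializing the fibration over the ball) is isomorphic to the setup already treated by Abouzaid--Smith. The control needed to confine holomorphic curves to this ball would come from a neck-stretching argument along a separating hypersurface, combined with action estimates in the spirit of Lemma~\ref{lemma: p_c to c} and the surgery result of \cite{FO3} underlying Lemma~\ref{lemma: FO3}. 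Once step three is in place, independence from the transverse isotopy class---thereby upgrading Theorem~\ref{cor: invariant} from a transverse-link to a genuine link invariant of the pair $(\pi,[K])$---follows because Carter--Saito moves generate ambient link isotopy in the fibered $3$-manifold.
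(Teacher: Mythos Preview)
The statement you are addressing is a \emph{conjecture}; the paper offers no proof and, for the closely related preceding conjecture, says explicitly that introducing cup- and cap-bimodules and verifying the standard moves ``will be left for future work.'' So there is no argument in the paper to compare yours against: what you have written is a program, not a proof, and should be read as such.

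That said, your outline matches the strategy the paper itself hints at (elementary bimodules for braids, cups, caps; tensor them; take Hochschild homology; verify movie-type relations, following \cite{AS}). A few points where your sketch would need correction or substantial work before it could become a proof:
\begin{itemize}
\item The snake relation $\mathcal{C}_i^*\otimes_{R^f}\mathcal{C}_i\simeq \op{id}$ is too strong as stated. Already in the disk case the cup-cap composition is not the identity but obeys a Temperley--Lieb/Jones-type relation (a cone or a direct sum with shifts); the correct statement involves the categorified skein relation, and getting the shifts right in the wrapped/HDHF setting is part of the problem.
\item ``Carter--Saito movie moves'' is not the relevant list: those are for isotopies of \emph{surfaces} in $4$-manifolds. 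For links in a fibered $3$-manifold presented by slicing along fibers, the moves you need are the Markov/Reidemeister-type moves for tangles (height exchanges, cup/cap creation--annihilation, commutation of distant elementary pieces) as in Khovanov's tangle functoriality or Rezazadegan's work~\cite{Re1,Re2}, together with the Alexander-type move already handled in the proof of Theorem~\ref{thm: braid in fibration}.
\item Your localization/neck-stretching plan for step three is the heart of the matter and is genuinely open in this setting. Confining the relevant HDHF curves to a ball model, with full wrapping and the $\hbar$-weighted counts, has not been carried out; this is precisely the substantive content the paper defers.
\end{itemize}
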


\part{Combinatorial description of surface categories}

The goal of Part 2 is to give (conjectural) combinatorial descriptions of the partially wrapped $\ai$-algebras $R^p(S,n,{\bf a})$ in terms of dgas $R(S,n,{\bf a})$. Our dgas can be viewed as higher-dimen\-sional analogs of the strands algebras in bordered Heegaard Floer homology, due to Lipshitz-Ozsv\'ath-Thurston.  

There are three types of arcs on a surface $(S, n, \tau)$: 
\be 
\item an arc connecting two points on different components of the boundary $\bdry S\backslash \tau$;
\item a half-arc from a point on the boundary to a singular point; and
\item an arc connecting two points on the same boundary component.
\ee
It gives a Lagrangian in $W_{S,n}$ which is a disk in Case (2), and an annulus in Cases (1) and (3). 
We present a combinatorial description of endomorphism algebras of the products of these Lagrangians.    
The description will be carried out in four steps:
\be
\item[(a)] the local model $S=D^2, n=0, |\tau|=2$ in Section \ref{sec local}: a single arc of type (1);
\item[(b)] the $A_n$-singularity $S=D^2, |\tau|=1$ in Section \ref{sec An}: a collection of arcs of type (2);
\item[(c)] an annulus with $|\tau|=1$ in Section \ref{ssec annulus}: a single arc of type (3); and
\item[(d)] a parametrized surface in Section \ref{ssec surface}.
\ee

\section{The local model} \label{sec local}

The surface is a disk without singular points and has a stop consisting of two points on the boundary, i.e., $S = D^2, n = 0, |\tau| = 2$, and $W_{S,n}=D^2\times A=D^2\times S^1\times[-1,1]$. Let $a\subset D^2$ be the arc such that each component of $D^2\setminus a$ nontrivially intersects $\tau$. Let $L\subset W_{S,n}$ be the Lagrangian $a\times S^1\times\{0\}$. Then $\rk$ is the conjectural algebraic description of $\End(L_k)$, where $L_k$ is the family of Lagrangian cylinders sitting above $k$ parallel copies of $a$.

\subsection{The basic version $\rk$}  \label{ssec rk}

In this subsection we define the dga $\rk$ and analyze its properties.   As a graded algebra, $\rk$ is isomorphic to a twisted tensor product $\slk$, where $H_k$ is the Hecke algebra of the symmetric group $S_k$ and $\lk$ is the exterior algebra on $k$ generators of degree $1$.
We prove that the cohomology algebra $H(\rk)$ is isomorphic to $\lk$ (see Proposition \ref{prop H(Rk)}) and conjecture that $\rk$ is a formal dga, i.e., quasi-isomorphic to $H(\rk)$.

\subsubsection{Definition of $\rk$}

Let $H_k$ be the Hecke algebra of $S_k$ and $\lk$ be the exterior algebra on $k$ generators.
The twisted tensor product $\slk$ is generated by $T_i, \xi_j$, $1 \leq i \leq k-1, 1 \leq j \leq k$, and satisfies the following defining relations:
\begin{gather} \label{rel Rk}
T_i^2=1+\hbar T_i, \quad T_iT_{i+1}T_i=T_{i+1}T_iT_{i+1}, \quad  T_iT_{i'}=T_{i'}T_i ~\mbox{for}~|i-i'|>1; \\
\nonumber \xi_j^2=0, \quad  \xi_j\xi_{j'}=-\xi_{j'}\xi_j ~\mbox{for}~j \neq j';   \\
\nonumber \xi_iT_i=T_i\xi_{i+1},  \quad \xi_{i+1}T_i=T_i\xi_{i}-\hbar(\xi_{i}-\xi_{i+1}), \quad  \xi_jT_i=T_i\xi_j ~\mbox{for}~j \neq i,i+1.
\end{gather}
The parameter $\hbar$ is a nonzero element of the ground field $\mathbb{F}$. 
The subalgebra generated by the $T_i$ is isomorphic to the Hecke algebra $H_k$ with a change of parameter $\hbar=q-q^{-1}$. 
The generator $T_i$ is invertible: $T_i^{-1}=T_i-\hbar$.
The subalgebra generated by the $\xi_j$ is isomorphic to $\lk$. 
Let $\mb_k$ denote the unit of $\slk$.

\begin{lemma}
The graded algebra $\slk$ is well-defined.
\end{lemma}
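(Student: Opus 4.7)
The plan is to verify two things: first, that the grading $\deg T_i = 0$, $\deg \xi_j = 1$, $\deg \hbar = 0$ is consistent with the defining relations; and second, that the resulting algebra has the expected PBW-type basis $\{T_w \xi_S : w \in S_k,\, S \subseteq \{1,\ldots,k\}\}$, which realizes $\slk$ as the twisted tensor product $H_k \tilde\otimes \Lambda_k$ on the nose. For consistency of the grading, a direct inspection of (\ref{rel Rk}) suffices: the Hecke relations live in degree $0$, the exterior relations in degree $2$, and each cross relation in degree $1$, so the grading descends to the quotient.

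For the basis statement I would apply Bergman's diamond lemma. Fix a monoid order on words in which every $T$-letter precedes every $\xi$-letter and longer words dominate shorter ones, and take as reduction rules: (i) $T_i^2 \mapsto 1 + \hbar T_i$ together with the standard Hecke rewrites that produce reduced expressions of $w \in S_k$; (ii) $\xi_j^2 \mapsto 0$ and $\xi_j \xi_{j'} \mapsto -\xi_{j'} \xi_j$ for $j > j'$; and (iii) the three cross relations in (\ref{rel Rk}) read left-to-right, which move every $T_i$ past a $\xi$-letter to the left. Each rule application strictly decreases the chosen monoid order, so the rewriting system terminates, and the normal forms are exactly $T_w \xi_S$ with $w \in S_k$ reduced and $S$ indexed in increasing order, giving $k! \cdot 2^k$ monomials in total.

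The core of the proof is to verify confluence, i.e., that every minimal overlap ambiguity resolves identically. The ambiguities fall into three classes: (a) purely Hecke overlaps, resolved by the classical PBW theorem for $H_k$; (b) purely exterior overlaps, trivial; and (c) mixed overlaps of the form $\xi_j T_i^2$, $\xi_j T_i T_{i'}$ with $|i-i'|>1$, $\xi_j T_i T_{i+1} T_i$ with $j \in \{i, i+1, i+2\}$, and $\xi_j \xi_{j'} T_i$. The short mixed cases are direct calculations; for instance, $\xi_i T_i^2$ reduces via $T_i^2 \to 1 + \hbar T_i$ to $\xi_i + \hbar T_i \xi_{i+1}$, and reduces via the cross relation $\xi_i T_i \to T_i \xi_{i+1}$ followed by $\xi_{i+1} T_i \to T_i \xi_i - \hbar(\xi_i - \xi_{i+1})$ to the same element, after one use of $T_i^2 = 1 + \hbar T_i$ in the Hecke part.

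The main obstacle is the braid overlap $\xi_j T_i T_{i+1} T_i = \xi_j T_{i+1} T_i T_{i+1}$, which requires propagating $\xi_j$ through three successive transpositions along each side while tracking the $\hbar$-corrections contributed by the second cross relation; these corrections must cancel in pairs for the two normal forms to agree. This is a finite (but notationally heavy) computation, essentially forced by the fact that the cross relations in (\ref{rel Rk}) are modeled on the polynomial representation of the affine Hecke algebra, and I expect no surprises. Once all ambiguities are checked, Bergman's diamond lemma gives the basis $\{T_w \xi_S\}$ and an associative multiplication, and combined with the grading check $\slk$ is well-defined as a graded algebra of dimension $k! \cdot 2^k$.
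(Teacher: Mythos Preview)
Your approach is correct and the core computations coincide with the paper's, but the framing differs. The paper takes the minimal route: it treats $\slk$ as a twisted tensor product and directly verifies that the twisting operations (the third line of the relations) are compatible with the defining relations of $H_k$ and $\Lambda_k$, working out $\xi_i T_i^2$ and $\xi_{i+2} T_i T_{i+1} T_i = \xi_{i+2} T_{i+1} T_i T_{i+1}$ explicitly and leaving the rest to the reader. You instead set up Bergman's diamond lemma with a full rewriting system, which is more systematic and has the bonus of yielding the PBW basis $\{T_w \xi_S\}$ and the dimension count $k!\cdot 2^k$ as part of the same argument (the paper states the PBW basis separately, without proof). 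The overlap ambiguities you list are exactly the compatibility checks the paper has in mind; in particular your sample computation of $\xi_i T_i^2$ matches the paper's first displayed check, and the braid overlap you flag as the main obstacle is precisely the paper's second displayed computation. Both you and the paper leave the remaining cases as routine, so neither proof is strictly more complete than the other on that front.
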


\begin{proof}
We verify that the twisting operations given by the third line of \eqref{rel Rk} are compatible with the defining relations for $H_k$ and $\lk$ given by the first two lines of \eqref{rel Rk}. 
For instance, 
\begin{align*}
\xi_iT_i^2=&T_i\xi_{i+1}T_i=T_i(T_i\xi_{i}-\hbar(\xi_{i}-\xi_{i+1}))=(T_i^2-\hbar T_i)\xi_i+\hbar T_i\xi_{i+1}=\xi_i(1+\hbar T_i),
\end{align*}
and 
\begin{align*}
\xi_{i+2}T_iT_{i+1}T_i=&T_i\xi_{i+2}T_{i+1}T_i=T_i((T_{i+1}-\hbar)\xi_{i+1}+\hbar\xi_{i+2})T_i \\
=&T_i(T_{i+1}-\hbar)(T_{i}-\hbar)\xi_i+\hbar T_i(T_{i+1}-\hbar)\xi_{i+1}+\hbar T_i^2\xi_{i+2}, \\
\xi_{i+2}T_{i+1}T_iT_{i+1}=&((T_{i+1}-\hbar)\xi_{i+1}+\hbar\xi_{i+2})T_iT_{i+1}\\
=&(T_{i+1}-\hbar)((T_{i}-\hbar)\xi_i+\hbar\xi_{i+1})T_{i+1}+\hbar T_i\xi_{i+2}T_{i+1} \\
= & (T_{i+1}-\hbar)(T_{i}-\hbar)T_{i+1}\xi_i+\hbar(T_{i+1}-\hbar)T_{i+1}\xi_{i+2}+\hbar T_i(T_{i+1}-\hbar)\xi_{i+1}+\hbar^2T_i\xi_{i+2} \\ 
= & (T_{i+1}-\hbar)(T_{i}-\hbar)T_{i+1}\xi_i+\hbar T_i(T_{i+1}-\hbar)\xi_{i+1}+\hbar T_i^2\xi_{i+2},
\end{align*}
so that $\xi_{i+2}T_iT_{i+1}T_i=\xi_{i+2}T_{i+1}T_iT_{i+1}.$ 
The other relations are similar and left to the reader. 
\end{proof}

The Hecke algebra $H_k$ has a standard basis $\{T_w~|~w\in S_k\}$, where $T_w=T_{i_1}\cdots T_{i_l}$ if $w=s_{i_1}\cdots s_{i_l}$ is a reduced expression. 
The algebra $\slk$ has a PBW type basis 
$$\{T_w\xi~|~w \in S_k, \xi ~\mbox{monomial in}~\lk\}.$$
Hence it is a finite-dimensional algebra of rank $2^k k!$.
If $\hbar=0$, then $\slk$ reduces to the semidirect product $\F[S_k] \rtimes \lk$, where $S_k$ acts on $\lk$ by permutations.

\begin{defn} \label{def Rk}
The dga $\rk=(\slk, d)$ has differential $d$, defined on the generators as
$$d(\mb_k)=0, \qquad d(T_i)=\xi_i-\xi_{i+1}, \qquad d(\xi_j)=0,$$ 
and extended by the Leibniz rule: $d(ab)=d(a)b+(-1)^{\deg(a)}a~d(b)$.
The generators have degree $\deg(\xi_j)=1$ and $\deg(T_i)=0$.
\end{defn}

\begin{rmk}
The HDHF differential is given by $d(T_i)=\hbar(\xi_i-\xi_{i+1})$; see \cite[Lemma 9.4.1]{CHT}. We normalize the generators $\xi_i$ so that $d(T_i)=\xi_i-\xi_{i+1}$ in our algebraic presentation. 
\end{rmk}

A diagrammatic presentation of $\rk$ is given by Figure~\ref{fig alg1}.
The generator $T_i$ of $H_k$ is drawn as a braid with a single positive crossing.
The generator $\xi_j$ of $\lk$ is drawn by putting a dot on the $j$th horizontal strand.
Some of the relations can be interpreted as:
\be
\item[(R1)] disjoint diagrams supercommute; 
\item[(R2)] a diagram containing a strand with more than one dot is zero; and 
\item[(R3)] a dot can freely slide through a crossing from above.
\ee

\begin{figure}[ht]
\begin{overpic}
[scale=0.25]{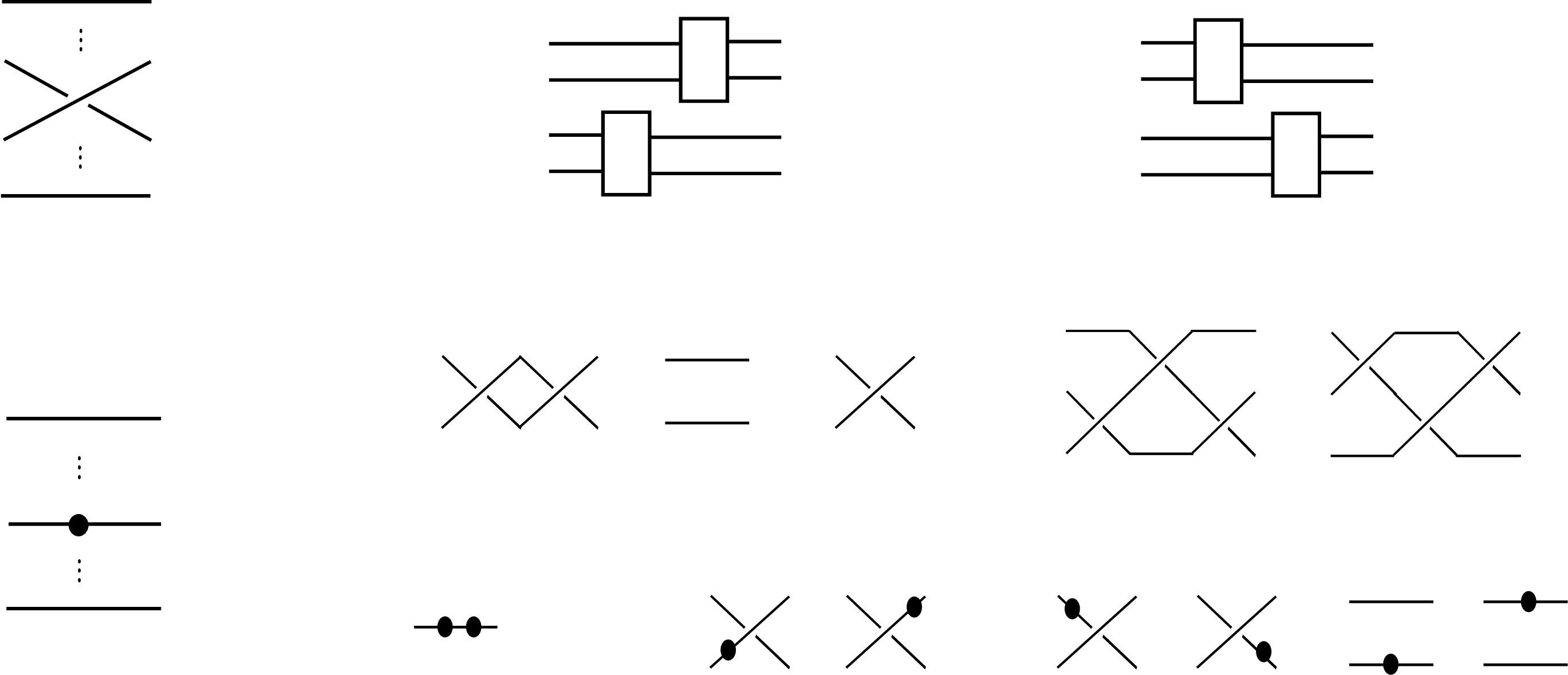}
\put(4,26){$T_i$}
\put(5,0){$\xi_j$}
\put(-3,4){${\scriptstyle 1}$}
\put(-3,10){${\scriptstyle j}$}
\put(-3,16){${\scriptstyle k}$}
\put(-3,31){${\scriptstyle 1}$}
\put(-3,35){${\scriptstyle i}$}
\put(-5,40){${\scriptstyle i+1}$}
\put(-3,44){${\scriptstyle k}$}
\put(39.5,33){${\scriptstyle a}$}
\put(82,33){${\scriptstyle a}$}
\put(44.5,39){${\scriptstyle b}$}
\put(77,39){${\scriptstyle b}$}
\put(51,36){${\scriptstyle =(-1)^{\deg(a)\deg(b)}}$}
\put(39,17){${\scriptstyle =}$}
\put(49,17){${\scriptstyle + \hbar}$}
\put(81,17){${\scriptstyle =}$}
\put(73,2){${\scriptstyle =}$}
\put(81.5,2){${\scriptstyle - \hbar(}$}
\put(92,2){${\scriptstyle -}$}
\put(101,2){${\scriptstyle )}$}
\put(51,2){${\scriptstyle =}$}
\put(33,2){${\scriptstyle =0}$}
\end{overpic}
\caption{The generators $T_i, \xi_j$ of $\rk$ are on the left and the relations are on the right.}
\label{fig alg1} 
\end{figure}

The differential of a single crossing has two terms, given by resolving the crossing and adding a dot on one of the two strands up to signs; see Figure~\ref{fig alg1n}.

\begin{figure}[ht]
\begin{overpic}
[scale=0.25]{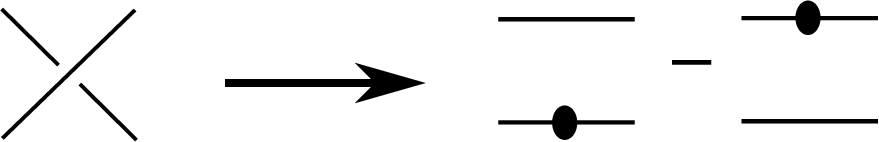}
\put(32,10){$d$}
\end{overpic}
\caption{The differential of a crossing.}
\label{fig alg1n}
\end{figure}

\begin{rmk} \label{rmk Rk LOT}
The dga $\rk$ is the higher-dimensional analog of the LOT dga. 
They diagrammatically differ in the following ways:
\be
\item the degree of a single crossing is zero in $\rk$, while it is minus one in the LOT algebra;
\item the double crossing satisfies the quadratic relation in $\rk$, while it is zero in the LOT algebra;
\item the dot generators of $\rk$ do not exist in the LOT algebra;
\item the differential of a single crossing has two terms with dots, while it has one term in the LOT algebra.
\ee
\end{rmk}

\begin{lemma} \label{lem Rk well}
The dga $\rk$ is well-defined.
\end{lemma}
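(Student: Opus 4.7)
The plan is to verify (i) that the map $d$, prescribed on generators and extended by the graded Leibniz rule, descends to a well-defined degree $+1$ linear map on $\slk$, and (ii) that $d^2=0$. Part (ii) is immediate on generators, since $d(\xi_j)=0$ and $d^2(T_i)=d(\xi_i-\xi_{i+1})=0$; it then propagates to all of $\slk$ by Leibniz once (i) is in place. For (i), using the preceding lemma that $\slk$ is a well-defined graded algebra under the relations \eqref{rel Rk}, it suffices to check that $d$ sends each defining relation to a consistent identity in $\slk$.

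I would dispatch the easy relations first: the exterior relations $\xi_j^2=0$ and $\xi_j\xi_{j'}+\xi_{j'}\xi_j=0$ are trivial since $d$ vanishes on each $\xi$, and the distant commutations $T_iT_{i'}=T_{i'}T_i$ for $|i-i'|>1$ and $\xi_jT_i=T_i\xi_j$ for $j\neq i,i+1$ are immediate because the generators being moved involve disjoint strand indices. The two nontrivial twisting relations are checked by direct expansion, for instance,
\[
d(\xi_iT_i)=-\xi_i(\xi_i-\xi_{i+1})=\xi_i\xi_{i+1},\qquad d(T_i\xi_{i+1})=(\xi_i-\xi_{i+1})\xi_{i+1}=\xi_i\xi_{i+1},
\]
which agree. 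For the Hecke quadratic relation, Leibniz combined with the twists $\xi_iT_i=T_i\xi_{i+1}$ and $\xi_{i+1}T_i=T_i\xi_i-\hbar(\xi_i-\xi_{i+1})$ collapses $d(T_i^2)=(\xi_i-\xi_{i+1})T_i+T_i(\xi_i-\xi_{i+1})$ to $\hbar(\xi_i-\xi_{i+1})=d(1+\hbar T_i)$.

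The main obstacle is the braid relation $T_iT_{i+1}T_i=T_{i+1}T_iT_{i+1}$, where applying $d$ to each side produces three summands of the form $\xi TT$, $T\xi T$, $TT\xi$ that must be normalized to the PBW form $T_w\cdot\xi^{\mathbf{m}}$ (with $w\in S_3$ acting on strands $\{i,i+1,i+2\}$) via repeated application of the twisting rules, keeping careful track of the $\hbar$ powers. This is a finite computation living entirely in the three-strand subalgebra, and I expect it to go through by direct matching of coefficients in the PBW basis. Once every relation is seen to be respected by $d$, the map descends to a derivation of $\slk$, and $d^2=0$ extends from the generators to all of $\slk$ via Leibniz, completing the verification.
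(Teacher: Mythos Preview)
Your proposal is correct and follows essentially the same approach as the paper: verify that $d$ respects each defining relation of $\slk$, with the braid relation being the only nontrivial check. The paper carries out that three-strand computation explicitly (normalizing both $d(T_iT_{i+1}T_i)$ and $d(T_{i+1}T_iT_{i+1})$ to the common form $T_{i+1}T_i(\xi_{i+1}-\xi_{i+2})+T_i^2\xi_{i+1}-T_i^2\xi_{i+2}+T_iT_{i+1}(\xi_i-\xi_{i+1})$), whereas you defer it; otherwise the arguments coincide.
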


\begin{proof}
It suffices to prove that the differential $d$ preserves the defining relations of $\slk$.
We verify some of them and leave the remaining verifications to the reader:
\begin{align*}
d(\xi_iT_i)=&-\xi_i(\xi_i-\xi_{i+1})=\xi_i\xi_{i+1}=(\xi_i-\xi_{i+1})\xi_{i+1}=d(T_i\xi_{i+1}), \\
d(T_i^2)=&(\xi_i-\xi_{i+1})T_i+T_i(\xi_i-\xi_{i+1})=\hbar (\xi_i-\xi_{i+1})=d(1+\hbar T_i),\\
d(T_iT_{i+1}T_i)=&(\xi_i-\xi_{i+1})T_{i+1}T_i+T_i(\xi_{i+1}-\xi_{i+2})T_i+T_iT_{i+1}(\xi_i-\xi_{i+1}) \\
=& T_{i+1}T_i(\xi_{i+1}-\xi_{i+2})+T_i(T_i-\hbar)\xi_{i+1}+\hbar T_i\xi_{i+1}-T_i^2\xi_{i+2}+T_iT_{i+1}(\xi_i-\xi_{i+1})\\
=& T_{i+1}T_i(\xi_{i+1}-\xi_{i+2})+T_i^2\xi_{i+1}-T_i^2\xi_{i+2}+T_iT_{i+1}(\xi_i-\xi_{i+1}),
\end{align*}
\begin{align*}
d(T_{i+1}T_iT_{i+1})=&(\xi_{i+1}-\xi_{i+2})T_iT_{i+1}+T_{i+1}(\xi_i-\xi_{i+1})T_{i+1}+T_{i+1}T_i(\xi_{i+1}-\xi_{i+2}) \\
=&((T_i-\hbar)\xi_i+\hbar\xi_{i+1})T_{i+1}-T_i(T_{i+1}-\hbar)\xi_{i+1}-\hbar T_i\xi_{i+2}+\\
&T_{i+1}(T_{i+1}-\hbar)\xi_{i+1}+\hbar T_{i+1}\xi_i-T_{i+1}^2\xi_{i+2}+T_{i+1}T_i(\xi_{i+1}-\xi_{i+2}) \\
=&(T_i-\hbar)T_{i+1}\xi_i+\hbar T_{i+1}\xi_{i+2}-T_i(T_{i+1}-\hbar)\xi_{i+1}-\hbar T_i\xi_{i+2}+\\
&T_{i+1}(T_{i+1}-\hbar)\xi_{i+1}+\hbar T_{i+1}\xi_i-T_{i+1}^2\xi_{i+2}+T_{i+1}T_i(\xi_{i+1}-\xi_{i+2}) \\
=&T_iT_{i+1}(\xi_i-\xi_{i+1})+T_i^2\xi_{i+1}-T_i^2\xi_{i+2}+T_{i+1}T_i(\xi_{i+1}-\xi_{i+2}),
\end{align*}
so that $d(T_iT_{i+1}T_i)=d(T_{i+1}T_iT_{i+1})$. 
\end{proof}


\begin{example} \label{ex Rk} $\mbox{}$
\begin{enumerate}
    \item The differential on $R_1$ is trivial so that $R_1 \cong H(R_1) \cong \Lambda_1$.
    \item The cohomology $H(R_2)$ has a linear basis
    $$\{[\mb_2], [\xi_1]=[\xi_2], [T_1(\xi_1-\xi_2)], [T_1\xi_1\xi_2]\},$$
    and is isomorphic to $\lm_2$ as a graded algebra, where $[T_1\xi_1\xi_2]=[T_1(\xi_1-\xi_2)][\xi_1]$. Moreover, the map $H(R_2) \ra R_2$ given by $[a] \mapsto a$ for $a \in \{\mb_2, \xi_1, T_1(\xi_1-\xi_2), T_1\xi_1\xi_2\}$ is a quasi-isomorphism of dgas. Hence the dga $R_2$ is formal.
\end{enumerate}
\end{example}

\subsubsection{The cohomology $H(\rk)$}

In this subsection we compute $H(\rk)$ as a graded vector space.  
There is an inclusion $R_{k-1} \hookrightarrow \rk$ given by tensoring with the $k$th horizontal strand at the top.
We fix this inclusion and view $R_{k-1}$ as a subalgebra of $\rk$.

\begin{lemma} \label{lem H(Rk)}
The cohomology $H(\rk)$ is isomorphic to the exterior algebra $\lk$ as a graded vector space, where the class $[\mb_k]$ corresponds to the identity of $\lk$.
\end{lemma}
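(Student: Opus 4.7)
The plan is to proceed by induction on $k$. The base case $k=1$ is immediate since $R_1 = \Lambda_1$ carries the zero differential.

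For the inductive step, the idea is to exploit the PBW decomposition $\rk = A \oplus A\cdot \xi_k$ as a graded vector space, where $A := \op{span}\{T_w \xi_S : w \in S_k,\; S \subseteq \{1,\ldots,k-1\}\}$ collects the PBW monomials not involving $\xi_k$. Since $d(\xi_k) = 0$, the Leibniz rule shows that $B := A\cdot \xi_k$ is a subcomplex of $\rk$, giving a short exact sequence of complexes
\begin{equation*}
0 \to B \to \rk \to \rk/B \to 0.
\end{equation*}
Right multiplication by $\xi_k$ identifies $B \cong (A,d_A)[1]$ and $\rk/B \cong (A,d_A)$, where $d_A$ denotes the projection of $d$ onto the $A$-component of the decomposition $\rk = A \oplus B$. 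The associated long exact sequence then reads
\begin{equation*}
\cdots \to H^{i-1}(A) \to H^i(\rk) \to H^i(A) \xrightarrow{\delta} H^i(A) \to H^{i+1}(\rk) \to \cdots,
\end{equation*}
and a short calculation will show that the connecting homomorphism $\delta$ is induced by the degree-zero map $\tilde d_B\colon A\to A$ characterized by $d(a) = d_A(a) + \tilde d_B(a)\,\xi_k$.

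The heart of the argument consists of two claims: (i) $H(A,d_A) \cong \Lambda_{k-1}$ as a graded vector space, and (ii) the map $\delta=[\tilde d_B]$ vanishes. For (i), I will use that the subalgebra $R_{k-1} \subset A$ generated by $T_1,\ldots,T_{k-2},\xi_1,\ldots,\xi_{k-1}$ is a genuine sub-dga of $A$, and that $A$ admits the coset decomposition
\begin{equation*}
A = R_{k-1} \oplus \bigoplus_{l=1}^{k-1} R_{k-1}\cdot T_{k-1}T_{k-2}\cdots T_l
\end{equation*}
as a right $R_{k-1}$-module, arising from $S_k/S_{k-1}$. The $l<k$ summands will be paired along the leading $\xi_{k-1}$ term of $d_A(T_{k-1}\cdots T_l)$ to produce an acyclic subcomplex, and combined with the inductive hypothesis $H(R_{k-1})\cong \Lambda_{k-1}$ this yields $H(A)\cong \Lambda_{k-1}$. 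For (ii), the strategy is to produce an explicit homotopy $h\colon A\to A$ of degree $-1$ with $\tilde d_B = d_A h + h\, d_A$, built from left multiplication by $T_{k-1}$ composed with contraction against $\xi_{k-1}$. Granting (i) and (ii), the long exact sequence collapses to
\begin{equation*}
\dim H^i(\rk) = \dim H^i(A) + \dim H^{i-1}(A) = \tbinom{k-1}{i} + \tbinom{k-1}{i-1} = \tbinom{k}{i} = \dim \Lambda_k^i,
\end{equation*}
completing the induction. The identification $[\mb_k]\leftrightarrow \mb$ is then forced, because $H^0(\rk)$ is one-dimensional by the count above and $\mb_k$ is a nonzero degree-zero cocycle.

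The main obstacle will be the acyclicity statement in step (i) and the explicit null-homotopy in step (ii). Both require carefully tracking how the twisting relation $\xi_{i+1}T_i = T_i\xi_i - \hbar(\xi_i - \xi_{i+1})$ propagates when pushing the $\xi_{k-1}$ arising from $d(T_{k-1})$ across the chain $T_{k-2},\ldots,T_l$; the combinatorics of the resulting $\hbar$-corrections is precisely the point at which the defining relations of $\rk$ enter critically and make it a higher-dimensional analog of the LOT dga.
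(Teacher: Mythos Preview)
Your short exact sequence $0\to A\xi_k\to \rk\to (A,d_A)\to 0$ is set up correctly, and the reduction to claims (i) and (ii) is valid. But both claims are where the entire difficulty lies, and your sketches for them do not work as written.

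For (i), note first that $A$ is \emph{not} a subalgebra of $\rk$: for instance $\xi_{k-1}\cdot T_{k-1}=T_{k-1}\xi_k\notin A$, so speaking of $R_{k-1}$ as a ``sub-dga of $A$'' is not meaningful, and the coset decomposition you write (with $R_{k-1}$ on the left, but called a right-module decomposition) is only a vector-space splitting, not a module splitting compatible with $d_A$. More seriously, the phrase ``the $l<k$ summands will be paired along the leading $\xi_{k-1}$ term'' does not describe an argument: there are $k-1$ such summands, and the twisting relation $\xi_{k-1}T_{k-2}=T_{k-2}\xi_{k-2}-\hbar(\xi_{k-2}-\xi_{k-1})$ means that $d_A$ scatters $A_l$ across several $A_{l'}$ with nontrivial $\hbar$-corrections. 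To actually prove (i) you would need a further filtration of $A$ by $l$ and a spectral-sequence analysis of $d_A$ on the associated graded --- essentially the paper's own argument, carried out inside $A$.

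For (ii), the proposed homotopy ``$T_{k-1}\cdot{}$ composed with contraction against $\xi_{k-1}$'' already fails for $k=2$: with $h=T_1\cdot\iota_{\xi_1}$ one gets $(d_Ah+hd_A)(T_1)=h(\xi_1)=T_1$, whereas $\tilde d_B(T_1)=-1$. Producing a genuine null-homotopy amounts to exhibiting, for each class in $H(A,d_A)$, a $d$-closed lift in $\rk$; this is exactly the role played in the paper by the explicit elements $h_{k,i}$, whose construction is an inductive formula in $T_{k-1}^{\pm 1}$ and is the technical heart of the proof.

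The paper takes a different and more direct route: it filters $\rk$ by the position $p$ to which the top strand maps, obtaining a $k$-step filtration whose $E_0^p\cong R_{k-1}\oplus R_{k-1}[-1]$, so that induction gives $E_1^p\cong\Lambda_{k-1}\oplus\Lambda_{k-1}[-1]$ immediately. An explicit computation of $d_1$ then shows exactness except at the endpoints, and the remaining differential $d_{k-1}^1$ is killed by constructing the closed element $h_k$. Your two-step filtration postpones rather than avoids this work: claim (i) hides the $E_1$-page analysis, and claim (ii) hides the $h_k$ construction.
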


\begin{proof}
The proof is by induction on $k$. The $k=1$ case is immediate.

Assume $k \ge 2$.
Let $G^p$ denote the subspace of $\rk$ spanned by diagrams containing a strand which connects the $k$th endpoint on the left to the $p$th endpoint on the right, where $1 \le p \le k$.
It is a left free $R_{k-1}$-module of rank $2$ with a basis $\{T_{k-1}\cdots T_{p}, T_{k-1}\cdots T_{p}\xi_p\}$, given by the left-hand side of Figure \ref{fig alg3} with the boxes removed.
\begin{figure}[ht]
\begin{overpic}
[scale=0.3]{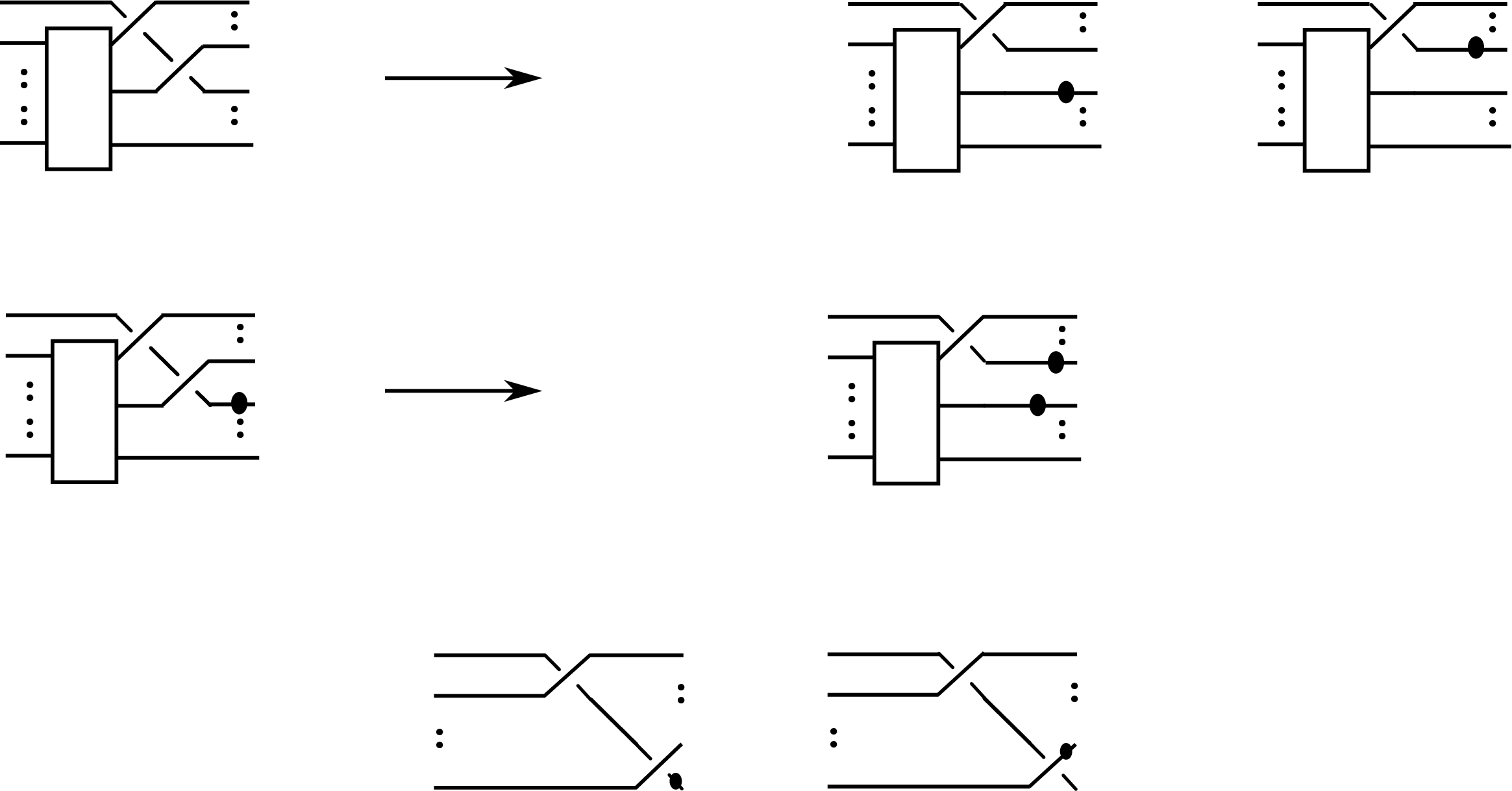}
\put(-3,21){$1$}
\put(-3,31){$k$}
\put(-3,42){$1$}
\put(-3,52){$k$}
\put(19,21){$1$}
\put(19,31){$k$}
\put(19,42){$1$}
\put(19,52){$k$}
\put(19,25){$p$}
\put(18,28){$p+1$}
\put(19,46){$p$}
\put(18,49){$p+1$}
\put(30,28){$d_1^p$}
\put(30,49){$d_1^p$}
\put(38,46){$(-1)^{\deg(f_i)}($}
\put(76,46){$-$}
\put(102,46){$)$}
\put(38,25){$(-1)^{\deg(f_i)}$}
\put(1,5){$\ti{f}_{\es}(k)=(-1)^{k-1}($}
\put(48,5){$-$}
\put(73,5){$)$}
\end{overpic}
\caption{The top two lines depict the basis $\{[f_iT_{k-1}\cdots T_{p}], [f_iT_{k-1}\cdots T_{p}\xi_p] \}$ of $E_1^p$ and their images under $d_1^p$, where each box denotes $f_i \in R_{k-1}$.}
\label{fig alg3}
\end{figure}

Consider the finite filtration $0 \subset F^k \subset \cdots \subset F^1 =\rk$, where $F^p=\textstyle{\oplus_{q \ge p}}G^q$ is a subcomplex of $\rk$. The corresponding finite spectral sequence starts with $E_0^p=F^p / F^{p+1} \cong G^p$. Since the differential $d_0^p$ does not change the crossings $T_{k-1}, \dots, T_{p}$ for any element of $G^p$, the complex $(E_0^p, d_0^p)$ is isomorphic to $R_{k-1} \oplus R_{k-1}[-1]$. By the inductive step the first page is:
\begin{equation} \label{eq iso E1}
E_1^p=H(E_0^p, d_0^p) \cong \lm_{k-1} \oplus \lm_{k-1}[-1],
\end{equation}
Now choose a linear basis $\{[f_i(k-1)]\}$ of $H(R_{k-1})$, where $f_i(k-1) \in R_{k-1}$ is homogeneous with a distinguished element $f_{\es}(k-1)=\mb_{k-1}$. We simply write $f_i$ for $f_i(k-1)$ when there is no confusion. Then $E_1^p$ has a basis $\{[f_iT_{k-1}\cdots T_{p}], [f_iT_{k-1}\cdots T_{p}\xi_p] \}$ and the differential $d_1^p: E_1^p \ra E_1^{p+1}$ resolves the rightmost crossing $T_p$:
\begin{align}
\label{panda1} d_1^p[f_iT_{k-1}\cdots T_{p}]&=(-1)^{\deg(f_i)}([f_i T_{k-1}\cdots T_{p+1}\xi_p]-[f_i T_{k-1}\cdots T_{p+1}\xi_{p+1}]), \\
\label{panda2} d_1^p[f_iT_{k-1}\cdots T_{p}\xi_p]&=(-1)^{\deg(f_i)}[f_i T_{k-1}\cdots T_{p+1}\xi_p\xi_{p+1}].
\end{align}
The terms $[f_i T_{k-1}\cdots T_{p+1}\xi_p], [f_i T_{k-1}\cdots T_{p+1}\xi_{p+1}]$  
on the right-hand side of \eqref{panda1} form a basis of $E_1^{p+1}$. Hence $\{d_1^p[f_iT_{k-1}\cdots T_{p}]\}$ is linearly independent and $\dim \im(d_1^p) \ge 2^{k-1}$ for $1 \le p \le k-1$.
This in turn implies that $\dim \Ker(d_1^p) \le 2^{k-1}$ for $1 \le p \le k-1$ since $\dim E_1^p =2\cdot2^{k-1}$.
Moreover, $\im(d_1^p) \subset \Ker(d_1^{p+1})$ since $d_1^{p+1} \circ d_1^p=0$.
Hence we have
$$\dim \im(d_1^p) = \dim \Ker(d_1^p) = 2^{k-1}, \quad 1 \le p \le k-1.$$
Therefore, the complex $(E_1^p, d_1^p)$ is exact except at $p=1, k$, and
$$\dim E_2^p=\left\{ 
\begin{array}{cl}
2^{k-1}, & \mbox{for } p=1,k; \\
0, & \mbox{otherwise}.
\end{array}
\right.$$

A basis of $E_2^1=\Ker(d_1^1)$ is given by $\{[\ti{f}_i]\}$, where $\ti{f}_i=f_iT_{k-1}\cdots T_{1}(\xi_1-\xi_2)$.
The distinguished element is
\begin{gather} \label{eq def fes}
\ti{f}_{\es}(k)=(-1)^{k-1}T_{k-1}\cdots T_{1}(\xi_1-\xi_2) \in \rk,
\end{gather}
which is of degree $1$.
A basis of $E_2^k=\op{Coker}(d_1^{k-1})$ is given by $\{[f_i]\}$.

Since $E_2^p=0$ for $p \neq 1,k$, we have $d_r^p=0$ for $2 \le r \le k-2$, and $E_{k-1}^1\cong E_{2}^1, E_{k-1}^k \cong E_{2}^k$.
The only possibly nontrivial differential is $d_{k-1}^1: E_{k-1}^1 \ra E_{k-1}^k$ such that 
$$d_{k-1}^1([\ti{f}_i])=(-1)^{\deg(f_i)}[f_i]d_{k-1}^1([\ti{f}_{\es}]).$$

We claim that $d_{k-1}^1=0$. It suffices to show that there exists $h_k \in \rk$ for $k \ge 2$ which satisfies
\begin{gather} \label{eq prop hk}
d(h_k)=0, \quad \deg(h_k)=1, \quad h_k-\ti{f}_{\es}(k) \in F^2,
\end{gather}
so that $[h_k]=[\ti{f}_{\es}(k)] \in E_2^1 \cong E_{k-1}^1$.
The element $h_k\in \rk$ will be constructed below.
Assuming that $d_{k-1}^1=0$, the last page $E_k^p$ is isomorphic to $E_2^p$.
The finite spectral sequence converges to $H(\rk)$, which has a basis $\{[f_i], [f_ih_k]\}$, where $\{[f_i]\}$ is a basis of $H(R_{k-1}) \cong \lm_{k-1}$ and $\deg(h_k)=1$. Hence $H(\rk)\cong \lk$ as graded vector spaces with a distinguished class $[f_{\es}]=[\mb_k]$.
\end{proof}

We construct $h_k$ (via elements $h_{k,i}\in \rk$) as follows:

\begin{defn} \label{def hk}
For $k=1$, define $h_{1,1}=\xi_1$, and $h_{1,i}=0$ for $i \neq 1$. 
For $k>1$, define $h_{k,i} \in \rk$ inductively as follows:
$$h_{k,i}=h_{k-1,i}-T_{k-1}^{-1}h_{k-1,i-1}+h_{k-1,i-1}T_{k-1}-T_{k-1}^{-1}h_{k-1,i-2}T_{k-1},$$
for $i \in \Z$. See Figure \ref{fig alg4a}, where $T_{k-1}^{-1}$ is drawn as a negative crossing. We view $R_{k-1} \subset \rk$ via the natural inclusion.
\end{defn}

\begin{figure}[ht]
\begin{overpic}
[scale=0.4]{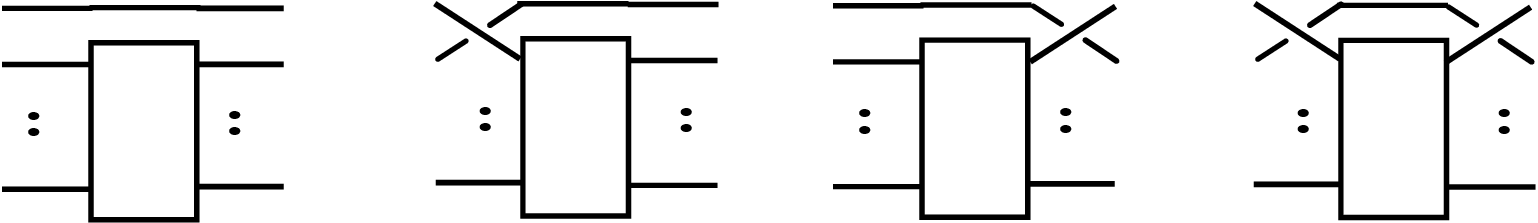}
\put(-15,4){${h_{k,i}=}$}
\put(8.8,4){${\scriptstyle i}$}
\put(35,4){${\scriptstyle i-1}$}
\put(60.8,4){${\scriptstyle i-1}$}
\put(88,4){${\scriptstyle i-2}$}
\put(22,4){${\scriptstyle -}$}
\put(49,4){${\scriptstyle +}$}
\put(75,4){${\scriptstyle -}$}
\end{overpic}
\caption{The definition of $h_{k,i} \in \rk$, where a box with $j$ represents $h_{k-1,j} \in R_{k-1}$.}
\label{fig alg4a}
\end{figure}

The elements $h_{k,i}$ are all of degree one. By definition, $h_{k,i}=0$ for $i \le 0$ or $ i \ge 2k$. Moreover, $h_{k,i}=h_{k-1,i} \in R_{k-1} \subset \rk$ for $1 \le i \le k-1$ since $T_{k-1}-T_{k-1}^{-1}=\hbar$.

\begin{lemma} \label{lem dhk}
The element $h_{k, i} \in \rk$ is closed for $1 \le i \le k$.
\end{lemma}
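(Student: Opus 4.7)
The plan is to prove a strengthened statement by induction on $k$: for every $i\in\mathbb{Z}$, $d(h_{k,i})=0$. The base case $k=1$ is trivial because $h_{1,1}=\xi_1$ is closed and all other $h_{1,i}=0$. The strengthening is forced on us because the recursion for $h_{k,k}$ involves $h_{k-1,k}$, which lies outside the range $1\le j\le k-1$ covered by the original statement.

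For the inductive step, I package the $h_{k,i}$ into the generating series
$$H_k(t):=\sum_{i\in\mathbb{Z}}h_{k,i}\,t^i\ \in\ \rk[t],$$
and observe that the four-term recursion of Definition~\ref{def hk} factors compactly as
$$H_k(t)=A(t)\,H_{k-1}(t)\,B(t),\qquad A(t):=1-tT_{k-1}^{-1},\ B(t):=1+tT_{k-1}.$$
Since $d(T_{k-1})=d(T_{k-1}^{-1})=\eta$ with $\eta:=\xi_{k-1}-\xi_k$, the graded Leibniz rule together with the induction hypothesis $dH_{k-1}(t)=0$ yields
$$dH_k(t)=-t\bigl[\eta\,H_{k-1}(t)\,B(t)+A(t)\,H_{k-1}(t)\,\eta\bigr].$$
Thus it suffices to establish the single identity
$$\eta\,Y\,B(t)+A(t)\,Y\,\eta=0\quad\text{in }\rk[t],\qquad Y:=H_{k-1}(t).$$

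Two algebraic observations make this tractable. First, the twisted commutation identity $\eta\,T_{k-1}=-T_{k-1}^{-1}\eta$, a direct consequence of \eqref{rel Rk}, gives the intertwining $A(t)\,\eta=\eta\,B(t)$. Second, since every generator of $R_{k-1}$ supercommutes with $\xi_k$ ($R_{k-1}$ involves only $\xi_1,\dots,\xi_{k-1}$ and $T_1,\dots,T_{k-2}$, each of which commutes or anticommutes with $\xi_k$), and $Y=H_{k-1}(t)$ is a sum of odd-degree elements in the $\xi$-grading, one has $\xi_k Y+Y\xi_k=0$, which collapses the anticommutator to $\eta\,Y+Y\,\eta=\xi_{k-1}Y+Y\xi_{k-1}$. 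Combining these two observations, the desired identity reduces to the purely internal statement
$$[A(t),Y]\,\eta+\{\xi_{k-1},Y\}\,B(t)=0\quad\text{in }\rk[t].$$

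The main obstacle is establishing this last identity; it does not follow from the outer induction on closedness of $H_{k-1}(t)$. Instead one must prove two auxiliary statements: that $\{\xi_{k-1},H_{k-1}(t)\}=0$ in $R_{k-1}$, and that $[T_{k-1}^{-1},H_{k-1}(t)]\,\eta=0$ in $R_k$. Both can be handled by a secondary induction on $k$ exploiting the analogous factorization $H_{k-1}(t)=(1-tT_{k-2}^{-1})H_{k-2}(t)(1+tT_{k-2})$, the fact that $H_{k-2}(t)\in R_{k-2}[t]$ commutes with $T_{k-1}$, and careful tracking of the $\hbar$-corrections that arise when pushing $\xi_{k-1}$ past $T_{k-2}^{\pm1}$ via the twisted Hecke relations of \eqref{rel Rk}. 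As a sanity check one verifies the statement for small $k$: for $k=2$, $H_2(t)=\xi_1 t+(T_1\xi_2-T_1\xi_1+\hbar\xi_1)t^2-\xi_2 t^3$ is manifestly closed after applying $d$ term-by-term using $d(T_1)=\xi_1-\xi_2$ and $\xi_i^2=0$.
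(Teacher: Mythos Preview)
Your generating-function setup is clean, the factorization $H_k(t)=A(t)H_{k-1}(t)B(t)$ is correct, and the reduction of $dH_k(t)=0$ to the identity $[A(t),Y]\eta+\{\xi_{k-1},Y\}B(t)=0$ is valid. The problem is that you stop exactly where the work begins: the two auxiliary statements
\[
\text{(a)}\ \{\xi_{k-1},H_{k-1}(t)\}=0,\qquad \text{(b)}\ [T_{k-1}^{-1},H_{k-1}(t)]\,\eta=0
\]
are asserted, not proven. ``Secondary induction with careful tracking of $\hbar$-corrections'' is not a proof. For (b), note that $T_{k-1}^{-1}$ fails to commute with both $T_{k-2}$ and $\xi_{k-1}$, so the commutator is genuinely nonzero and you must check that multiplying by $\eta$ kills it; this is a nontrivial identity in $R_k$ that you have not verified beyond $k=2$. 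For (a), the situation is more subtle: this statement is in fact \emph{equivalent} to your strengthened induction hypothesis $dH_{k-1}(t)=0$, but only via the identity $d(h_{k-1,j})=\{\xi_{k-1},h_{k-1,j-1}\}$, which you never establish. Without that identity, proving (a) directly unwinds to essentially the same computation you are trying to avoid.

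The paper's proof goes straight for that identity: it shows by induction on $k$ (a direct expansion of the four-term recursion, using $\xi_kT_{k-1}^{-1}=T_{k-1}^{-1}\xi_{k-1}$ to slide dots through negative crossings) that
\[
d(h_{k,i})=\xi_k h_{k,i-1}+h_{k,i-1}\xi_k\qquad\text{for all }i.
\]
The lemma then falls out immediately: for $1\le i\le k$ one has $h_{k,i-1}=h_{k-1,i-1}\in R_{k-1}$, which is odd and supercommutes with $\xi_k$, so the right-hand side vanishes. This single formula also yields your strengthened claim and your statement (a) as corollaries, so your instinct to strengthen was correct---but the efficient route is to prove the explicit formula for $d(h_{k,i})$ first, rather than to bypass it.
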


\begin{proof}
We prove that $d(h_{k,i})=\xi_kh_{k,i-1}+h_{k,i-1}\xi_k,$ for all $k,i$ by induction on $k$.
The graphical calculation is given in Figure~\ref{fig alg4}, where the dot can slide through the negative crossing $T_{k-1}^{-1}$ from above since $\xi_kT_{k-1}^{-1}=T_{k-1}^{-1}\xi_{k-1}$.
Hence $d(h_{k,i})=0$ for $1 \le i \le k-1$ since $h_{k,i} \in R_{k-1} \subset \rk$.
\end{proof}
\begin{figure}[ht]
\begin{overpic}
[scale=0.3]{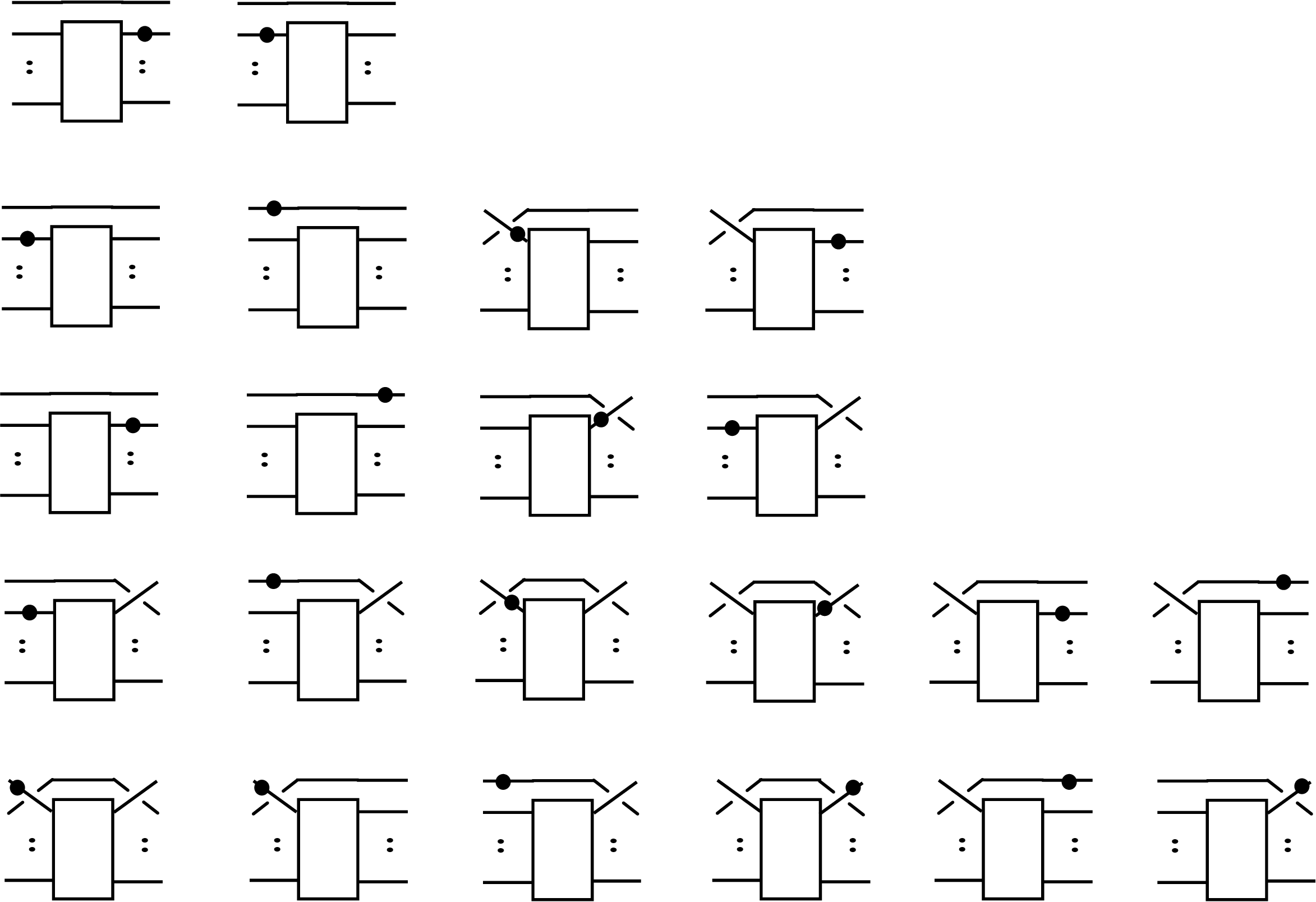}
\put(-14,60){${d(h_{k,i})=}$}
\put(5,62){${\scriptscriptstyle i-1}$}
\put(22.5,62){${\scriptscriptstyle i-1}$}
\put(14.3,60){${\scriptstyle +}$}
\put(-3,47){${\scriptstyle -}$}
\put(14.3,47){${\scriptstyle +}$}
\put(32.7,47){${\scriptstyle -}$}
\put(50,47){${\scriptstyle -}$}
\put(4.5,47){${\scriptscriptstyle i-1}$}
\put(23.2,47){${\scriptscriptstyle i-1}$}
\put(40.5,47){${\scriptscriptstyle i-2}$}
\put(57.8,47){${\scriptscriptstyle i-2}$}
\put(-3,33){${\scriptstyle -}$}
\put(14.3,33){${\scriptstyle +}$}
\put(32.7,33){${\scriptstyle +}$}
\put(50,33){${\scriptstyle +}$}
\put(4.5,33){${\scriptscriptstyle i-1}$}
\put(23.3,33){${\scriptscriptstyle i-1}$}
\put(40.8,33){${\scriptscriptstyle i-2}$}
\put(58,33){${\scriptscriptstyle i-2}$}
\put(-3,20){${\scriptstyle -}$}
\put(14.3,20){${\scriptstyle +}$}
\put(32.7,20){${\scriptstyle -}$}
\put(50,20){${\scriptstyle -}$}
\put(67.5,20){${\scriptstyle +}$}
\put(84.3,20){${\scriptstyle -}$}
\put(4.5,19){${\scriptscriptstyle i-2}$}
\put(23,19){${\scriptscriptstyle i-2}$}
\put(40.5,19){${\scriptscriptstyle i-3}$}
\put(58,19){${\scriptscriptstyle i-3}$}
\put(75,19){${\scriptscriptstyle i-2}$}
\put(92,19){${\scriptscriptstyle i-2}$}
\put(-7,4){$= {\scriptstyle -}$}
\put(14.3,4){${\scriptstyle -}$}
\put(32.7,4){${\scriptstyle +}$}
\put(50,4){${\scriptstyle -}$}
\put(67.65,4){${\scriptstyle -}$}
\put(84.3,4){${\scriptstyle +}$}
\put(4.5,4){${\scriptscriptstyle i-3}$}
\put(23,4){${\scriptscriptstyle i-2}$}
\put(41,4){${\scriptscriptstyle i-2}$}
\put(58.3,4){${\scriptscriptstyle i-3}$}
\put(75.3,4){${\scriptscriptstyle i-2}$}
\put(92.3,4){${\scriptscriptstyle i-2}$}
\end{overpic}
\caption{The computation $d(h_{k,i})=\xi_kh_{k,i-1}+h_{k,i-1}\xi_k$ by induction on $k$.}
\label{fig alg4}
\end{figure}

\begin{lemma} \label{lem hk}
The element $h_k=h_{k,k} \in \rk$ for $k \ge 2$ satisfies (\ref{eq prop hk}).
\end{lemma}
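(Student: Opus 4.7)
The plan is to verify the three conditions of~\eqref{eq prop hk} in turn by induction on $k\geq 2$; the base case $k=2$ will be handled by direct computation. The degree condition is immediate from Definition~\ref{def hk} and the fact that each $T_{k-1}^{\pm 1}$ has degree zero, so I will focus on closedness and the filtration statement.

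For closedness, I will apply Lemma~\ref{lem dhk} with $i=k$ to obtain $d(h_k)=\xi_k h_{k,k-1}+h_{k,k-1}\xi_k$, and then argue that $h_{k,k-1}$ lies in $R_{k-1}\subset\rk$. The key is the inductive structural fact that $h_{k-1,j}\in R_{k-2}$ for $j\leq k-2$, which implies $h_{k-1,j}$ super-commutes with $T_{k-1}^{\pm 1}$; substituting into the recursion for $h_{k,k-1}$ collapses the correction terms and yields
\[ h_{k,k-1}=h_{k-1,k-1}+\hbar\,h_{k-1,k-2}-h_{k-1,k-3}\in R_{k-1}. \]
Since $h_{k,k-1}$ has degree~$1$ and involves none of the generators $T_{k-1}$ or $\xi_k$, it anti-commutes with $\xi_k$, so $d(h_k)=0$.

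For the filtration property $h_k-\ti{f}_\es(k)\in F^2$, I induct on $k$. The base case $k=2$ reduces, via $\xi_1 T_1=T_1\xi_2$ and $T_1^{-1}=T_1-\hbar$, to
\[ h_{2,2}=-T_1^{-1}\xi_1+\xi_1 T_1=-T_1(\xi_1-\xi_2)+\hbar\,\xi_1=\ti{f}_\es(2)+\hbar\,\xi_1, \]
and $\hbar\,\xi_1\in F^2$ because the identity permutation fixes the second endpoint. For $k\geq 3$, the recursion simplifies (using $h_{k-1,k}=0$) to
\[ h_{k,k}=-T_{k-1}^{-1}h_{k-1,k-1}+h_{k-1,k-1}T_{k-1}-T_{k-1}^{-1}h_{k-1,k-2}T_{k-1}, \]
and I will analyze each summand by tracking the image of the top-$k$ strand under the corresponding permutation in $S_k$. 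The summand $h_{k-1,k-1}T_{k-1}$ sends top-$k$ to position $k-1$, hence lies in $F^{k-1}$; the summand $-T_{k-1}^{-1}h_{k-1,k-2}T_{k-1}$ equals $-h_{k-1,k-2}\in R_{k-2}$ by commutation and lies in $F^k$. For $-T_{k-1}^{-1}h_{k-1,k-1}$, I expand $T_{k-1}^{-1}=T_{k-1}-\hbar$ and apply the inductive hypothesis $h_{k-1,k-1}\equiv\ti{f}_\es(k-1)$ modulo the analogous filtration on $R_{k-1}$, producing
\[ -T_{k-1}\,\ti{f}_\es(k-1)=(-1)^{k-1}T_{k-1}T_{k-2}\cdots T_1(\xi_1-\xi_2)=\ti{f}_\es(k) \]
plus terms in which the top-$k$ strand lands at a position $\geq 2$ in $\rk$ (namely $\hbar\,h_{k-1,k-1}$, which fixes position $k$, and the $F^2$-remainder of $h_{k-1,k-1}$, which after prepending $T_{k-1}^{-1}$ routes top-$k$ through $k-1$ to a position $\geq 2$). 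Because $F^{k-1},F^k\subset F^2$ when $k\geq 3$, summing yields $h_k\equiv\ti{f}_\es(k)\pmod{F^2}$.

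The main obstacle is the auxiliary structural fact that the lower-index elements $h_{k-1,j}$ for $j\leq k-2$ genuinely lie in $R_{k-2}\subset R_{k-1}$; without this, neither the collapse in the closedness step nor the identification of the fourth recursion term in the filtration step goes through. This structural claim is exactly the parenthetical remark made in the paper's discussion of $h_{k,i}$, and I would verify it by a parallel induction using $T_{k-1}-T_{k-1}^{-1}=\hbar$ together with the commutation of $R_{k-2}$-elements with $T_{k-1}^{\pm 1}$. Granting this, the remainder of the argument is routine bookkeeping in the PBW basis $\{T_w\xi\}_{w\in S_k,\,\xi\text{ monomial in }\Lambda_k}$.
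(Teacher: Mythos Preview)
Your proof is correct and follows essentially the same approach as the paper's: both argue closedness via Lemma~\ref{lem dhk}, handle $k=2$ by the same direct computation, and establish the filtration condition by the same induction --- dropping $h_{k-1,k-1}T_{k-1}\in F^{k-1}$, collapsing $T_{k-1}^{-1}h_{k-1,k-2}T_{k-1}$ to $h_{k-1,k-2}\in R_{k-2}$, replacing $T_{k-1}^{-1}$ by $T_{k-1}-\hbar$, and then invoking the inductive hypothesis on $h_{k-1,k-1}$ to reduce to $-T_{k-1}\ti f_\es(k-1)=\ti f_\es(k)$. Your write-up supplies somewhat more detail (e.g.\ the explicit formula $h_{k,k-1}=h_{k-1,k-1}+\hbar h_{k-1,k-2}-h_{k-1,k-3}$ and the tracking of where $T_{k-1}r$ routes the top strand for $r\in F^2(R_{k-1})$), but the logical skeleton is identical.
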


\begin{proof}
We have $d(h_k)=0$ by Lemma \ref{lem dhk}.
For $k=2$, 
$$h_2=-T_1^{-1}\xi_1+\xi_1T_1=T_1(-\xi_1+\xi_2)+\hbar\xi_1\equiv\ti{f}_{\es}(2) \mbox{ mod } F^2,$$ 
as in (\ref{eq def fes}).
In general, 
\begin{align*}
    h_k &\equiv -(T_{k-1}^{-1}h_{k-1,k-2}T_{k-1}+T_{k-1}^{-1}h_{k-1,k-1}) \equiv -T_{k-1}h_{k-1,k-1} \\
    &\equiv -T_{k-1}\ti{f}_{\es}(k-1)\equiv \ti{f}_{\es}(k) \mbox{ mod } F^2,
\end{align*}
where 
\begin{gather*}
T_{k-1}^{-1}h_{k-1,k-2}T_{k-1}=h_{k-1,k-2} \in R_{k-2},\\ 
T_{k-1}h_{k-1,k-1}-T_{k-1}^{-1}h_{k-1,k-1}=\hbar \cdot h_{k-1,k-1} \in R_{k-1},
\end{gather*}
and the third equality is obtained by induction on $k$.
\end{proof}

We now compute $H(\rk)$ as an algebra. 

\begin{prop} \label{prop H(Rk)}
The cohomology algebra $H(\rk)$ is isomorphic to an exterior algebra generated by the classes of $h_{k,1}, \dots, h_{k,k}$ if $\op{char}(\F) \neq 2$.
\end{prop}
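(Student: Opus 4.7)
The plan is induction on $k$, leveraging the spectral-sequence computation from Lemma \ref{lem H(Rk)}. The base case $k=1$ is immediate since $R_1 \cong \Lambda_1$ and $h_{1,1} = \xi_1$. For the inductive step, Lemma \ref{lem H(Rk)} supplies a basis of $H(R_k)$ of the form $\{[f_i], [f_i h_{k,k}]\}$, where $\{[f_i]\}$ is a basis of $H(R_{k-1})$ viewed inside $H(R_k)$ via $R_{k-1} \hookrightarrow R_k$. By the inductive hypothesis, I may take each $[f_i]$ to be a monomial in $[h_{k-1,1}], \dots, [h_{k-1,k-1}]$; since $h_{k,j} = h_{k-1,j}$ for $j \leq k-1$, this shows that $[h_{k,1}], \dots, [h_{k,k}]$ generate $H(R_k)$ as an algebra.

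The core task is to verify $[h_{k,i}][h_{k,j}] + [h_{k,j}][h_{k,i}] = 0$ in $H(R_k)$, from which $[h_{k,i}]^2 = 0$ follows using $\op{char}(\F) \neq 2$. For $i, j \leq k-1$ both elements lie in the subdga $R_{k-1}$, so the primitive furnished by the inductive hypothesis extends to $R_k$ along the inclusion. The genuinely new case is $j = k$ with $i \leq k-1$, where I would construct an explicit $\gamma_i \in R_k$ satisfying $d(\gamma_i) = h_{k,k}h_{k,i} + h_{k,i}h_{k,k}$. Starting from the recursion
\[
h_{k,k} = h_{k-1,k-1}T_{k-1} - T_{k-1}^{-1}h_{k-1,k-1} - T_{k-1}^{-1}h_{k-1,k-2}T_{k-1}
\]
(the $h_{k-1,k}$ term of Definition \ref{def hk} vanishes), I would expand the anti-commutator, pass $h_{k,i} = h_{k-1,i}$ past the $T_{k-1}^{\pm 1}$ factors, and apply the inductive hypothesis to the residual anti-commutators inside $R_{k-1}$. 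For $i \leq k-2$ the element $h_{k-1,i}$ already sits in $R_{k-2}$ and commutes with $T_{k-1}$, so every term reduces to $d(\cdot)\in R_{k-1}$ conjugated by $T_{k-1}^{\pm 1}$, and the Leibniz rule packages this cleanly into $\gamma_i$.

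The hard subcase will be $i = k-1$, where $h_{k-1,k-1}$ does not commute with $T_{k-1}$; the defect is governed by $\xi_{k-1}T_{k-1} = T_{k-1}\xi_k$ and $\xi_k T_{k-1} = T_{k-1}\xi_{k-1} - \hbar(\xi_{k-1}-\xi_k)$, producing $\hbar$-corrections involving $d(T_{k-1}) = \xi_{k-1} - \xi_k$. These corrections are themselves exact and, combined with the fact (already used in the proof of Lemma \ref{lem dhk}) that $\xi_k$ super-commutes with $R_{k-1}$, can be absorbed into $\gamma_i$. Once anti-commutation is established, the resulting surjective graded algebra map $\Lambda_k \twoheadrightarrow H(R_k)$ is an isomorphism by the dimension count $\dim\Lambda_k = 2^k = \dim H(R_k)$ from Lemma \ref{lem H(Rk)}. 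The main obstacle is the bookkeeping in the $i = k-1$ subcase: producing $\gamma_i$ in closed form requires careful termwise cancellation of the $\hbar$-corrections against the Leibniz contributions of $T_{k-1}^{\pm 1}$, and the argument closes precisely because the inductive exterior structure of $H(R_{k-1})$ absorbs every residual anti-commutator produced along the way.
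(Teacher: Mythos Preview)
Your proposal has a genuine gap: the relation $[h_{k,k}]^2 = 0$ is never established. You write that ``$[h_{k,i}]^2 = 0$ follows using $\op{char}(\F) \neq 2$'' from anti-commutation, but setting $i = j = k$ in the anti-commutator gives the tautology $2[h_{k,k}]^2 = 0$, which is precisely the statement to be proved (up to a unit). Your explicit-primitive method only treats $j = k$ with $i \leq k-1$, where the key simplification $h_{k,i} = h_{k-1,i} \in R_{k-1}$ is available; for $i = j = k$ there is no such reduction. Nor does dimension counting save you: an algebra on odd generators $x_1,\dots,x_k$ with $x_ix_j = -x_jx_i$ for $i\neq j$, $x_i^2=0$ for $i\leq k-1$, and (say) $x_k^2 = x_1x_2$ still has dimension $2^k$, so the surjection $\Lambda_k \to H(R_k)$ you invoke at the end is not yet well-defined.

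The paper takes a completely different route that sidesteps any explicit primitive. It embeds $H(R_k) \hookrightarrow H(R_{2k})$ via $f \mapsto \mb_k \otimes f$ (iterating the injection $H(R_{k-1}) \hookrightarrow H(R_k)$ coming from the spectral sequence), then uses Claim~1 to replace $[\mb_k \otimes h_{k,k}]$ by a class $[h' \otimes \mb_k]$ supported on the \emph{other} block of $k$ strands. Since disjoint diagrams super-commute in $R_{2k}$, one gets $[\mb_k \otimes h_{k,k}]^2 = -[\mb_k \otimes h_{k,k}]^2$ and similarly $[f_i h_{k,k}] = (-1)^{\deg f_i}[h_{k,k} f_i]$, all without touching the recursion for $h_{k,k}$. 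Your approach, even if the $i = k-1$ bookkeeping closes, would still need a separate argument of this kind for the square.
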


\begin{proof}
The proof is by induction on $k$. The $k=1$ case is immediate.  

Arguing by induction, suppose that $H(R_{k-1}) \cong \lm_{k-1}$ as algebras. The space $H(\rk)$ has a linear basis $\{[f_i], [f_ih_{k,k}]\}$, where $\{[f_i]\}$ is a basis of $H(R_{k-1})$. It suffices to show that
\begin{gather} \label{eq HRk}
[f_ih_{k,k}]=(-1)^{\deg(f_i)}[h_{k,k}f_i], \quad [h_{k,k}]^2=0,
\end{gather}
which will be proved using the following two claims:

\s\n {\bf Claim 1.} For any closed $h \in R_{k-1}$, there exists a closed $h' \in R_{k-1}$ such that $[h' \ot 1]=[1 \ot h] \in H(\rk)$, where
 $h' \ot 1$ and $1 \ot h$ are given by stacking a trivial strand above $h'$ and below $h$, respectively.

The spectral sequence from the proof of Lemma \ref{lem H(Rk)} gives a short exact sequence $0 \ra E_2^k \ra H(\rk) \ra E_2^1 \ra 0$.
The element $1\ot h$ lies in $F^2$ and the image of its class $[1\ot h]$ in $H(\rk)$ lies in $E_2^k$ under the inclusion.
Then observe that $E_2^k$ has a basis $\{[f_i]=[f_i \ot 1]\}$.

\s\n {\bf Claim 2.} The inclusion $\imath_{k-1}: R_{k-1} \ra \rk$, $\imath_{k-1}(f)=1 \ot f$ induces an inclusion $H(R_{k-1})\to H(\rk)$.
This is immediate from the above bases of $H(R_{k-1})$ and $H(\rk)$.

\s
By a repeated application of Claim 2, we have an inclusion $H(\rk) \ra H(R_{2k})$ given by $f \mapsto \mb_{k} \ot f$.
Hence it suffices to show that the image of \eqref{eq HRk} holds in $H(R_{2k})$ under the inclusion.  We have
$$ [\mb_{k} \ot h_{k,k}]^2=[\mb_{k} \ot h_{k,k}][h_{k,k}' \ot \mb_k]=-[h_{k,k}' \ot \mb_k][\mb_{k} \ot h_{k,k}]=-[\mb_{k} \ot h_{k,k}]^2,$$
where the first equation is from Claim 1, and the second one is from super-commutativity of disjoint diagrams.
Hence $[h_{k,k}]^2=0$ if $\op{char}(\F) \neq 2$.
The proof for the other case is similar.
\end{proof}


Finally we conjecture that the dga $\rk$ is formal.

\begin{conj} \label{conj Rk formal}
The elements $h_{k,1}, \dots, h_{k,k}$ of $\rk$ satisfy 
$$h_{k,i}^2=0; \quad h_{k,i}h_{k,j}=-h_{k,j}h_{k,i}, i \neq j.$$
Hence the map $H(\rk) \ra \rk$ given by $[h_{k,i}] \mapsto h_{k,i}$ is a quasi-isomorphism.

\end{conj}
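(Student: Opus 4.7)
The plan is to prove Conjecture \ref{conj Rk formal} by induction on $k$, establishing the exterior algebra relations $h_{k,i}^2 = 0$ and $h_{k,i} h_{k,j} + h_{k,j} h_{k,i} = 0$ for $i \neq j$ directly in $\rk$. Once these hold, the assignment $[h_{k,i}] \mapsto h_{k,i}$ respects all multiplicative relations in $\Lambda_k$, sends closed elements to closed elements (Lemma \ref{lem dhk}), and realizes the algebra isomorphism $H(\rk) \cong \Lambda_k$ of Proposition \ref{prop H(Rk)}, so it is automatically a quasi-isomorphism of dgas. The base cases $k = 1, 2$ are handled in Example \ref{ex Rk}.

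For the inductive step, iterate the stabilization $h_{k,i} = h_{k-1,i}$ (valid for $i \le k-1$) to obtain $h_{k,i} = h_{i,i} \in R_i \subseteq R_{k-2}$ for $i \le k-2$. Since $R_{k-2}$ commutes with $T := T_{k-1}$, the cases $i, j \le k-1$ reduce to the inductive hypothesis applied in $R_{k-1}$, so it remains to prove (a) $h_{k,k}\, h_{k-1,j} + h_{k-1,j}\, h_{k,k} = 0$ for $1 \le j \le k-1$, and (b) $h_{k,k}^2 = 0$. Set $a := h_{k-1,k-1}$ and $b := h_{k-1,k-2} = h_{k-2,k-2} \in R_{k-2}$; since $b$ commutes with $T$, the recursion collapses to $h_{k,k} = -T^{-1} a + aT - b$. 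For (a) with $j \le k-2$, $h_{k-1,j} \in R_{k-2}$ commutes with $T$, and combining this with the inductive anticommutation among $\{a, b, h_{k-1,j}\}$ yields the result. For $j = k-1$ (so $h_{k-1,j} = a$), expanding $a\, h_{k,k} + h_{k,k}\, a$ and using $a^2 = 0$ and $ab + ba = 0$ leaves $a(T - T^{-1})a = \hbar\, a^2 = 0$.

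The main obstacle is identity (b). Expanding $h_{k,k}^2$ and applying $a^2 = b^2 = 0$, $ab + ba = 0$, and $[T,b] = 0$, all terms involving $b$ cancel and one is reduced to the identity $T^{-1} a T^{-1} a + aTaT = 0$ in $\rk$, equivalently (via $T^2 = 1 + \hbar T$) $aTaT + TaTa = \hbar\, aTa$. The delicacy lies in the fact that $a$ genuinely fails to commute with $T$: both the occurrences of $T_{k-2}$ in $a$ (which obey a braid relation with $T_{k-1}$) and the occurrences of $\xi_{k-1}$ in $a$ (which $T_{k-1}$ transports to $\xi_k$) contribute. We propose a nested induction: expand $a = -T_{k-2}^{-1} a' + a' T_{k-2} - b'$ with $a' := h_{k-2,k-2}$ and $b' := h_{k-2,k-3} \in R_{k-3}$, then systematically exploit $[T_{k-1}, R_{k-3}] = 0$ together with the braid relation $T_{k-1} T_{k-2} T_{k-1} = T_{k-2} T_{k-1} T_{k-2}$ to reduce the identity for $(a, T_{k-1})$ to the analogous identity for $(a', T_{k-2})$, closing the induction.

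As an alternative, one can attempt to deduce formality of $\rk$ from that of its associated graded $\rkkh$, established in Theorem \ref{thm rkkh dual}. The corresponding spectral sequence converges to $H(\rk)$, and since $\dim H(\rk) = \dim H(\rkkh) = 2^k$, it degenerates at $E_1$. The remaining difficulty is to lift the quasi-isomorphism $\rkkh \simeq \Lambda_k$ (provided by $\rho_k$) back to $\rk$ in a way compatible with the multiplicative structure; this ultimately reduces to verifying the strict exterior relations among the $h_{k,i}$ in $\rk$, which is the same identity that constitutes the main obstacle above.
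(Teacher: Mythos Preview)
The statement you are attempting to prove is left as an open \emph{conjecture} in the paper; no proof is given, and the authors do not claim one. So there is no ``paper's own proof'' to compare against. What the paper \emph{does} prove is the analogous formality statement for the associated graded $\rkkh$ (Theorem~\ref{thm rkkh dual}), via an entirely different route---the explicit quasi-isomorphism $\rho_k$ to $\End(\lek)$ and Koszul duality for $\smk$---and this does not transfer back to $\rk$ in any evident way.

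Your attempt, as written, is not a proof. There is first a concrete error: the recursion for $h_{k,k}$ does \emph{not} collapse to $-T^{-1}a+aT-b$. You have silently dropped the leading term $h_{k-1,k}$ from Definition~\ref{def hk}, but this term is nonzero for $k\ge 3$; already $h_{2,3}=-T_1^{-1}h_{1,1}T_1=-T_1^{-1}\xi_1 T_1=-\xi_2$. With the correct formula $h_{k,k}=h_{k-1,k}-T^{-1}a+aT-b$, both your computation for (a) at $j=k-1$ and your reduction of (b) acquire extra cross-terms such as $a\,h_{k-1,k}+h_{k-1,k}\,a$, about which the inductive hypothesis says nothing (the index $k$ exceeds $k-1$, so $h_{k-1,k}$ is not among the elements assumed to satisfy exterior relations). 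Second, even setting this aside, the ``nested induction'' you propose for the key identity $aTaT+TaTa=\hbar\,aTa$ is only a plan: you have not verified that expanding $a=h_{k-1,k-1}$ via its own recursion and applying the braid relation actually reproduces the same identity one level down, and the same missing-term issue recurs there. Finally, you correctly observe that your alternative route through degeneration of the filtration spectral sequence is circular: it yields an isomorphism of graded vector spaces but not the strict multiplicative relations in $\rk$ that the conjecture asserts. The conjecture remains open.
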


\subsection{The $q$-graded version} \label{ssec rkkh} 

\subsubsection{Definition of $\rkkh$} 

Recall the algebra $\rk$ has a linear basis 
$$\{T_w\xi~|~ w \in S_k, \xi ~\mbox{monomial in}~\lk\}.$$ 
We introduce an additional $q$-grading on the underlying vector space as follows:
\begin{equation}
\deg_{q}(T_w \xi)=-2(l(w)+\deg(\xi)).
\end{equation}
Here $\deg(\xi)$ is the homological grading of $\xi$ and $l(w)$ is the length of $w \in S_k$. It is easy to verify that $\rk$ is a filtered algebra with respect to the $q$-grading. We denote the images of $T_i$ and $\xi_j$ in the associated graded algebra by $s_i$ and $\xi_j$. Then $s_1,\dots,s_{k-1}$ generate the nilCoxeter algebra $\nck$, and $\xi_1,\dots,\xi_k$ generate the exterior algebra $\lk$. The associated $q$-graded algebra has generators $s_i, \xi_j$, $1 \leq i \leq k-1, 1 \leq j \leq k$, and satisfies the following defining relations:
\begin{gather} \label{rel Rkkh}
s_i^2=0, \quad s_is_{i+1}s_i=s_{i+1}s_is_{i+1}, \quad  s_is_{i'}=s_{i'}s_i ~\mbox{for}~|i-i'|>1; \\
\nonumber \xi_j^2=0, \quad \xi_j\xi_{j'}=-\xi_{j'}\xi_j ~\mbox{for}~j \neq j';   \\
\nonumber \xi_is_i=s_i\xi_{i+1},  \quad \xi_{i+1}s_i=s_i\xi_{i}, \quad \xi_js_i=s_i\xi_j ~\mbox{for}~j \neq i,i+1.
\end{gather}
The differential $d$ of $\rk$ preserves the $q$-grading and hence induces a differential on the associated $q$-graded algebra, which is given by
$$d(s_i)=\xi_i-\xi_{i+1}, \qquad d(\xi_j)=0.$$
The homological grading is given by $\deg(\xi_j)=1, \deg(s_i)=0$, and the $q$-grading is given by $\deg_q(\xi_j)=-2, \deg_q(s_i)=-2$.
We denote this $q$-graded dga by $\rkkh$.

\begin{figure}[ht]
\begin{overpic}
[scale=0.25]{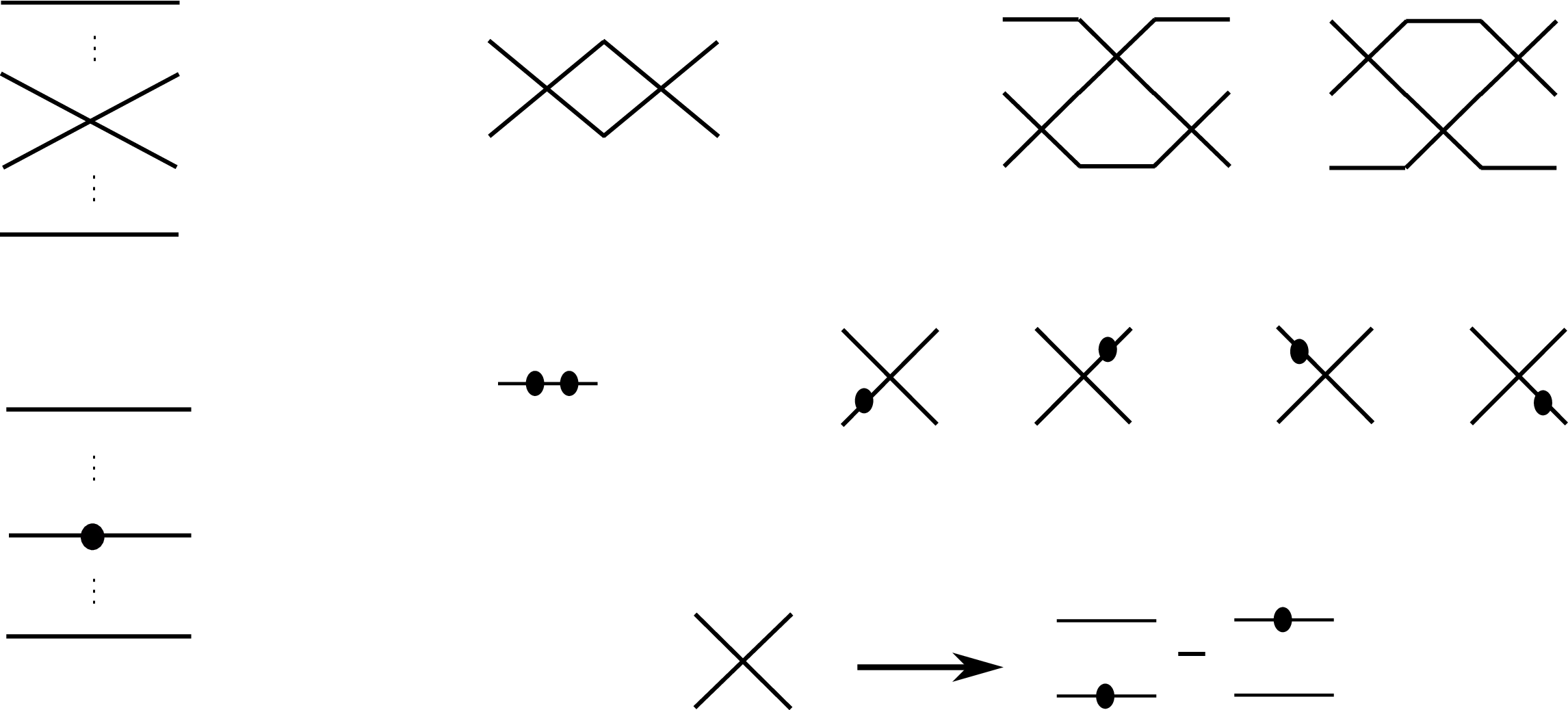}
\put(5,26){$s_i$}
\put(5,0){$\xi_j$}
\put(-3,4){${\scriptstyle 1}$}
\put(-3,10){${\scriptstyle j}$}
\put(-3,18){${\scriptstyle k}$}
\put(-3,29){${\scriptstyle 1}$}
\put(-3,35){${\scriptstyle i}$}
\put(-5,40){${\scriptstyle i+1}$}
\put(-3,44){${\scriptstyle k}$}
\put(57, 4){$d$}
\put(48,38){$=0$}
\put(40,20){$=0$}
\put(80,38){${=}$}
\put(89,20){${=}$}
\put(62,20){${=}$}
\end{overpic}
\caption{The generators $s_i, \xi_j$ of $\rkkh$ are on the left and the relations and the differential are on the right.}
\label{fig algn5}
\end{figure}

All the results for $\rk$ in the previous subsection still hold for $\rkkh$ and the proofs are the same. In particular, there is a family of elements $h_{k,i} \in \rkkh$ as in Definition \ref{def hk}. Each $h_{k,i}$ is homogeneous for both gradings: $\deg(h_{k,i})=1, \deg_q(h_{k,i})=-2i$. Among them, $h_{k,i}$ is closed for $1 \le i \le k$ from Lemma \ref{lem dhk}. The cohomology $H(\rkkh)$ has a linear basis 
$$\{[h_{k, i_1}\cdots h_{k, i_s}] ~|~ 1 \le i_1 < \cdots < i_s \le k, 0 \le s \le k\}$$ 
from Lemma \ref{lem H(Rk)}. 
We state the analog of Proposition \ref{prop H(Rk)} for later use:

\begin{prop} \label{prop H(Rkkh)}
Assuming $\op{char}(\F) \neq 2$, the cohomology algebra $H(\rkkh)$ is isomorphic to an exterior algebra generated by $[h_{k,i}]$, $1 \le i \le k$, with $\deg([h_{k,i}])=1$ and $\deg_q([h_{k,i}])=-2i$.
\end{prop}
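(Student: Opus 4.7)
The plan is to mirror the proof of Proposition \ref{prop H(Rk)} line by line, carrying the extra $q$-grading along at every step. The crucial structural fact is that the differential $d$ of $\rkkh$ preserves both the homological and $q$-gradings, so the finite filtration $F^p$ of Lemma \ref{lem H(Rk)} and its associated spectral sequence admit $q$-graded refinements, and the vector-space isomorphism $H(\rkkh) \cong \lk$ produced by the $q$-graded analog of Lemma \ref{lem H(Rk)} already delivers the correct total rank $2^k$.

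First I would check by induction on $k$ that $h_{k,i} \in \rkkh$ is bihomogeneous with $\deg(h_{k,i}) = 1$ and $\deg_q(h_{k,i}) = -2i$. Since $\deg_q(s_i) = \deg_q(\xi_j) = -2$ and each of the four summands in the recursion of Definition \ref{def hk} (interpreted in the associated graded) sits in bidegree $(1, -2i)$, induction on $k$ gives the claim. In particular the closed elements $h_{k,1}, \dots, h_{k,k}$ produced by Lemma \ref{lem dhk} define cohomology classes in bidegrees $(1, -2), \dots, (1, -2k)$ respectively.

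Next I would rerun Claims 1 and 2 from the proof of Proposition \ref{prop H(Rk)} inside $\rkkh$. The spectral sequence furnishes a $q$-graded short exact sequence $0 \to E_2^k \to H(\rkkh) \to E_2^1 \to 0$, from which one extracts both the injectivity of the inclusion $H(R_{k-1}^{\op{nil}}) \hookrightarrow H(\rkkh)$ induced by stacking a trivial strand on top, and the statement that for any closed $h \in R_{k-1}^{\op{nil}}$ there is a closed $h' \in R_{k-1}^{\op{nil}}$ with $[\mb_1 \ot h] = [h' \ot \mb_1]$ in $H(\rkkh)$. Iterating the stacking inclusion embeds $H(\rkkh)$ into $H(R_{2k}^{\op{nil}})$ via $f \mapsto \mb_k \ot f$, reducing the two desired relations for the $[h_{k,i}]$ to computations in $H(R_{2k}^{\op{nil}})$.

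Finally, applying Claim 1 to lift one $h_{k,\ast}$ to the ``top half'' and leave the other on the ``bottom half'' of $R_{2k}^{\op{nil}}$, the super-commutativity of disjoint diagrams (relation (R1)) yields $[h_{k,i}][h_{k,j}] = -[h_{k,j}][h_{k,i}]$ for all $i,j$; specializing $i = j$ and using $\op{char}(\F) \neq 2$ gives $[h_{k,i}]^2 = 0$. This produces an algebra homomorphism from the exterior algebra on $[h_{k,1}], \dots, [h_{k,k}]$ into $H(\rkkh)$, and the rank count from the $q$-graded version of Lemma \ref{lem H(Rk)} forces it to be an isomorphism of $q$-graded algebras. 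The main point where care is needed --- and therefore the principal potential pitfall --- is consistently passing the construction of $h_{k,i}$ from $\rk$ (where it uses $T_{k-1}^{-1} = T_{k-1} - \hbar$) to the associated-graded algebra $\rkkh$ while tracking the $q$-degree; this is handled by taking the top-$q$-degree representative at each step of the recursion.
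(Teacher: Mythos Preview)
Your proposal is correct and matches the paper's approach: the paper does not give a separate proof of this proposition but simply notes that all results for $\rk$ carry over to $\rkkh$ with identical proofs, and you have faithfully outlined how to rerun the argument of Proposition~\ref{prop H(Rk)} (via Lemma~\ref{lem H(Rk)}, Claims~1 and~2, and the embedding into $H(R_{2k}^{\op{nil}})$) while tracking the $q$-grading. The one subtlety you flag --- that $T_{k-1}^{-1}$ in Definition~\ref{def hk} must be interpreted as $s_{k-1}$ in the associated graded --- is exactly the right thing to watch, and since each of the four terms in the recursion then has $q$-degree $-2i$, the elements $h_{k,i}\in\rkkh$ are bihomogeneous as required.
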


%
%

We denote this $q$-graded dga $H(\rkkh)$ by $\lkkh$ and write $\lkkh=\lk([h_{k,1}], \dots, [h_{k,k}])$ to indicate the generators.

\subsubsection{Relationship to the nilHecke algebra}

We now discuss the relationship of $\rkkh$ to the nilHecke algebra $\nhk$ and the categorified quantum $\mathfrak{sl}_2$. 
Recall that $\nhk$ is a $q$-graded algebra with generators $\bdry_i, x_j$, $1 \leq i \leq k-1, 1 \leq j \leq k$, and defining relations: 
\begin{gather}\label{rel nhk}
\bdry_i^2=0, \qquad  \bdry_i\bdry_{i+1}\bdry_i=\bdry_{i+1}\bdry_i\bdry_{i+1}, \qquad \bdry_i\bdry_{i'}=\bdry_{i'}\bdry_i ~\mbox{for}~|i-i'|>1; \\
\nonumber x_jx_{j'}=x_{j'}x_j, ~\mbox{for}~j \neq j';  \\
\nonumber x_i\bdry_i-\bdry_ix_{i+1}=1,  \qquad  x_{i+1}\bdry_i-\bdry_ix_{i}=-1, \qquad x_j\bdry_i=\bdry_ix_j ~\mbox{for}~j \neq i,i+1.
\end{gather}
The $q$-grading is given by $\deg_q(x_j)=2, \deg_q(\bdry_i)=-2$. 

The nilHecke algebra plays a fundamental role in the categorification of quantum groups. We briefly summarize some results of Lauda~\cite{La}.
There are two subalgebras of $\nhk$: the nilCoxeter algebra $\nck$ generated by the $\bdry_i$, and the polynomial algebra $\polk=\F[x_1, \dots, x_k]$. 
The center of $\nhk$ is the algebra $\smk=\F[e_1, \dots, e_k]$ of symmetric polynomials of $x_1, \dots, x_k$, where $e_i$ is the $i$th elementary symmetric polynomial. 
The space $\polk$ is a free module over $\smk$ of rank $k!$, up to a grading shift.  Lauda showed that $\nhk$ is naturally isomorphic to $\End_{\smk}(\polk)$, which is further isomorphic to the matrix algebra $\op{Mat}(k!; \smk)$ up to a grading shift.

\subsubsection{Main result}

Let $\upl$ denote the additive monoidal category which categorifies the positive half of the quantum $\mathfrak{sl}_2$ from \cite{La}. It has objects $\E^{\otimes k}$ for $k \ge 0$ such that $\End(\E^{\otimes k})=\nhk$. 
Let $\dgupl$ be the dg category of complexes over $\upl$. It has a natural monoidal structure induced from $\upl$. 
Define the two-term complex:
\begin{equation} \label{def le}
\lle:=(\E \xra{x_1} \E),
\end{equation}   
where the two components are in cohomological degrees $0$ and $1$, and the differential is given by $x_1 \in \End(\E)$. 
Let $\End(\lek)$ denote the endomorphism algebra of $\lek$ in $\dgupl$ which is a $q$-graded dga. 

We first construct a family of natural inclusions of $q$-graded dgas 
$$\rho_k: \rkkh \hookrightarrow \End(\lek).$$ 
For $k=1$, $\End(\E) \cong \F[x_1]$, and $H(\End(\lle)) \cong \F[\zeta_1] / (\zeta_1^2)$. We view $\lle$ as the Koszul dual of $\E$.  
Define $\rho_1(\xi_1)$ as the generator $\zeta_1 \in H(\End(\lle))$, i.e.,  the map labeled by $\id$ between two rows representing $\lle$: 
$$\xymatrix{
\lle: & \E \ar[r]^{x_1} & \E  \\
\lle:  \ar[u]^{\rho_1(\xi_1)} &\E \ar[r]^{x_1} \ar[ur]^{\id} & \E.
}$$
For $k=2$, the images $\rho_2(\xi_1), \rho_2(\xi_2)$ can be computed by taking tensor products:
$$\xymatrix{
\lle^{\otimes 2}: & \E^2 \ar[rr]^{\scriptscriptstyle \begin{bmatrix} x_1, x_2 \end{bmatrix}} & &( \E^2 & \oplus & \E^2 ) \ar[rr]^{\scriptscriptstyle \begin{bmatrix} x_ 2 \\ -x_1 \end{bmatrix}} & & \E^2  \\
\lle^{\otimes 2}:  \ar@/_1pc/@{-->}[u]_{\rho_2(\xi_2)} \ar@/^1pc/[u]^{\rho_2(\xi_1)} & \E^2 \ar[urr]^{\id} \ar@{-->}[urrrr]^{\id} \ar[rr]_{\scriptscriptstyle \begin{bmatrix} x_1, x_2 \end{bmatrix}} & &( \E^2 \ar@{-->}[urrrr]^{-\id}  & \oplus & \E^2 ) \ar[urr]_{\id} \ar[rr]_{\scriptscriptstyle \begin{bmatrix} x_ 2 \\ -x_1 \end{bmatrix}} & & \E^2,
}$$ 
where each row is $\lle^{\otimes 2}$ --- a complex of four copies of $\E^2$ with a differential given by the $x_i$, and the solid and dashed lines between the two rows are $\rho_2(\xi_1)$  and $\rho_2(\xi_2)$, respectively. 
The image $\rho_2(s_1)$ consists of four components of $\bdry_1$ between the two rows:
$$\xymatrix{
\lle^{\otimes 2}: &  \E^2 \ar[rr]^{\scriptscriptstyle \begin{bmatrix} x_1, x_2 \end{bmatrix}} & & (\E^2 & \oplus & \E^2) \ar[rr]^{\scriptscriptstyle \begin{bmatrix} x_ 2 \\ -x_1 \end{bmatrix}} & & \E^2  \\
\lle^{\otimes 2}: \ar[u]^{\rho_2(s_1)} & \E^2 \ar[u]^{\bdry_1}  \ar[rr]_{\scriptscriptstyle \begin{bmatrix} x_1, x_2 \end{bmatrix}} & & (\E^2 \ar[urr]^{\bdry_1} & \oplus & \E^2) \ar[ull]^{\bdry_1} \ar[rr]_{\scriptscriptstyle \begin{bmatrix} x_ 2 \\ -x_1 \end{bmatrix}} & & \E^2 \ar[u]^{-\bdry_1},
}$$  
It is easy to verify that $\rho_2$ preserves the defining relations and the $q$-grading of $\rkkh$. Moreover, 
$$d(\rho_2(s_1))=d \circ \rho_2(s_1)-\rho_2(s_1)\circ d=\rho_2(\xi_1)-\rho_2(\xi_2).$$
Therefore we have a map of $q$-graded dgas $\rho_2: R_2^{\op{nil}} \ra \End(\lle^{\otimes 2})$. 
The construction can be extended to general $k$ by taking tensor products as follows: 
$$\rho_k(\xi_j)=\id^{\otimes j-1} \otimes \rho_1(\xi_1) \otimes \id^{\otimes k-j}, \quad \rho_k(s_i)=\id^{\otimes i-1} \otimes \rho_2(s_1) \otimes \id^{\otimes k-i-1},$$ 
where each $\id$ is $\id_{\lle}$.  We leave it to the reader to verify that $\rho_k: \rkkh \hookrightarrow \End(\lek)$ is a well-defined map of $q$-graded dgas. 

Now we give another description of the action $\rho_k$ of $\rkkh$ on $\lek$. Let us write the tensor product $\lek$ as $(\lk(\xi_1, \dots, \xi_k) \otimes \E^k, d)$, where $\lk(\xi_1, \dots, \xi_k)$ is the exterior algebra generated by the $\xi_j$ and the differential is $d=\textstyle{(\sum_{i=1}^{k} \xi_i \otimes x_i)}\cdot -$. Then the action is given by $\rho_k(\xi_j)=\xi_j \cdot -$, the multiplication by $\xi_j$ on the left; and $\rho_k(s_i)(\xi \otimes -)=s_i(\xi) \otimes (\bdry_i \cdot -)$, where the first factor is the symmetric group action on $\lk$, and the second is the nilCoxeter action.  

The algebra $\rkkh$ has a linear basis $\{\xi s_{w} ~|~ w \in S_k, \xi \in \lk ~\mbox{monomials}\}$,
where $s_w$ corresponds to $T_w$. 
Consider the restriction of $\rho_k(\xi s_{w})=\rho_k(\xi) \rho_k(s_{w})$ on the component $\E^k$ of the lowest cohomological degree. The factor $\rho_k(s_{w})$ is $\bdry_{w} \in \nck$, and $\rho_k(\xi)$ determines which component it maps into. Hence $\rho_k$ is an inclusion. 

\begin{proof}[Proof of Theorem \ref{thm rkkh dual}.]
We first compute $H(\End(\lek))$. Consider 
$$\lek=(\lk(\xi_1, \dots, \xi_k) \otimes \E^k, d)$$ 
as above. There are natural isomorphisms
$$\End(\E^k) \cong \nhk \cong \End_{\smk}(\polk),$$
where $\smk=\F[e_1, \dots, e_k]$. 
The composition induces a natural isomorphism of $q$-graded dgas
$$\End(\lek) \cong \End_{\cal{DG}(\smk)}(M_k),$$ 
where $\cal{DG}(\smk)$ is the dg category of complexes of graded free modules over $\smk$ and 
$$M_k=(\lk(\xi_1, \dots, \xi_k) \otimes \polk, d=\textstyle{(\sum_{i=1}^{k} \xi_i \otimes x_i)\cdot -}).$$
Since $M_k$ is a free resolution of the trivial module $\F$ over $\smk$, we have the following isomorphisms of $q$-graded dgas:
$$H(\End(\lek)) \cong H(\End_{\cal{DG}(\smk)}(M_k))  \cong H(\End_{\cal{D}(\smk)}(\F)) \cong \lk(e_1^*, \dots, e_k^*),$$
where $\cal{D}(\smk)$ is the derived category of modules over $\smk$ and $\lk(e_1^*, \dots, e_k^*)$ is the exterior algebra generated by the duals of the $e_i$ in $\smk$. The last isomorphism follows from the Koszul duality between $\smk$ and $\lk(e_1^*, \dots, e_k^*)$.  
The gradings are $\deg(e_i^*)=1, \deg_q(e_i^*)=-2i$. In particular, $H(\End(\lek))$ has a volume form $e_1^* \cdots e_k^*$, which has top cohomological grading $k$ and lowest $q$-grading $-k(k+1)$.

Recall that $H(\rkkh)=\lkkh=\lk([h_{k,1}], \dots, [h_{k,k}])$ from Proposition \ref{prop H(Rkkh)}. Both $q$-graded vector spaces $H(\rkkh)$ and $H(\End(\lek))$ has the same graded dimension. It remains to show that the algebra homomorphism ${\rho_k}_*: H(\rkkh) \ra H(\End(\lek))$ is injective. Since both algebras are exterior algebras, it suffices to show that ${\rho_k}_*$ preserves the volume form. The lowest $q$-grading of $\rkkh$ is $-k(k+1)$. The corresponding component is one-dimensional and is generated by $\xi_1 \cdots \xi_k s_{w_0}$, where $w_0$ is the longest word in $S_k$. Its class gives the volume form of $H(\rkkh)$ up to a scalar.  The lowest $q$-grading of $\End(\lek)$ is $-k(k+1)$, and the corresponding component is one-dimensional as well. It is easy to see that $\rho_k(\xi_1 \cdots \xi_k s_{w_0})$ is nonzero. 
We conclude that ${\rho_k}_*$ preserves the volume form up to scalars. Hence $\rho_k$ is a quasi-isomorphism. 

The dga $\End(\lek)$ is formal and quasi-isomorphic to $H(\End(\lek)) \cong \lk(e_1^*, \dots, e_k^*)$. The second claim follows.
\end{proof}

\begin{rmk} \label{rmk Koszul} 
The restriction $\op{char}(\F) \neq 2$ in Theorem \ref{thm rkkh dual} can be dropped.  We will explore it in future work. 
\end{rmk}

\section{The $A_n$-singularity} \label{sec An}

We discuss the connection to symplectic Khovanov homology in this section. 
The corresponding surface is a disk with $n$ singular points and with a stop of one point on the boundary, i.e., $S = D^2, |\tau| = 1$. Let $\pi:W_n=W_{S,n}\to S=D^2$ be the Lefschetz fibration over $D^2$ with critical values $x_1,\dots, x_n$ and regular fibers $A=S^1\times[-1,1]$. 
Let ${\bf a}=\{a_1,\dots, a_n\}$ be a pairwise disjoint collection of arcs in $D^2$ where $a_i$ is from $x_i$ to $\bdry D^2\setminus  \tau$ and let $L_i$ denote the corresponding Lagrangian thimble over $a_i$.

Let $\mfn=\{1,2,\dots,n\}$ and $\pnk$ be the set of $k$-element subsets of $\mfn$, where $0 \leq k \leq n$.
For $S \in \pnk$, let $L_S$ be the collection $\{L_i~|~i \in S\}$, viewed informally as the ``tensor product'' of the $L_i$.  Then we have
$$R^\tau(D^2,n,{\bf a};k)= \oplus_{S,T\in\pnk}\widehat{CF}^\tau(L_S,L_T).$$

We define a family of dgas $\rnk$ for $0 \le k \le n$ as a conjectural algebraic description of $R^\tau(D^2,n,{\bf a};k)$.
There is also an associated $q$-graded version $\rnkkh$. 
As $\rkkh$ is related to the nilHecke algebra $\nhk$ and the categorified quantum $\mathfrak{sl}_2$, $\rnkkh$ is expected to be related to the categorification of the tensor product representation $V^{\ot n}$ of quantum $\mathfrak{sl}_2$, where $V$ is the fundamental representation of $\mathfrak{sl}_2$.

\subsection{The basic version}

The dga under construction is a modification of the {\em strands algebra} $\cal{A}(n,k)$; see \cite[Section 3.1]{LOT}.
Recall that the strands algebra $\cal{A}(n,k)$ has a linear basis $(S, T, \phi)$, where $S, T \in \pnk$ and $\phi: S \ra T$ is a nondecreasing bijection. Such a generator can be described as a diagram of $k$ linear strands possibly with crossings on $[0,1] \times [1,n]$, which connect $\{0\} \times s$ to $\{1\} \times \phi(s)$ for all $s\in S$. 
The multiplication is given by concatenating two strands diagrams and straightening; if the number of crossings is reduced during the straightening we set it equal to zero. 
The strand associated to $s\in S$ is {\em increasing} if $\phi(s)>s$.

We modify the strands algebra by locally replacing the LOT algebra by the dga $\rk$. Given a diagram corresponding to $(S,T,\phi)$, we replace each crossing by a braid-like crossing where the strand with the larger slope is above the other strand, and then allow dots on increasing strands. They satisfy relations similar to (R1)--(R3) for $\rk$; see Definition~\ref{def rnk} for more details.


We now introduce some notation to describe the placement of dots: Given a finite set $Y$, let $\mathcal{O}(Y)$ be the set of ordered subsets of $Y$ and let $\po(Y)=\mathcal{O}(Y)\cup\{\star\}$, where $\star$ is a distinguished element.  (Note that the empty set $\varnothing\in \mathcal{O}(Y)$ and $\star$ is different from $\varnothing$.) If $D$ and $D'$ have the same underlying subset of $Y$, let $\mu(D,D')$ denote the minimal number of transpositions needed to take $D$ to $D'$.
Any inclusion $\phi: Y \ra Z$ induces a natural inclusion $\po(Y) \ra \po(Z)$, which we still denote by $\phi$.
Our universe is $\mfn$. For any $Y \subset \mfn$, we identify $\po(Y)$ with a subset of $\po(\mfn)$.
For $D, D' \in \po(Y)$, define $D \# D'$ as the subset $D \sqcup D'$ with the order on $D$ first and the order on $D'$ next if $D,D' \neq \star$ and $D \cap D'=\es$; otherwise define $D \# D'$ as $\star$.
We call $D \# D'$ the {\em ordered union} of $D$ and $D'$.

For a nondecreasing bijection $\phi: S \ra T$,
let $I(\phi)=\{t \in T ~|~ \phi^{-1}(t)<t\}$ be the subset of $T$ consisting of right endpoints of increasing strands.
Let $\po(\phi):=\po(I(\phi))$. Let
\begin{gather} \label{eq inv}
\inv(\phi)=\{(i,j) ~|~ i,j \in T, i<j, \phi^{-1}(i)>\phi^{-1}(j)\}
\end{gather} 
be the number of inversions in $\phi$.

\begin{defn}[Definition of $\rnk$ as a dga] \label{def rnk} $\mbox{}$

As a vector space $\rnk$ is generated by $(S, T, \phi, D)$, where $S$, $T \in \pnk$, $\phi: S \ra T$ is a nondecreasing bijection, and $D \in \po(\phi)$, and quotiented out by the relations
\begin{gather*}
(S, T, \phi, D)=(-1)^{\mu(D,D')}(S, T, \phi, D'), \quad \mbox{if} ~D=D' ~\mbox{as subsets,} \\
(S, T, \phi, \star)=0.
\end{gather*}
Each generator $(S, T, \phi, D)$ can be drawn as a braid-like diagram of $k$ strands for $\phi$ with a collection of ordered dots on the increasing strands near the right boundary determined by $D \in \po(\phi)$.

The multiplication is given by a concatenation of two diagrams (if the diagrams cannot be concatenated, the product is set to zero), subject to the following rules:
\be
\item[(H1)] disjoint diagrams supercommute;
\item[(H2)] a double crossing satisfies the quadratic relation as in the Hecke algebra; 
\item[(H3)] a diagram with more than one dot on a strand is set to be zero; 
\item[(H4)] a dot can freely slide through a crossing if it is on the strand above; and 
\item[(H5)] when a dot slides through a crossing from below, an extra term with $\hbar$ is needed; see (\ref{rel Rk}) and Figure \ref{fig alg1}.
\ee

The differential is defined on the generators by
$$\textstyle{d(S, T, \phi, D)=\sum_{(i,j) \in \inv(\phi)}\left((S, T, \phi_{(i,j)}, \{i\}\#D)-(S, T, \phi_{i,j}, \{j\}\#D)\right)},$$
where $\phi_{i,j}=\phi$ except at $\phi^{-1}(i), \phi^{-1}(j)$, and $\phi_{i,j}(\phi^{-1}(i))=j, \phi_{i,j}(\phi^{-1}(j))=i$.
The differential is a sum of resolution of crossings indexed by $\inv(\phi)$.
The resolution of a single crossing is defined similarly as in Definition \ref{def Rk}.

The degree of the generator $(S, T, \phi, D)$ is $|D|$, the number of elements of $D$.
In particular, the dga is non-negatively graded.

The unit $\mb=\sum_{S}\mb_S$, where $\mb_S=(S,S,\op{id},\es)$ is an idempotent which is closed.
\end{defn}

For example, a linear basis of $R(3;2)$ is drawn in Figure~\ref{fig alg5}.


\begin{figure}[ht]
\begin{overpic}
[scale=0.3]{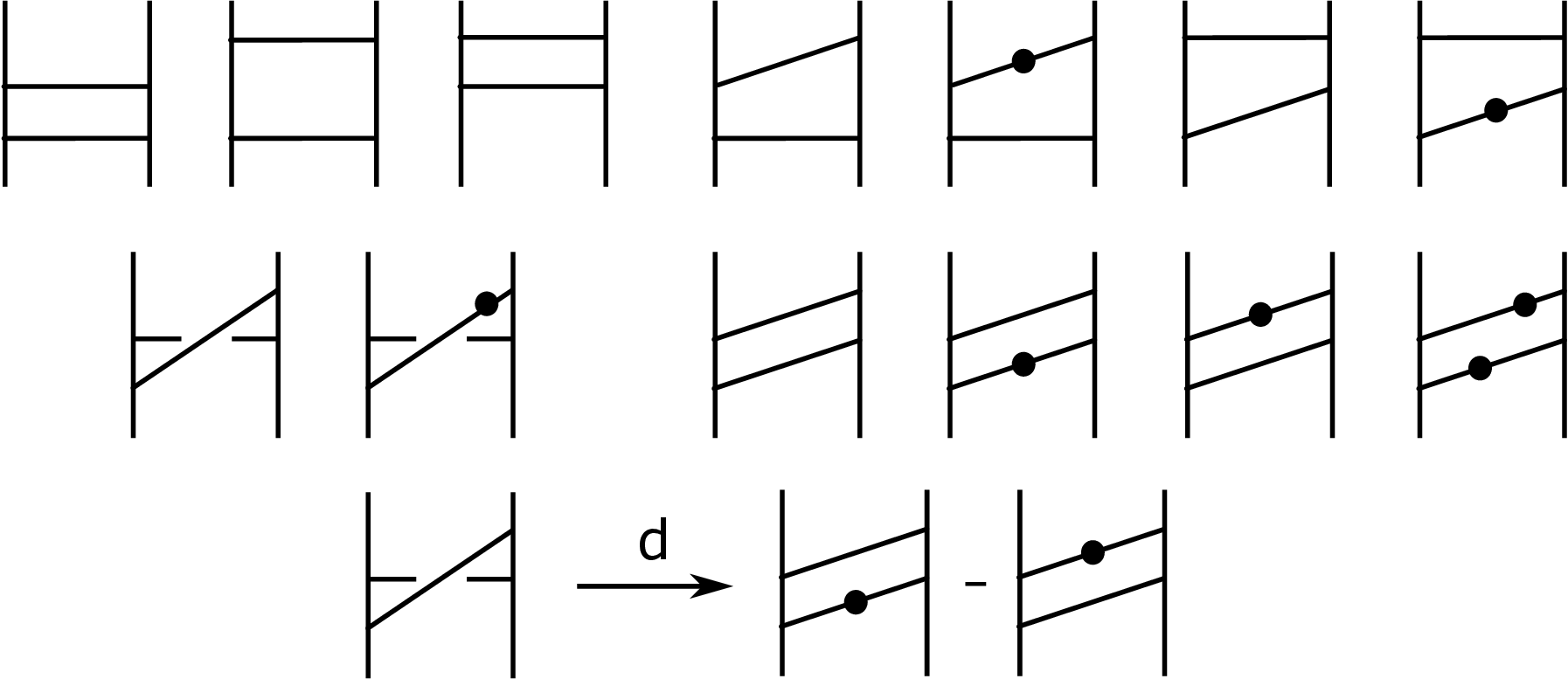}
\end{overpic}
\caption{A linear basis of $R(3;2)$, together with the differential.}
\label{fig alg5}
\end{figure}

\begin{lemma} \label{lem Rnk well}
The dga $\rnk$ is well-defined.
\end{lemma}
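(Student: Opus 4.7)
The strategy is to reduce the verification to two established results: Lemma \ref{lem Rk well}, which controls the local behavior of braid-like crossings with dots, and the well-definedness of the LOT strands algebra \cite{LOT}, which controls the global combinatorics of nondecreasing bijections. Concretely, any generator $(S,T,\phi,D)$ may be written as a composition of elementary pieces (a single crossing, an identity strand, or a strand carrying a dot), and both multiplication and differential are defined diagrammatically; a local neighborhood of each crossing looks like a two-strand configuration in $\rk$, while the global strand combinatorics is identical to that of the strands algebra.

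First I would verify that the relations imposed in Definition \ref{def rnk} are consistent with the multiplication. Sliding of dots (H4, H5) and straightening of double crossings (H2) coincide with the corresponding identities in $\rk$, so the algebra structure is well-defined modulo the antisymmetry of the ordering of $D$. The antisymmetry $(S,T,\phi,D) = (-1)^{\mu(D,D')}(S,T,\phi,D')$ is preserved by multiplication because reordering dots produces only Koszul signs. Associativity then follows from the associativity of $\rk$ at each crossing together with the associativity of composition of nondecreasing bijections. For the differential, each resolution is a local move identical to the one in $\rk$, and the placement of the new dot in $\{i\}\#D$ or $\{j\}\#D$ via the ordered union $\#$ automatically encodes the correct sign under the antisymmetry relation; consequently $d$ descends to the quotient.

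Next I would prove the Leibniz rule $d(ab) = d(a)b + (-1)^{|a|} a\, d(b)$. Writing $a = (S,T,\phi,D)$ and $b = (T,U,\psi,E)$, the inversions of the concatenated bijection $\psi\circ\phi$ split into those coming from $\phi$ and those coming from $\psi$. The contribution from resolving a crossing in $a$ gives $d(a)b$; resolving a crossing in $b$ produces a term in which the newly created dot must be transported past the dots in $D$ to reach its ordered position at the right end of the increasing strands, contributing the sign $(-1)^{|D|}=(-1)^{|a|}$. This is a purely combinatorial sign computation that factors through the local $\rk$-identity on each crossing.

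The main content, and the principal obstacle, is verifying $d^2 = 0$. Applying $d$ twice gives a double sum indexed by ordered pairs of inversions $(\alpha_1,\alpha_2)$ of $\phi$. For ``independent'' pairs, where the strands involved in $\alpha_1$ and $\alpha_2$ do not interact, the contributions from $(\alpha_1,\alpha_2)$ and $(\alpha_2,\alpha_1)$ produce identical resolved diagrams; the antisymmetry of $\#$ forces their signs to cancel in pairs. For interacting pairs (sharing a strand), one is forced into a triangular configuration of three strands; here the cancellation requires simultaneously invoking the braid relation on three crossings from $\rk$ together with the antisymmetry of the ordered dots, and mirrors the standard $d^2=0$ argument for the LOT strands algebra with one extra layer of sign bookkeeping. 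The hardest step will be this triangular case, for which I would adapt the LOT argument, using Lemma \ref{lem Rk well} to handle the local three-strand identity and the definition of $\#$ to guarantee the needed sign compatibilities.
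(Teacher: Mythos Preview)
Your approach is broadly sound, but it is considerably more laborious than the paper's, and one step in your Leibniz argument is not quite right as stated.

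The paper's proof is much shorter because it uses a \emph{global} embedding rather than your local one. Fixing $S,T\in\pnk$, the paper normalizes both $S$ and $T$ to $\{1,\dots,k\}$ and thereby embeds the entire subspace $\mb_S\,\rnk\,\mb_T$ into $\rk$ as a subcomplex; this embedding commutes with $d$, so $d^2=0$ is inherited wholesale from Lemma~\ref{lem Rk well}. Likewise, for fixed $S,T,V$ one embeds the pair $\mb_S\,\rnk\,\mb_T \times \mb_T\,\rnk\,\mb_V$ into $\rk\times\rk$ compatibly with multiplication, and the Leibniz rule follows immediately. In other words, the paper observes that after fixing endpoints there is no global combinatorics left to manage: everything is already a computation in $\rk$. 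Your pair-by-pair analysis of inversions for $d^2=0$ and your appeal to LOT-style arguments are therefore unnecessary, though not wrong.

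The one genuine gap in your sketch is the sentence ``the inversions of the concatenated bijection $\psi\circ\phi$ split into those coming from $\phi$ and those coming from $\psi$.'' This is only true when the concatenation introduces no double crossings. In $\rnk$ the product is defined by concatenation followed by (H2) and (H5), so a pair of strands that cross once in $\phi$ and once in $\psi$ produces, via $T_i^2=1+\hbar T_i$, a sum of two terms with different inversion sets; your bookkeeping must track the differential through this rewriting, not just through a fixed set of inversions. You allude to (H2) earlier but do not carry it through the Leibniz computation. The paper's global embedding sidesteps this entirely, since the Hecke straightening and the Leibniz rule are already verified once and for all inside $\rk$.
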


\begin{proof}
We prove that the differential $d$ satisfies $d^2=0$ and the Leibniz rule, using the diagrammatic presentation of $\rnk$.

Fixing $S, T \in \pnk$, the subspace $\mb_S \rnk \mb_T$ can be embedded into $\rk$ by normalizing both ends $S$ and $T$ to $\{1,2,\dots,k\}$ and adjusting the strands accordingly. The embedding commutes with the differentials. Hence $d^2=0$ since it holds for $\rk$.

Fixing $S, T, V \in \pnk$, the embedding $$\mb_S \rnk \mb_T \times \mb_T \rnk \mb_V \hookrightarrow \rk \times \rk$$ commutes with the multiplication maps on both sides. Hence $d$ satisfies the Leibniz rule since it does for $\rk$.
\end{proof}

\begin{rmk} $\mbox{}$
\be
\item The vector space $\rnk$ is finite-dimensional with a basis $(S, T, \phi, D)$ by choosing one order $D$ for each underlying subset of $I(\phi)$.

\item Unlike the algebra $\rk$, describing $\rnk$ in terms of generators and relations is not straightforward.
\ee
\end{rmk}

Given $S,T\in \pnk$, we write $S=\{s_1, \dots, s_k\}$, $T=\{t_1,\dots,t_k\}$, such that $s_1<\cdots <s_k$ and $t_1<\cdots <t_k$.
We write $S \le T$ if $s_i \le t_i$ for all $i$. Let $m(S,T)$ denote the cardinality $|S \backslash T|=|T \backslash S|$.

\begin{prop} \label{prop H(Rnk)}
The space $\mb_S \rnk \mb_T$ is nonzero only if $S \le T$.
In this case, its cohomology is isomorphic to an exterior algebra $\lm_m$ as graded vector spaces, for $m=m(S,T)$.
\end{prop}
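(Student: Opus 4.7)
The statement splits into two parts. For the \textbf{nonvanishing} criterion, I observe that $\mb_S \rnk \mb_T$ is spanned by tuples $(S, T, \phi, D)$ with $\phi : S \to T$ a nondecreasing bijection ($\phi(s) \ge s$). Writing $S = \{s_1 < \cdots < s_k\}$ and $T = \{t_1 < \cdots < t_k\}$, the existence of such a $\phi$ forces $s_i \le t_i$ for every $i$: otherwise the $k-i+1$ distinct values $\phi(s_i),\phi(s_{i+1}),\dots,\phi(s_k)$ would all exceed $t_i$, while $T$ contains only $k-i$ elements greater than $t_i$. Conversely, when $S \le T$, the order-preserving bijection $s_i \mapsto t_i$ is nondecreasing, so $(S,T,\phi,\es)$ is a nonzero generator.

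For the \textbf{cohomology computation} under $S \le T$, the plan is to induct on $k$ (with trivial base $k=0$) via a spectral sequence directly modeled on the proof of Lemma~\ref{lem H(Rk)}. Setting $s_k = \max(S)$, let $F^p \subset \mb_S \rnk \mb_T$ be spanned by generators with $\phi(s_k) \ge t_p$. This is a subcomplex: any inversion $(i,j)\in\inv(\phi)$ touching the $s_k$-strand has $i=\phi(s_k)$ and $\phi^{-1}(j)<s_k$, and resolving it sends $s_k$ to $j>i$, strictly moving the endpoint up; resolutions of other crossings fix $\phi(s_k)$. The associated graded piece then factorizes as
$$E_0^p \;\cong\; \mb_{S'} R(n;k-1) \mb_{T'_p} \;\otimes\; V_p,$$
where $S' = S \setminus \{s_k\}$, $T'_p = T \setminus \{t_p\}$, and $V_p = \F$ when $t_p = s_k$ (the top strand is horizontal, no dot allowed) or $V_p = \F \oplus \F[-1]$ when $t_p > s_k$ (an optional dot at $t_p$ on the increasing top strand). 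By the inductive hypothesis $H(\mb_{S'} R(n;k-1) \mb_{T'_p}) \cong \Lambda_{m_p}$; a short case analysis (according to whether $t_p \in S$ and whether $s_k \in T$) yields $m_p \in \{m-1,m\}$ on the levels where this piece is nonzero.

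Next I would analyze the $E_1$-differential $d_1^p \colon E_1^p \to E_1^{p+1}$, which is induced by resolving the rightmost crossing involving the $s_k$-strand and satisfies formulas directly parallel to~(\ref{panda1})--(\ref{panda2}). Mirroring the proof of Lemma~\ref{lem H(Rk)}, a dimension count on the $E_1$-page should collapse all intermediate filtration levels, leaving two surviving ``extreme'' levels at $E_2$: the horizontal level $t_p = s_k$ (present precisely when $s_k \in T$) and the topmost level $p = k$. The remaining task is to show that the potentially nontrivial higher differential between these two surviving levels vanishes, which I plan to accomplish by constructing an explicit closed witness cocycle $h_{S,T} \in \mb_S \rnk \mb_T$ lifting the top generator of $\Lambda_m$, built inductively on the ``excess'' strands indexed by $S \setminus T$ in the spirit of Definition~\ref{def hk}.

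The main obstacle will be this last step: the explicit construction of $h_{S,T}$ and the verification that the higher differential vanishes. Unlike the uniform local situation of~$\rk$, the indexing sets $S,T$ induce nonuniform shifts between filtration levels, and the signs coming from ordered labels in $\po(\phi)$ must be tracked through the induction. Once this bookkeeping is in place, the spectral sequence converges to $\Lambda_m$ as a graded vector space, completing the proof.
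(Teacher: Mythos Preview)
Your approach is the paper's: induct on $k$, filter $\mb_S\rnk\mb_T$ by the endpoint $\phi(s_k)$, compute $E_1$ by induction, and collapse via a dimension count. But your case analysis is inverted, and this matters for where the witness cocycle is actually needed.

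Let $j$ be the least index with $t_j\ge s_k$. First, $m_p=m(S',T'_p)$ does not vary with $p$: since $t_p>s_k=\max(S)$ for $p>j$ (and for $p=j$ when $s_k\notin T$), removing $t_p$ from $T$ never reinserts an element of $S$, so $m_p$ depends only on whether $s_k\in T$. The two cases are:
\begin{itemize}
\item \textbf{$s_k\in T$ (so $t_j=s_k$):} here $m_p=m$ for all $p$. The bottom level $E_1^j\cong\Lambda_m$ has \emph{half} the rank of the others (no dot on the horizontal strand), and the $(E_1,d_1)$ complex is exact except at the top: only $E_2^k\cong\Lambda_m$ survives. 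The horizontal level is killed by $d_1$ and there is no higher differential to check.
\item \textbf{$s_k\notin T$ (so $t_j>s_k$):} here $m_p=m-1$ for all $p$, every $E_1^p\cong\Lambda_{m-1}\oplus\Lambda_{m-1}[-1]$ has the same rank, and the picture is a direct translate of Lemma~\ref{lem H(Rk)}. Both extremes $E_2^j\cong\Lambda_{m-1}[-1]$ and $E_2^k\cong\Lambda_{m-1}$ survive, together giving $\Lambda_m$, and one must verify the single higher differential $d_{k-j}$ vanishes. This is where a witness $h_{S,T}$ is needed; the paper simply says ``similar to Lemma~\ref{lem H(Rk)}'' and defers to the $h_k$ construction there.
\end{itemize}
So the cocycle construction you flag as the main obstacle is required precisely in the case you thought had only one surviving level, and is unnecessary in the case you flagged. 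Once you swap the roles of the two cases, your outline matches the paper's proof.
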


\begin{proof}
The first assertion is clear by definition that generators in $\mb_S \rnk \mb_T$ are given by nondecreasing bijections.

The proof of the second assertion is similar to that of Lemma \ref{lem H(Rk)} and is by induction on $k$. 
Consider $S, T \in \pnk$. 
Let $j \in \{1,\dots,k\}$ such that $t_{j-1} < s_k \le t_j$.
Define $S'=S \backslash \{s_k\}$ and $T^p=T \backslash \{t_p\}$.
The complexes $\mb_{S'} \rnk \mb_{T^p}$ are canonically isomorphic to each other, for $j \le p \le k$.
Let $G^p$ denote the subspace of $\mb_S \rnk \mb_T$ spanned by diagrams containing a strand which connects the top ($k$th) endpoint on the left to the $p$th endpoint on the right, for $j \le p \le k$.
There is a finite filtration 
$$0 \subset F^k \subset \cdots \subset F^j = \mb_S \rnk \mb_T,$$
where $F^p=\oplus_{q \ge p}G^q$ is a subcomplex of $\mb_S \rnk \mb_T$.

There are two cases: $t_j > s_k$ and $t_j=s_k$.
In the $t_j > s_k$ case, each $G^p$ is isomorphic to $$\mb_{S'} \rnk \mb_{T^p} \oplus \mb_{S'} \rnk \mb_{T^p}[-1]$$
and the cohomology is isomorphic to the second page $$E_2=E_2^j \oplus E_2^k \cong \lm_{m'}[-1] \oplus \lm_{m'} \cong \lm_m$$ for $m'=m(S', T^p)=m(S,T)-1.$ The proof is similar to that of Lemma \ref{lem H(Rk)} and is left to the reader.

In the $t_j=s_k$ case, each $G^p$ is isomorphic to $$\mb_{S'} \rnk \mb_{T^p} \oplus \mb_{S'} \rnk \mb_{T^p}[-1],$$ 
except for $p=j$. The space $G^j$ is isomorphic to $\mb_{S'} \rnk \mb_{T^j}$ since dots are not allowed on the horizontal strand connecting $s_k$ to $t_j$.
By induction on $k$, the first page is:
$$E_1^p \cong \left\{ \begin{array}{cl}
\lm_m, & p=j, \\
\lm_{m}[-1] \oplus \lm_{m}, & j < p \le k,
\end{array}
\right.$$
where $m=m(S,T)=m(S', T^p)$.
The complex $(E_1^p, d_1^p)$ is exact except at $p=k$, and $E_2^k \cong \lm_m$.
The spectral sequence degenerates at the second page and the proposition follows.
\end{proof}

The differential on $\rnk$ is trivial if $k=1$. Hence $\{i\}R(n,1)\{j\}$ is two-dimensional if $i<j$, one-dimensional if $i=j$, and zero otherwise.
Let $(i)=\mb_{\{i\}}$ denote the idempotent, and $$(j|j+1), (j \bullet j+1) \in \{j\}R(n,1)\{j+1\}$$ 
denote the generators of degree zero and one, respectively.

\begin{prop} \label{prop Rn1}
The dga $R(n,1)$ is generated by $(i), (j|j+1), (j \bullet j+1)$ for $1 \le i \le n, 1\le j \le n-1$, with relations:
\begin{gather*}
(i)(i')=\delta_{i,i'}(i), \\
(i)(j|j+1)= \delta_{i,j}(j|j+1), \quad (j|j+1)(i)= \delta_{j+1,i}(j|j+1), \\
(i)(j\bullet j+1)= \delta_{i,j}(j\bullet j+1), \quad (j\bullet j+1)(i)= \delta_{j+1,i}(j\bullet j+1), \\
(j|j+1)(j+1\bullet j+2)=(j\bullet j+1)(j+1|j+2), \\
 (j\bullet j+1)(j+1\bullet j+2)=0.
\end{gather*}
The degree of $(j\bullet j+1)$ is one and is zero for the other generators.
The differential is trivial.
\end{prop}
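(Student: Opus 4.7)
The plan is to read off a basis of $R(n,1)$ directly from Definition~\ref{def rnk}, check that the listed relations hold, and then show by a normal-form argument that the algebra presented by the listed generators and relations has the same dimension.

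First, for $k=1$ a generator of $R(n,1)$ has the form $(\{i\},\{j\},\phi,D)$, where $\phi$ is the unique nondecreasing bijection (which exists precisely when $i\le j$) and $D\in\mathcal{O}(I(\phi))$. Here $I(\phi)=\{j\}$ if $i<j$ and $I(\phi)=\varnothing$ if $i=j$, so a linear basis of $R(n,1)$ consists of the idempotents $(i)$ for $1\le i\le n$ together with, for each $i<j$, the two elements $(i|j):=(\{i\},\{j\},\phi,\varnothing)$ and $(i\bullet j):=(\{i\},\{j\},\phi,\{j\})$. In particular $\dim R(n,1)=n+2\binom{n}{2}=n^{2}$, with nonzero elements only in degrees $0$ and $1$. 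Moreover, the differential is identically zero, because for any $\phi$ between singletons one has $\inv(\phi)=\varnothing$, so the defining sum in $d$ is empty.

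Second, I would verify each relation in the proposition by direct diagrammatic computation. The idempotent relations follow from $\mb_{S}\mb_{T}=\delta_{S,T}\mb_{S}$ and the restriction on the endpoints of the elementary generators. For the sliding relation $(j|j+1)(j+1\bullet j+2)=(j\bullet j+1)(j+1|j+2)$, both products are single-strand diagrams from $j$ to $j+2$ carrying exactly one dot, hence each equals $(j\bullet j+2)$; no crossing is traversed, so no $\hbar$-correction arises from (H5), and (H4) applies directly. The relation $(j\bullet j+1)(j+1\bullet j+2)=0$ is immediate from (H3), since the concatenation produces a single strand with two dots.

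Third, let $A$ denote the abstract algebra presented by the listed generators and relations. There is an obvious algebra map $A\to R(n,1)$ sending each listed generator to the element of the same name; this map is surjective since
\[
(i|j)=(i|i{+}1)(i{+}1|i{+}2)\cdots(j{-}1|j),\qquad (i\bullet j)=(i|i{+}1)\cdots(j{-}2|j{-}1)(j{-}1\bullet j),
\]
exhausting the basis of $R(n,1)$ described above. It remains to see $\dim A\le n^{2}$. The idempotent relations split $A=\bigoplus_{i,j}(i)A(j)$, and any nonzero monomial in $(i)A(j)$ is an ordered concatenation of $j-i$ elementary generators along the path $i\to i{+}1\to\cdots\to j$, each factor being $(k|k{+}1)$ or $(k\bullet k{+}1)$. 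The sliding relation pushes every dot past any plain generator appearing to its right (resp.\ left), so we may assume all dot generators occur at the end of the word; the squaring relation then forces at most one dot in any nonzero monomial. Hence $\dim(i)A(j)\le 2$ for $i<j$ and $\dim(i)A(i)\le 1$, matching $R(n,1)$ summand by summand. The map $A\to R(n,1)$ is therefore an isomorphism, and the differential is trivial by the computation above.

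The main (mild) obstacle is the normal-form bookkeeping in the last paragraph; however, because $k=1$ admits no crossings, there are no $\hbar$-corrections to track from (H5), and the argument reduces to the two purely combinatorial moves already listed.
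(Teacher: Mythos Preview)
Your proof is correct. The paper does not give a proof of this proposition at all: it simply records, in the two sentences preceding the statement, that the differential on $R(n;k)$ is trivial for $k=1$ and that $\mb_{\{i\}}R(n,1)\mb_{\{j\}}$ has dimension $2$, $1$, or $0$ according as $i<j$, $i=j$, or $i>j$, and then states the presentation without further argument. Your write-up supplies exactly the routine verification the authors omit, and your normal-form argument for the upper bound on $\dim A$ is the natural way to complete it.
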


\begin{prop} \label{prop rnk non formal}
The dga $\rnk$ is not formal for $2 \le k \le n-2$.
\end{prop}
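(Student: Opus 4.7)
The plan is to exhibit an explicit nontrivial triple Massey product in $H(\rnk)$, which obstructs formality, and to extend the construction from the base case $(n,k)=(4,2)$ to all admissible pairs.

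In $R(4;2)$, consider the chain of idempotents $S_1=\{1,2\}$, $S_2=\{1,3\}$, $S_3=\{2,3\}$, $S_4=\{3,4\}$, and the degree-one cocycles
\begin{align*}
a &= (S_1,S_2,\phi_a,\{3\}), \qquad \phi_a\colon 1\mapsto 1,\ 2\mapsto 3, \\
b &= (S_2,S_3,\phi_b,\{2\}), \qquad \phi_b\colon 1\mapsto 2,\ 3\mapsto 3, \\
c &= (S_3,S_4,\phi_c,\{3\}), \qquad \phi_c\colon 2\mapsto 3,\ 3\mapsto 4.
\end{align*}
The underlying bijections are crossingless, so these elements are closed. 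A direct concatenation shows $bc=0$ at the chain level (two dots accumulate on a single strand), while $ab=(S_1,S_3,\phi_{ab},\{2,3\})$ with $\phi_{ab}\colon 1\mapsto 2,\ 2\mapsto 3$ equals $d(x)$ for the crossing generator $x=(S_1,S_3,\phi_x,\{3\})$ with $\phi_x\colon 1\mapsto 3,\ 2\mapsto 2$. The Massey product $\langle a,b,c\rangle$ is therefore represented by $xc$; concatenating yields $\pm(S_1,S_4,\phi,\{3,4\})$, where $\phi\colon 1\mapsto 4,\ 2\mapsto 3$ is the crossing bijection with both strands dotted. By Proposition~\ref{prop H(Rnk)} this class is the top generator of $H^2(\mb_{S_1}R(4;2)\mb_{S_4})\cong\lm_2$.

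To confirm that the Massey class is genuinely nonzero we inspect the indeterminacy $a\cdot H(\mb_{S_2}R(4;2)\mb_{S_4})+H(\mb_{S_1}R(4;2)\mb_{S_3})\cdot c$ in degree two. Each of the two relevant $H^1$ groups is one-dimensional; a short computation shows that their distinguished generators, when multiplied by $a$ or $c$, either force two dots on a single strand or land in the crossingless two-dot diagram, which is exact. Hence the indeterminacy vanishes and $R(4;2)$ is not formal. For $2\le k\le n-2$ we have $n\ge k+2$, so we may pick any $(k-2)$-element subset $U\subset\{5,\dots,n\}$ and define a dg inclusion $\iota\colon R(4;2)\hookrightarrow R(n;k)$ by adjoining $k-2$ horizontal strands at the positions of $U$; this is a dg map by the supercommutativity axiom~(H1). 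The crucial combinatorial point is that, since $\min U>4$, any upward bijection $S_i\cup U\to S_j\cup U$ must fix $U$ pointwise, so $\mb_{S_i\cup U}R(n;k)\mb_{S_j\cup U}\cong\mb_{S_i}R(4;2)\mb_{S_j}$ as chain complexes. The base-case Massey computation and indeterminacy analysis then transplant verbatim, proving that $R(n;k)$ is not formal.

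The main technical obstacle is the indeterminacy analysis in the general case: one must verify that no ``hidden'' morphism coming from a nontrivial bijection mixing $\{1,2,3,4\}$ with $U$ can absorb the Massey class. This is guaranteed by the height condition $\min U>4$, since no element of $U$ can decrease into $\{1,\dots,4\}$ and the unique upward bijection $U\to U$ is the identity. The remaining delicate point is the careful sign and dot-bookkeeping through the crossing of $x$ when computing $xc$, using the relations (H4) and (H5).
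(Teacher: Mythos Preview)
Your proof is correct and follows the same strategy as the paper: exhibit a nontrivial triple Massey product in $H(R(4;2))$ and then transport it to $H(R(n;k))$ by adjoining horizontal strands. The paper's argument is terser---it simply displays the Massey product diagrammatically (Figure~\ref{fig alg6}) and remarks that it is ``local and exists in $H(\rnk)$ for $2\le k\le n-2$''---whereas you spell out the indeterminacy analysis and the embedding argument explicitly.

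One minor difference: your cocycles are chosen so that $bc=0$ at the chain level, while in the paper's Figure~\ref{fig alg6} both $ab$ and $bc$ are nonzero but exact (there $d(g)=ab$ and $d(f)=-bc$, and the representative is $gc-af$). Either choice works. Your verification that the indeterminacy lands in the \emph{exact} crossingless two-dot class of $\mb_{S_1}R(4;2)\mb_{S_4}$, while the Massey representative $xc$ is the crossing two-dot class (which is not a boundary), is exactly the content the paper leaves to the figure. Your observation that any nondecreasing bijection $S_i\cup U\to S_j\cup U$ with $\min U>4\ge\max S_j$ must restrict to the identity on $U$ is the precise justification for the paper's word ``local''.
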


\begin{proof}
There is a nontrivial Massey product in $H(R(4,2))$; see Figure~\ref{fig alg6}. Hence the dga $R(4,2)$ is not formal.
This Massey product is local and exists in $H(\rnk)$ for $2 \le k \le n-2$.
\end{proof}

\begin{figure}[ht]
\begin{overpic}
[scale=0.3]{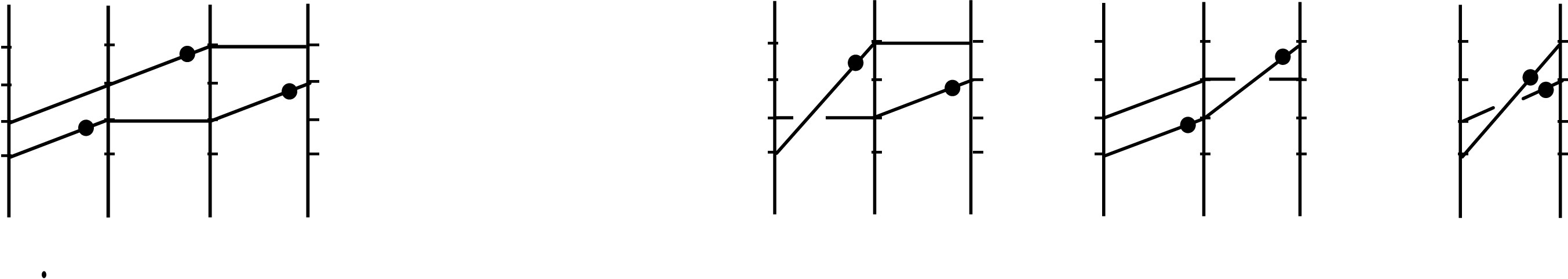}
\put(2,0){$a$}
\put(11,0){$b$}
\put(17,0){$c$}
\put(32,9){$\mu_3(a,b,c)=$}
\put(65,9){$-$}
\put(87,9){$=$}
\put(55,0){$gc$}
\put(75,0){$af$}
\put(94,0){$gc$}
\end{overpic}
\caption{A nontrivial Massey product on $H(R(4,2))$:  $\mu_3(a,b,c)=gc$, where $d(g)=ab, d(f)=-bc$.}
\label{fig alg6}
\end{figure}


\subsection{A categorification of tensor product representation} \label{ssec An Kh}

This subsection is the graded version of the previous subsection, as $\rkkh$ is the associated $q$-graded dga of $\rk$. The dga $\rnk$ is a generalization of the strands algebra $\cal{A}(n, k)$ which is locally enriched by the dga $\rk$. 
There is a filtration on $\rnk$ induced by that on $\rk$. The associated $q$-graded dga is denoted by $\rnkkh$. It is a generalization of the strands algebra $\cal{A}(n, k)$ which is locally enriched by the dga $\rkkh$. 


\subsubsection{Definition of $\rnkkh$}

The algebra $\rnkkh$ has a definition similar to that of $\rnk$ from Definition \ref{def rnk}, with the following changes: for each diagram a braid-like crossing is replaced by a crossing; a double crossing is set equal to zero; and a dot can freely slide through a crossing.  
The multiplication now can be explicitly given by: 
\begin{align*} 
& (S, T, \phi, D) \cdot (U, V, \psi, D') \\
=& \left\{
\begin{array}{cl}
(S,V,\psi\circ\phi, \psi(D)\#D'), & \mbox{if  $U=T$ and $|\inv(\psi\circ\phi)|=|\inv(\psi)|+|\inv(\phi)|$}; \\
0, & \mbox{otherwise.}
\end{array}
\right.
\end{align*}
Recall that $\inv(\phi)$ from  \eqref{eq inv} is the number of inversions of $\phi$. (The above formula automatically implies that a double crossing is zero in $\rnkkh$.)
The cohomological and $q$-gradings are given by
\begin{align} \label{eq An qgrading} 
\deg(S, T, \phi, D)&=|D|,\\
\deg_q(S, T, \phi, D)&=-2(|D|+|\inv(\phi)|)+(\|T\|-\|S\|),\nonumber
\end{align}
where $\|S\|=\textstyle{\sum_{1 \le i \le k}s_i}$ for $S=\{s_1, \dots, s_k\} \in \pnk$. Observe that the first part of the $q$-grading is induced from that of $\rkkh$.

The algebra $\rnkkh$ is a finite-dimensional quiver algebra with relations, where the quiver has no loops. There is a complete set of primitive idempotents 
$$\{\mb_S=(S,S,\op{id},\es) ~|~ S \in \pnk\},$$ 
where $\pnk$ is the set of $k$-element subsets of $\mfn=\{1,2,\dots,n\}$. 
Define 
$$(S, T)=\mb_S \cdot \rnkkh \cdot \mb_T,$$
as the subspace of $\rnkkh$. In particular, $(S, S)$ is one-dimensional, generated by $\mb_S$.  
For each $S \in \pnk$, there is a $q$-graded right projective dg module $P(S)=\mb_S \cdot \rnkkh$. The corresponding simple module $L(S)$ is one-dimensional. 

Let $M$ be a finitely-generated $q$-graded right dg $\rnkkh$ module so that it is finite-dimen\-sional. Let $M=\oplus_{i,j}M^i_j$, where $i$ denotes the cohomological grading and $j$ denotes the $q$-grading.  
There are two grading shifts: the cohomological shift $[a]$ and the $q$-grading shift $\{b\}$. Our convention is that $M[a]\{b\}^i_j=M^{i+a}_{j-b}$.  

Let $\cnk:=\mathcal{D}^b(\rnkkh)$ be the bounded derived category of finitely generated $q$-graded right dg $\rnkkh$ modules.  Recall a collection of objects {\em generates} the category $\cnk$ if the smallest full triangulated subcategory containing it is equivalent to $\cnk$. 
Then $\cnk$ is generated by the collection of all simple modules and their grading shifts. Since each simple module admits a finite projective resolution, $\cnk$ is also generated by $\{P(S)\{b\} ~|~ S \in \pnk, b\in \Z\}$.  

\begin{prop} \label{prop K0 rnk}
The Grothendieck group $K_0(\cnk)$ is isomorphic to the free $\zq$-module generated by the $[P(S)]$ for $S \in \pnk$ such that $[P(S)\{b\}]=q^b[P(S)]$.
\end{prop}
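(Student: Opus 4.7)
The plan is to establish the isomorphism in three steps: first show that $\{[P(S)]\}_{S \in \pnk}$ generates $K_0(\cnk)$ as a $\zq$-module; then prove linear independence of the simple classes $\{[L(S)]\}$ using explicit ``spot'' functionals; and finally convert this into linear independence of the projective classes via an upper-triangular change of basis.

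For generation, the excerpt already observes that $\cnk$ is generated (as a triangulated category) by $\{P(S)\{b\} ~|~ S \in \pnk,\, b \in \Z\}$, via the finite projective resolutions of the simples. Hence every object of $\cnk$ admits an iterated cone presentation in shifts of these, and $K_0(\cnk)$ is spanned as an abelian group by the $[P(S)\{b\}]$. Combined with the defining relation $[M\{1\}] = q \cdot [M]$, this gives $[P(S)\{b\}] = q^b[P(S)]$, so $\{[P(S)]\}$ spans $K_0(\cnk)$ as a $\zq$-module.

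For linear independence I would introduce functionals $h_T : K_0(\cnk) \to \zq$ for each $T\in \pnk$, defined by
\begin{equation*}
h_T([M]) := \sum_i (-1)^i \dim_q H^i(M \cdot \mb_T).
\end{equation*}
Because $\mb_T$ is a closed idempotent of bidegree $(0,0)$, right multiplication by $\mb_T$ commutes with the differential and picks out a subcomplex (in fact a graded vector space direct summand); additivity of cohomology on triangles then makes $h_T$ a well-defined $\zq$-linear map. Applied to the one-dimensional simple $L(S)$ concentrated at the idempotent $\mb_S$, one gets $h_T([L(S)]) = \delta_{ST}$, a unit of $\zq$. This shows at once that $\{[L(S)]\}_{S \in \pnk}$ is $\zq$-linearly independent in $K_0(\cnk)$. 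The standard truncation triangles $\tau_{\le i-1}M \to \tau_{\le i}M \to H^i(M)[-i] \to$ yield $[M] = \sum_i (-1)^i [H^i(M)]$ in $K_0(\cnk)$, and each $H^i(M)$ is a finite-dimensional module over $H^0(\rnkkh)$ whose composition series has quotients among the $L(T)\{c\}$, so $\{[L(S)]\}$ also spans.

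Finally, to pass from simples to projectives, I would compute the change-of-basis matrix $(a_{ST}(q))$ defined by $[P(S)] = \sum_T a_{ST}(q)\,[L(T)]$. Applying $h_T$ and using $h_T([L(U)]) = \delta_{UT}$ yields $a_{ST}(q) = h_T([P(S)]) = \chi_q(H^*((S,T)))$. By Proposition~\ref{prop H(Rnk)}, $(S,T) = 0$ unless $S \le T$, while $(S,S) = \F\cdot\mb_S$ sits in bidegree $(0,0)$ and contributes $1$ on the diagonal. Hence $(a_{ST}(q))$ is upper triangular with $1$'s on the diagonal with respect to any linear extension of $\le$, so it is invertible over $\zq$, and $\{[P(S)]\}_{S\in \pnk}$ is a $\zq$-basis of $K_0(\cnk)$. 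The main technical point to justify carefully will be the identity $[M] = \sum_i (-1)^i [H^i(M)]$ in the dg setting; this follows from iterating the truncation triangles above, terminating because each $M\in\cnk$ has only finitely many nonvanishing and finite-dimensional cohomology groups.
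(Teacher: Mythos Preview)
The paper states this proposition without proof, treating it as a standard fact about Grothendieck groups of derived categories of finite-dimensional non-negatively graded dgas with a finite set of primitive idempotents. Your argument supplies exactly that standard proof and is essentially correct.

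One technical point deserves care. The ``smart'' truncation triangles
\[
\tau_{\le i-1}M \to \tau_{\le i}M \to H^i(M)[-i] \to
\]
are valid for complexes over an ordinary algebra, but for dg modules over $\rnkkh$ (which has generators in positive cohomological degree) the smart truncation $\oplus_{j<i}M^j \oplus \ker d^i$ is not in general closed under the $\rnkkh$-action. What does work is the \emph{stupid} truncation: since $\rnkkh$ is non-negatively graded, $\tau_{\ge i}M := \oplus_{j\ge i}M^j$ is a sub-dg-module, and the successive quotients are the graded pieces $M^i$ (with zero differential and $(\rnkkh)^{\ge 1}$ acting trivially). This yields $[M]=\sum_i(-1)^i[M^i]$, and each $M^i$ is a finite-dimensional module over $(\rnkkh)^0$ rather than $H^0(\rnkkh)$; since $(\rnkkh)^0$ is a finite-dimensional quiver algebra whose simples are again the $L(S)$, your composition-series argument goes through unchanged.

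Alternatively, you can bypass this step entirely: you already have a surjection $\zq^{\pnk}\twoheadrightarrow K_0(\cnk)$ from generation, and your functionals assemble to a map $K_0(\cnk)\to\zq^{\pnk}$. The composite has matrix $(h_T([P(S)]))_{S,T}$, which you show is unitriangular, hence invertible over $\zq$; so the surjection is an isomorphism and $\{[P(S)]\}$ is a basis. This route avoids mentioning simples spanning at all.

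A small citation remark: Proposition~\ref{prop H(Rnk)} is stated for $\rnk$, but the vanishing $(S,T)=0$ for $S\not\le T$ holds verbatim for $\rnkkh$ directly from its definition (the generators involve nondecreasing bijections), so the triangularity step is fine.
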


\subsubsection{Tensor product representation.}

Recall that the integral version of quantum $\mathfrak{sl}_2$ is an associative $\Z[q,q^{-1}]$-algebra with generators $E, F, K, K^{-1}$, and relations:
\begin{gather*}
 KK^{-1}=K^{-1}K=1, \\
 KE=q^2EK, \quad KF=q^{-2}FK, \\
 EF-FE=\frac{K-K^{-1}}{q-q^{-1}}.
\end{gather*} 
The comultiplication is given by
\begin{gather*}
\Delta(K^{\pm1})=K^{\pm1} \otimes K^{\pm1}, \\
 \Delta(E)=E\otimes 1+K\otimes E, \\
 \Delta(F)=F\otimes K^{-1}+1\otimes F.
\end{gather*}
Its fundamental representation $V$ has a basis $\{v_{-1}, v_1\}$ such that
\begin{align*}
K^{\pm1}v_{-1}=q^{\mp 1}v_{-1}, \quad & K^{\pm1}v_{1}=q^{^{\pm1}1}v_{1}, \\
Ev_{-1}=v_1, Ev_1=0, \quad  & Fv_{1}=v_{-1}, Fv_{-1}=0.
\end{align*}
The tensor product $V^{\otimes n}$ has a tensor basis $\{v(S) ~|~ S \in \pnk, 0 \le k \le n\}$, where $v(S)=v_{1(S)} \otimes \cdots \otimes v_{n(S)} \in V^{\otimes n}$, and $i(S)=1$ if $i \in S$; otherwise, $i(S)=-1$.

\s
\n{\em Proof of Theorem \ref{thm K0 cnk}.}
Proposition \ref{prop K0 rnk} implies that 
$$\oplus_{0 \le k \le n}K_0(\cnk) \ra V^{\otimes n}, \quad [P(S)] \mapsto v(S)$$ 
is an isomorphism of free $\Z[q, q^{-1}]$-modules. 
\qed
\s

We compute the action of $E,F$ on $V^{\otimes n}$ as follows.
For $S=\{s_1, \dots, s_k ~|~ s_1> \cdots > s_k\} \in \pnk$, let us write $f_i(S)=S \backslash \{s_i\} \in \pnkm$. 
For $S^c=\{s_1^c, \dots, s_{n-k}^c ~|~ s_1^c < \cdots < s_{n-k}^c\}$, let us write $e_i(S)=S \cup \{s_i^c\} \in \pnkp$.

\begin{prop} \label{prop ef}
The actions of $E,F$ on the tensor basis of $V^{\otimes n}$ are given by
\begin{align*}
F(v(S))=\textstyle\sum_{1 \le i \le k}q^{m_i(S)}v(f_i(S)), \\
E(v(S))=\textstyle\sum_{1 \le i \le n-k}q^{l_i(S)}v(e_i(S)),
\end{align*}
where $m_i(S)=n-s_i-2i+2, l_i(S)=s_i^c-2i+1$.
\end{prop}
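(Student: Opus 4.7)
The plan is a direct computation using iterated coproducts. First, I would write out $\Delta^{(n-1)}(F) = \sum_{j=1}^{n} 1^{\otimes (j-1)} \otimes F \otimes (K^{-1})^{\otimes (n-j)}$ and the analogous formula $\Delta^{(n-1)}(E) = \sum_{j=1}^{n} K^{\otimes (j-1)} \otimes E \otimes 1^{\otimes (n-j)}$, both obtained by an easy induction from the comultiplication formulas in the excerpt. Applied to $v(S) = v_{1(S)} \otimes \cdots \otimes v_{n(S)}$, the $j$th summand of $F$ vanishes unless $j \in S$, since $Fv_{-1}=0$; similarly, the $j$th summand of $E$ vanishes unless $j \notin S$. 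So only the surviving terms, indexed by elements of $S$ (for $F$) and $S^c$ (for $E$), contribute.

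Next I would compute the coefficient produced by the surviving $K^{\pm 1}$ factors. For $F$ acting at position $s_i \in S$ (with the convention $s_1 > \cdots > s_k$), the coefficient is $q^{-\sum_{j > s_i} j(S)}$, where $j(S) = \pm 1$. Among the $n - s_i$ indices $j > s_i$, exactly $i-1$ lie in $S$ (namely $s_1, \ldots, s_{i-1}$), while the remaining $n - s_i - (i-1)$ lie in $S^c$. Hence the exponent is
\[
-(i-1) + \bigl(n - s_i - (i-1)\bigr) = n - s_i - 2i + 2 = m_i(S),
\]
giving the first formula. For $E$ acting at position $s_i^c \in S^c$ (with $s_1^c < \cdots < s_{n-k}^c$), the coefficient is $q^{\sum_{j < s_i^c} j(S)}$. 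Among the $s_i^c - 1$ indices $j < s_i^c$, exactly $i-1$ lie in $S^c$ (namely $s_1^c, \ldots, s_{i-1}^c$), and the remaining $s_i^c - i$ lie in $S$. The exponent is
\[
(s_i^c - i) - (i-1) = s_i^c - 2i + 1 = l_i(S),
\]
which is the second formula.

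There is no real obstacle here: the statement follows by bookkeeping once one fixes the conventions, and the only thing to be careful about is matching the decreasing ordering used for elements of $S$ and the increasing ordering used for elements of $S^c$, which is arranged so that the $K^{\pm 1}$'s act on precisely the positions whose parity is counted by $m_i(S)$ and $l_i(S)$. In particular, Theorem~\ref{thm K0 ef} can then be verified by comparing these formulas, up to the overall factor $(q - q^{-1})$, against the induced maps $K_0(\check{\mathcal{E}})$ and $K_0(\check{\mathcal{F}})$ computed from the explicit bimodules in the next section.
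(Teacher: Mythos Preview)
Your proof is correct and follows essentially the same approach as the paper: both compute the iterated coproduct, identify the surviving summands, and count how many factors of $v_1$ versus $v_{-1}$ are hit by the $K^{\pm 1}$'s to extract the exponent. You are simply more explicit, writing out both the $E$ and $F$ cases in full, whereas the paper does only $F$ and leaves $l_i(S)$ to the reader.
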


\begin{proof}
The action of $E,F$ is determined by the comultiplication. The formula is straightforward except for the exponents of $q$. 
For the action of $F$, one component of the iterated comultiplication is $1\otimes \cdots \otimes 1 \otimes F \otimes K^{-1} \otimes \cdots \otimes K^{-1}$.  Hence $q^{m_i(S)}$ is the eigenvalue of  $K^{-1} \otimes \cdots \otimes K^{-1}$ on the last $n-s_i$ factors of the tensor product $v(S)$. Among them, there are $i-1$ copies of $v_1$ and $n-s_i-i+1$ copies of $v_{-1}$. 
We have $m_i(S)=(n-s_i-i+1) \cdot 1+ (i-1) \cdot (-1)=n-s_i-2i+2$. 
The computation of $l_i(S)$ is similar and is left to the reader.  
\end{proof}

\subsubsection{Definition of bimodules $\ech$ and $\fch$}

We construct a $(\rnkkh, \rnkp)$ dg bimodule $\ech_{k, k+1}$ for $0 \le k \le n-1$, and a $(\rnkkh, \rnkm)$ dg bimodule $\fch_{k, k-1}$ for $1 \le k \le n$. For simplicity we will refer to them as a $(k, k+1)$-bimodule $\ech$ and a $(k,k-1)$-bimodule $\fch$ when $k$ is understood.  
As a  cochain complex
$$\ech=\oplus_{\scriptscriptstyle S \in \pnk,  T \in \pnkp}(1\cdot S, 0 \cdot T)\subset \rnke,$$
where $1 \cdot S:=\{1\} \cup \{s+1 ~|~ s \in S\}, 0 \cdot T:=\{t+1 ~|~ t \in T\} \in \pnke$. 
The bimodule structure is induced by the multiplication of $\rnke$. For each summand, the nontrivial multiplication is given by:
$$(S', S) \otimes (1\cdot S, 0 \cdot T) \cong (1 \cdot S', 1 \cdot S) \otimes (1\cdot S, 0 \cdot T) \ra (1\cdot S', 0 \cdot T),$$
$$(1\cdot S, 0 \cdot T) \otimes (T, T') \cong (1\cdot S, 0 \cdot T) \otimes (0 \cdot T, 0 \cdot T') \ra (1\cdot S, 0 \cdot T').$$
Dually, define 
$$\fch=\oplus_{\scriptscriptstyle S \in \pnk,  T \in \pnkm}(S\cdot 0, T \cdot 1)\subset \rnkf,$$
where $S \cdot 0:=S, T \cdot 1:=T \cup \{n+1\} \in \pnkf$. 
The bimodule structure is induced by the multiplication of $\rnkf$.  The bimodules $\ech$ and $\fch$ are schematically given by Figure~\ref{fig algn1}.
\begin{figure}[ht]
\begin{overpic}
[scale=0.4]{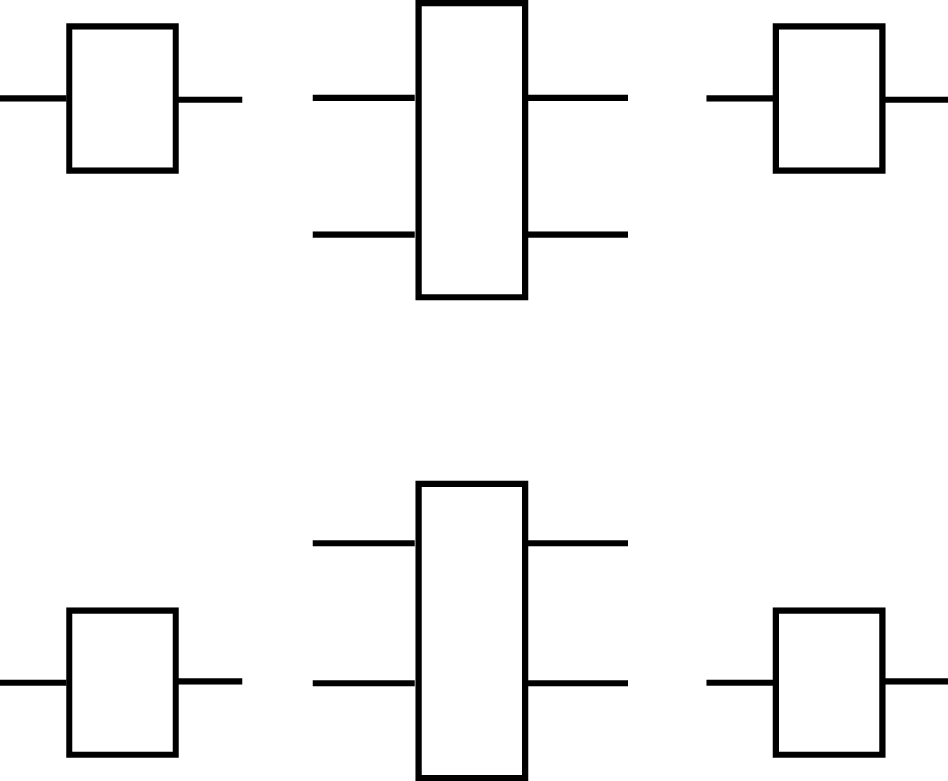}
\put(28,55){$\underline{1}$}
\put(68,55){$\underline{0}$}
\put(28,70){$S$}
\put(68,70){$T$}
\put(101,70){$T'$}
\put(-6,70){$S'$}
\put(-20,70){$\ech:$}
\put(28,23){$\underline{0}$}
\put(68,23){$\underline{1}$}
\put(28,8){$S$}
\put(68,8){$T$}
\put(101,8){$T'$}
\put(-6,8){$S'$}
\put(-20,8){$\fch:$}
\end{overpic}
\caption{The bimodules $\ech$ and $\fch$. There is a strand which begins/ends at $\underline{1}$, but $\underline{0}$ is just a placeholder and there is no strand which begins/ends at $\underline{0}$.}
\label{fig algn1}
\end{figure}

\begin{rmk}
The bimodules $\ech$ and $\fch$ can be viewed as the induction and restriction bimodules, respectively. Their tensor products $\ech\fch$ and $\fch\ech$ however are not easy to compute since the extra strand for $\ech$ is at the bottom, while the one for $\fch$ is at the top.  
\end{rmk}

\subsubsection{Computation of the functors.}

The derived tensor product with $\ech, \fch$ give functors $\cnk \ra \cnkp, \cnk \ra \cnkm$ that we still denote by $\ech, \fch$. 

We first compute $\fch(P(S))\cong P(S) \otimes_k \fch$ for $S \in \pnk$, where $\otimes_k$ is tensor product over $\rnkkh$. 
Recall that $f_i(S)=S \backslash \{s_i\} \in \pnkm$ for $S=\{s_1, \dots, s_k ~|~ s_1> \cdots > s_k\} \in \pnk$.
For each pair $(i,j)$ such that $1 \le i < j \le k$, there is a distinguished generator 
$$r_{i,j}=(f_i(S), f_j(S), \phi_{i,j}, \es) \in \rnkm,$$ 
where $\phi_{i,j}: f_i(S) \ra f_j(S)$ maps $s_j$ to $s_i$ and fixes the other elements.  Its diagrammatic presentation has a unique non-horizontal strand which intersects $j-i-1$ horizontal strands. Let $r_{i,j}^+$ be the generator of $\rnkm$ obtained by adding a dot on the unique non-horizontal strand of $r_{i,j}$.  The gradings are 
\begin{align*}
 \deg(r_{i,j})=0, \quad &\deg_{q}(r_{i,j})=-2(j-i-1)+s_i-s_j, \\
 \deg(r_{i,j}^+)=1, \quad &\deg_{q}(r_{i,j}^+)=-2(j-i)+s_i-s_j,
\end{align*}
in view of Equation \eqref{eq An qgrading}.

\begin{thm} \label{thm tensor f}
There is an isomorphism of right dg $\rnkm$ modules:
$$P(S) \otimes_k \fch \cong \left(\oplus_{1 \le i \le k} (P(f_i(S))\{m_i(S)+1\} \oplus P(f_i(S))[-1]\{m_i(S)-1\}), d\right),$$
where the differential is $d=\sum_{1 \le i < j \le k}(d_{i,j}+d_{i,j}[-1]+d_{i,j}')$ and each summand is given by left multiplication between right projective modules: 
\begin{align*}
d_{i,j}: & P(f_j(S))\{m_j(S)+1\} \xra{r_{i,j}^+ \cdot} P(f_i(S))\{m_i(S)+1\}, \\  
d_{i,j}[-1]: & P(f_j(S))[-1]\{m_j(S)-1\} \xra{-r_{i,j}^+ \cdot} P(f_i(S))[-1]\{m_i(S)-1\}, \\  
d_{i,j}': & P(f_j(S))\{m_j(S)+1\} \xra{-r_{i,j} \cdot} P(f_i(S))[-1]\{m_i(S)-1\}.  
\end{align*}
\end{thm}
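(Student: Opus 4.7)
The plan is to identify $P(S) \otimes_k \fch$ with the subspace $\mb_S \cdot \fch \subset \rnkf$, exhibit an explicit family of generators, and match the induced differential with the stated combinatorial formula.

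For each $1 \le i \le k$, let $\phi_i \colon S \to f_i(S) \cup \{n+1\}$ be the bijection sending $s_i \mapsto n+1$ and $s_l \mapsto s_l$ for $l \neq i$, and define
$$g_i = (S \cdot 0, f_i(S) \cdot 1, \phi_i, \varnothing), \qquad g_i^+ = (S \cdot 0, f_i(S) \cdot 1, \phi_i, \{n+1\}).$$
Diagrammatically, these are the strand diagrams with a single non-horizontal strand $s_i \to n+1$ crossing the $i-1$ horizontal strands above, without or with a dot on that strand respectively. A PBW-type factorization shows that any generator $(S \cdot 0, T \cdot 1, \phi, D)$ of $\mb_S \cdot \fch$ has $\phi^{-1}(n+1) = s_i$ for a unique $i$ and decomposes uniquely as $g_i \cdot x$ when $n+1 \notin D$ or, after using the sign rule to move $n+1$ to the front of $D$, as $\pm g_i^+ \cdot x$ when $n+1 \in D$; here $x \in \rnkm$ is determined by $\phi|_{f_i(S)} \colon f_i(S) \to T$ together with the remaining dots, and the identity $|\inv(\phi_i)| + |\inv(\phi|_{f_i(S)})| = |\inv(\phi)|$ ensures the product in $\rnkf$ is nonzero. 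This yields an isomorphism
$$\mb_S \cdot \fch \;\cong\; \bigoplus_{1 \le i \le k}\bigl(g_i \cdot \rnkm \oplus g_i^+ \cdot \rnkm\bigr) \;\cong\; \bigoplus_{1 \le i \le k}\bigl(P(f_i(S)) \oplus P(f_i(S))[-1]\bigr)$$
of right $\rnkm$-modules, and applying the grading formula \eqref{eq An qgrading} with $|\inv(\phi_i)| = i-1$ and $\|f_i(S) \cup \{n+1\}\| - \|S\| = n+1-s_i$ yields the shifts $\{m_i(S)+1\}$ and $\{m_i(S)-1\}$ respectively.

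For the differential, Definition \ref{def rnk} applied to $g_j$ gives
$$d(g_j) = \sum_{l<j}\bigl[(S\cdot 0, f_j(S)\cdot 1, \phi_j', \{s_l\}) - (S\cdot 0, f_j(S)\cdot 1, \phi_j', \{n+1\})\bigr],$$
where $\phi_j'$ swaps $s_l \mapsto n+1$ and $s_j \mapsto s_l$. Factoring each term via the preceding decomposition: the first equals $g_l \cdot r_{l,j}^+$ (the dot at $s_l$ lands on the $s_j \to s_l$ strand of $r_{l,j}^+$), and the second equals $g_l^+ \cdot r_{l,j}$. An analogous calculation for $g_j^+$ (where the would-be second term vanishes because $\{n+1\}\#\{n+1\}=\star$), combined with the ordering identity $\{s_l\} \# \{n+1\} = -\{n+1\} \# \{s_l\}$ (the latter being the form produced by $g_l^+ \cdot r_{l,j}^+$), gives
$$d(g_j) = \sum_{i<j}\bigl(g_i \cdot r_{i,j}^+ - g_i^+ \cdot r_{i,j}\bigr), \qquad d(g_j^+) = -\sum_{i<j} g_i^+ \cdot r_{i,j}^+.$$
Transferring these formulas through the direct-sum isomorphism reproduces exactly the differentials $d_{i,j}$, $d_{i,j}'$, and $d_{i,j}[-1]$ of the theorem. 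The main technical hurdle is the bookkeeping of signs arising from the ordered-union convention for $\#$ and from the cohomological shift $[-1]$; once these are fixed consistently, the remaining content reduces to straightforward combinatorics of strand-diagram factorization and inversion counts.
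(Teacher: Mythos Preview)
Your argument is essentially the paper's own proof: your generators $g_i, g_i^+$ are exactly the paper's $r_i, r_i^+$, the PBW-type factorization is the same direct-sum decomposition used there, and your computation of $d(g_j)$ and $d(g_j^+)$ reproduces the paper's formulas verbatim. The only difference is that you spell out the factorization and sign bookkeeping in slightly more detail than the paper does.
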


\begin{proof}
The complex $P(S) \otimes_k \fch$ is isomorphic to $\oplus_{\scriptscriptstyle T \in \pnkm}(S\cdot 0, T \cdot 1)$. For $1 \le i \le k$, there are $k$ distinguished generators $r_i=(S \cdot 0, f_i(S) \cdot 1, \phi_i, \es) \in \fch$, where $\phi_i: S \cdot 0 \ra f_i(S) \cdot 1$ maps $s_i$ to $n+1$ and fixes the other elements. Its diagrammatic presentation also has a unique non-horizontal strand which intersects the other $i-1$ horizontal strands; see Figure~\ref{fig algn2}. Let $r_{i}^+$ be the generator of $(S \cdot 0, f_i(S) \cdot 1)$ by adding a dot on the unique non-horizontal strand in $r_i$. 
The gradings are 
\begin{align*}
 \deg(r_i)=0, \quad &\deg_{q}(r_i)=-2(i-1)+n+1-s_i=m_i(S)+1, \\
 \deg(r_i^+)=1, \quad &\deg_{q}(r_i^+)=-2i+n+1-s_i=m_i(S)-1.
\end{align*}
The multiplication $r_i \cdot \mb_{f_i(S)}=r_i$. 
Each $r_i$ generates a right $\rnkm$ module $r_i \cdot \rnkm$ which is isomorphic to the projective module $P(f_i(S))\{m_i(S)+1\}$. Similarly, $r_i^+$ generates a right $\rnkm$ module $r_i^+ \cdot \rnkm$ which is isomorphic to the projective module $P(f_i(S))[-1]\{m_i(S)-1\}$. Moreover, 
$$P(S) \otimes_k \fch \cong \oplus_{1 \le i \le k} \left(P(f_i(S))\{m_i(S)+1\} \oplus P(f_i(S))[-1]\{m_i(S)-1\}\right),$$ 
as right $\rnkm$ modules when ignoring the differential. 

It remains to compute $d(r_i), d(r_i^+)$. 
By definition, 
$$d(r_j)=\textstyle\sum_{i<j}(r_i\cdot r_{i,j}^+ -r_i^+ \cdot r_{i,j}), \quad d(r_j)=\textstyle\sum_{i<j} -r_i^+ \cdot r_{i,j}^+.$$
The summand $r_i\cdot r_{i,j}^+$ of $d(r_j)$ gives the summand 
$$d_{i,j}:  P(f_j(S))\{m_j(S)+1\} \xra{r_{i,j}^+ \cdot} P(f_i(S))\{m_i(S)+1\}.$$
The other summands of $d$ can be computed similarly and the details are left to the reader. 
\end{proof}

\begin{figure}[ht]
\begin{overpic}
[scale=0.4]{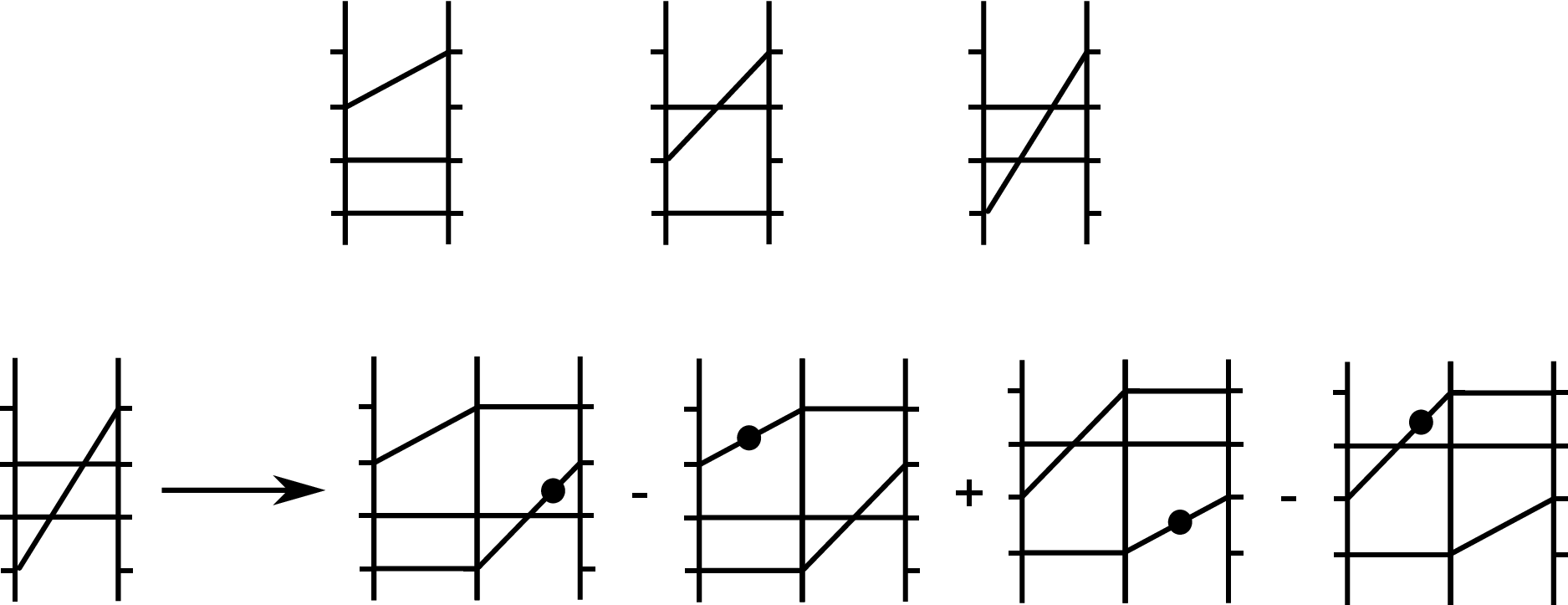}
\put(18,34){$\underline{0}$}
\put(18,31){$s_1$}
\put(18,27.5){$s_2$}
\put(18,24){$s_3$}
\put(30,31){$s_1$}
\put(30,27.5){$s_2$}
\put(30,24){$s_3$}
\put(30,34){$\underline{1}$}
\put(38,34){$\underline{0}$}
\put(38,31){$s_1$}
\put(38,27.5){$s_2$}
\put(38,24){$s_3$}
\put(51,31){$s_1$}
\put(51,27.5){$s_2$}
\put(51,24){$s_3$}
\put(51,34){$\underline{1}$}
\put(59,34){$\underline{0}$}
\put(59,31){$s_1$}
\put(59,27.5){$s_2$}
\put(59,24){$s_3$}
\put(71,31){$s_1$}
\put(71,27.5){$s_2$}
\put(71,24){$s_3$}
\put(71,34){$\underline{1}$}
\put(65,20){$r_3$}
\put(44,20){$r_2$}
\put(24,20){$r_1$}
\end{overpic}
\caption{The generators $r_1, r_2, r_3$ at the top and the differential $d(r_3)=r_1 \cdot r_{1,3}^+-r_1^+ \cdot r_{1,3} + r_2 \cdot r_{2,3}^+-r_2^+ \cdot r_{2,3}$ at the bottom.}
\label{fig algn2}
\end{figure}

\begin{rmk}
The module $\fch(P(S))$ has a submodule 
$$(\oplus_{1 \le i \le k} P(f_i(S))[-1]\{m_i(S)-1\}, \textstyle\sum_{1 \le i < j \le k}d_{i,j}[-1]),$$ 
and the quotient of $\fch(P(S))$ by this submodule is 
$$(\oplus_{1 \le i \le k} P(f_i(S))\{m_i(S)+1\}, \textstyle\sum_{1 \le i < j \le k}d_{i,j}).$$ 
Up to grading shifts, both modules can be viewed as $\cal{F}(P(S))$ for some functor $\cal{F}$ which descends to the action of $F \in \mathfrak{sl}_2$ on $V^{\ot n}$.  
\end{rmk}

\begin{example} \label{ex f}
For $S=\{1,2,4\} \in \cal{P}_3(\mathbf{4})$, we also write $S=(1101) \in \{0,1\}^4$, where the $s$th factor is $1$ for $s \in S$. 
We have $f_1(S)=(1100), f_2(S)=(1001), f_3(S)=(0101)$ in $\cal{P}_2(\mathbf{4})$. The right dg $R(4;2)^{\op{nil}}$ module $P(1101) \otimes \fch$ is isomorphic to:
$$\xymatrix{
P(1100)  & & P(1100)\{-2\} \\
P(1001)\{-2\} \ar[u]^{r_{1,2}^+} \ar[urr]^{r_{1,2}} & & P(1001)\{-4\} \ar[u]^{-r_{1,2}^+}  \\
P(0101)\{-4\} \ar[u]^{r_{2,3}^+} \ar[urr]_{-r_{2,3}} \ar[uurr]^{-r_{1,3}} \ar@/^3pc/[uu]^{r_{1,3}^+}& & P(0101)\{-6\}, \ar[u]^{-r_{2,3}^+} \ar@/_3pc/[uu]_{-r_{1,3}^+}
}$$ 
where the first and second columns are in cohomological degrees $0$ and $1$, respectively; and the differential is given by left multiplication with some generators. 
\end{example}

We now prepare to compute $\ech(P(S))\cong P(S) \otimes_k \ech$ for $S \in \pnk$.  
First consider the case $S=0 \cdot \hat{S}$ for $\hat{S} \in \cal{P}_{k}(\mathbf{n-1})$. 
There is an inclusion of dgas 
$$i_0: R(n-1, k+1)^{\op{nil}} \ra R(n,k+1)^{\op{nil}},$$ mapping $(\hat{T}, \hat{T}')$ to $(0 \cdot \hat{T}, 0 \cdot \hat{T}')$, for $\hat{T}, \hat{T}' \in \cal{P}_{k+1}(\mathbf{n-1})$. 
It lifts any diagram in $R(n-1, k+1)^{\op{nil}}$ one unit up. Note that this inclusion is not unital.
It induces a left dg  $R(n-1,k+1)^{\op{nil}}$ module structure on $R(n,k+1)^{\op{nil}}$ which is isomorphic to a direct sum of the free module $R(n-1,k+1)^{\op{nil}}$ and a zero module.  
Tensoring with $R(n,k+1)^{\op{nil}}$ gives a functor $$-\ot_{n-1,k+1} R(n,k+1)^{\op{nil}}: \cal{C}_{n-1,k+1} \ra \cal{C}_{n,k+1},$$ still denoted by $i_0$. 
The functor $i_0$ is an embedding. We have $i_0(P(\hat{T})) = P(0 \cdot \hat{T})$, and $i_0(M) \cong M$ as right $R(n-1,k+1)^{\op{nil}}$ modules. 

\begin{lemma} \label{lem e action0}
If $S=0 \cdot \hat{S}$ for $\hat{S} \in \cal{P}_{k}(\mathbf{n-1})$, then there exists a short exact sequence of right dg $R(n,k+1)^{\op{nil}}$ modules:
$$0 \ra i_0(\ech(P(\hat{S})))[-1]\{-1\}  \xra{g_1} (P(1 \cdot \hat{S})\{1\} \oplus P(1 \cdot \hat{S})[-1]\{-1\}) \xra{g_0} \ech(P(0\cdot \hat{S})) \ra 0.$$ 
\end{lemma}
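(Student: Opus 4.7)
The plan is to exhibit $g_0$ and $g_1$ explicitly from the strands-diagram picture by isolating the strand starting at position $1$, and then to check exactness bidegree by bidegree.

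First I would single out two distinguished elements $e_0,e_1\in\ech(P(0\cdot\hat S))$: both are the diagram with source $1\cdot(0\cdot\hat S)=\{1\}\cup\{\hat s+2:\hat s\in\hat S\}$ and target $0\cdot(1\cdot\hat S)=\{2\}\cup\{\hat s+2:\hat s\in\hat S\}$ consisting of a strand $1\to 2$ and horizontal strands at the positions $\hat s+2$, where $e_0$ carries no dot and $e_1$ carries a single dot on the $1\to 2$ strand. A direct check using \eqref{eq An qgrading} places $e_0$ in bidegree $(0,1)$ and $e_1$ in bidegree $(1,-1)$. I would then define $g_0$ to be the unique right dg $\rnkp$-module morphism sending $\mb_{1\cdot\hat S}\in P(1\cdot\hat S)\{1\}$ to $e_0$ and $\mb_{1\cdot\hat S}\in P(1\cdot\hat S)[-1]\{-1\}$ to $e_1$; the chain-map property is immediate because the $1\to 2$ strand in $e_0,e_1$ crosses no other strand, so left multiplication by $e_0$ or $e_1$ commutes with the crossing-resolution differential. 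Surjectivity follows by splitting generators of $\ech(P(0\cdot\hat S))$ by the value of $\phi(1)$: when $\phi(1)=2$ the generator is visibly $e_0\cdot\bar r$ or $e_1\cdot\bar r$ for $r\in P(1\cdot\hat S)$ with $\phi(1)=1$, and when $\phi(1)=p\ge 3$ it arises from an $r$ whose strand from $1$ goes to $p-1$ with the appropriate dot.

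Next I would define $g_1$. The module $\ech(P(\hat S))\subset R(n,k+1)^{\op{nil}}$ consists of elements already lying in $P(1\cdot\hat S)$, so for $x\in\ech(P(\hat S))$ I set
$$g_1(i_0(x)):=(x^+,-x)\in P(1\cdot\hat S)\{1\}\oplus P(1\cdot\hat S)[-1]\{-1\},$$
where $x^+$ denotes $x$ with an additional dot placed on the strand starting at position $1$ (interpreted as zero when that strand already carries a dot), extended as a right $\rnkp$-module map. Compatibility with the tensor product defining $i_0$ reduces to $(x\cdot r)^+=x^+\cdot r$, an immediate consequence of the ordered-union formula for dots combined with multiplication in $R$. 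The chosen shifts render $g_1$ grading-preserving, and the $-x$ coordinate makes it injective. The identity $g_0\circ g_1=0$ reduces to the direct computation $e_0\cdot\overline{x^+}=e_1\cdot\bar x$: both sides produce the strand $1\to\phi(1)+1$ bearing exactly one dot, and both vanish when the strand from $1$ in $x$ already carries a dot, by the quadratic dot relation.

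To establish $\ker(g_0)=\im(g_1)$ I would decompose each $m_i\in P(1\cdot\hat S)$ by the value of $\phi(1)$ and the dot status on the strand from $1$. In a kernel element $(m_1,m_2)$, the $\phi(1)=1$ pieces map to linearly independent Type A generators and therefore vanish; the $\phi(1)>1$, no-dot piece of $m_1$ maps into the undotted-bottom portion of Type B and also vanishes; what remains is the relation pairing the $\phi(1)>1$, dotted piece of $m_1$ with the $\phi(1)>1$, no-dot piece of $m_2$, together with an unconstrained $\phi(1)>1$, dotted piece of $m_2$. These two degrees of freedom are parametrized exactly by $x\in\ech(P(\hat S))$ without and with a dot on the strand from $1$, and recover precisely the image of $g_1$. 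The main obstacle will be the chain-map compatibility $d(x^+)=(d(x))^+$ when crossings involve the strand from $1$, together with sign bookkeeping coming from the ordered-union convention; both reduce to a careful case analysis of the differential in Definition \ref{def rnk}.
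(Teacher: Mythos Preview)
Your proposal is correct and follows essentially the same approach as the paper. Your $e_0,e_1,x^+$ are the paper's $r_0,r_0^+,r^l$ (the paper obtains $r^l$ as a formal left multiplication by a dotted horizontal strand at position $1$, which places the new dot at the front of the ordered set $D$); the maps $g_0,g_1$ and the verification of $g_0\circ g_1=0$ coincide, and where the paper simply asserts surjectivity of $g_0$ and ``it is not hard to see that $\Ker(g_0)=\im(g_1)$'', you supply the $\phi(1)$-decomposition argument. One small caution: with the shift conventions the chain-map condition for $g_1$ is $d(x^+)=-(dx)^+$ rather than $d(x^+)=(dx)^+$; this is exactly the sign bookkeeping you flag, and it works out once the new dot is placed at the front of $D$.
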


\begin{proof}
The right $R(n,k+1)^{\op{nil}}$-module $\ech(P(0\cdot \hat{S})) \cong \oplus_{\scriptscriptstyle T \in \pnkp}(1\cdot 0\cdot \hat{S}, 0 \cdot T)$. For $T=1 \cdot \hat{S}$, there is a distinguished element $r_0=(1\cdot 0\cdot \hat{S}, 0 \cdot 1 \cdot \hat{S}, \phi_0, \es)$, where $\phi_0$ maps $1$ to $2$ and fixes the other elements. Its diagrammatic presentation has a unique non-horizontal strand connecting $1$ to $2$ and does not have any crossing; see Figure~\ref{fig algn3}. 
Let $r_{0}^+$ be the generator by adding a dot on the unique non-horizontal strand in $r_{0}$.  
The gradings are $\deg(r_{0})=0, \deg_{q}(r_{0})=1,\deg(r_{0}^+)=1, \deg_{q}(r_{0}^+)=-1.$
The right module $\ech(P(0\cdot \hat{S}))$ is generated by $r_0$ and $r_0^+$. Hence there is a surjection 
$$
\begin{array}{cccc}
g_0: & (P(1 \cdot \hat{S})\{1\} \oplus P(1 \cdot \hat{S})[-1]\{-1\}) & \ra & \ech(P(0\cdot \hat{S})), \\
& (\mb_{1 \cdot \hat{S}}, 0) & \mapsto & r_0, \\
& (0, \mb_{1 \cdot \hat{S}}) & \mapsto & r_0^+. \\
\end{array}
$$

As cochain complexes $i_0(\ech(P(\hat{S}))) \cong \ech(P(\hat{S}))$ and $\ech(P(\hat{S})) \cong \oplus_{\scriptscriptstyle \hat{T} \in \cal{P}_{k+1}(\mathbf{n-1})}(1 \cdot \hat{S}, 0 \cdot \hat{T})$. Define 
$$
\begin{array}{cccc}
g_1: & i_0(\ech(P(\hat{S})))[-1]\{-1\} & \ra & (P(1 \cdot \hat{S})\{1\} \oplus P(1 \cdot \hat{S})[-1]\{-1\}), \\
& r & \mapsto & (r^l, -r),
\end{array}
$$
for $r \in (1 \cdot \hat{S}, 0 \cdot \hat{T})$, where $r^l$ is obtained by adding a dot on the lowest strand of $r$ viewed from the left. 
The map $g_1$ is an inclusion of right dg $R(n,k+1)^{\op{nil}}$ modules.
We have
$$g_0(g_1(r))=r_0 \cdot i_0(r^l) + r_0^+ \cdot i_0(-r)=0,$$ 
since $r_0 \cdot i_0(r^l) =  r_0^+ \cdot i_0(r) \in (1\cdot 0\cdot \hat{S}, 0 \cdot 0 \cdot \hat{T})$. 
It is not hard to see that $\Ker(g_0)=\im(g_1)$. 
\end{proof}

\begin{figure}[ht]
\begin{overpic}
[scale=0.4]{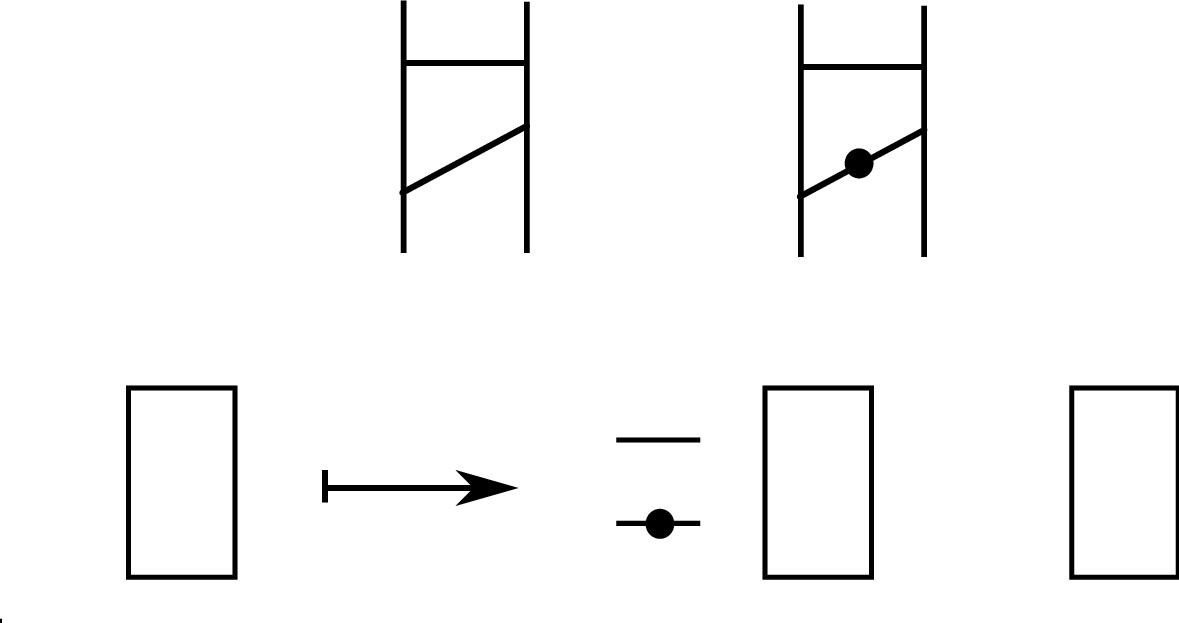}
\put(30,34){$\underline{1}$}
\put(30,40){$0$}
\put(30,46.5){$\hat{S}$}
\put(46,34){$\underline{0}$}
\put(46,40){$1$}
\put(46,46.5){$\hat{S}$}
\put(64,34){$\underline{1}$}
\put(64,40){$0$}
\put(63.5,46.5){$\hat{S}$}
\put(80,34){$\underline{0}$}
\put(80,40){$1$}
\put(80,46.5){$\hat{S}$}
\put(60.5,7){$1$}
\put(60.5,14){$\hat{S}$}
\put(75,7){$0$}
\put(75,14){$\hat{T}$}
\put(80,7){$,$}
\put(81,10){$-$}
\put(86,7){$1$}
\put(86,14){$\hat{S}$}
\put(101,7){$0$}
\put(101,14){$\hat{T}$}
\put(106,10){$)$}
\put(94,10){$r$}
\put(68,10){$r$}
\put(48,7){$1$}
\put(48,14){$\hat{S}$}
\put(45,10){$($}
\put(14,10){$r$}
\put(7,7){$1$}
\put(6.5,14){$\hat{S}$}
\put(21,7){$0$}
\put(21,14){$\hat{T}$}
\put(-5,10){$g_1:$}
\end{overpic}
\caption{The generators $r_0, r_0^+$ at the top and the map $g_1$ at the bottom, where the horizontal strand cannot carry a dot but the multiplication does exist and represents $r^l$.}
\label{fig algn3}
\end{figure}

Next consider the case $S=1 \cdot \hat{S}$ for $\hat{S} \in \cal{P}_{k-1}(\mathbf{n-1})$. 
There is a nonunital inclusion of dgas 
\begin{gather*}
i_1: R(n-1, k)^{\op{nil}} \ra R(n,k+1)^{\op{nil}},\\ 
(\hat{T}, \hat{T}') \mapsto (1 \cdot \hat{T}, 1 \cdot \hat{T}'),
\end{gather*}
where $\hat{T}, \hat{T}' \in \cal{P}_{k}(\mathbf{n-1})$. 
The map $i_1$ adds a horizontal strand at the bottom to any diagram in $R(n-1, k)^{\op{nil}}$. 
It induces a left dg $R(n-1,k)^{\op{nil}}$ module structure on $R(n,k+1)^{\op{nil}}$.

\begin{lemma} \label{leopard}
As a left dg $R(n-1,k)^{\op{nil}}$ module, $R(n,k+1)^{\op{nil}}$ is isomorphic to a complex of projective modules.
\end{lemma}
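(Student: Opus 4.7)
The plan is to stratify $R(n,k+1)^{\op{nil}}$ according to the behavior of the ``bottom strand''.  For each $p \in \{1,\dots,n\}$ let $M_p$ be the span of generators $(S,T,\phi,D)$ with $1 \in S$ and $\phi(1) = p$, and let $M_\varnothing$ be the span of generators with $1 \notin S$.  Because $i_1(\hat r)$ has the horizontal segment $1\to 1$ as its bottom strand, left multiplication by $i_1(\hat r)$ preserves both the condition $1 \in S$ and the value $\phi(1)$, so each $M_p$ and $M_\varnothing$ is a left $R(n-1,k)^{\op{nil}}$-submodule.  On $M_\varnothing$ the action is zero; as a cochain complex it is canonically isomorphic to $R(n-1,k+1)^{\op{nil}}$ via $s \mapsto s-1$, and is the non-unital complement of $e\cdot R(n,k+1)^{\op{nil}}$ for $e := i_1(1) = \sum_{\hat T}\mb_{1\cdot \hat T}$.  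The content of the lemma is that $\bigoplus_p M_p = e\cdot R(n,k+1)^{\op{nil}}$ is isomorphic to a complex of projective left $R(n-1,k)^{\op{nil}}$-modules.

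The index $p$ is compatible with the differential: resolving a crossing $(i,j) \in \inv(\phi)$ changes $\phi(1)$ only when $\phi^{-1}(j) = 1$ (forcing $j = p$), in which case $\phi_{i,p}(1) = i < p$.  Hence $d(M_p) \subseteq M_p \oplus \bigoplus_{i<p} M_i$, so $F_p := \bigoplus_{q \le p} M_q$ is an ascending filtration by subcomplexes with $F_p / F_{p-1} \cong M_p$.  Within each $M_p$ the target $T$ (with $T \ni p$, and $1 \notin T$ automatic when $p > 1$) and the binary indicator $\delta \in \{0,1\}$ of whether $D$ contains a dot at the endpoint $p$ of the bottom strand (with $\delta = 0$ automatic when $p = 1$) are both preserved by the left action: the product formula $D' = \phi(D_r)\#D$ combined with $p \notin \phi(D_r)$ ensures this.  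Hence $M_p = \bigoplus_{T \ni p}\bigoplus_\delta M_p^T(\delta)$ as graded left modules.

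The key identification is
\[ M_p^T(\delta) \;\cong\; R(n-1,k)^{\op{nil}}\cdot \mb_{\hat T_p}\,[-\delta]\,\{\,p-1-2\delta-3c(p,T)\,\}, \]
where $\hat T_p := \{t \in T : t < p\} \cup \{t-1 : t \in T, t > p\} \in \mathcal{P}_k(\mathbf{n-1})$ and $c(p,T) := |\{t \in T : t < p\}|$.  The isomorphism sends $(1\cdot \hat S, T, \phi, D)$ to $(\hat S, \hat T_p, \hat\phi, \hat D)$ by erasing the bottom strand (and its dot when $\delta = 1$) and shifting indices down by one; compatibility with the left action is immediate since $i_1(\hat r)$ acts trivially on the bottom strand and as $\hat r$ on the restricted data.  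The $q$-shift follows from Equation \eqref{eq An qgrading} using the identities $\inv(\phi) = \inv(\hat\phi) + c(p,T)$ and $\|T\|-\|S\| = (\|\hat T_p\| - \|\hat S\|) + p - 1 - c(p,T)$; the cohomological shift $[-\delta]$ encodes the dot at $p$.  Summing over $(p,T,\delta)$ exhibits the underlying graded left module of $\bigoplus_p M_p$ as a direct sum of shifted indecomposable projectives.

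The main obstacle is identifying the total differential in these coordinates while tracking signs.  The internal component on each $M_p^T(\delta)$ transports to the internal differential of $R(n-1,k)^{\op{nil}}\cdot \mb_{\hat T_p}$, modulo signs arising from the reordering rule $(S,T,\phi,\{p\}\# D) = (-1)^{|D|}(S,T,\phi, D \# \{p\})$ and from super-commutation of disjoint diagrams.  The external component $d^{\text{ext}} \colon M_p^T(\delta) \to M_i^T(\delta')$ for $i < p$ arises exclusively from resolving the crossing $(i,p) \in \inv(\phi)$; one verifies $\hat T_p \le \hat T_i$ element-wise (straightforward since $i < p$ in $T$), so $\Hom_{R(n-1,k)^{\op{nil}}}(R(n-1,k)^{\op{nil}}\cdot \mb_{\hat T_p}, R(n-1,k)^{\op{nil}}\cdot \mb_{\hat T_i}) = \mb_{\hat T_p}R(n-1,k)^{\op{nil}}\mb_{\hat T_i}$ is nonzero, and under the identification the two terms of $d$ from this resolution become a specific degree-one element of this Hom space --- essentially the transposition-with-dot generator connecting $\mb_{\hat T_p}$ to $\mb_{\hat T_i}$.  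Once the sign bookkeeping is carried out, $\bigoplus_p M_p$ acquires the structure of a complex of projective left $R(n-1,k)^{\op{nil}}$-modules, proving the lemma.
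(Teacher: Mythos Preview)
Your strategy is exactly the paper's: decompose $R(n,k+1)^{\op{nil}}$ by the right idempotent $T$ and, within each $\tilde P(T)$, by the endpoint $p=\phi(1)$ of the bottom strand and the dot indicator $\delta$; then erase the bottom strand to identify each piece with a left projective over $R(n-1,k)^{\op{nil}}$. You have in fact supplied more detail than the paper on the filtration, the external differential, and the grading shifts.

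However, your formula for the target renormalization is wrong. Since $i_1$ shifts \emph{all} indices up by one and adds a horizontal strand at position~$1$, the inverse operation on the target side must likewise shift \emph{all} remaining indices down by one: the correct index set is
\[
\hat T_p \;=\; \{\,t-1 : t\in T,\ t\neq p\,\},
\]
not $\{t\in T:t<p\}\cup\{t-1:t\in T,\ t>p\}$. (Your prose ``shifting indices down by one'' says the right thing; the displayed formula does not.) With your formula the map you write down is a left-module homomorphism but is \emph{not surjective}. Concretely, take $n=4$, $k=1$, $T=\{2,3\}$, $p=3$: then $M_3^T(0)$ is one-dimensional (the unique generator has $S=\{1,2\}$, since $S=\{1,3\}$ forces $\phi(3)=2<3$), whereas $R(3,1)^{\op{nil}}\cdot \mb_{\{2\}}$ is three-dimensional. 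With the correct $\hat T_p=\{1\}$ the projective is one-dimensional and the identification holds.

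This error propagates into your $q$-shift: with the correct $\hat T_p$ one has $\|T\|-\|S\|=(\|\hat T_p\|-\|\hat S\|)+p-1$ (no $-c(p,T)$ term), so the shift is $\{\,p-1-2\delta-2c(p,T)\,\}$, matching the paper's $\{t_i-2i+1\}$ and $\{t_i-2i-1\}$ for $\delta=0,1$. Once you make this correction, the remainder of your argument (filtration by $p$, identification of internal and external differentials, and the check $\hat T_p\le \hat T_i$ for $i<p$) goes through and coincides with the paper's proof.
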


\begin{proof}
It suffices to show that the left projective $R(n,k+1)^{\op{nil}}$ module $\tilde{P}(T)=R(n,k+1)^{\op{nil}} \cdot \mb_{T}$ is a complex of left projective $R(n-1,k)^{\op{nil}}$ modules for $T \in \cal{P}_{k+1}(\mathbf{n})$. 
If $1 \in T$, then $\tilde{P}(T) \cong \tilde{P}(T\backslash \{1\})$ as left $R(n-1,k)^{\op{nil}}$ modules.
If $1 \notin T$ and $T=\{t_1, \dots, t_{k+1} ~|~ t_1 < \cdots t_{k+1}\}$, then let $f_i(T)=T \backslash \{t_i\}$. There are $k+1$ generators $u_i=(1\cdot f_i(T), T, \phi_i, \es)$ for $1 \le i \le k+1$, where $\phi_i$ maps $1$ to $t_i$ and fixes the other elements. 
Its diagrammatic presentation has a unique non-horizontal strand which intersects with other $i-1$ horizontal strands. Let $u_i^+$ be the generator obtained by adding a dot on the unique non-horizontal strand in $u_i$. 
Then these $2(k+1)$ elements $u_i, u_i^+$ generate the left projective $R(n-1,k)^{\op{nil}}$ modules 
$$\tilde{P}(f_i(T))\{2-2i+t_i-1\},\quad \tilde{P}(f_i(T))[-1]\{-2i+t_i-1\}.$$
Moreover, $\tilde{P}(T)$ is isomorphic to a complex of these projective $R(n-1,k)^{\op{nil}}$ modules.
\end{proof}

By Lemma~\ref{leopard}, there is a functor $$-\ot_{n-1,k} R(n,k+1)^{\op{nil}}: \cal{C}_{n-1,k} \ra \cal{C}_{n,k+1},$$ 
which we still denote by $i_1$. 
We have $i_1(P(\hat{T})) = P(1 \cdot \hat{T})$. 

\begin{lemma} \label{lem e action1}
If $S=1 \cdot \hat{S}$ for $\hat{S} \in \cal{P}_{k-1}(\mathbf{n-1})$, then there exists a short exact sequence of right dg $R(n,k+1)^{\op{nil}}$ modules:
$$0 \ra  i_1(\ech(P(\hat{S}))) \xra{q_0} \ech(P(1\cdot \hat{S}))\ra Q \ra 0,$$
where $Q$ is contractible.  
\end{lemma}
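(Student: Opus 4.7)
The proof mirrors the strategy of Lemma~\ref{lem e action0}, adapted to the case $1 \in S$. For each generator $r$ of $\ech(P(\hat S))$, viewed as an element of the ambient $R(n,k)^{\op{nil}}$ with left endpoint $1\cdot\hat S$ and right endpoint $0\cdot \hat T$ for some $\hat T \in \cal{P}_k(\mathbf{n-1})$, let $\tilde r \in R(n+1,k+1)^{\op{nil}}$ denote its lift obtained by adding a horizontal strand at position~$1$ and shifting all indices up by one; the lifted diagram has left endpoint $\{1,2\}\cup(\hat S+2) = 1\cdot(1\cdot\hat S)$ and right endpoint $\{1\}\cup(\hat T+2)$. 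Let $\sigma_{\hat T} \in R(n+1,k+1)^{\op{nil}}$ be the element with left endpoint $\{1\}\cup(\hat T+2)$, right endpoint $\{2\}\cup(\hat T+2)=0\cdot(1\cdot\hat T)$, whose upward-veering bijection sends $1 \mapsto 2$ and is the identity on $\hat T+2$, carrying no dots and no crossings; this element is closed. We then define
$$q_0: i_1(\ech(P(\hat S))) \to \ech(P(1\cdot\hat S)), \qquad \tilde r \mapsto \tilde r \cdot \sigma_{\hat T},$$
extended by right-$R(n,k+1)^{\op{nil}}$-linearity. Diagrammatically, $q_0$ replaces the horizontal strand at position~$1$ by a diagonal $1\to 2$, leaving the upper part of the diagram unchanged.

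The chain-map property follows because each $\sigma_{\hat T}$ is closed and the newly-created $1\to 2$ strand introduces no crossings with the upper part (whose relevant left and right endpoints all lie at positions $\geq 2$ and $\geq 3$ respectively). Right-$R(n,k+1)^{\op{nil}}$-linearity is immediate, since that action only touches positions $\geq 2$ on the right and commutes with the modification at positions $\{1,2\}$. Injectivity follows from the preservation of the upper part.

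To show that $Q$ is contractible, we exhibit an explicit contracting homotopy $h:Q\to Q$ of cohomological degree $-1$. First, observe that right-multiplication in $R(n,k+1)^{\op{nil}}$ can absorb into $\im(q_0)$ any generator of $\ech(P(1\cdot\hat S))$ in which the strand from left-$1$ does not carry a dot at its right endpoint (such generators can be written as $q_0(\tilde m)\cdot\rho$ for a suitable upper-part generator $\tilde m$ and right-acting $\rho$). Consequently, $Q$ is spanned by those generators of $\ech(P(1\cdot \hat S))$ in which $\phi(1) \in D$. We define $h$ by pairing each such \emph{dotted} generator with the generator obtained by moving $\phi(1)$ to the next available right slot (so as to create one additional inversion with the strand formerly occupying that slot) and removing the dot at $\phi(1)$. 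The identity $dh + hd = \id_Q$ then follows from the local relation $d(T_i) = \xi_i - \xi_{i+1}$: the differential of the paired generator produces both the original dotted generator and a term landing in $\im(q_0)$, which vanishes in $Q$.

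The main obstacle will be verifying $dh + hd = \id_Q$ in the general case, where the upper part of the diagram itself carries inversions and dots. Careful bookkeeping of signs coming from the ordered tuple $D \in \po(\phi)$, and of which terms in the differential of a crossing lie in $\im(q_0)$ versus those contributing to the homotopy relation, will be required. The essential local computation takes place in the model algebra $R_k$ and parallels the one implicit in Lemma~\ref{lem e action0}.
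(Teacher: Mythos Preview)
Your construction of $q_0$ agrees with the paper's (your $\tilde r\cdot\sigma_{\hat T}$ is exactly what the paper calls $r_0$), but your description of $\im(q_0)$ and hence of $Q$ is incorrect, and this breaks the argument.

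You claim that any generator of $\ech(P(1\cdot\hat S))$ in which the strand from left-$1$ carries no dot lies in $\im(q_0)$. This is false. Every element of $\im(q_0)$ has the form $u_0\cdot i_0(\hat r)$; in $u_0$ the strand from left-$2$ goes to position $\phi(1)+1\geq 3$, and since $\hat r$ is non-decreasing this strand still ends at position $\geq 3$ after right-multiplication. Hence any generator with $\psi(2)=2$ (strand from left-$2$ horizontal) lies outside $\im(q_0)$, regardless of whether the strand from left-$1$ carries a dot. Already in the smallest nontrivial case $n=2$, $k=1$, $\hat S=\varnothing$, the generator $(\{1,2\},\{2,3\},\,1\mapsto 3,\,2\mapsto 2,\,\varnothing)$ has no dot anywhere but is not in $\im(q_0)$, since $R(2,2)^{\op{nil}}$ acts trivially on the right. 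Consequently your proposed spanning set for $Q$ is too small, and your homotopy $h$ is not defined on all of $Q$.

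The paper organizes $Q$ differently. For each basis element $u$ of $\ech(P(\hat S))$ it introduces three associated generators $u_0,u_1,u_2$ in $(1\cdot 1\cdot\hat S,\,0\cdot 1\cdot\hat T)$: $u_0$ has the strand $1\to 2$ below a shifted copy of $u$; $u_1$ keeps the strand from left-$2$ horizontal at position $2$ and re-routes the lowest strand of the shifted $u$ to start from left-$1$ (so it crosses the horizontal strand); $u_2$ is $u_0$ with a dot added at position $2$. The image $Q_0=\im(q_0)$ is generated by the $u_0$'s, and $\{u_1^\alpha,u_2^\alpha\}$ is a linear basis of $Q$. Computing the differential in $Q$ gives $d(u_1)=\tilde d(u)_1+u_2$ and $d(u_2)=-\tilde d(u)_2$, so $Q\cong \ech(P(\hat S))\otimes_\F(\F\xrightarrow{\id}\F)$, which is a cone on the identity and hence contractible. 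Your pairing of ``remove dot, add crossing'' is morally the $u_2\leftrightarrow u_1$ pairing, but to make it work you must first correctly identify that both halves of each pair live in $Q$, not just the dotted one.
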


\begin{proof}
We have $\ech(P(\hat{S})) \cong \oplus_{\scriptscriptstyle \hat{T} \in \cal{P}_{k}(\mathbf{n-1})}(1 \cdot \hat{S}, 0 \cdot \hat{T})$ and $\ech(P(1 \cdot \hat{S})) \cong \oplus_{\scriptscriptstyle T \in \cal{P}_{k+1}(\mathbf{n})}(1 \cdot 1 \cdot \hat{S}, 0 \cdot T)$. 
For any $u=(1 \cdot \hat{S}, 0 \cdot \hat{T}, \phi, D) \in \ech(P(\hat{S}))$, there are three associated generators in $(1 \cdot 1 \cdot \hat{S}, 0 \cdot 1 \cdot \hat{T})$:
$$\begin{array}{ll}
u_0=(1 \cdot 1 \cdot \hat{S}, 0 \cdot 1 \cdot \hat{T}, \phi_0, D+1), & \phi_0: 1 \mapsto 2, s+1 \mapsto \phi(s)+1, \\
u_1=(1 \cdot 1 \cdot \hat{S}, 0 \cdot 1 \cdot \hat{T}, \phi_1, D+1), & \phi_1: 2 \mapsto 2, 1 \mapsto \phi(1), s+1 \mapsto \phi(s)+1, \\
u_2=(1 \cdot 1 \cdot \hat{S}, 0 \cdot 1 \cdot \hat{T}, \phi_2, \{2\}\#(D+1)), & \phi_2=\phi_0,
\end{array}$$
where $D+1=\{d+1 ~|~ d \in D\}$; see Figure~\ref{fig algn4}. 
The collection $$\{u_0 \cdot r ~|~ u \in \ech(P(\hat{S})), r \in (0\cdot 1 \cdot \hat{T}, 0 \cdot T)\}$$ spans a right $R(n,k+1)^{\op{nil}}$ submodules $Q_0$ of $\ech(P(1\cdot \hat{S}))$. 
Define a map
$$\begin{array}{cccc}
q_0: & \ech(P(\hat{S})) \ot_{n-1,k} R(n,k+1)^{\op{nil}} & \ra & \ech(P(1\cdot \hat{S})) \\
& u \ot \hat{r} & \mapsto & u_0 \cdot i_0(\hat{r}),
\end{array}$$
where $\hat{r} \in (1 \cdot \hat{T}, T)$ and $i_0(\hat{r}) \in (0\cdot 1 \cdot \hat{T}, 0 \cdot T)$.
It is an isomorphism of right $R(n,k+1)^{\op{nil}}$ modules onto $Q_0$.

Let $Q$ denote the quotient of $q_0$. 
Take a linear basis $\{u^{\alpha}\}$ of $\ech(P(\hat{S}))$.
The associated $\{u_1^{\alpha}, u_2^{\alpha}\}$ forms a linear basis of $Q$. Moreover, 
$$d(u_1)=\tilde{d}(u)_1 + u_2, d(u_2)=-\tilde{d}(u)_2,$$ 
where $\tilde{d}$ is the differential of $\ech(P(\hat{S}))$. 
There is an isomorphism of right $R(n,k+1)^{\op{nil}}$ modules: $$Q \cong  \ech(P(\hat{S})) \ot_{\F} (\F \xra{\id} \F)$$ up to grading shifts. Hence $Q$ is contractible and $q_0: i_1(\ech(P(\hat{S}))) \ra \ech(P(1\cdot \hat{S}))$ is a quasi-isomorphism.  
\end{proof}

\begin{figure}[ht]
\begin{overpic}
[scale=0.4]{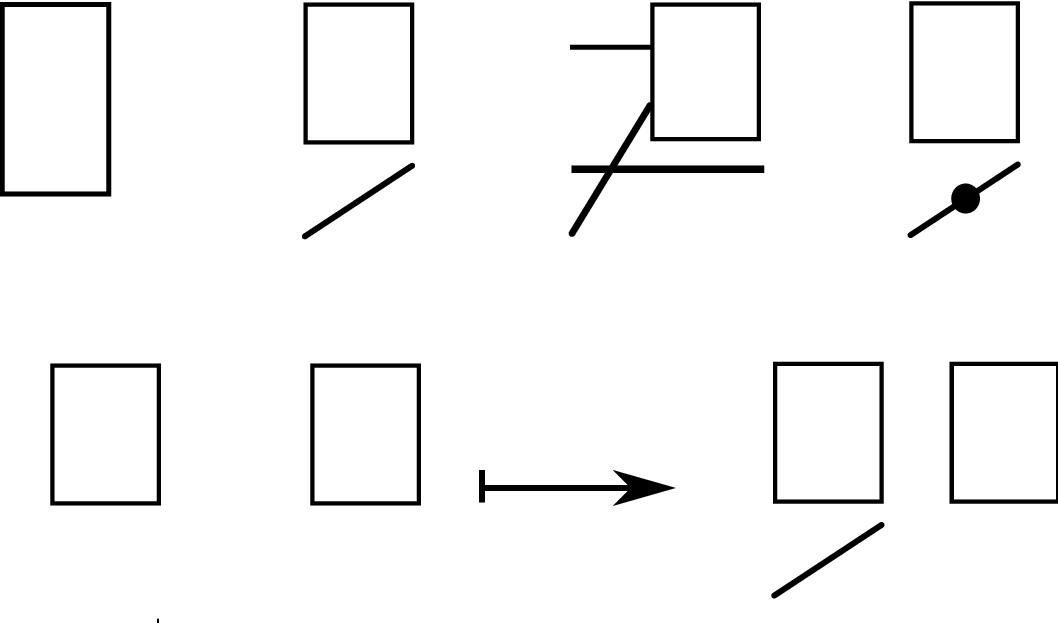}
\put(-4,45){$\underline{1}$}
\put(-4.5,52){$\hat{S}$}
\put(11.5,45){$\underline{0}$}
\put(11,52){$\hat{T}$}
\put(25,34){$\underline{1}$}
\put(25,45){$1$}
\put(24.4,52){$\hat{S}$}
\put(40,34){$\underline{0}$}
\put(40,45){$1$}
\put(40,52){$\hat{T}$}
\put(49.5,34){$\underline{1}$}
\put(49,42){$1$}
\put(49,52){$\hat{S}$}
\put(73,34){$\underline{0}$}
\put(73,42){$1$}
\put(73,52){$\hat{T}$}
\put(81,34){$\underline{1}$}
\put(81,42){$1$}
\put(81,52){$\hat{S}$}
\put(98.5,34){$\underline{0}$}
\put(98.5,42){$1$}
\put(98,52){$\hat{T}$}
\put(3,50){$u$}
\put(32,50){$u$}
\put(64,50){$u$}
\put(90,50){$u$}
\put(69,0){$\underline{1}$}
\put(69,10){$1$}
\put(68.5,20){$\hat{S}$}
\put(85,0){$\underline{0}$}
\put(85,10){$1$}
\put(85,20){$\hat{T}$}
\put(101,15){$T$}
\put(0,10){$\underline{1}$}
\put(0,20){$\hat{S}$}
\put(17,10){$\underline{0}$}
\put(16.5,20){$\hat{T}$}
\put(25,10){$1$}
\put(24.5,20){$\hat{T}$}
\put(41,15){$T$}
\put(20.5,15){$\otimes$}
\put(9,15){$u$}
\put(34,15){$\hat{r}$}
\put(76,15){$u$}
\put(94,15){$\hat{r}$}
\put(-10,15){$q_0:$}
\end{overpic}
\caption{The generators $u, u_0, u_1, u_2$ are on the top line and the map $q_0$ is on the bottom line.}
\label{fig algn4}
\end{figure}

By iteratively using Lemmas \ref{lem e action0} and \ref{lem e action1}, we obtain a complex of projective modules quasi-isomorphic to $\ech(P(S))$ for $S \in \pnk$.
Recall that $e_i(S)=S \cup \{s_i^c\} \in \pnkp$ for $S^c=\{s_1^c, \dots, s_{n-k}^c ~|~ s_1^c < \cdots < s_{n-k}^c\}$.  
There is a distinguished generator $$r_{i,i+1}=(e_i(S), e_{i+1}(S), \phi_{i,i+1}, \es) \in \rnkp,$$ where $\phi_{i,i+1}$ maps $s$ to $s+1$ for $s=s_i^c, \dots, s_{i+1}^c-1$ and fixes the other elements. Its diagrammatic presentation has $s_{i+1}^c-s_i^c$ non-horizontal strands and no crossing. Let $r_{i,i+1}^l$ (resp.\ $r_{i,i+1}^h$) be the generator obtained by adding a dot on the lowest (resp.\ highest) non-horizontal strand in $r_{i,i+1}$, and let $r_{i,i+1}^{lh}$ be a generator by adding a dot on both the lowest and highest non-horizontal strand in $r_{i,i+1}$ such that the dot on the lowest strand is to the left.   
Note that the lowest and highest strands coincide when $s_{i+1}^c-s_i^c=1$, and $r_{i,i+1}^l=r_{i,i+1}^h, r_{i,i+1}^{lh}=0$. 
The gradings are 
\begin{gather*}
\deg(r_{i,i+1})=0, \quad \deg_{q}(r_{i,i+1})=s_{i+1}^c-s_i^c, \\ \deg(r_{i,i+1}^t)=1, \quad \deg_{q}(r_{i,i+1}^t)=s_{i+1}^c-s_i^c-2, \quad \mbox{for $t=l,h$}, \\ \deg(r_{i,i+1}^{lh})=2, \quad \deg_{q}(r_{i,i+1}^{lh})=s_{i+1}^c-s_i^c-4.
\end{gather*}

\begin{thm} \label{thm tensor e}
The right dg $\rnkp$ module $\ech(P(S))$ is quasi-isomorphic to the following complex of projective modules:
$$(\oplus_{1 \le i \le n-k} \left(P(e_i(S))\{l_i(S)+1\} \oplus P(e_i(S))[-1]\{l_i(S)-1\}), d\right),$$
where the differential is $d=\sum_{1 \le i  \le n-k-1}(d_{i,i+1}+d_{i,i+1}^l+d_{i,i+1}^h+d_{i,i+1}^{lh})$ and each summand is given by left multiplication between right projective modules: 
\begin{align*}
d_{i,i+1}: & P(e_{i+1}(S))\{l_{i+1}(S)+1\} \xra{-r_{i,i+1} \cdot} P(e_i(S))[-1]\{l_i(S)-1\}, \\  
d_{i,i+1}^l: & P(e_{i+1}(S))\{l_{i+1}(S)+1\} \xra{r_{i,i+1}^l \cdot} P(e_i(S))\{l_i(S)+1\}, \\  
d_{i,i+1}^h: & P(e_{i+1}(S))[-1]\{l_{i+1}(S)-1\} \xra{-r_{i,i+1}^h \cdot} P(e_i(S))[-1]\{l_i(S)-1\}, \\
d_{i,i+1}^{lh}: & P(e_{i+1}(S))[-1]\{l_{i+1}(S)-1\} \xra{r_{i,i+1}^{lh} \cdot} P(e_i(S))\{l_i(S)+1\}.  
\end{align*}
\end{thm}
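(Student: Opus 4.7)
The plan is to prove the quasi-isomorphism by induction on $n$. The base case $k=n$ is vacuous since $\pnkp = \varnothing$ forces $\ech(P(S))=0$ while the index set $\{1,\dots,n-k\}$ is empty. For the inductive step I will peel off the first coordinate of $S$: either $1\in S$, so $S=1\cdot\hat{S}$ with $\hat{S}\in \cal{P}_{k-1}(\mathbf{n-1})$, or $1\notin S$, so $S=0\cdot\hat{S}$ with $\hat{S}\in \cal{P}_k(\mathbf{n-1})$. In both cases, the inductive hypothesis describes $\ech(P(\hat{S}))$ as a complex of projective modules over $R(n-1,k)^{\op{nil}}$ (respectively $R(n-1,k+1)^{\op{nil}}$), and we propagate this description along the peeling functors $i_1$ and $i_0$ via Lemmas~\ref{lem e action1} and~\ref{lem e action0}.

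Suppose first $1\in S$. Then the complements satisfy $s_i^c=\hat{s}_i^c+1$, so $e_i(S)=1\cdot e_i(\hat{S})$ and $l_i(S)=\hat{l}_i(\hat{S})+1$ for all $i$. Lemma~\ref{lem e action1} identifies $\ech(P(S))$ with $i_1(\ech(P(\hat{S})))$, but a direct grading computation on the map $q_0$ (tracking $\|0\cdot 1\cdot \hat{T}\|-\|1\cdot 1\cdot \hat{S}\|=(\|\hat{T}\|-\|\hat{S}\|)+1$) reveals a uniform $q$-grading shift by $\{1\}$. Since $i_1$ preserves $q$-gradings and sends $P(\hat{T})\mapsto P(1\cdot \hat{T})$, applying $i_1$ and this shift to the inductive complex yields precisely the terms $P(1\cdot e_i(\hat{S}))\{\hat{l}_i+2\}\oplus P(1\cdot e_i(\hat{S}))[-1]\{\hat{l}_i\}$, which agree with the target complex under the substitutions $e_i(S)=1\cdot e_i(\hat{S})$ and $l_i(S)=\hat{l}_i+1$.

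Now suppose $1\notin S$, so $s_1^c=1$ while $s_i^c=\hat{s}_{i-1}^c+1$ for $i\geq 2$; this gives $e_1(S)=1\cdot\hat{S}$ with $l_1(S)=0$, and $e_i(S)=0\cdot e_{i-1}(\hat{S})$ with $l_i(S)=\hat{l}_{i-1}-1$ for $i\geq 2$. The short exact sequence of Lemma~\ref{lem e action0} presents $\ech(P(S))$ in the derived category as the mapping cone of
$$g_1\colon i_0(\ech(P(\hat{S})))[-1]\{-1\}\longrightarrow P(1\cdot\hat{S})\{1\}\oplus P(1\cdot\hat{S})[-1]\{-1\}.$$
The middle term is exactly the $i=1$ block of the target complex. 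The $[1]$-shift of the source, $i_0(\ech(P(\hat{S})))\{-1\}$, combined with the inductive description and the fact that $i_0$ preserves $q$-gradings and sends $P(\hat{T})\mapsto P(0\cdot\hat{T})$, yields exactly the $i\geq 2$ blocks with their correct grading shifts.

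The main obstacle will be identifying the differentials. The ``internal'' differentials between blocks with $i,i+1\geq 2$ in the second case (and all differentials in the first case) arise by applying $i_0$ (resp.\ $i_1$) to the inductive differentials: the generators $\hat{r}_{j,j+1}^t$ in $R(n-1,k+1)^{\op{nil}}$ lift diagram-by-diagram to $r_{j+1,j+2}^t$ in $R(n,k+1)^{\op{nil}}$. The new ``connecting'' differentials $d_{1,2}$, $d_{1,2}^l$, $d_{1,2}^h$, $d_{1,2}^{lh}$ come from the cone map $g_1$; tracing $g_1(\hat{r})=(\hat{r}^l,-\hat{r})$ through the inductive decomposition of $\ech(P(\hat{S}))$ and combining it with the structure of the unique non-horizontal strand in $r_{1,2}$ (which traverses a gap of $\hat{s}_1^c$ horizontal strands) should produce exactly the prescribed signs and dot placements. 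This last diagrammatic identification is the most delicate step, and I expect it to require a careful case analysis of how the ``lowest strand'' dot introduced by $g_1$ interacts with the $l$ and $h$ decorations that distinguish $r_{1,2}$, $r_{1,2}^l$, $r_{1,2}^h$, and $r_{1,2}^{lh}$.
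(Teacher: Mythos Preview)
Your approach is essentially the paper's own: both proceed by peeling off the first coordinate of $S$ and invoking Lemmas~\ref{lem e action0} and~\ref{lem e action1}. The paper phrases this as an iterative unwinding illustrated on the example $S=\{3,4\}\in\cal{P}_2(\mathbf{5})$ and declares the general case similar, whereas you package it as a formal induction on $n$; your bookkeeping of $e_i(S)$, $l_i(S)$, and the hidden $q$-shift $\{1\}$ in $q_0$ is correct.

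Two points warrant care. First, your description of $r_{1,2}$ as having a ``unique non-horizontal strand'' is inaccurate: by definition $r_{i,i+1}$ has $s_{i+1}^c-s_i^c$ non-horizontal strands, and the decorations $l$, $h$ refer to the lowest and highest among these. Second, and more substantively, your inductive hypothesis as stated (mere quasi-isomorphism) is slightly too weak to pin down the differential of the cone. To transport $g_1$ you must compose with a specific quasi-isomorphism $\beta$ from the projective complex to $\ech(P(\hat S))$; the vanishing of components $d_{1,j}$ for $j\ge 3$ is not automatic but follows because $\beta$ is inductively built from the surjections $g_0'$ of the short exact sequences, and $g_0'\circ g_1'=0$. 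The paper handles this by keeping the explicit maps visible at each stage (rows of its commutative diagram), which is why its three-term complex in Row~3 has no skip term. You should strengthen the inductive statement to carry this explicit surjection along.
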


\begin{proof}
We prove the theorem for a particular example $S=\{3,4\} \in \cal{P}_{2}(\mathbf{5})$. The case in general is similar.
We have $S=(00110)$ as in Example \ref{ex f} so that $e_1(S)=(10110), e_2(S)=(01110), e_3(S)=(00111)$, and $l_1(S)=0, l_2(S)=-1, l_3(S)=0$. 
Lemma \ref{lem e action0} implies that $\ech(P(00110))$ is quasi-isomorphic to a complex:
\begin{equation} \label{eq e act1}
i_0(\ech(P(0110))[-1]\{-1\}  \xra{g_1} (P(10110)\{1\} \oplus P(10110)[-1]\{-1\}),
\end{equation}
where the two components are in cohomological degrees $-1$ and $0$. 
We omit the grading shifts for simplicity. 
Lemmas \ref{lem e action0} and \ref{lem e action1} implies that there are three short exact sequences:
\begin{gather*}
0 \ra i_0(\ech(P(110))) \xra{g_1'} P(1110)^{\oplus 2} \xra{g_0'} \ech(P(0110)) \ra 0, \\
0 \ra i_1(\ech(P(10))) \xra{q_0} \ech(P(110)) \ra Q \ra 0, \\
0 \ra i_1(\ech(P(0))) \xra{q_0'} \ech(P(10)) \ra Q' \ra 0,
\end{gather*}
where $Q$ and $Q'$ are contractible, and $\ech(P(0)) \cong P(1)^{\oplus 2}$.
Combined with the functors $i_0, i_1$, we have the following commutative diagram:
$$\xymatrix{
& & i_0(\ech(P(0110)) \ar[rr]^{g_1} & & P(10110)^{\oplus 2} \\
i_0^2(\ech(P(110)) \ar[rr]^{i_0(g_1')} & & P(01110)^{\oplus 2} \ar[rr]^{g_1 \circ i_0(g_0')} \ar[u]^{i_0(g_0')} & & P(10110)^{\oplus 2} \ar[u]^{\id} \\
i_0^2\circ i_1^2(P(1)) \ar[rr]^{i_0(g_1') \circ i_0^2(q_0 \circ i_1(q_0'))} \ar[u]^{ i_0^2(q_0 \circ i_1(q_0'))} & & P(01110)^{\oplus 2} \ar[rr]^{g_1 \circ i_0(g_0')} \ar[u]^{\id} & & P(10110)^{\oplus 2}. \ar[u]^{\id} 
}$$ 
Each row is quasi-isomorphic to $\ech(P(00110))$. 

The differential $g_1 \circ i_0(g_0')$ is: 
$$\begin{array}{ccc}
 P(01110)[-1] \oplus P(01110)[-2]\{-2\} & \xra{i_0(g_0')} &  i_0(\ech(P(0110)))[-1]\{-1\} \\
(\mb_{01110}, 0) & \mapsto & r_{1,2} \\
(0, \mb_{01110}) & \mapsto & r_{1,2}^+ \\
i_0(\ech(P(0110)))[-1]\{-1\} & \xra{g_1} & P(10110)\{1\} \oplus P(10110)[-1]\{-1\} \\
r_{1,2} & \mapsto & (r_{1,2}^l, -r_{1,2}) \\
r_{1,2}^+ & \mapsto & (0, -r_{1,2}^+),
\end{array}
$$
where $r_{1,2}$ is the distinguished element of $(10110, 01110)=(e_1(S), e_2(S))$ with a unique non-horizontal strand, and $r_{1,2}^l=r_{1,2}^h=r_{1,2}^+, r_{1,2}^{lh}=0$. 

The differential $i_0(g_1') \circ i_0^2(q_0 \circ i_1(q_0'))$ is: 
$$\begin{array}{ccc}
 P(00111)[-2]\{1\} \oplus P(00111)[-3]\{-1\} & \xra{i_0^2(q_0 \circ i_1(q_0'))} &  i_0^2(\ech(P(110)))[-2]\{-2\} \\
(\mb_{00111}, 0) & \mapsto & r_{2,3} \\
(0, \mb_{00110}) & \mapsto & r_{2,3}^h \\
i_0^2(\ech(P(110)))[-2]\{-2\}  & \xra{i_0(g_1')} & P(01110)[-1] \oplus P(01110)[-2]\{-2\} \\
r_{2,3} & \mapsto & (r_{2,3}^l, -r_{2,3}) \\
r_{2,3}^h & \mapsto & (r_{2,3}^{lh}, -r_{2,3}^h),
\end{array}
$$
where $r_{2,3}$ is the distinguished element of $(01110, 00111)=(e_2(S), e_3(S))$ with three non-horizontal strands and no crossing. 
\end{proof}

\s
\n{\em Proof of Theorem \ref{thm K0 ef}.}
This follows from Proposition \ref{prop ef} and Theorems \ref{thm tensor f} and \ref{thm tensor e}.
\qed
\s

\section{The surface case} \label{sec surface}

The goal of this section is to describe the dga associated to a general bordered surface $S$. We treat the ungraded version corresponding to $\rk$. The ground field is $\mathbb{F}=\mathbb{Z}_2$ in this section.   

\subsection{An annulus} \label{ssec annulus}

We first consider the annulus $S=A$ with $|\tau|=1$ and an arc $a$ connecting two points on the same component of $\bdry A$ and straddling the stop; see Figure~\ref{fig alg7}, where the stop and $\bdry a$ are on the top boundary component and the left and right sides are identified. Let $L$ be the Lagrangian annulus sitting above $a$ and let $L_k$ be the family of Lagrangians above $a_{(k)}$ consisting of $k$ parallel copies of $a$.  

We give a conjectural combinatorial description of the endomorphism algebras $\End(L_k)$.  Its diagrammatic presentation, denoted $R(A,0, a;k)$, is a modification of the strands algebra of an annulus.

\subsubsection{Cases $k=1,2$.}

For $k=1$, there are two intersection points on $A$: the right one is the identity $\mb$, and the left one is the generator $b$ of degree one. (See Figure~\ref{fig alg7}.)


For $k=2$, there are eight intersection points on $A$.
They form eight pairs of intersection points in $\End(a_{(2)})$, which in turn can be viewed as elements of $\End(L_2)$:
\begin{align} \label{eq end L2}
\{\circled{4}\circled{7}, \circled{3}\circled{8}; ~~\circled{4}\circled{6}, \circled{3}\circled{5}, \circled{2}\circled{8}, \circled{1}\circled{7}; ~~\circled{2}\circled{5}, \circled{1}\circled{6}\}.
\end{align}

\begin{figure}[ht]
\begin{overpic}
[scale=0.5]{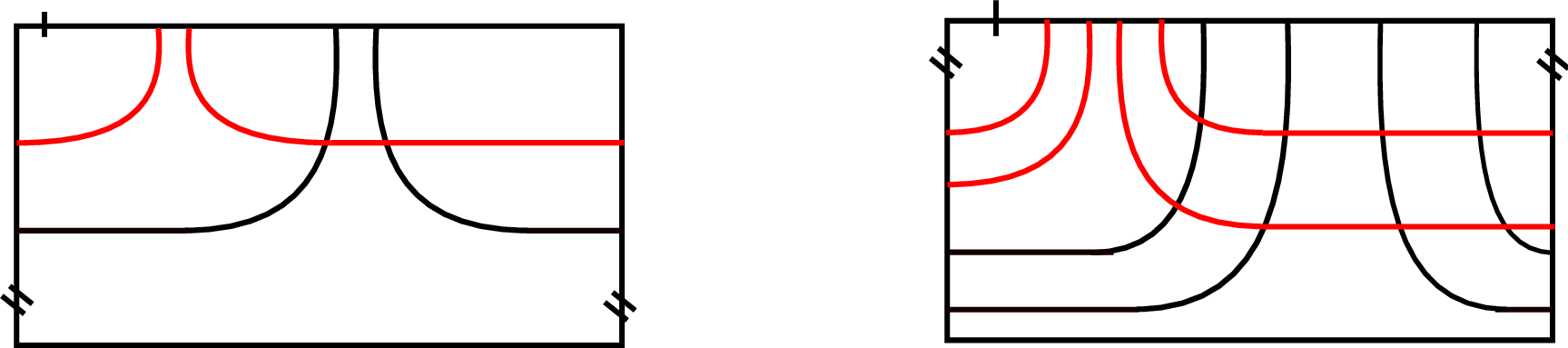}
\put(25,15){$\mb$}
\put(18,15){$b$}
\put(77,15){$\circled{1}$}
\put(82.5,15){$\circled{2}$}
\put(88.5,15){$\circled{3}$}
\put(95,15){$\circled{4}$}
\put(71,8){$\circled{5}$}
\put(81,5){$\circled{6}$}
\put(87,5){$\circled{7}$}
\put(96,9){$\circled{8}$}
\end{overpic}
\caption{Intersection points on $A$ for $\End(L_k)$, where the case $k=1$ is on the left and the case $k=2$ is on the right.}
\label{fig alg7}
\end{figure}

We express these generators in terms of a strands algebra of $A$, together with some extra data. Recall there is an {\em arc diagram} that one associates to a surface in bordered Heegaard Floer homology \cite[Definition 2.1]{Za}. The arc diagram $\cal{Z}$ for $A$ is a vertical interval with two points which are matched together by a half-circle as on the left-hand side of Figure~\ref{fig alg8}. The strands algebra $\mathcal{A}(\mathcal{Z})$ associated to $\cal{Z}$ is a variant of the strands algebra $\cal{A}(n,k)$ that appeared in Section \ref{sec An}, where the main difference is the diagram $\mb\in \mathcal{A}(\mathcal{Z})$ consisting of two dashed horizontal lines starting from a pair of matched points as in the middle of Figure~\ref{fig alg8}. Here $\mb$ is equal to the sum of two diagrams, each with a single horizontal strand, but the diagram consisting of a single horizontal strand is by itself not an element of $\mathcal{A}(\mathcal{Z})$. The strands algebra $\mathcal{A}(\mathcal{Z})$ is two-dimensional with a basis $\mb, b$, where $b$ is the unique increasing strand such that $b^2=0$.

\begin{figure}[ht]
\begin{overpic}
[scale=0.5]{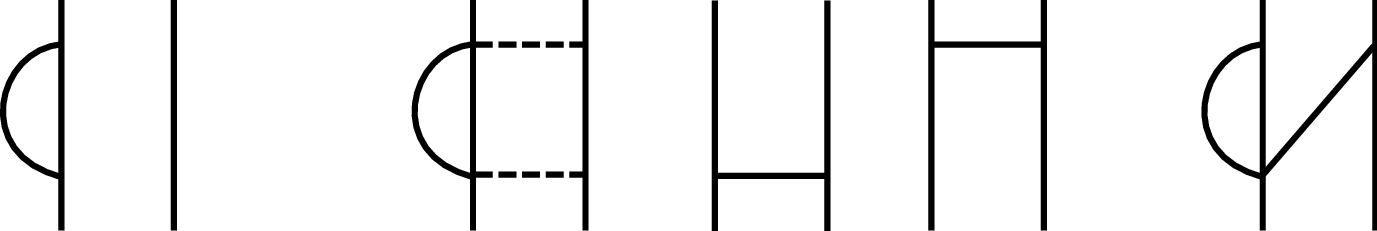}
\put(46,8){$=$}
\put(25,8){$\mb:$}
\put(62,8){$+$}
\put(83,8){$b:$}
\end{overpic}
\caption{The strands algebra $\mathcal{A}(\mathcal{Z})$: the matching half-circle is to the left, the generators $\mb$ and $b$ are in the middle and to the right.}
\label{fig alg8}
\end{figure}

The dga $R(A,0,a;k)$ is obtained from $\mathcal{A}(\mathcal{Z})$ by making the following two changes: (1) adding dots on the strands and (2) forming the disjoint union of $k$ copies of matched points.

For $k=1$, a dotted pair of horizontal strands gives an element $\xi$ which is a sum of two terms as in Figure~\ref{fig alg9}.
\begin{figure}[ht]
\begin{overpic}
[scale=0.4]{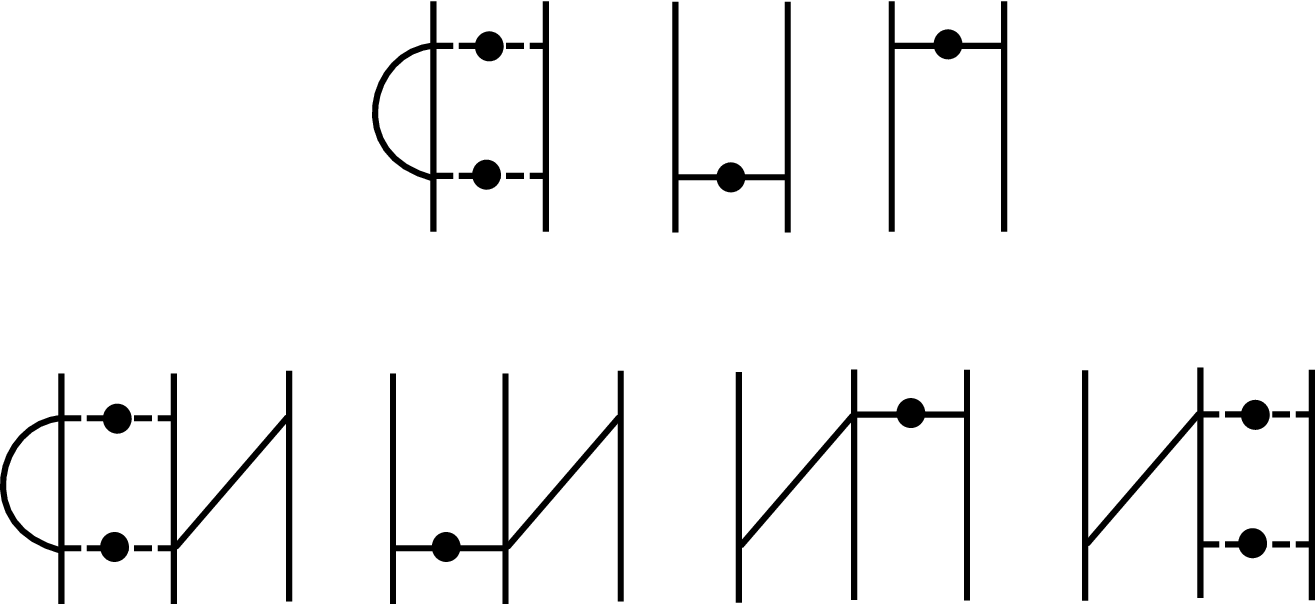}
\put(23,37){$\xi:$}
\put(44,37){$=$}
\put(62,37){$+$}
\put(25,8){$=$}
\put(50,8){$=$}
\put(76,8){$=$}
\end{overpic}
\caption{The dga $R(A,0,a;k=1)$: the generator $\xi$ on the top row and the relation $\xi b=b\xi$ on the bottom row.}
\label{fig alg9}
\end{figure}
The generator $\xi$ corresponds to the degree one generator in the fiber direction in $\End(L)$; adjoining this generator $\xi$ we obtain:
\begin{align} \label{eq end L}
\End(L) \cong \F[b,\xi]/(b^2=\xi^2=0, b\xi=\xi b) \cong H^*(T^2).
\end{align}

For $k=2$, we form a disjoint union of two copies of matched points; see Figure~\ref{fig alg10}.
\begin{figure}[ht]
\begin{overpic}
[scale=0.35]{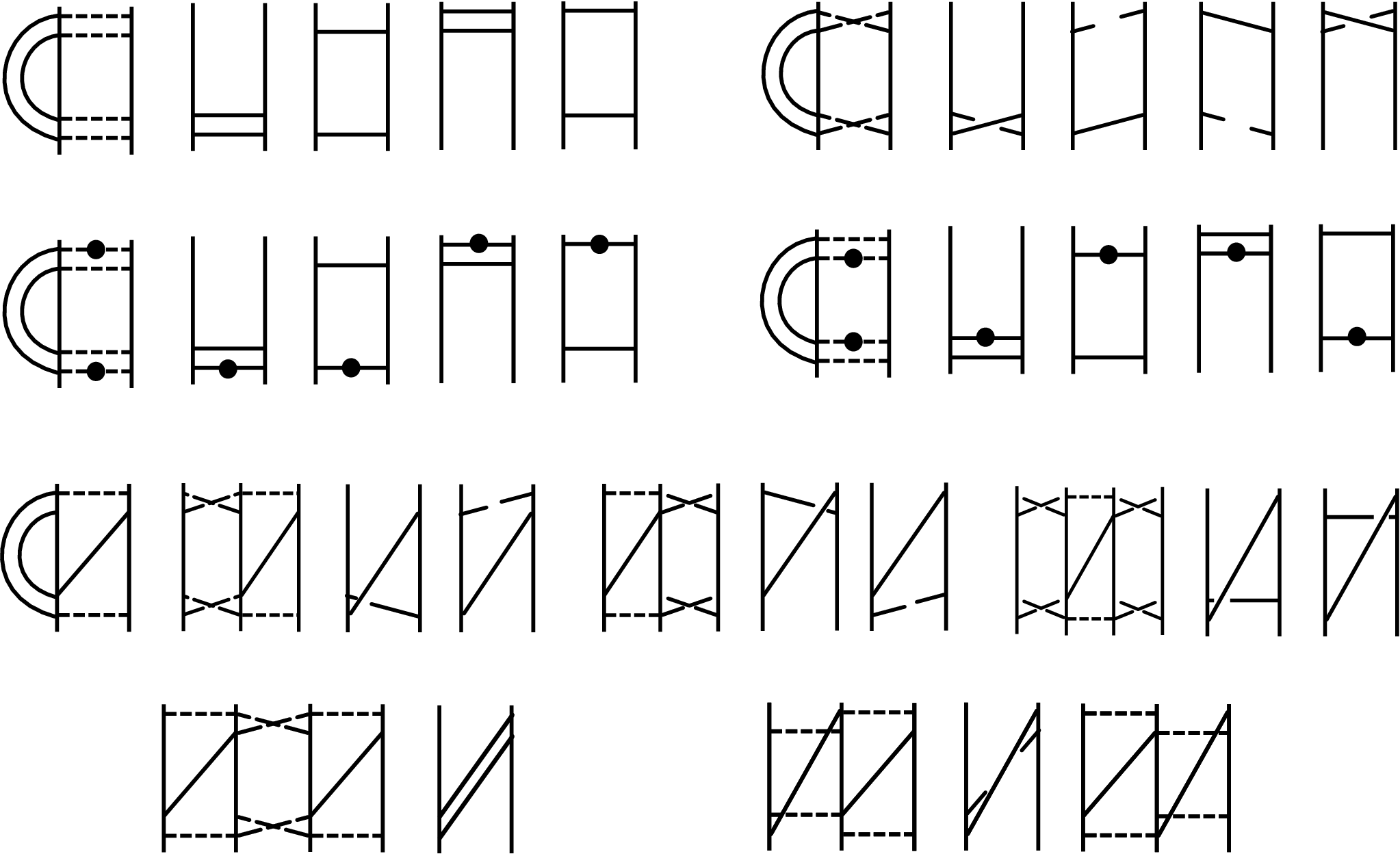}
\put(-5,54){$\mb:$}
\put(11,54){$=$}
\put(20,54){$+$}
\put(29,54){$+$}
\put(37.5,54){$+$}
\put(65,54){$=$}
\put(74,54){$+$}
\put(83,54){$+$}
\put(91.5,54){$+$}
\put(49,54){$T_1:$}
\put(-5,38){$\xi_1:$}
\put(11,38){$=$}
\put(20,38){$+$}
\put(29,38){$+$}
\put(37.5,38){$+$}
\put(49,38){$\xi_2:$}
\put(65,38){$=$}
\put(74,38){$+$}
\put(83,38){$+$}
\put(91.5,38){$+$}
\put(6,13){${\scriptstyle b}$}
\put(15,13){${\scriptstyle T_1b}$}
\put(22,20){${\scriptstyle =}$}
\put(30.5,20){${\scriptstyle +}$}
\put(52,20){${\scriptstyle =}$}
\put(60.2,20){${\scriptstyle +}$}
\put(45,13){${\scriptstyle bT_1^{-1}}$}
\put(84,20){${\scriptstyle =}$}
\put(92,20){${\scriptstyle +}$}
\put(73,13){${\scriptstyle T_1bT_1^{-1}}$}
\put(28,4){${\scriptstyle =}$}
\put(66,4){${\scriptstyle =}$}
\put(75,4){${\scriptstyle =}$}
\put(18,-3){${\scriptstyle bT_1b}$}
\put(56,-3){${\scriptstyle T_1bT_1^{-1}b}$}
\put(78,-3){${\scriptstyle bT_1bT_1^{-1}}$}
\end{overpic}
\caption{The dga $R(A,0,a;k=2)$.}
\label{fig alg10}
\end{figure}
The unit $\mb$ is a sum of four terms, and restricts to the identity for both pairs. A new generator $T_1$ is also a sum of four terms, each of which is a braid-like crossing (the convention is that the lower crossing of $T_1$ in Figure~\ref{fig alg10} is positive).  The generator $T_1$ is invertible: $T_1^{-1}=T_1-\hbar \mb$. The sum of diagrams for $T_1^{-1}$ is that of $T_1$ with the crossings reversed. The two generators $\mb$ and $T_1$ correspond to $\circled{4} \circled{7}$ and $\circled{3} \circled{8}$ in $\End(L_2)$, respectively.  Diagrammatically, some strands in $T_1$ are decreasing, but the slope can be made arbitrarily close to zero if the two matching half-circles are made sufficiently close. These are the only types of decreasing strands that are allowed in $R(A,0,a;k)$. 

The two generators $\xi_1$ and $\xi_2$ correspond to two generators of degree one in the fiber direction in $\End(L_2)$.
The generator $b$ corresponds to $\circled{4} \circled{6}$ in $\End(L_2)$ whose differential is zero.
This is compatible with the fact that the diagram $b$ contains no crossing.

The eight generators in (\ref{eq end L2}) then correspond to:
$$\{\mb, T_1; ~~b, T_1b, bT_1^{-1}, T_1bT_1^{-1}; ~~bT_1b, bT_1bT_1^{-1}\}.$$

\subsubsection{The dga $R(A,0,a;k)$ for $k\geq 1$.}


\begin{defn} \label{def tRk}
The dga $R(A,0,a;k)$, $k\geq 1$, has generators $b, T_i, \xi_j$ for $1 \le i \le k-1,  1 \le j \le k$, and relations:
\begin{gather*}
T_i^2=1+\hbar T_i, \quad T_iT_{i+1}T_i=T_{i+1}T_iT_{i+1}, \quad  T_iT_{i'}=T_{i'}T_i ~\mbox{for}~|i-i'|>1, \\
\nonumber \xi_j^2=0, \quad  \xi_j\xi_{j'}=-\xi_{j'}\xi_j, ~\mbox{for}~j \neq j',   \\
\nonumber \xi_iT_i=T_i\xi_{i+1},  \quad \xi_{i+1}T_i=T_i\xi_{i}-\hbar(\xi_{i}-\xi_{i+1}), \quad  \xi_jT_i=T_i\xi_j ~\mbox{for}~j \neq i,i+1, \\
b^2=0,\\
b\xi_j=\xi_jb, \quad 1 \le j \le k,\\
bT_i=T_ib, \quad 1 \le i \le k-2,\\
bT_{k-1}bT_{k-1}^{-1}=T_{k-1}bT_{k-1}^{-1}b.
\end{gather*}
The grading is $\deg(\xi_j)=\deg(b)=1$ and $\deg(T_i)=0$.
The differential on the generators is
$$d(\mb_k)=0, \qquad d(T_i)=\xi_i-\xi_{i+1}, \qquad d(\xi_j)=0, \qquad d(b)=0,$$ 
and is extended using the Leibniz rule: $d(ab)=d(a)b+a~d(b)$.
\end{defn}

To show the well-definition of $R(A,0,a;k)$, we need to verify that the differential preserves the relations. For example for the last relation:
\begin{align*}
d(bT_{k-1}bT_{k-1}^{-1})=b(\xi_{k-1}-\xi_k)bT_{k-1}^{-1}+bT_{k-1}b(\xi_{k-1}-\xi_k)=bT_{k-1}(\xi_{k-1}-\xi_k)b, \\
d(T_{k-1}bT_{k-1}^{-1}b)=(\xi_{k-1}-\xi_k)bT_{k-1}^{-1}b+T_{k-1}b(\xi_{k-1}-\xi_k)b=b(\xi_{k-1}-\xi_k)T_{k-1}^{-1}b.
\end{align*}
The two lines are the same since $T_{k-1}(\xi_{k-1}-\xi_k)=(\xi_{k-1}-\xi_k)T_{k-1}^{-1}$.
The proof for the other relations is straightforward.

%
%
%
%



\subsection{Parametrized surfaces} \label{ssec surface}

Let $(S,n,{\bf a})$ be a parametrized bordered surface with a basis of arcs ${\bf a}=\{a_1,\dots,a_n,a_{n+1},\dots,a_{n+s}\}$. The following slightly generalizes the definition of an arc diagram from \cite[Definition 2.1]{Za}, which was defined when there are no singular points.

\begin{defn} \label{def arc diagram}
An {\em arc diagram} $\cal{Z} = (Z,{\bf \ti{a}},M)$ is a triple consisting of:
\be
\item a disjoint union $Z = Z_1\sqcup \dots \sqcup Z_l$ of oriented line segments, 
\item a collection ${\bf \ti{a}} = \{\ti{a}_1, \dots, \ti{a}_n , \ti{a}_{n+1}, \dots, \ti{a}_{n+2s}\}$ of distinct points in $Z$, and 
\item a partial matching, i.e., a 2-to-1 function $M: \{\ti{a}_{n+1}, \dots, \ti{a}_{n+2s}\} \ra \{1, \dots , s\}$.
\ee
The points of ${\bf \ti{a}}$ are ordered lexicographically by the ordering of the components of $Z$, followed by the orientation of each $Z_i$. For simplicity we assume that the points $\{\ti{a}_1, \dots, \ti{a}_n , \ti{a}_{n+1}\}$ lie on the first component $Z_1$.
\end{defn}

Starting with the disjoint union $Z \times [0, 1]$ of rectangles, we attach $1$-handles along $M^{-1}(j) \times \{0\}$ for $j = 1,\dots, s$. This yields a surface $S$ with a basis of arcs ${\bf a}=\{a_1,\dots,a_n,a_{n+1},\dots,a_{n+s}\}$, where the arc $a_{n+j}$ corresponds to the core of the $1$-handle attached along $M^{-1}(j) \times \{0\}$ and the half-arc $a_{i}$ corresponds to the point $\ti{a}_i\in Z$ for $i=1,\dots,n$. Each parametrized surface corresponds to a unique arc diagram.

Let $\op{St}(a_{n+j}, a_{n+j'})$ be the set of primitive increasing strands from one point in $M^{-1}(j) \times \{0\}$ to another point in $M^{-1}(j') \times \{1\}$ in some rectangle $Z_i \times [0, 1]$, where a strand is {\em primitive} if it is not a concatenation of two increasing strands.
Note that $\#\op{St}(a_{n+j}, a_{n+j'})=0,1,2$ depending on their relative positions.

We define the grading on ${\bf \ti{a}} = \{\ti{a}_1, \dots, \ti{a}_n , \ti{a}_{n+1}, \dots, \ti{a}_{n+2s}\}$ as follows: $\deg(\ti{a}_i)=1$ if $\{\ti{a}_i, \ti{a}_{i'}\}=M^{-1}(j)$ and $i>i'$ and $\deg(\ti{a}_i)=0$ otherwise. In words, for each matched pair the point with the larger subscript has grading one and the other points have grading zero.

\vspace{.2cm}
We will construct a family of dgas $\rsnk$ for $k \ge 0$ in two steps.
It is a modification of the strands algebra associated to the arc diagram $\cal{Z} = (Z,{\bf \ti{a}},M)$.

\vspace{.2cm}

\n {\bf Step 1.} $k=0,1$.
The dga $\rsnk$ for $k=0$ is the ground field $\F$.

For $k=1$, the dga $\rsno$ has four types of generators:
\be
\item an idempotent $\mb(i)$ for $i=1,\dots,n+s$, represented by one horizontal strand from $\ti{a}_i$ to itself for $i=1,\dots,n$, or the sum of two diagrams, each with a single horizontal strand from $M^{-1}(i-n)$ to itself for $i=n+1,\dots,n+s$;
\item a generator $\xi(n+j)$ for $j=1,\dots,s$, represented by a difference of two diagrams, each with a single dotted horizontal strand from $M^{-1}(j)$ to itself;
\item a generator $b_p \in \op{St}(a_{n+j}, a_{n+j'})$, represented by a primitive increasing strand from $M^{-1}(j)$ to $M^{-1}(j')$;
\item a generator $r \in R(n+1;1)$, represented by an increasing strand (not necessarily primitive) possibly with a dot from one point in $\{\ti{a}_1, \dots, \ti{a}_n , \ti{a}_{n+1}\}$ to another.
\ee
The multiplication is given by the concatenation of the strands diagrams. The dots can freely move along a strand, and a strand with more than one dot is set to be zero.  The grading on the generators is given by
$$\deg(\mb_i)=\deg(\mb_j)=0, \deg(\xi_j)=\deg(\ti{b}_i)=1, \deg(b_p)=\deg(\ti{a}_{i+1})-\deg(\ti{a}_{i}),$$
if $b_p$ connects $\ti{a}_{i}$ to $\ti{a}_{i+1}$. Some generators of the first three types were given in Figures \ref{fig alg8} and \ref{fig alg9}.  The differential is trivial. 

\begin{example} \label{ex rsnk}
Consider a parametrized once-punctured torus $(S,{\bf a})$ with $n=2$ singular points.
The associated arc diagram is $\cal{Z} = (Z,{\bf \ti{a}},M)$, where $Z=Z_1$ contains ${\bf \ti{a}} = \{\ti{a}_1, \dots, \ti{a}_{6}\}$ and the
partial matching $M: \{\ti{a}_3, \dots, \ti{a}_{6}\} \ra \{1,2\}$ is given by $M^{-1}(1)=\{\ti{a}_3, \ti{a}_{5}\}, M^{-1}(2)=\{\ti{a}_4, \ti{a}_{6}\}$ which corresponds to the arcs $a_3, a_4$, respectively. 
See Figure~\ref{fig alg12}.

There are four types of generators of $\rsno$: (1) idempotents $\mb(i)$ for $1\le i \le 4$; (2) two generators $\xi(3), \xi(4)$ of the second type; (3) three generators $b_{p_1}, b_{p_2}, b_{p_3}$ of the third type such that $\op{St}(a_3, a_4)=\{b_{p_1}, b_{p_3}\}, \op{St}(a_4, a_3)=\{b_{p_2}\}$; and (4) a generator $r \in R(3;1)$ of the fourth type.
The gradings of $\xi(3)$, $\xi(4)$, and $b_{p_2}$ are $1$ and the gradings of the other generators are zero.
\end{example}

\begin{figure}[ht]
\begin{overpic}
[scale=0.35]{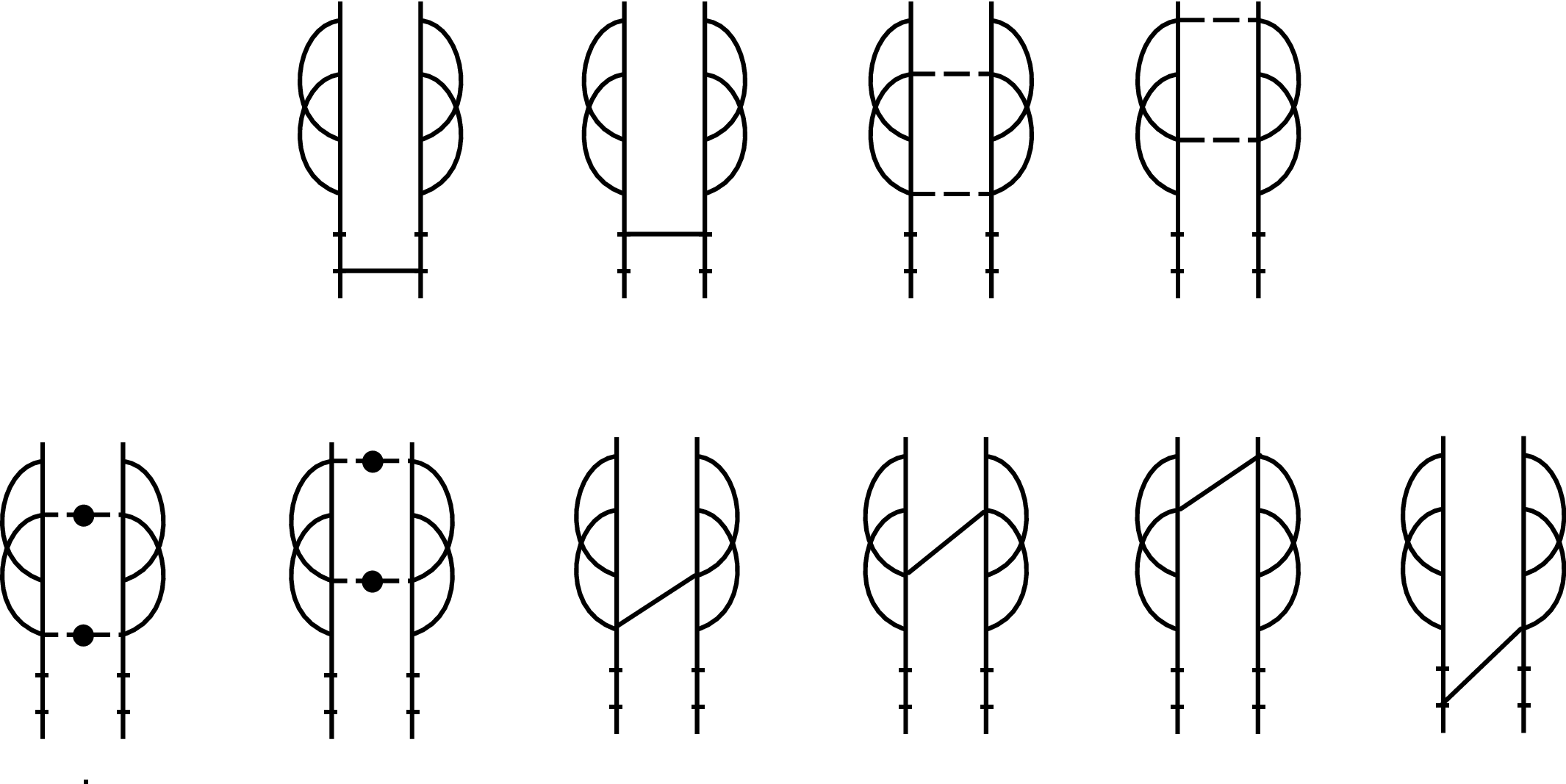}
\put(21,27){$\mb(1)$}
\put(40,27){$\mb(2)$}
\put(58,27){$\mb(3)$}
\put(75,27){$\mb(4)$}
\put(2.1,0){$\xi(3)$}
\put(20.5,0){$\xi(4)$}
\put(40.5,0){$b_{p_1}$}
\put(59.5,0){$b_{p_2}$}
\put(76.2,0){$b_{p_3}$}
\put(93.85,0){$r$}
\end{overpic}
\caption{Generators of $R(S,n=2, {\bf a}; k=1)$ when $S$ is a once-punctured torus.}
\label{fig alg12}
\end{figure}

\s\n {\bf Step 2.} $k \ge 2$.
We define
$$S^k({\bf a})=\{a_{\bf i}=a_1^{i_1}\cdots a_{n+s}^{i_{n+s}}~|~ {\bf i}=(i_1,\dots,i_{n+s}), |{\bf i}|=i_1+\dots+i_{n+s}=k\}.$$
We are using multiplicative notation for $a_{\bf i}$ and e.g., $a_{\bf i}/a_j:= a_1^{i_1}\cdots a_j^{i_j-1}  \cdots a_{n+s}^{i_{n+s}}$, where $a_j^{i_j-1}=0$ if $i_j-1<0$.
Each element of $S^k({\bf a})$ corresponds to a $k$-tuple of arcs.

The dga $\rsnk$ has five types of generators:
\be
\item an idempotent $\mb({\bf i})$ for $a_{\bf i} \in S^k({\bf a})$, represented by the sum of all horizontal strands from $a_{\bf i}$ to itself (by this we mean there is a single horizontal strand from each copy of $a_i$ to itself);
\item a generator $T({\bf i}, n+j)_t$ for $1 \le t \le i_{n+j}-1, 1 \le j \le s$, represented by the sum of braid-like diagrams from $a_{\bf i}$ to itself which exchange the $t$th and $(t+1)$st copies of $a_{n+j}$ and have a single horizontal strand from each remaining copy of $a_i$ to itself;
\item a generator $\xi({\bf i}, n+j)_t$ for $1 \le t \le i_{n+j}, 1 \le j \le s$, represented by a signed sum of horizontal strands from $a_{\bf i}$ to itself 
with a dot on the $t$th copy of $a_{n+j}$;
\item a generator $b({\bf i}, {\bf i'})_p$ for $b_p \in \op{St}(a_{n+j}, a_{n+j'})$, $a_{\bf i}/ a_{n+j}=a_{\bf i'}/a_{n+j'}\not=0$, represented by the sum of diagrams consisting of the shortest increasing strand of type $b_p$ from $a_{n+j}^{i_{n+j}}$ to $a_{n+j'}^{i_{n+j'}}$, together with horizontal strands from $a_{\bf i}/a_{n+j}$ to $a_{\bf i'}/a_{n+j}$. 
\item a generator $r({\bf i}, {\bf i'})$ for $r \in R(n+1;k'), k' \le k$, represented by the sum of diagrams of $k$ strands from $a_{\bf i}$ to $a_{\bf i'}$, whose lower $k'$ strands connect the points in $\{\ti{a}_1, \dots, \ti{a}_n , \ti{a}_{n+1}\}$ and give the element $r \in R(n+1;k')$, and whose upper $k-k'$ strands are horizontal and do not intersect the lower $k'$ strands.
\ee
Some generators of the first four types are drawn in Figure~\ref{fig alg10} for $a_{\bf i}=a_{\bf i'}=a_{n+1}^{2}$.

The multiplication is given by concatenating the diagrams, isotoping, and applying some relations. Any diagram can be decomposed into a product of the generators. The relations are as follows: 
\be
\item disjoint diagrams supercommute; 
\item a dot can move freely along a strand and slide through a crossing from above; 
\item the generators of the type $T({\bf i},n+j)$ satisfy the Hecke algebra (= HOMFLY skein) relations; 
\item there are local relations involving generators of the second and fourth types as in Figure \ref{fig alg11}; and
\item a diagram containing a strand with more than one dot is set to be zero.
\ee

\begin{figure}[ht]
\begin{overpic}
[scale=0.35]{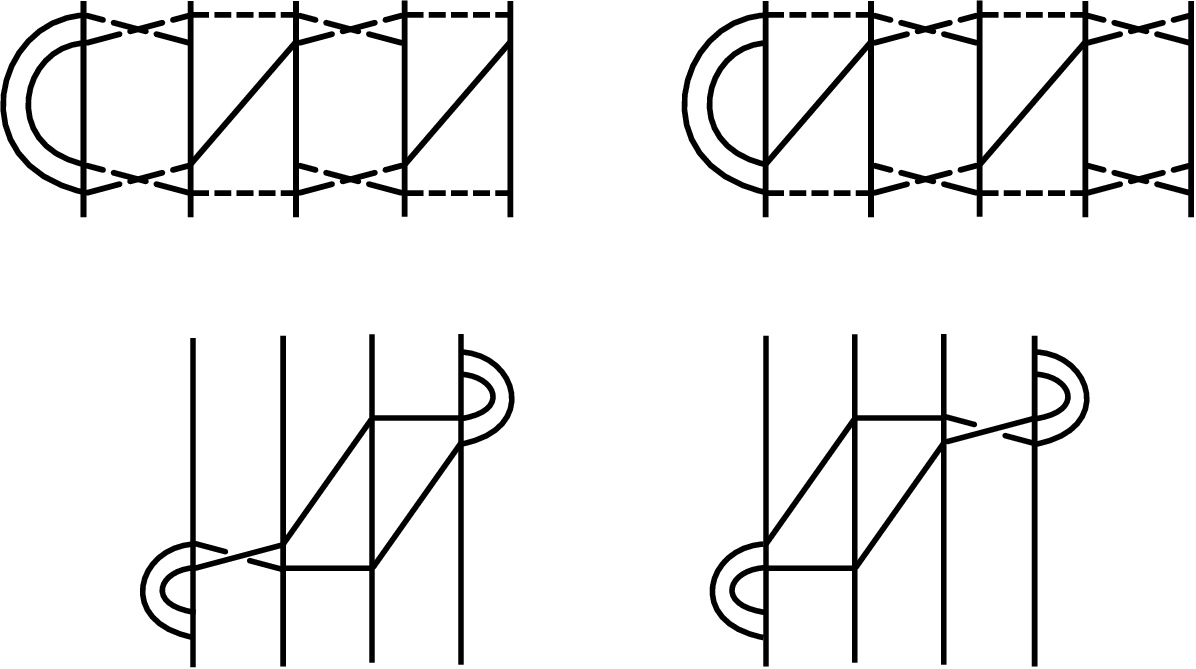}
\put(48,45){$=$}
\put(48,15){$=$}
\put(-20,45){${\scriptstyle TbT^{-1}b}$}
\put(108,45){${\scriptstyle bTbT^{-1}}$}
\end{overpic}
\caption{Local relations involving $b({\bf i}, {\bf i'})_p$, ${\bf i}={\bf i'}$ in the first case and ${\bf i} \neq {\bf i'}$ in the second case. Both sides of each relation are isotopic to a diagram with a single crossing after concatenation.}
\label{fig alg11}
\end{figure}

The grading is defined on the generators as follows: 
\begin{gather*}
\deg(\mb({\bf i}))=\deg(T({\bf i}, n+j)_t)=0, \quad \deg(\xi({\bf i}, n+j)_t)=1,\\
\deg(b({\bf i}, {\bf i'})_p)=\deg(b_p),\quad \deg(r)=\#\{\mbox{dots on the diagram $r$}\}.
\end{gather*}
The differential is defined on the generators as follows:
$$d(T({\bf i}, n+j)_t)=\xi({\bf i}, n+j)_t-\xi({\bf i}, n+j)_{t+1},$$ 
$d(r)$ defined as in $R(n+1;k)$, and the differentials of the remaining generators are zero. By a routine verification the differential preserves all the relations and hence the dga $\rsnk$ is well-defined.

Given two $k$-tuples of arcs $a_{\bf i}, a_{\bf i'} \in S^k({\bf a})$, the vector space $\mb(a_{\bf i}) \cdot \rsnk \cdot \mb(a_{\bf i'})$ has a linear basis of diagrams with $k$ strands from $a_{\bf i}$ to $a_{\bf i'}$ up to isotopy.
In particular, $\rsnk$ is finite-dimensional.

\s\n{\em Example \ref{ex rsnk}, continued.}
For $k=2$, $S^2({\bf a})=\{a_1a_2, a_1a_3, a_1a_4, a_2a_3, a_2a_4, a_3^2, a_3a_4, a_4^2\}$. Hence there are eight idempotents. Some generators of the fourth or fifth type are given in Figure~\ref{fig alg13}.
\begin{figure}[ht]
\begin{overpic}
[scale=0.4]{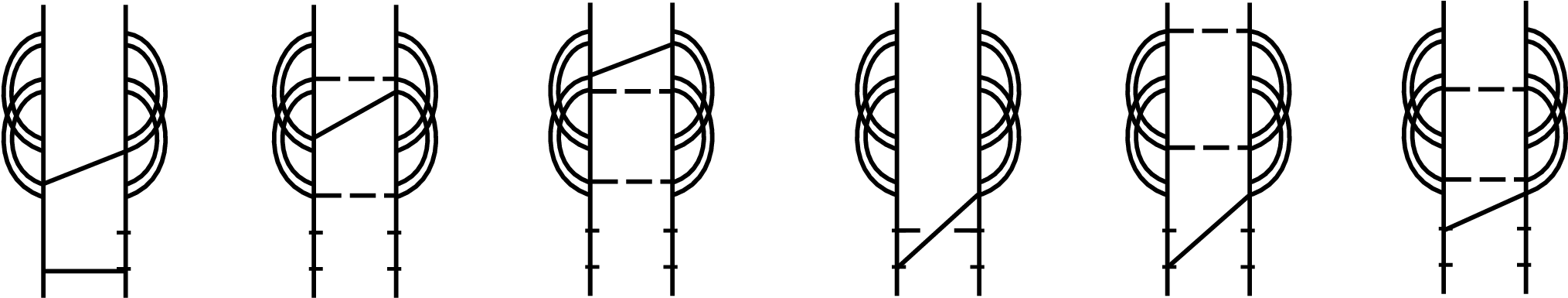}
\put(-1,-3.5){${\scriptstyle b(a_1a_3,a_1a_4)_{p_1}}$}
\put(17.2,-3.5){${\scriptstyle b(a_3a_4,a_3^2)_{p_2}}$}
\put(34.8,-3.5){${\scriptstyle b(a_3^2,a_3a_4)_{p_3}}$}
\put(54,-3.5){${\scriptstyle r(a_1a_2,a_2a_3)}$}
\put(71.6,-3.5){${\scriptstyle r(a_1a_4,a_3a_4)}$}
\put(90,-3.5){${\scriptstyle r(a_2a_3,a_3^2)}$}
\end{overpic}
\vskip.2in
\caption{Generators of $R(S,n=2, {\bf a};k=2)$, where $S$ is a once-punctured torus.}
\label{fig alg13}
\end{figure}

\begin{conj} \label{conj rnsk} $\mbox{}$
\be
\item The higher $\ai$-operations $\mu_m$ for $(S,n,{\bf a})$ are trivial for $m \geq 3$.
Moreover, the resulting dga is isomorphic to $\rsnk$.
\item The quasi-isomorphic class of $\rsnk$ is independent of choices of parametrization ${\bf a}$ of the surface $S$.
\ee
\end{conj}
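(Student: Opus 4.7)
\emph{Plan.} The strategy for (1) is a local-to-global reduction that matches the combinatorial dga $\rsnk$ to the geometric $\ai$-algebra $R^p(S,n,{\bf a};k)$ piece by piece in the decomposition of $(S,n,{\bf a},\tau)$ suggested by the arc diagram $\cal{Z}$ of Definition~\ref{def arc diagram}; part (2) will then follow from Theorem~\ref{thm: invariance} together with (1).

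First I would decompose $(S,n,{\bf a},\tau)$ into elementary parametrized surface pieces: (a) rectangular ``local model'' pieces ($D^2$, no singular points, two stops, one arc) of Section~\ref{sec local}; (b) $A_n$-type pieces (disk with singular points and a single stop) of Section~\ref{sec An}; and (c) annular pieces (annulus with $|\tau|=1$) of Section~\ref{ssec annulus}. This matches the decomposition $S\simeq (Z\times[0,1])\cup(\text{1-handles})$ read off from the arc diagram, and the Lefschetz fibration $\pi\colon W_{S,n}\to S$ restricts to the Weinstein domains associated with each piece. The key analytic step is a neck-stretching / SFT-type degeneration along $\pi^{-1}(a_i)$ for the arcs $a_i$, $i>n$, cutting $S$ into the elementary pieces: since the projection $\pi\circ u$ of any $J^\Diamond$-holomorphic map $u$ to $S$ is itself holomorphic with boundary on the prescribed arcs, one expects a splitting/gluing statement in the spirit of the bordered Heegaard Floer pairing theorem of Lipshitz--Ozsv\'ath--Thurston identifying $R^p(S,n,{\bf a};k)$ as an $\ai$-tensor product over the elementary pieces. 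Each piece has been described combinatorially in Sections~\ref{sec local}--\ref{ssec annulus}, and the global dga of Section~\ref{ssec surface} is built precisely by gluing these local models along idempotents, so the desired isomorphism $\rsnk \cong R^p(S,n,{\bf a};k)$ should emerge directly from the gluing.

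Second, the vanishing of $\mu_m$ for $m\ge 3$ splits into a local and a global component. Locally, $\rnk$ and $R(A,0,a;k)$ are honest dgas by construction, while the local-model dga $\rk$ is conjecturally formal (Conjecture~\ref{conj Rk formal}). Globally, a holomorphic polygon in $D_m\times W_{S,n}$ with $m+1$ boundary punctures projects to a polygon in $D_m\times S$ with corners on the arcs ${\bf a}$; a cross-ratio analysis analogous to the proof of Lemma~\ref{lemma: triangle1} should show that such polygons are rigid and factor through iterated pair-of-pants concatenations, producing only $\mu_2$-contributions after simplification. The hardest step here will be verifying the global algebraic relations of Section~\ref{ssec surface}---in particular the twisting rules \eqref{rel Rk} and the ``$bTbT^{-1}=TbT^{-1}b$''-type relations of Figure~\ref{fig alg11}---directly from holomorphic bigon and triangle counts in the fiber direction; the argument should parallel \cite[Lemma~9.4.1]{CHT}, and the choice $\F=\Z_2$ adopted in Section~\ref{ssec surface} removes sign issues.

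Finally, for (2), the plan is to deduce quasi-isomorphism invariance of $\rsnk$ under change of parametrization ${\bf a}$ as a direct corollary of Theorem~\ref{thm: invariance} together with (1). Any two parametrizations are related by a finite sequence of the handleslide moves (1)--(4) analyzed in Section~\ref{subsection: proof of theorem}, each of which induces a quasi-equivalence of triangulated envelopes $\overline{\mathcal{R}}^p(S,n,{\bf a})\simeq \overline{\mathcal{R}}^p(S,n,{\bf a}')$. Granted (1), this transfers to a Morita equivalence of the dgas $\rsnk \simeq R(S,n,{\bf a}';k)$ in the style of Theorem~\ref{thm: invariance under handleslides}, which is the desired conclusion. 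The principal obstacle to the whole program is the reliance on the still-conjectural formality of $\rk$: without it one only obtains a minimal $\ai$-algebra quasi-isomorphic to $R^p(S,n,{\bf a};k)$ rather than the specific dga $\rsnk$. A secondary technical hurdle is tracking the $\hbar$-exponents and the genus filtration in curve counts precisely enough to match the combinatorial presentation of Section~\ref{ssec surface}, which is why the conjecture is formulated with the implicit specialization $\hbar=1$ mentioned in Section~\ref{subsection: combinatorial}.
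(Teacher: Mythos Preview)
The paper does not prove this statement: it is labeled a \emph{Conjecture} and is the final statement of Part~2, with no proof or proof sketch following it. Indeed, the introduction (Section~\ref{subsection: combinatorial}) explicitly warns that ``the equivalence with the geometric description from Part~1 should be viewed as being \emph{entirely conjectural}'' and that it ``relies in particular on the fact that one could get genus bounds on holomorphic curves and set $\hbar=1$.'' So there is no paper proof to compare against; what you have written is a research outline for attacking an open problem.

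As an outline, your strategy is reasonable in broad shape but has real gaps beyond the ones you flag. First, your argument for the vanishing of $\mu_m$, $m\ge 3$, via a ``cross-ratio analysis analogous to Lemma~\ref{lemma: triangle1}'' does not work as stated: Lemma~\ref{lemma: triangle1} computes a single triangle count in a split model and says nothing about higher polygons; the projection $\pi_S\circ u$ being holomorphic constrains homotopy classes but does not by itself force rigidity or factorization through iterated pairs of pants, especially once branched covers in the $A_\infty$-base (the $\hbar$-weighted higher-genus contributions of \cite{CHT}) are allowed. Second, your neck-stretching/gluing step is asserted by analogy with the LOT pairing theorem, but no such pairing theorem for HDHF is established in the paper, and the presence of $\hbar$ and higher-genus curves makes this a substantial analytic undertaking, not a routine adaptation. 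Third, invoking the formality of $\rk$ (Conjecture~\ref{conj Rk formal}) is not actually needed for part~(1) as stated---part~(1) asks that the geometric $A_\infty$-algebra \emph{be} a dga isomorphic to $\rsnk$, not that it be formal---so that dependency is a red herring; the genuine obstruction is the curve-counting identification itself. Your plan for (2), deducing it from (1) plus Theorem~\ref{thm: invariance}, is logically sound but of course inherits the conjectural status of (1); a purely combinatorial proof of (2) (constructing explicit bimodules for handleslides on the $\rsnk$ side) would be an independent and arguably more tractable line of attack.
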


\end{document}